\documentclass[11pt,twoside]{article}

\usepackage{amsmath}
\usepackage{amssymb}
\usepackage{mathrsfs}
\usepackage{amsthm}

\usepackage{latexsym}

\usepackage{indentfirst}
\usepackage{color}
\usepackage{txfonts}

\usepackage{anysize}

\allowdisplaybreaks

\usepackage[colorlinks=true,
  linkcolor=red,
  citecolor=blue,
  urlcolor=magenta]{hyperref}

\def\R{{\mathbb R}}
\def\rn{{{\R}^n}}
\def\D{\mathcal{D}}
\def\d{\mathrm{d}}
\def\M{\mathcal{M}}
\def\Mit{\mathcal{M}^{\operatorname{it}}}
\def\H{\mathcal{H}}

\def\sgn{\operatorname{sgn}}

\newtheorem{theorem}{Theorem}[section]
\newtheorem{lemma}[theorem]{Lemma}
\newtheorem{corollary}[theorem]{Corollary}
\newtheorem{proposition}[theorem]{Proposition}
\newtheorem{example}[theorem]{Example}
\theoremstyle{definition}
\newtheorem{remark}[theorem]{Remark}
\newtheorem{definition}[theorem]{Definition}

\numberwithin{equation}{section}

\begin{document}
\title{\bf\Large Mixed Bourgain-Morrey spaces and their applications to boundedness of operators
\footnotetext{\hspace{-0.35cm} 2020 {\it
Mathematics Subject Classification}. Primary 46E30; Secondary  42B20, 42B25, 42B35. \endgraf
{\it Key words and phrases}. fractional integral operator, Hardy-Littlewood maximal operator,  iterated maximal operator,  mixed Bourgain-Morrey space, predual,   singular integral operator.
\endgraf
The work is supported by the National Natural Science Foundation of China (Grant No.
12561002), Hainan Provincial Natural Science Foundation of China (Grant No. 126MS217) and the Science and Technology Project of Guangxi (Guike AD25069086).
}}
\date{}
\author{}
\author{Tengfei Bai, Pengfei Guo and Jingshi Xu\footnote{Corresponding author,
E-mail: \texttt{jingshixu@126.com}}}
\maketitle

\vspace{-0.8cm}

\begin{center}
\begin{minipage}{13cm}
{\small {\bf Abstract:}\quad
We introduce  the mixed Bourgain-Morrey spaces and obtain their preduals.
The boundedness of Hardy-Littlewood maximal operator, iterated maximal operator, fractional integral operator, singular integral operator on these spaces is proved.  In addition, we give a description of the dual of mixed Bourgain-Morrey spaces and  conclude the reflexivity of these spaces.
}
\end{minipage}
\end{center}

\vspace{0.2cm}


\section{Introduction}
In 1961, Benedek and Panzone \cite{BP61} introduced Lebesgue spaces with mixed norm. Bagby \cite{B75}
extended the Fefferman-Stein inequality to the mixed Lebesgue spaces.
In \cite{Ig86}, Igari researched the  interpolation theory for  linear operators in  mixed Lebesgue spaces and apply them to Fourier analysis.
Torres  and Ward considered Leibniz's rule, sampling and wavelets of mixed Lebesgue spaces  in \cite{TW15}.
In \cite{AIV19}, Antoni{\'c},  Ivec  and Vojnovi{\'c}  obtained a general framework for dealing with continuity of linear operators on mixed-norm Lebesgue spaces and showed the boundedness of a large class of pseudodifferential operators on these spaces.
The Hardy-Littlewood-Sobolev inequality on mixed-norm Lebesgue spaces was studied by Chen and Sun in \cite{CS22}.

Then  numerous other function spaces with mixed norms are developed.   Besov spaces with mixed norms  were  studied by in   \cite{BIN78, BIN79}. Triebel-Lizorkin spaces with  mixed norms were considered by  Besov et al. in \cite{BIN96}.  
In 1977,  the Lorentz spaces with mixed norms were first introduced by Fernandez  \cite{Fe77}.
An interpolation result on these spaces was got by Milman  \cite{Mi81}. 
Banach function spaces with mixed norms were studied by  Blozinski   in \cite{Bl81}. 
Anisotropic mixed-norm {Hardy} spaces were  explored by Cleanthous et al. in  \cite{CGN17}.
Mixed-norm $\alpha$-modulation spaces were introduced by Cleanthous and Georgiadis in \cite{CG20}. 
The mixed Lebesgue spaces with variable exponents were studied by Ho in \cite{Ho18}. He obtained the boundedness of operators, such as the  Calder\'on-Zygmund operators on product domains, the Littlewood-Paley operators associated with family of disjoint rectangles,  the nontangential maximal function.  We refer the reader to the survey \cite{HY21} for the development of mixed norm spaces.

Recently,  there are new development of mixed norm spaces. For example, 
in \cite{WYY22}, Wu, Yang and Yuan studied interpolation in mixed-norm function spaces, including mixed-norm Lebesgue spaces, mixed-norm Lorentz spaces and mixed-norm Morrey spaces.
In \cite{Ho16},  Ho obtained the boundedness of the strong maximal operator on mixed-norm spaces. 
In \cite{ZX20}, Zhang and Xue introduced the  classes of multiple weights and of multiple fractional weights. They obtained the boundedness of multilinear strong maximal operators and  multilinear fractional strong maximal operators on weighted mixed norm spaces. 
In \cite{Ho21}, Ho established an extrapolation theory to mixed norm spaces.

In \cite{HLYY20}, Huang et al. introduced the anisotropic mixed-norm Hardy spaces  associated with  a general expansive matrix on  $\rn$ and established their radial or non-tangential maximal function characterizations. 
They obtained  characterizations of  anisotropic mixed-norm Hardy spaces by means of atoms, finite atoms, Lusin area functions, Littlewood-Paley  $g-$functions. The duality between anisotropic mixed-norm Hardy space and anisotropic mixed-norm Campanato space is also obtained. 
In \cite{HCY21}, Huang, Chang and Yang  researched the Fourier transform on anisotropic mixed-norm Hardy space. In \cite{HYY21}, Huang, Yang and Yuan  introduced  anisotropic mixed-norm Campanato-type space associated with a general expansive matrix on $\rn$. They proved that the Campanato-type space is the dual space of the anisotropic mixed-norm Hardy space.

More results can be founded in 
\cite{BKPS06, Bl81, Fe77} for mixed Lorentz spaces, \cite{CGN19,GN16, JS08} for mixed Besov spaces and Triebel-Lizorkin spaces, \cite{DGZ24, DZ23, Hu23, Li22} for mixed Hardy spaces.

Now we turn to the Morrey spaces.
In \cite{Mo38}, Morrey introduced the origin Morrey spaces  to research the  boundedness of the elliptic differential operators. We refer the reader to the  monographs \cite{SDH20, SDH202} for the theory of Morrey spaces. The mixed Morrey spaces were first introduced by  Nogayama in \cite{N19}.
He obtained the  boundedness of iterated
maximal operator,  fractional integral operator and singular integral operator on mixed Morrey spaces. 
Nogayama \cite{N192} studied the predual spaces of mixed Morrey spaces and obtained the necessary and sufficient conditions for the boundedness of commutators of fractional integral operators on mixed Morrey spaces.  

Next we recall the Bourgain-Morrey spaces.
In 1991, Bourgain \cite{Bou91} introduced a special case of Bourgain-Morrey spaces to study the Stein-Tomas (Strichartz) estimate.
After then, many authors began to pay attention to the Bourgain-Morrey spaces. For example,
in \cite{M16}, Masaki showed the block spaces which are the  preduals of Bourgain-Morrey spaces. 
In \cite{HNSH23}, Hatano et al. researched the Bourgain-Morrey spaces from the viewpoints of harmonic analysis and functional analysis.    After then, some function spaces extending Bourgain-Morrey spaces were established.

In \cite{ZSTYY23}, Zhao et al.  introduced Besov-Bourgain-Morrey spaces which connect Bourgain-Morrey spaces with amalgam-type spaces.   They established an equivalent norm with an integral expression  and  showed the boundedness on these spaces of the Hardy-Littlewood maximal operator, the fractional integral, and the Calder\'on-Zygmund operator. The preduals, dual spaces and complex interpolations  of these spaces were also obtained.

Immediately after \cite{ZSTYY23},
Hu, et al.  introduced  Triebel-Lizorkin-Bourgain-Morrey spaces which connect Bourgain-Morrey spaces and global Morrey spaces in \cite{HLY23}. 
The embedding relations between Triebel-Lizorkin-Bourgain-Morrey spaces and Besov-Bourgain-Morrey spaces are proved.  
Various fundamental real-variable properties of these spaces are obtained. The sharp boundedness of  the Hardy-Littlewood maximal operator, the Calder\'on-Zygmund operator, and the fractional integral on these spaces was also proved.

Inspired by the generalized grand Morrey spaces and Besov-Bourgain-Morrey spaces, Zhang et al. introduced generalized grand Besov-Bourgain-Morrey spaces in \cite{ZYZ24}. The preduals  and the Gagliardo-Peetre  interpolation theorem,  extrapolation theorem were obtained.  The boundedness of Hardy-Littlewood maximal operator, the fractional integral and the Calder\'on-Zygmund operator on these spaces was also proved. 

Recently, Zhu et al. \cite{ZYY26} introduced Besov-Bourgain-Morrey spaces on the space of homogeneous type satisfying the reverse doubling property. The boundedness of the Hardy-Littlewood maximal operator, fractional integral operators, fractional maximal operators, Calder¨®n-Zygmund operators, and commutators on these spaces was obtained.

The first author and the third author of this paper  introduced the weighted Bourgain-Morrey-Besov-Triebel-Lizorkin spaces associated with operators over a  space of homogeneous type in \cite{BX25} and  obtained two sufficient conditions for precompact sets in matrix weighted Bourgain-Morrey spaces in \cite{BX252}. The authors of this paper  introduced the weighted Bourgain-Morrey-Besov-Triebel-Lizorkin type spaces associated with operators over a  space of homogeneous type in \cite{BGX26}.

Motivated by the above literature, we will study the mixed Bourgain-Morrey spaces and their preduals from the viewpoints of harmonic analysis and functional analysis. The paper is organized as follows.
In Section \ref{preliminaries}, we give the definition of  mixed Bourgain-Morrey spaces and recall some lemmas about mixed Lebesgue spaces.
In Section \ref{sec property}, we research the properties of mixed Bourgain-Morrey spaces, such as embedding, dilation, translation, nontriviality, approximation, density.
In Section \ref{sec predual}, we introduce the mixed norm block spaces and show they are the preduals of mixed Bourgain-Morrey spaces.
In Section \ref{property block}, we consider the properties of the block spaces, such as completeness, density, Fatou property, lattice property. We also show the  duality of  mixed Bourgain-Morrey.
In Section \ref{sec operator}, we consider the boundedness of operators on (sequenced valued) mixed Bourgain-Morrey spaces and the (sequenced valued) block spaces. There operators include Hardy-Littlewood maximal operator, iterated maximal operator, fractional integral operator, singular integral operators.

Throughout this paper,  let $c, C$ denote constants that are independent of the main parameters involved but whose value may differ from line to line.
For $A,B>0$, by $A\lesssim B$, we mean that $A\leq CB$ with some positive constant $C$ independent of appropriate quantities. By $ A \approx B$, we mean that $A\lesssim B$ and $B\lesssim A$.

\section{Preliminaries} \label{preliminaries}
First we recall some notations. Let $\mathbb N  =\{1,2,\ldots\} $, $\mathbb{N}_{0}:=\mathbb{N\cup}\{0\}$ and let $\mathbb Z$ be all the integers. 
Let $\chi_{E}$ be the characteristic function of the set $E\subset\mathbb{R}^{n}$. 
Set $E^c : = \mathbb R^n \backslash E$ for a set $E \subset \rn$.
For $ 1 \le  p \le \infty$, let $p'$  be the conjugate exponent of $p$, that is $1/p +1/p' =1 $. 
For $j\in\mathbb{Z}$, $m\in\mathbb{Z}^{n}$, let $Q_{j,m}:=\prod_{i=1}^{n}[2^{-j}m_{i},2^{-j}(m_{i}+1))$.
For a cube $Q$, $\ell(Q)$ stands for the length of cube $Q$. We
denote by $\mathcal{D} : = \{Q_{j,m},  j\in\mathbb{Z}$, $m\in\mathbb{Z}^{n} \}$ the family of all dyadic cubes in $\mathbb{R}^{n}$,
while $\mathcal{D}_{j}$ is the set of all dyadic cubes with $\ell(Q)=2^{-j},j\in\mathbb{Z}$. Let $a Q , a >0$ be the  cube  concentric with $Q$, having  the side length of $a \ell (Q)$.
For any $R>0$, Let $ B(x,R) := \{y\in \rn : |x-y| <R \} $ be an open ball in $\rn $. Let $B_R : = B (0,R)$ for $R>0$.
The function $\sgn $ is defined by $\sgn (f) = f / |f|$ for $f \neq 0$ and $\sgn (0) = 0$.
Let $ L^0 := L^0 (\rn) $  be the set of  all measurable functions on $\rn$.
Let $\mathbb M ^+ : = \mathbb M ^+  (\rn)$ be the cone of all non-negative Lebesgue measurable functions.
We denote by $\operatorname{Sim} (\rn)$ the class of all simple functions 
$
f =\sum_{i=1}^N a_i  \chi_{E_i}, 
$
where $N \in \mathbb N$, $\{ a_i\} \subset \mathbb C$ and $\{ E_i \} \subset \rn  $ are pairwise disjoint.

The letters $\vec p  = (p_1, \ldots,  p_n)$ will denote $n$-tuples of the numbers in $[0,\infty]$ ($n\in \mathbb N$).
The notation $0< \vec p <\infty$ means that $0 < p_i < \infty$ for each $i \in \{1,\ldots,n\}$.	For $a\in \mathbb R$, let \begin{equation*}
	\frac{1}{\vec p }  = \left( \frac{1}{p_1}, \ldots , \frac{1}{p_n} \right),\quad  a \vec p = (ap_1, \ldots, a p_n), \quad  \vec{p}^{\,\prime} = ( p_1 ', \ldots, p_n ' ).
\end{equation*}

Then we  recall the  mixed Lebesgue spaces, which are  introduced by Benedek and Panzone in \cite{BP61}.
\begin{definition}
	Let $\vec p = (p_1, \ldots, p_n)  \in (0,\infty]^n$. Then the mixed Lebesgue spaces $L^{\vec p} : = L^{\vec p} (\rn)$ is the set of all measurable functions $f :\rn \to \mathbb C$ such that
	\begin{equation*}
		\| f \|_{L^{\vec p} } : = \left(\int_\R \cdots  \left( \int_\R  |f(x_1, \ldots, x_n) |^{p_1 }\d x_1  \right)^{  \frac{p_2}{p_1} }   \cdots \d x_n  \right) ^{\frac{1 }{p_n}} <\infty ,
	\end{equation*}
	with appropriate modifications when  $p_j =\infty$.
	We also define $\| f\|_{L^{\vec p} (E) } := \| f \chi_E \|_{L^{\vec p}} $ for a set $E \subset \rn$.

\end{definition}

Note that the order in which the norms are taken is fundamental because in general (\cite[page 302]{BP61})
\begin{equation*}
	\left(\int_\R \left( \int_\R  |f(x_1, x_2) |^{p_1 }\d x_1  \right)^{  \frac{p_2}{p_1} }    \d x_2  \right) ^{\frac{1 }{p_2}} \neq \left(\int_\R \left( \int_\R  |f(x_1, x_2) |^{p_2 }\d x_1  \right)^{  \frac{p_1}{p_2} }    \d x_2  \right) ^{\frac{1 }{p_1}} .
\end{equation*}

In \cite{N19}, Nogayama introduced  mixed Morrey spaces.

\begin{definition}
	Let $\vec p = (p_1, \ldots, p_n)  \in (0,\infty]^n$ and $t\ \in (0,\infty] $ satisfy $\sum_{i=1 } ^n 1/p_i  \ge n/t.$
	The mixed Morrey norm $\| \cdot \|_{ M_{\vec p} ^t   } $  is defined by 
	\begin{equation*}
		\|f\|_{  M_{\vec p} ^t   } : = \sup_{\operatorname{cube} \; Q \subset \rn } |Q|^{ \frac{1}{t}  - \frac{1}{n} \sum_{i=1 } ^n \frac{1}{p_i}  }  \| f\chi_Q \|_{L^{\vec p}  } ,
	\end{equation*}
	where $f \in L^0 $. The mixed Morrey space $ M_{\vec p} ^t  $ is the set of all measurable functions $f$ with finite norm $ \|f\|_{  M_{\vec p} ^t  }$.
\end{definition}

Inspired by the Bourgain-Morrey space and the mixed Morrey space,
we introduce the mixed Bourgain-Morrey space. 
\begin{definition}
	Let $\vec p = (p_1, \ldots, p_n)  \in (0,\infty]^n$ and $t\ \in (0,\infty] $ satisfy
	$\sum_{i=1 } ^n 1/p_i  \ge n/t.$
	Let $ t \le r \le \infty$. Then the mixed Bourgain-Morrey space $ M_{\vec p} ^{t,r}:=  M_{\vec p} ^{t,r}  (\rn)  $ is the set of all measurable functions $f$ such that
	\begin{equation*}
		\|f\|_{  M_{\vec p} ^{t,r}  } : = \left(\sum_{Q \in \D} |Q|^{ \frac{r}{t}  - \frac{r}{n} \sum_{i=1 } ^n \frac{1}{p_i}  }  \| f\chi_Q \|_{L^{\vec p}  } ^r  \right)^{1/r}<\infty.
	\end{equation*}
\end{definition}

\begin{remark}
	(i) If $ \vec p = (p,p,\ldots, p) $, then $  M_{\vec p} ^{t,r}   $  is the Bourgain-Morrey spaces, which can be found in \cite{HNSH23, M16}.
	
	(ii) If $r =\infty$,  then $  M_{\vec p} ^{t,r}   $   becomes the mixed Morrey space.
	
\end{remark}

We list some properties for $L^{\vec p} $, which will be used frequently.
The first property  is the Fatou's property for $L^{\vec p}$; for example, see \cite[Proposition 2.2]{N19}.
\begin{lemma} \label{lem fatou}
	Let $ 0 < \vec p \le \infty$. Let $\{ f_k\}_{k=1}^\infty$ be a 	sequence of non-negative measurable functions on $\rn$. Then 
	$		\left\|  \liminf _{k\to \infty} f_k \right\|_{L^{\vec p}}  \le \liminf _{k\to \infty}
	\left\|   f_k \right\|_{L^{\vec p}} . $
\end{lemma}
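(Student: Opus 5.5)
The plan is to prove Lemma \ref{lem fatou} by induction on the number $n$ of variables, using the classical one-dimensional Fatou lemma at each stage. We may assume the right-hand side is finite; otherwise there is nothing to prove.

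For one variable and $0<p<\infty$, the ordinary Fatou lemma applied to $f_k^{p}$ gives
\begin{equation*}
\int_{\R} \left(\liminf_{k\to\infty} f_k\right)^{p} \d x \le \liminf_{k\to\infty}\int_{\R} f_k^{p}\,\d x,
\end{equation*}
since $t\mapsto t^{p}$ is continuous and increasing, so $(\liminf_k f_k)^p = \liminf_k f_k^p$. For $p=\infty$, suppose $\liminf_k \|f_k\|_{L^\infty}=\lambda$. Choose a subsequence $k_j$ with $\|f_{k_j}\|_{L^\infty}\to\lambda$. Off the null set $\bigcup_j\{f_{k_j}>\|f_{k_j}\|_{L^\infty}\}$ we have
\begin{equation*}
\liminf_{k} f_k \le \liminf_{j} f_{k_j} \le \lambda ,
\end{equation*}
so $\|\liminf_k f_k\|_{L^\infty}\le\lambda$. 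This settles $n=1$.

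For the inductive step, fix $x_n$ and write $F_k(x_n):=\|f_k(\cdot,x_n)\|_{L^{(p_1,\ldots,p_{n-1})}(\R^{n-1})}$. By Tonelli's theorem each $F_k$ is measurable. By the induction hypothesis applied for each fixed $x_n$,
\begin{equation*}
\left\|\left(\liminf_{k\to\infty} f_k\right)(\cdot,x_n)\right\|_{L^{(p_1,\ldots,p_{n-1})}} \le \liminf_{k\to\infty} F_k(x_n).
\end{equation*}
Here we use that fixing $x_n$ commutes with taking the pointwise $\liminf$. Next we take the $L^{p_n}(\R)$ norm of both sides. The mixed norm is monotone, and the one-dimensional case applied to the non-negative functions $F_k$ gives
\begin{equation*}
\left\|\liminf_{k\to\infty} F_k\right\|_{L^{p_n}} \le \liminf_{k\to\infty}\|F_k\|_{L^{p_n}} = \liminf_{k\to\infty}\|f_k\|_{L^{\vec p}}.
\end{equation*}
Chaining the two inequalities yields the claim.

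The only delicate point is measurability: the inner partial norms must be measurable in the remaining variables, including when some $p_i=\infty$. For finite exponents this follows from Tonelli's theorem. For an essential supremum in one variable, measurability in the remaining variables is less immediate. I would handle it by noting that for non-negative measurable $g$,
\begin{equation*}
\|g\|_{L^\infty}=\lim_{m\to\infty}\|g\,\chi_{B_m}\|_{L^{m}} ,
\end{equation*}
where $B_m=[-m,m]$ is an interval in the one-variable setting, not the ball of $\rn$ from the notation above. This expresses the essential supremum as a pointwise limit of measurable functions, which is again measurable. This is standard in \cite{BP61}, so I would cite it rather than redo it.
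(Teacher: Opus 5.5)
Your proof is correct. The paper gives no argument of its own for this lemma and simply cites \cite[Proposition 2.2]{N19}. Your induction, which applies the one-dimensional Fatou lemma together with monotonicity of the outer norms and handles the $p_i=\infty$ measurability via $\|g\|_{L^\infty}=\lim_{m}\|g\chi_{[-m,m]}\|_{L^m}$, is the standard proof of that fact.

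One minor adjustment is needed. The induction hypothesis should be applied for almost every $x_n$, not for each fixed $x_n$. Sections of Lebesgue measurable functions are only guaranteed to be measurable for a.e.\ $x_n$, and that is all the subsequent $L^{p_n}$ step requires.
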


\begin{remark}
	Let $f,g \in L^0 $.
	It is easy to verify that the mixed Bourgain-Morrey space have the lattice property,  that is, $|g| \le |f| $ almost everywhere implies that $\| g\|_{ M_{\vec p} ^{t,r}  }  \le\| f\|_{ M_{\vec p} ^{t,r}   }  $. By the dominated converge theorem, $0 \le f_k \uparrow f$ almost everywhere as $k \to \infty$ implies that $ \|f_k\|_{  M_{\vec p} ^{t,r}  }  \uparrow \| f\|_{  M_{\vec p} ^{t,r}   }$ as $k \to \infty$ for $ n / ( \sum_{i=1}^n  1/p_{i})  \le  t \le r < \infty$.
	From \cite[Theorem 2]{CFMN21}, we obtain $M_{\vec p} ^{t,r}  $ is complete for $ n / ( \sum_{i=1}^n  1/p_{i})  \le  t \le r < \infty$.
	
	When $r=\infty$,  $  M_{\vec p} ^{t,\infty}   $  are the mixed Morrey spaces and  complete.
\end{remark}

The second property is the H\"older inequality, coming from,  for example, \cite[Section 2.1]{N19}.
\begin{lemma} \label{Holder mixed}
	Let $1 <\vec p ,\vec q <\infty$ and define $\vec r$ by $ 1/ \vec p + 1 / \vec q = 1/ \vec r$. If $f \in L^{\vec p} $  and  $g \in L^{\vec q} $, then $fg \in L^{\vec r} $  and
	\begin{equation*}
		\|fg\|_{ L^{\vec r} }  \le 	\|f\|_{ L^{\vec p}  }  	\|g\|_{ L^{\vec q}  } .
	\end{equation*}
\end{lemma}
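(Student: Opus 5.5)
The plan is to prove the mixed H\"older inequality by induction on the number of variables, applying the classical one-dimensional H\"older inequality in one coordinate at a time, from the innermost integral outward. First I reduce to the scalar facts. For each $i$ we have $1/p_i+1/q_i=1/r_i$ with $1<p_i,q_i<\infty$, hence $0<r_i<\infty$ and $r_i<\min(p_i,q_i)$. The one-dimensional H\"older inequality in the form
\begin{equation*}
\|uv\|_{L^{r_i}(\R)}\le \|u\|_{L^{p_i}(\R)}\|v\|_{L^{q_i}(\R)}
\end{equation*}
holds for $u,v\ge 0$ measurable. It follows by applying the usual H\"older inequality to $u^{r_i}v^{r_i}$ with the conjugate pair $p_i/r_i$ and $q_i/r_i$, whose reciprocals sum to $1$. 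This works even when $r_i<1$.

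Next comes the iteration. Set $F_0=|f|$, $G_0=|g|$, $H_0=|fg|=F_0G_0$. For $k=1,\dots,n$ define the partial norms
\begin{equation*}
F_k(x_{k+1},\dots,x_n)=\left(\int_\R F_{k-1}^{p_k}\,\d x_k\right)^{1/p_k},
\end{equation*}
and define $G_k$ with $q_k$ and $H_k$ with $r_k$ in the same way. These are measurable functions of the remaining variables by Tonelli's theorem, with values in $[0,\infty]$. I claim that $H_k\le F_kG_k$ pointwise in $(x_{k+1},\dots,x_n)$. The case $k=0$ is an equality. If $H_{k-1}\le F_{k-1}G_{k-1}$, then for fixed outer variables, monotonicity of the integral and the one-dimensional inequality in $x_k$ give
\begin{equation*}
H_k\le \|F_{k-1}G_{k-1}\|_{L^{r_k}(\d x_k)}\le F_kG_k.
\end{equation*}
At $k=n$ this reads $\|fg\|_{L^{\vec r}}=H_n\le F_nG_n=\|f\|_{L^{\vec p}}\|g\|_{L^{\vec q}}<\infty$. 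So $fg\in L^{\vec r}$ and the inequality holds.

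The only delicate point is bookkeeping with infinite values and null sets. The intermediate $F_k$ may equal $\infty$ on a set of outer variables. Since the final norms are finite, such sets are null at each stage, and in any case the inequality $H_k\le F_kG_k$ holds in $[0,\infty]$ with the convention $0\cdot\infty=0$. That convention is harmless, because if $F_{k-1}(\cdot)=0$ a.e.\ in $x_k$ then $H_{k-1}=0$ a.e.\ as well. Measurability of each partial norm follows from Tonelli's theorem. I expect no real obstacle beyond this.
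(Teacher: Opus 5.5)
Your argument is correct and is the standard one. You apply the scalar H\"older inequality one coordinate at a time, from the innermost integral outward, which gives $H_k\le F_kG_k$ and hence the claim; using the exponents $p_i/r_i$ and $q_i/r_i$ correctly covers the case $r_i<1$ that the hypotheses allow. The paper gives no proof of its own and only cites Section~2.1 of Nogayama's paper on mixed Morrey spaces, where the inequality rests on this same coordinatewise iteration.
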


The third property is duality of $L^{\vec p}$.

\begin{lemma}[Theorem 1, \cite{BP61}] \label{dual mixed Lp}
	
	{\rm (i)}
	Let $1 \le \vec p < \infty$, $1/\vec p + 1 / \vec p \, ^\prime =1 $. $J (f)$ is a continuous linear functional on the normed space $L^{\vec p}$  if and only if it can be represented by 
	\begin{equation*}
		J (f) = \int_\rn h(x) f(x) \d x
	\end{equation*}
	where $h$ is a uniquely determined function of $ L^{\vec p \, ^\prime}$, and $ \| J\| = \|h\|_{ L^{\vec p \, ^\prime}}$.
	
	{\rm (ii)} Let $1 \le \vec p < \infty$. The normed space $L^{\vec p}$ is a Banach space and every sequence converging
	in $L^{\vec p}$ contains a subsequence convergent almost everywhere to the limit 	function.
\end{lemma}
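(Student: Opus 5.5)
Part (ii) is the easier half, and I would prove it first by a Riesz--Fischer argument. Since $1\le p_i$ for every $i$, iterating the one-dimensional Minkowski inequality shows that $\|\cdot\|_{L^{\vec p}}$ satisfies the triangle inequality, and also the countable version $\|\sum_k |g_k|\|_{L^{\vec p}}\le \sum_k\|g_k\|_{L^{\vec p}}$ via Lemma \ref{lem fatou}. Given a Cauchy sequence $\{f_k\}$, pass to a subsequence with $\|f_{k+1}-f_k\|_{L^{\vec p}}<2^{-k}$ and set $g=|f_1|+\sum_k|f_{k+1}-f_k|$. Then $\|g\|_{L^{\vec p}}<\infty$, and a function of finite mixed norm is finite a.e. To see this, peel off one variable at a time: the inner $L^{p_1}$ norm is finite for a.e. $(x_2,\dots,x_n)$, and Fubini applies. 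Hence the telescoping series converges a.e. to some $f$ with $|f|\le g$. Applying Lemma \ref{lem fatou} to $|f-f_k|=\liminf_j|f_j-f_k|$ gives $\|f-f_k\|_{L^{\vec p}}\le\sup_{j\ge k}\|f_j-f_k\|_{L^{\vec p}}\to0$. So the space is complete, and the chosen subsequence converges a.e.

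For the sufficiency direction of (i), I would take $h\in L^{\vec p\,'}$ and apply the one-dimensional H\"older inequality successively in $x_1,x_2,\dots,x_n$. This is the endpoint-inclusive version of Lemma \ref{Holder mixed}, and it gives $|J(f)|\le\|h\|_{L^{\vec p\,'}}\|f\|_{L^{\vec p}}$. For the reverse inequality $\|J\|\ge\|h\|_{L^{\vec p\,'}}$, I would write the partial norms $H_k(x_{k+1},\dots,x_n)$, namely the $L^{(p_1',\dots,p_k')}$ norm of $h$ in the first $k$ variables. The candidate extremiser is
\[
f=\overline{\sgn(h)}\,|h|^{p_1'-1}\prod_{k=1}^{n-1}H_k^{\,p_{k+1}'-p_k'},
\]
suitably normalised, for which every H\"older step is an equality. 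When some $p_i=1$, so that $p_i'=\infty$, the exponent formula breaks down. In that case I would instead use near-extremisers, concentrating $f$ on the set where the relevant partial norm exceeds $(1-\varepsilon)$ times its essential supremum. To keep the bookkeeping manageable, I would organise this as an induction on $n$, viewing $L^{\vec p}=L^{p_n}(\R;X)$ with $X=L^{(p_1,\dots,p_{n-1})}(\R^{n-1})$.

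For necessity, fix $J\in(L^{\vec p})^*$ and a ball $B_N$. The set function $\nu(E)=J(\chi_E)$, for measurable $E\subset B_N$, is countably additive and absolutely continuous with respect to Lebesgue measure. This holds because $\|\chi_E\|_{L^{\vec p}}\to0$ as $|E|\to0$ inside $B_N$, by dominated convergence in each variable, and this is where $\vec p<\infty$ is used. Radon--Nikodym then gives a locally integrable $h$ with $J(f)=\int hf$ for simple $f$ supported in bounded sets. Since $\vec p<\infty$, such functions are dense, so the representation extends to all of $L^{\vec p}$. To show $h\in L^{\vec p\,'}$, I would apply the norming step above to the truncations $h_N=h\chi_{B_N\cap\{|h|\le N\}}$. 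This yields $\|h_N\|_{L^{\vec p\,'}}\le\|J\|$, and letting $N\to\infty$ with Lemma \ref{lem fatou} gives $\|h\|_{L^{\vec p\,'}}\le\|J\|$. Uniqueness follows by testing against $\chi_E$ for bounded $E$.

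I expect the main obstacle to be the norming step, that is, building exact or near extremisers uniformly across mixed cases where some $p_i=1$ and others do not. The delicate points are the measurability of the partial norms $H_k$ and the fact that the extremiser must lie in $L^{\vec p}$. I would first try the vector-valued induction, which reduces everything to the scalar $L^{p}$ and $L^{p'}(X^*)$ duality at each step.
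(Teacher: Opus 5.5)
The paper does not prove this lemma. It is quoted from Benedek--Panzone \cite{BP61}, so there is no in-paper argument to compare against. Your plan is the classical $L^p$ argument carried over to mixed norms: Riesz--Fischer with Lemma \ref{lem fatou} for (ii), and Radon--Nikodym plus a norming function for (i). Your exact extremiser $\overline{\sgn h}\,|h|^{p_1'-1}\prod_k H_k^{p_{k+1}'-p_k'}$ is precisely the function the paper itself uses later to build the blocks $g_{j,k}$ in the proof of Theorem \ref{predual mix BM}. Part (ii) is fine as written, and so is the H\"older bound $|J(f)|\le\|h\|_{L^{\vec p^{\,\prime}}}\|f\|_{L^{\vec p}}$.

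The necessity half needs two repairs.

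\emph{The steps are in circular order.} Density of bounded-support simple functions does not let you extend $J(f)=\int hf$ to all of $L^{\vec p}$ before you know $h\in L^{\vec p^{\,\prime}}$. The convergence $\int hs_k\to\int hf$ is exactly what is not yet available. Yet your norming step then evaluates $J$ on an extremiser for $h_N$ that is in general unbounded: negative powers of $H_k$, or $1/|A_k(\cdot)|$ below, blow up where $H_k$ or a slice is small. The fix is to reorder:
\begin{itemize}
\item First get $J(f)=\int hf$ for bounded $f$ with bounded support, by uniform approximation with simple functions and $h\in L^1_{\rm loc}$.
\item Then get it for every $f\in L^{\vec p}$ supported in $B_N\cap\{|h|\le N\}$, by truncating to $f\chi_{\{|f|\le M\}}$ and using $hf\in L^1$.
\item Only after the norming step and Lemma \ref{lem fatou} give $h\in L^{\vec p^{\,\prime}}$, extend to all of $L^{\vec p}$ by density and H\"older.
\end{itemize}

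\emph{Do not rely on the Bochner induction when some $p_i=1$.} In that case $X^*$ contains $L^\infty$ and fails the Radon--Nikodym property, so $(L^{p_n}(\R;X))^*$ is not $L^{p_n'}(\R;X^*)$. Worse, $x_n\mapsto h(\cdot,x_n)$ need not even be strongly measurable: take $n=2$, $p_1=1$, $h=\chi_{\{0\le x_1<x_2\le 1\}}$.

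The direct construction does work, and it removes your measurability worry. Put $H_0=|h_N|$. For each $k$, set
\begin{itemize}
\item $\phi_k=(H_{k-1}/H_k)^{p_k'-1}$ if $p_k>1$;
\item $\phi_k=\chi_{A_k}/|A_k(x_{k+1},\dots,x_n)|$ with $A_k=\{H_{k-1}>(1-\varepsilon)H_k\}$ if $p_k=1$;
\end{itemize}
and set $\phi_k=0$ where $H_k=0$ in both cases. The $H_k$ are measurable by Tonelli, or as limits of $L^q$-norms over the bounded support (as $q\to\infty$) when $p_k'=\infty$. The slice lengths $|A_k(\cdot)|$ are measurable, finite, and positive wherever $H_k>0$. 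Peeling off one variable at a time, $f=\overline{\sgn h_N}\prod_{k=1}^n\phi_k$ satisfies $\|f\|_{L^{\vec p}}\le 1$ and $\int h_N f\ge(1-\varepsilon)^n\|h_N\|_{L^{\vec p^{\,\prime}}}$. Letting $N\to\infty$ also gives $\|J\|\ge\|h\|_{L^{\vec p^{\,\prime}}}$ in the sufficiency half.
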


Denote by $\M$  the Hardy-Littlewood maximal function of $f$:
\begin{equation*} 
	\M f (x) = \sup_{B \ni x } \frac{1}{ |B| } \int_B | f (y) | \d y
\end{equation*}
where the sup is taken over all balls $B$ containing $x \in\rn$. For $\eta \in (0,\infty)$, set $\M _\eta (f) = \M (|f|^\eta)^{1/\eta}$.
Finally, we recall the Muckenhoupt's class $A_p$ and it will be used in Section \ref{sec operator}.
\begin{definition}
	Let $1<p<\infty $. We say that a weight $\omega$ belongs to class $A_p := A_p (\rn)$ if
	\begin{equation*}
		[\omega]_{A_p} := \sup_{B\;  \mathrm{ balls \; in  } \;\rn} \Big(   \frac{1}{|B|} \int_B \omega(x) \mathrm {d} x    \Big) ^{1/p} \Big(   \frac{1}{|B|} \int_B \omega(x) ^ { -1/(p-1) } \mathrm {d} x  \Big) ^{ (p-1 ) /p} <\infty.
	\end{equation*}
	We call a weight $\omega$   an $A_1$ weight if
	\begin{equation*}
		\mathcal M (w) (x) \le C w(x)
	\end{equation*}
	for almost all $x \in X$ and some $C>0$.
	Set $A_\infty = \cup_{p\ge 1} A_p$.
\end{definition}

\section{Properties of mixed Bourgain-Morrey spaces} \label{sec property}
In this section, we discuss some properties and examples of mixed Bourgain-Morrey spaces.

\subsection{Fundamental properties}
In what follows, the symbol $\hookrightarrow$ stands for continuous embedding. That is, for quasi-Banach spaces $X$ and $Y$, the notation $X\hookrightarrow Y$ means that $X \subset Y$ and that the canonical injection from $X$ to $Y$ is continuous.
\begin{lemma} \label{embed ell r}
	Let $ 0 < \vec p \le \infty $.  Let $ 0 < n / ( \sum_{i=1}^n  1/p_{i}) < t \le r_1 \le  r_2 \le  \infty $.   Then
	\begin{equation*}
		M_{\vec p} ^{t,r_1}    \hookrightarrow  M_{\vec p} ^{t,r_2}   .
	\end{equation*}
\end{lemma}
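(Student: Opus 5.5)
The embedding is nothing more than the monotonicity of the sequence spaces $\ell^r$ in $r$. Fix $f \in M_{\vec p}^{t,r_1}$ and, for each dyadic cube $Q \in \D$, set
\begin{equation*}
a_Q := |Q|^{\frac{1}{t} - \frac{1}{n}\sum_{i=1}^n \frac{1}{p_i}} \, \| f \chi_Q \|_{L^{\vec p}} \in [0,\infty].
\end{equation*}
By definition, $\|f\|_{M_{\vec p}^{t,r}} = \|\{a_Q\}_{Q\in\D}\|_{\ell^r(\D)}$ for every admissible $r$, including $r=\infty$. In that case the $\ell^\infty$ norm is the supremum over dyadic cubes only, which is the natural reading of the definition with $r=\infty$.

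The key step is the elementary inequality $\|\{a_Q\}\|_{\ell^{r_2}} \le \|\{a_Q\}\|_{\ell^{r_1}}$ for $0 < r_1 \le r_2 \le \infty$. To prove it, first note that each single term satisfies $a_{Q_0} \le \|\{a_Q\}\|_{\ell^{r_1}}$. This already settles the case $r_2=\infty$. For $r_2 < \infty$, write $A := \|\{a_Q\}\|_{\ell^{r_1}}$ and assume $0 < A < \infty$, since the cases $A=0$ and $A=\infty$ are trivial. Then $a_Q/A \le 1$, so
\begin{equation*}
\sum_{Q} (a_Q/A)^{r_2} \le \sum_Q (a_Q/A)^{r_1} = 1,
\end{equation*}
which gives $\|\{a_Q\}\|_{\ell^{r_2}} \le A$. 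Hence $\|f\|_{M_{\vec p}^{t,r_2}} \le \|f\|_{M_{\vec p}^{t,r_1}}$. This yields both the set inclusion $M_{\vec p}^{t,r_1}\subset M_{\vec p}^{t,r_2}$ and the continuity of the identity map with constant $1$. Continuity follows because the identity is linear and the quasi-norms are homogeneous: apply the inequality to $f-g$.

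There is no real obstacle. The only point needing care is that the hypothesis $t \le r_1$ guarantees both spaces are defined according to the Definition. The condition $n/(\sum 1/p_i) < t$ is not used in the argument itself; it only ensures the spaces are nontrivial. The summation index set $\D$ is countable, so the $\ell^r$ manipulations above are legitimate.
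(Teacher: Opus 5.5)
Your proposal is correct and follows the same route as the paper, which simply invokes $\ell^{r_1}\hookrightarrow\ell^{r_2}$ applied to the sequence $\bigl\{|Q|^{\frac1t-\frac1n\sum_{i=1}^n\frac1{p_i}}\|f\chi_Q\|_{L^{\vec p}}\bigr\}_{Q\in\D}$. You have just supplied the elementary proof of that sequence-space embedding, which gives the embedding with constant $1$.
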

\begin{proof}
	The inclusion follows immediately from $ \ell^{r_1}  \hookrightarrow  \ell^{r_2}$.
\end{proof}

\begin{proposition} \label{pro embedding}
	Let $ 0 < \vec p  \le \vec s \le \infty $.  Let $ 0< t \le r \le  \infty $ such that $ n/t \le  \sum_{ i=1 }^n 1/ s_i $.   Then
	\begin{equation*}
		M_{\vec s} ^{t,r}    \hookrightarrow  M_{\vec p} ^{t,r}  .
	\end{equation*}
\end{proposition}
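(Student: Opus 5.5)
The plan is to compare the two norms cube by cube. The key step is a local Hölder-type inequality for mixed Lebesgue norms on a cube: for every cube $Q$ with side length $\ell=\ell(Q)$ and every $f\in L^0$,
\begin{equation*}
	\|f\chi_Q\|_{L^{\vec p}} \le |Q|^{\frac1n\sum_{i=1}^n\left(\frac1{p_i}-\frac1{s_i}\right)}\,\|f\chi_Q\|_{L^{\vec s}} .
\end{equation*}
Once this is available, multiplying by $|Q|^{\frac1t-\frac1n\sum_i\frac1{p_i}}$ gives
\begin{equation*}
	|Q|^{\frac1t-\frac1n\sum_i\frac1{p_i}}\|f\chi_Q\|_{L^{\vec p}}\le |Q|^{\frac1t-\frac1n\sum_i\frac1{s_i}}\|f\chi_Q\|_{L^{\vec s}} .
\end{equation*}
Raising to the power $r$ and summing over $Q\in\D$ (or taking the supremum if $r=\infty$) yields $\|f\|_{M^{t,r}_{\vec p}}\le\|f\|_{M^{t,r}_{\vec s}}$, with constant $1$.

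Note that $n/t\le\sum_i 1/s_i\le\sum_i 1/p_i$, since $\vec p\le\vec s$. So the hypothesis needed for $M^{t,r}_{\vec p}$ to be defined holds automatically. This gives both the inclusion and the continuity of the embedding.

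To prove the local inequality, I will use the product structure $Q=I_1\times\cdots\times I_n$, where each $I_i$ is an interval of length $\ell$. I will peel off one variable at a time, starting from the innermost integral. For fixed $(x_2,\dots,x_n)$, the one-dimensional Hölder inequality on $I_1$ with exponents $s_1/p_1$ and its conjugate gives
\begin{equation*}
	\Big(\int_{I_1}|f|^{p_1}\d x_1\Big)^{1/p_1}\le \ell^{\frac1{p_1}-\frac1{s_1}}\Big(\int_{I_1}|f|^{s_1}\d x_1\Big)^{1/s_1}.
\end{equation*}
When $s_1=\infty$ this is just the trivial bound by the supremum, and when $p_1=s_1$ it is an equality.

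Next I take the $L^{p_2}(I_2)$ norm in $x_2$ of both sides. Monotonicity of the norm preserves the inequality, and the same Hölder step in $x_2$ produces a factor $\ell^{1/p_2-1/s_2}$. Iterating through $x_n$ gives the total factor
\begin{equation*}
	\ell^{\sum_i(1/p_i-1/s_i)}=|Q|^{\frac1n\sum_i(1/p_i-1/s_i)} .
\end{equation*}
Since $f\chi_Q$ vanishes off $Q$, all integrals can be restricted to the intervals $I_i$.

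I expect the only point needing care to be this iteration. The Hölder step uses exponent $s_i/p_i\ge1$, which is valid for all $0<p_i\le s_i\le\infty$ even in the quasi-norm range, because it is applied to $|f|^{p_i}$. The bookkeeping for cases with $p_i$ or $s_i$ equal to $\infty$ will be handled by the usual ess sup modifications. Measurability of the intermediate partial norms is standard, as in the definition of $L^{\vec p}$.
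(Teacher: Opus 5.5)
Your proposal is correct and follows the paper's argument: compare the two norms cube by cube using the local inequality $\|f\chi_Q\|_{L^{\vec p}}\le \ell(Q)^{\sum_{i=1}^n(1/p_i-1/s_i)}\|f\chi_Q\|_{L^{\vec s}}$, then sum the $r$-th powers (or take the supremum when $r=\infty$), which gives constant $1$. The only difference is that the paper quotes this local inequality, and the case $r=\infty$, from Nogayama's Proposition 3.2, whereas you derive the local inequality directly by iterating the one-dimensional H\"older inequality variable by variable.
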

\begin{proof}
	Then case $r =\infty$  was proved in  \cite[Proposition 3.2]{N19}. Now let $0<  t \le r <\infty$.
	It suffices to show 
	\begin{equation} \label{s p embed}
		\sum_{Q \in \D} |Q|^{ \frac{r}{t}  - \frac{r}{n} \sum_{i=1 } ^n \frac{1}{p_i}  }  \| f\chi_Q \|_{L^{\vec p}  } ^r   \le 	\sum_{Q \in \D} |Q|^{ \frac{r}{t}  - \frac{r}{n} \sum_{i=1 } ^n \frac{1}{s_i}   }  \| f\chi_Q \|_{L^{\vec s}  } ^r  .
	\end{equation}
	From \cite[Proposition 3.2]{N19}, we have
	\begin{equation*}
		\| f\chi_Q \|_{L^{\vec p}  }  \le \ell (Q) ^{ \left(\sum_{i=1}^n \frac{1}{ p_i}   \right)  -  \left(\sum_{i=1}^n \frac{1}{ s_i}   \right) } \| f\chi_Q \|_{L^{\vec s}  } .
	\end{equation*}
	Put this estimate into the left side of (\ref{s p embed}) and we obtain  the desired result (\ref{s p embed}).
\end{proof}

We also have the dilation property for $ M_{\vec p} ^{t,r}  $. 
\begin{lemma}
	Let $0 < \vec p \le \infty$. Let $0 < n / ( \sum_{i=1}^n  1/p_{i})    \le t < r \le \infty $. Then for $f \in M_{\vec p} ^{t,r}  $,  and $ a >0 $, 
	\begin{equation*}
		\| f(a \cdot )  \|_{ M_{\vec p} ^{t,r}   }  \approx a^{- n/t} \| f  \|_{ M_{\vec p} ^{t,r}   } .
	\end{equation*}
\end{lemma}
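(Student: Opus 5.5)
Write $\lambda := \frac{1}{t} - \frac{1}{n}\sum_{i=1}^n \frac{1}{p_i} \le 0$. The plan is to compare the dyadic sum defining $\| f(a\cdot)\|_{M_{\vec p}^{t,r}}$ with the one for $f$ after the change of variables $x \mapsto ax$, first for $a = 2^{k}$ with $k\in\mathbb Z$ (where dyadic cubes go to dyadic cubes exactly), and then for general $a$ by reducing to a dyadic scale and controlling a bounded distortion. The case $r=\infty$ is the mixed Morrey case; it is already known from \cite{N19}, and is in any case handled by the same computation with sums replaced by suprema. So I take $r<\infty$.

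\textbf{Step 1: scaling of $L^{\vec p}$ on a cube.} For any cube $Q$ and $a>0$, the substitution $y_i = a x_i$ in each iterated integral gives
\begin{equation*}
\| f(a\cdot)\chi_Q\|_{L^{\vec p}} = a^{-\sum_{i=1}^n 1/p_i}\, \| f\chi_{aQ^*}\|_{L^{\vec p}},
\end{equation*}
where $aQ^* := \{ax : x\in Q\}$ is the dilated cube, with $|aQ^*| = a^n|Q|$. Multiplying by $|Q|^{\lambda} = a^{-n\lambda}|aQ^*|^{\lambda}$ gives
\begin{equation*}
|Q|^{\lambda}\| f(a\cdot)\chi_Q\|_{L^{\vec p}} = a^{-n\lambda - \sum_i 1/p_i}\,|aQ^*|^{\lambda}\|f\chi_{aQ^*}\|_{L^{\vec p}} = a^{-n/t}\,|aQ^*|^{\lambda}\|f\chi_{aQ^*}\|_{L^{\vec p}}.
\end{equation*}

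\textbf{Step 2: dyadic $a$.} If $a = 2^k$, the map $Q\mapsto aQ^*$ is a bijection of $\D$ onto itself. Raising the identity of Step 1 to the power $r$ and summing over $Q\in\D$ gives the exact equality $\|f(2^k\cdot)\|_{M_{\vec p}^{t,r}} = 2^{-kn/t}\|f\|_{M_{\vec p}^{t,r}}$.

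\textbf{Step 3: general $a$.} Write $a = 2^k b$ with $b\in[1,2)$. By Step 2 it suffices to show $\|g(b\cdot)\|_{M_{\vec p}^{t,r}} \approx \|g\|_{M_{\vec p}^{t,r}}$ uniformly in $b\in[1,2)$. By Step 1 this amounts to comparing $\sum_{Q\in\D}\big(|bQ^*|^\lambda\|g\chi_{bQ^*}\|_{L^{\vec p}}\big)^r$ with the dyadic sum. Each $bQ^*$ has side $b\ell(Q)\in[\ell(Q),2\ell(Q))$, so it is covered by at most $3^n$ dyadic cubes $R$ with $\ell(R)=\ell(Q)$. Since $|bQ^*|\approx|R|$ and the $L^{\vec p}$ quasi-norm is quasi-subadditive with constants depending only on $\vec p$, we get
\begin{equation*}
\big(|bQ^*|^\lambda\|g\chi_{bQ^*}\|_{L^{\vec p}}\big)^r \lesssim \sum_R \big(|R|^\lambda\|g\chi_R\|_{L^{\vec p}}\big)^r.
\end{equation*}
Each such $R$ arises from only a bounded number of $Q$ (those of the same size whose dilates meet $R$), so summing over $Q$ gives $\lesssim \|g\|^r$. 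The reverse inequality follows by applying the same argument to $g(b\cdot)$ with dilation $b^{-1}\in(1/2,1]$, or equivalently $2b^{-1}\in(1,2]$ combined with Step 2.

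The main obstacle is this bounded-overlap bookkeeping in Step 3, in particular keeping the constants uniform in $b$ and handling quasi-norms when some $p_i<1$. I would settle both by using the quasi-triangle inequality of $L^{\vec p}$ with at most $3^n$ terms, which gives a fixed constant depending only on $n$ and $\vec p$.
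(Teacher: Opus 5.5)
Your proof is correct. It follows the standard argument the paper defers to (Nogayama's identity (6) for $r=\infty$, and Lemma 2.4 of Hatano--Nogayama--Sawano--Hakim for $\vec p=(p,\dots,p)$), adapted to mixed norms through the coordinatewise scaling identity $\|f(a\cdot)\chi_Q\|_{L^{\vec p}}=a^{-\sum_i 1/p_i}\|f\chi_{aQ^*}\|_{L^{\vec p}}$: the equality is exact for $a=2^k$ because dyadic cubes are permuted, and for $a=2^kb$ with $b\in[1,2)$ a bounded-overlap comparison with constants uniform in $b$ finishes the argument, with the reverse bound obtained via $2b^{-1}\in(1,2]$.
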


\begin{proof}
	The proof is simple and we omit it here. We refer the reader to \cite[(6)]{N19} for the proof of $ M_{\vec p} ^{t,\infty}  $  and to \cite[Lemma 2.4]{HNSH23} for the proof of $ M_{ p} ^{t,r}  $.
\end{proof}

Next, consider the translation invariance of  $ M_{\vec p} ^{t,r}  $.
\begin{lemma}
	Let  $0< \vec p \le \infty  $. Let $0 < n / ( \sum_{i=1}^n  1/p_{i})    \le t < r \le \infty $. Then there exists $C_{n, \vec p, r} >0 $  such that for all $  y \in \rn $  and $f \in  M_{\vec p} ^{t,r}  $,  we have
	\begin{equation*}
		\| f( \cdot -y) \|_{ M_{\vec p} ^{t,r}   }  \le  C_{n, \vec p, r}   \| f \|_{ M_{\vec p} ^{t,r}   } .
	\end{equation*}
\end{lemma}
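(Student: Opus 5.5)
Since $\|f(\cdot-y)\chi_Q\|_{L^{\vec p}}=\|f\chi_{Q-y}\|_{L^{\vec p}}$ (the mixed norm is translation invariant), we have
\begin{equation*}
\| f( \cdot -y) \|_{ M_{\vec p} ^{t,r}   }^r=\sum_{Q\in\D}|Q|^{\frac{r}{t}-\frac{r}{n}\sum_{i=1}^n\frac{1}{p_i}}\|f\chi_{Q-y}\|_{L^{\vec p}}^r .
\end{equation*}
The task is therefore to compare sums over translated dyadic cubes with sums over genuine dyadic cubes. The key geometric fact is that each translated cube $Q-y$ with $Q\in\D_j$ is covered by at most $2^n$ cubes $R\in\D_j$ of the same side length, namely those dyadic cubes of generation $j$ meeting $Q-y$. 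Conversely, each $R\in\D_j$ meets $Q-y$ for at most $2^n$ cubes $Q\in\D_j$. Both counts are independent of $y$ and $j$.

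With this covering, set $\chi_{Q-y}\le\sum_{R\in\D_j,\,R\cap(Q-y)\neq\emptyset}\chi_R$. By the lattice property and the quasi-triangle inequality of $L^{\vec p}$, which holds with constant depending only on $\vec p$ and $n$ since we sum at most $2^n$ terms, we obtain
\begin{equation*}
\|f\chi_{Q-y}\|_{L^{\vec p}}^r\le C_{n,\vec p,r}\sum_{R\in\D_j,\,R\cap(Q-y)\neq\emptyset}\|f\chi_R\|_{L^{\vec p}}^r.
\end{equation*}
This uses $(\sum_{k=1}^{2^n}a_k)^r\le 2^{n\max(r-1,0)}\sum a_k^r$ together with the quasi-triangle inequality $\|\sum_k g_k\|_{L^{\vec p}}\le C\sum_k\|g_k\|_{L^{\vec p}}$, which holds for finitely many terms with $C$ depending on $\min(1,p_i)$ and $2^n$. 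Since $|R|=|Q|$, the weight $|Q|^{\frac{r}{t}-\frac{r}{n}\sum 1/p_i}$ transfers unchanged to $R$.

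Summing over $Q\in\D_j$ and exchanging the order of summation, each $R$ is counted at most $2^n$ times. Summing over $j$ then gives $\|f(\cdot-y)\|^r_{M^{t,r}_{\vec p}}\le C\|f\|^r_{M^{t,r}_{\vec p}}$. The case $r=\infty$ is identical with suprema in place of sums (and is in \cite{N19}).

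The only delicate point is bookkeeping the constants so that they are uniform in $y$ and $j$. The quasi-norm constant when some $p_i<1$ must be handled via $\|\cdot\|^{\underline p}$ with $\underline p=\min(1,p_1,\dots,p_n)$, which is subadditive. I expect no real obstacle beyond this.
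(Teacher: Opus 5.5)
Your proof is correct and is essentially the argument the paper intends. The paper omits the proof and defers to \cite[Lemma 2.5]{HNSH23}, which likewise covers each translated cube $Q-y$, $Q\in\D_j$, by at most $2^n$ cubes of $\D_j$ of the same size, applies the (quasi-)triangle inequality in $L^{\vec p}$, and uses the bounded overlap count to sum over $Q$; your use of $\underline p$-subadditivity for $\min p_i<1$ is what makes the constant depend only on $n,\vec p,r$.
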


\begin{proof}
	The proof is similar to \cite[Lemma 2.5]{HNSH23} and we omit it here.
\end{proof}

Using this property and the triangle inequality for $M_{\vec p} ^{t,r}  $, we obtain the following convolution inequality.
\begin{corollary} \label{convolution}
	Let $1 \le \vec p \le \infty $. Let $0 < n / ( \sum_{i=1}^n  1/p_{i})     \le t < r \le \infty $. Then  there exists $C_{n, \vec p, r} >0 $  such that
	\begin{equation*}
		\| g*f \|_{ M_{\vec p} ^{t,r}   }  \le C_{n, \vec p, r}  \|g\|_{L^1}	\| f \|_{ M_{\vec p} ^{t,r}   }  
	\end{equation*}
	for all  $f \in M_{\vec p} ^{t,r}    $  and $g \in L^1$.
\end{corollary}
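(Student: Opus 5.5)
The plan is to write the convolution as a superposition of translates of $f$ and then combine the translation lemma with the triangle inequality in $M_{\vec p}^{t,r}$. The hypothesis $1\le\vec p\le\infty$ is what makes the triangle inequality available: for $1\le \vec p$ the $L^{\vec p}$ quasi-norm is a genuine norm, and $t\le r$ with $r\ge 1$ makes the outer $\ell^r$ sum a norm as well. For $f\in M_{\vec p}^{t,r}$ and $g\in L^1$ we have
\begin{equation*}
g*f(x)=\int_{\rn} g(y) f(x-y)\,\d y ,
\end{equation*}
so by the lattice property we may assume $f,g\ge 0$, and it suffices to bound $\int_{\rn} g(y) f(\cdot-y)\,\d y$.

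First I would prove a Minkowski-type integral inequality for the norm:
\begin{equation*}
\Big\| \int_{\rn} g(y) f(\cdot-y)\,\d y \Big\|_{M_{\vec p}^{t,r}} \le \int_{\rn} g(y) \| f(\cdot-y)\|_{M_{\vec p}^{t,r}}\,\d y .
\end{equation*}
This can be done at two levels. For each fixed dyadic cube $Q$, Minkowski's integral inequality, applied iteratively in each variable since every $p_i\ge1$, gives
\begin{equation*}
\Big\|\chi_Q\int g(y)f(\cdot-y)\,\d y\Big\|_{L^{\vec p}}\le \int g(y)\|\chi_Q f(\cdot-y)\|_{L^{\vec p}}\,\d y .
\end{equation*}
Then, after multiplying by the weight $|Q|^{1/t-\frac1n\sum 1/p_i}$, Minkowski's inequality for the $\ell^r$ sum over $Q\in\D$ (valid because $r\ge 1$) moves the $\ell^r$ norm inside the $y$-integral. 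Alternatively, one can use duality, or approximate $g$ by simple functions and use the triangle inequality together with monotone convergence (the Fatou property from the Remark after Lemma~\ref{lem fatou}).

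Second, the translation lemma gives $\|f(\cdot-y)\|_{M_{\vec p}^{t,r}}\le C_{n,\vec p,r}\|f\|_{M_{\vec p}^{t,r}}$ uniformly in $y$. Substituting this yields
\begin{equation*}
\|g*f\|_{M_{\vec p}^{t,r}}\le C_{n,\vec p,r}\|g\|_{L^1}\|f\|_{M_{\vec p}^{t,r}} .
\end{equation*}

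The main obstacle is justifying the Minkowski integral inequality in the mixed-norm, $\ell^r$-summed setting. The issues are measurability of $(x,y)\mapsto g(y)f(x-y)$ and the endpoint cases $p_i=\infty$ or $r=\infty$. Measurability is standard, since the composition of a measurable function with the linear map $(x,y)\mapsto x-y$ is measurable. The cases $p_i=\infty$ and $r=\infty$ are handled directly: the essential supremum of an integral is at most the integral of the essential supremums. Finiteness a.e. of $g*f$ then follows as a consequence of the final bound.
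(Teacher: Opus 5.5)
Your proposal is correct and follows the paper's route: the paper derives the corollary in one line from the uniform translation bound together with the triangle inequality in $M_{\vec p}^{t,r}$. Your two-level Minkowski integral inequality (first in $L^{\vec p}$ for each dyadic $Q$, then in $\ell^r$ over $Q\in\D$) is the rigorous form of that triangle-inequality step; the one point worth stating explicitly is that $r\ge 1$ is automatic, since $p_i\ge 1$ gives $t\ge n/(\sum_{i=1}^n 1/p_i)\ge 1$ and hence $r>t\ge 1$.
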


Before proving Young's inequality for mixed Bourgain-Morrey spaces, we first recall Young's inequality for mixed Lebesgue spaces.

\begin{lemma} [p 319, Theorem 1, \cite{BP61}]  \label{young mixed Lp}
	Let $1 \le \vec p, \vec q, \vec s \le\infty $ satisfy the relation $1/ \vec p + 1/ \vec q = 1 + 1/\vec s $. If $f \in L^{\vec p}, g \in L^{\vec q}$, then  $ f* g \in L^{ \vec s} $  and 
	\begin{equation*}
		\| f*g \|_{ L^{ \vec s}  }  \le \| f \|_{ L^{\vec p} }   \|g\|_{ L^{\vec q} }.
	\end{equation*}
\end{lemma}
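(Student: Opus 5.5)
The plan is to argue by induction on the dimension $n$, reducing everything to the classical one-dimensional Young inequality together with Minkowski's integral inequality for the (iterated) mixed norm. By density and the lattice property of $L^{\vec p}$ (together with Lemma \ref{lem fatou}), we may assume $f,g\ge 0$. This is harmless, since $|f*g|\le |f|*|g|$ pointwise. For $n=1$ the statement is exactly the classical Young inequality $\|f*g\|_{L^s(\R)}\le \|f\|_{L^p(\R)}\|g\|_{L^q(\R)}$ with $1/p+1/q=1+1/s$, including the endpoint cases with exponents equal to $1$ or $\infty$.

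For the induction step, write $x=(x',x_n)$ with $x'\in\R^{n-1}$, and set $\vec p^{\,*}=(p_1,\ldots,p_{n-1})$, and similarly $\vec q^{\,*}$ and $\vec s^{\,*}$. By Fubini,
\begin{equation*}
f*g(x',x_n)=\int_\R \big(f(\cdot,x_n-y_n)*_{n-1} g(\cdot,y_n)\big)(x')\,\d y_n ,
\end{equation*}
where $*_{n-1}$ denotes convolution on $\R^{n-1}$. We first take the $L^{\vec s^{\,*}}(\R^{n-1})$ norm in $x'$. Since $s_i\ge 1$ for every $i$, this is a genuine norm, given by iterated one-dimensional $L^{s_i}$ norms. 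Applying Minkowski's integral inequality successively in $x_1,\ldots,x_{n-1}$ moves the norm inside the $y_n$-integral. The induction hypothesis, applied with the relation $1/\vec p^{\,*}+1/\vec q^{\,*}=1+1/\vec s^{\,*}$, then gives
\begin{equation*}
\|f*g(\cdot,x_n)\|_{L^{\vec s^{\,*}}}\le \int_\R F(x_n-y_n)\,G(y_n)\,\d y_n ,
\end{equation*}
where $F(t):=\|f(\cdot,t)\|_{L^{\vec p^{\,*}}}$ and $G(t):=\|g(\cdot,t)\|_{L^{\vec q^{\,*}}}$.

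Finally we take the $L^{s_n}(\R)$ norm in $x_n$. The one-dimensional Young inequality with exponents $p_n,q_n,s_n$ bounds this by $\|F\|_{L^{p_n}}\|G\|_{L^{q_n}}=\|f\|_{L^{\vec p}}\|g\|_{L^{\vec q}}$, which is the claim. In particular $f*g\in L^{\vec s}$.

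The main obstacle I expect is the justification of Minkowski's inequality for the iterated norm $L^{\vec s^{\,*}}$. This requires joint measurability of the partial norm functions $F$, $G$ and $(x_n,y_n)\mapsto \|f(\cdot,x_n-y_n)*g(\cdot,y_n)\|_{L^{\vec s^{\,*}}}$, and also care with components $s_i=\infty$. I would handle measurability by Tonelli, since each partial norm of a nonnegative measurable function is measurable when built up one coordinate at a time. For $s_i=\infty$ I would use the trivial bound $\sup\int\le\int\sup$, which needs only an essential-supremum version of Minkowski. Alternatively, when all $s_i<\infty$, the duality of Lemma \ref{dual mixed Lp} gives $\|h\|_{L^{\vec s^{\,*}}}=\sup\{\int h\varphi : \|\varphi\|_{L^{(\vec s^{\,*})'}}\le 1\}$, which makes Minkowski immediate via Fubini.
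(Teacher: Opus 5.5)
Your proof is correct, and it is the standard argument for this inequality. You peel off the outermost variable $x_n$ and use Minkowski's integral inequality in the normed space $L^{\vec s^{\,*}}(\mathbb R^{n-1})$ to bring the norm inside the $y_n$-integral. You then apply the $(n-1)$-dimensional statement to the sections, and finish with the one-dimensional Young inequality for the partial norms $F\in L^{p_n}$, $G\in L^{q_n}$, using $\|F\|_{L^{p_n}}=\|f\|_{L^{\vec p}}$ and $\|G\|_{L^{q_n}}=\|g\|_{L^{\vec q}}$.

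The paper gives no proof of this lemma; it only quotes Benedek--Panzone. So your proposal supplies a self-contained justification where the paper relies on the reference.

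What your route buys is that it works for arbitrary nonnegative measurable $f,g$ (with $0\cdot\infty=0$). The general case then follows from $|f*g|\le|f|*|g|$, including the a.e. absolute convergence of the integral defining $f*g$, because $|f|*|g|\in L^{\vec s}$ is finite a.e. Hence the appeal to density and Lemma \ref{lem fatou} in your first step is not needed.

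Two technical remarks beyond what you already list:
\begin{itemize}
\item Sections $f(\cdot,t)$ of a Lebesgue measurable function are measurable only for a.e.\ $t$. Work with Borel representatives of $f$ and $g$, or discard a null set of $t$, before invoking the induction hypothesis.
\item The duality shortcut you mention presupposes $h\in L^{\vec s^{\,*}}$, or else a monotone truncation argument. Your primary route, iterated one-dimensional Minkowski with the ess-sup variant for $s_i=\infty$, needs neither.
\end{itemize}
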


\begin{theorem}[Young's inequality]  
	Let parameters $ \vec p , \vec q, \vec s ,  t,t_0 ,t_1 , r , r_0 ,r_1 $ satisfy the following relations:
	
	{\rm (i)} $1 \le \vec p, \vec q, \vec s \le\infty $ and  $1 + 1/\vec p  =  1/ \vec s + 1/ \vec q  $;
	
	{\rm  (ii)}  $ n / ( \sum_{i=1}^n  1/p_{i})    \le t < r \le \infty $,    $ n / (\sum_{i=1}^n 1/  s_i    )  \le t_0 < r_0 < \infty $,    $n / (\sum_{i=1}^n 1/  q_i    )    \le t_1 < r_1 < \infty $;   
	
	{\rm  (iii)}  $1/t + 1 =1/t_0  +1/ t_1  $ and $1/r + 1 =1/ r_0 + 1/ r_1 $.
	
	Then for all $f \in  M_{\vec s} ^{t_0,r_0} $  and $g \in M_{\vec q} ^{t_1,r_1}   $,
	\begin{equation*}
		\| f*g\|_{  M_{\vec p} ^{t,r}  }  \lesssim \|f\|_{ M_{\vec s} ^{t_0,r_0}    } \|g\|_{ M_{\vec q} ^{t_1,r_1}    } .
	\end{equation*}
\end{theorem}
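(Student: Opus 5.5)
We follow the scheme used for Young's inequality on Bourgain-Morrey spaces (\cite{HNSH23}) and reduce everything to Lemma \ref{young mixed Lp} on single dyadic cubes together with a discrete Young inequality in the scale parameter.

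\textbf{Step 1: localization.} Fix $Q\in\mathcal D_j$. We split $g$ dyadically at the same scale, $g=\sum_{m\in\mathbb Z^n} g\chi_{Q_{j,m}}$. For $x\in Q$ and $y\in Q_{j,m}$ we have $x-y\in Q-Q_{j,m}$, which lies in the union of at most $3^n$ cubes of $\mathcal D_j$ adjacent to a translate of $Q$. Hence
\begin{equation*}
(f*g)\chi_Q\le \sum_{m}\big((f\chi_{Q^*_m})*(g\chi_{Q_{j,m}})\big)\chi_Q ,
\end{equation*}
where $Q^*_m$ is a union of $3^n$ cubes in $\mathcal D_j$. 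Lemma \ref{young mixed Lp} applied to each term gives
\[
\|(f*g)\chi_Q\|_{L^{\vec p}}\le\sum_m \|f\chi_{Q^*_m}\|_{L^{\vec s}}\|g\chi_{Q_{j,m}}\|_{L^{\vec q}} .
\]
The exponents combine correctly. By (i), $\sum 1/p_i=\sum 1/s_i+\sum1/q_i-n$, and by (iii), $1/t=1/t_0+1/t_1-1$. Therefore
\[
|Q|^{\frac1t-\frac1n\sum\frac1{p_i}}=|Q|^{\frac1{t_0}-\frac1n\sum\frac1{s_i}}\,|Q|^{\frac1{t_1}-\frac1n\sum\frac1{q_i}},
\]
and the normalized quantity for $f*g$ at $Q$ is bounded by a discrete convolution over $\mathbb Z^n$ at level $j$. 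Write $a_{j,k}$ for the normalized $f$-pieces and $b_{j,m}$ for the normalized $g$-pieces. Then the $Q_{j,k}$-term of $f*g$ is at most $\sum_m a^*_{j,k-m}b_{j,m}$, where $a^*$ is a sum of $3^n$ shifted copies of $a$.

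\textbf{Step 2: the sum at fixed level.} Applying Young's inequality on $\ell(\mathbb Z^n)$ with $1/r+1=1/r_0+1/r_1$ gives
\[
\Big(\sum_k(\cdots)^r\Big)^{1/r}\lesssim \|a_{j,\cdot}\|_{\ell^{r_0}}\|b_{j,\cdot}\|_{\ell^{r_1}}.
\]
Summing over $j$ then requires controlling $\sum_j \|a_j\|_{\ell^{r_0}}^r\|b_j\|_{\ell^{r_1}}^r$. Since $r\ge r_0,r_1$ (because $1/r\le 1/r_0,1/r_1$), the $\ell^r$ norm in $j$ is bounded by $\sup_j\|a_j\|\cdot(\sum_j\|b_j\|^{r_1})^{1/r_1}$ only when $r\ge r_1$, which does hold. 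This yields
\[
\|f*g\|\lesssim \|f\|_{M^{t_0,\infty}_{\vec s}}\|g\|_{M^{t_1,r_1}_{\vec q}},
\]
where $\sup_j\|a_j\|_{\ell^{r_0}}\le\|f\|_{M^{t_0,r_0}_{\vec s}}$. The shifted copies are absorbed using the translation lemma, or directly since they are finitely many shifts of the same level sequence.

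\textbf{Main obstacle.} The only delicate point is the geometric covering in Step 1. Convolution mixes positions, but here the mixing happens only within the same scale, which keeps the argument in a single dyadic level. The case $r=\infty$ is handled by the same argument with sup norms. The case $p_i=\infty$ in some coordinates causes no difficulty, since Lemma \ref{young mixed Lp} allows it.
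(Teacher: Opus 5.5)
Your proposal is correct and follows essentially the paper's route: localize at a single dyadic scale using $Q_{j,k}-Q_{j,m}\subset$ finitely many level-$j$ neighbours, apply Lemma \ref{young mixed Lp} and Minkowski on each piece, use the discrete Young inequality on $\ell(\mathbb Z^n)$ at each level, and finally take a supremum over the level index on one factor. The only differences are cosmetic: you decompose $g$ where the paper decomposes $f$, and you put the supremum in $j$ on $f$ where the paper puts it on $g$.

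One slip needs fixing. Your intermediate display should read $\sup_j\|a_{j,\cdot}\|_{\ell^{r_0}}$, not $\|f\|_{M^{t_0,\infty}_{\vec s}}$. The latter equals $\sup_{j,k}a_{j,k}$ and does not control $\|a_{j,\cdot}\|_{\ell^{r_0}}$. Your next clause already bounds the correct quantity by $\|f\|_{M^{t_0,r_0}_{\vec s}}$, so the conclusion stands.
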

\begin{proof}
	We use the idea from \cite[Theorem 2.7]{HNSH23}. Since the spaces $ M_{\vec s} ^{t_0,r_0}  $ and  $ M_{\vec q} ^{t_1,r_1}   $ are closed when taking the absolute value, we assume $f$ and $g$ are non-negative. This justifies the definition of $f*g$. Fix $v\in \mathbb Z$ and $m \in \mathbb Z^n$.
	By Minkowski's inequality and Young's inequality for the $L^{ \vec p }$-norm (Lemma \ref{young mixed Lp}), we have
	\begin{align*}
		\| (f *g) \chi_{Q_{v , m} } \|_{L^{\vec p}}  & = \left\| \left( \sum_{\tilde m \in \mathbb Z^n} \int_{Q_{v , \tilde m} } f (y) g(\cdot -y) \d y  \right)  \chi_{Q_{v, m} }  \right\|_{L^{\vec p} }  \\
		& \le  \sum_{\tilde m \in \mathbb Z^n} \|  (f \chi_{ Q_{v, \tilde m} })  * (g \chi_{ Q_{v , m} - Q_{v , \tilde m}  })   \|_{ L^{\vec p} }  \\
		& \le  \sum_{\tilde m \in \mathbb Z^n} \|  f \chi_{ Q_{v ,  \tilde m} } \|_{L^{\vec s } } \| g \chi_{ Q_{v , m} - Q_{v  , \tilde m}  }  \|_{ L^{\vec q} }  ,
	\end{align*}
	where $Q_{v, m} - Q_{v  , \tilde m}  := \{  x - \tilde x :x \in Q_{v , m} ,  \tilde x \in Q_{v,  \tilde m}  \} $. Since $ Q_{v, m} - Q_{v ,  \tilde m}  \subset 3 Q_{ v, m-\tilde m} $, by the definition of the parameters $ \vec p , \vec q, \vec s ,  t,t_0 ,t_1 , r , r_0 ,r_1 $, we  obtain
	\begin{align*}
		& \left( \sum_{ m \in \mathbb Z^n}  | Q_{v, m}|^{ \frac{r}{t} - \frac{r}{n} \sum_{i=1}^n \frac{1}{p_i} } \| (f *g) \chi_{Q_{v, m} } \|_{L^{\vec p}} ^r	\right) ^{1/r}\\
		& \le \left( \sum_{ m \in \mathbb Z^n}  | Q_{v, m}|^{ \frac{r}{t} - \frac{r}{n} \sum_{i=1}^n \frac{1}{p_i} } \left( \sum_{\tilde m \in \mathbb Z^n} \|  f \chi_{ Q_{v,  \tilde m} } \|_{L^{\vec s } } \| g \chi_{  3 Q_{ v, m-\tilde m}  }  \|_{ L^{\vec q} }  \right) ^r	\right) ^{1/r}\\
		& \le \left( \sum_{ m \in \mathbb Z^n}  | Q_{v, m}|^{ \frac{r_0}{t_0} - \frac{r_0 }{n} \sum_{i=1}^n \frac{1}{s_i} }  \|  f \chi_{ Q_{v,  m} } \|_{L^{\vec s } } ^{r_0} \right) ^{1/r_0}  
		\left(  \sum_{ m \in \mathbb Z^n}  | Q_{v, m}|^{ \frac{r_1}{t_1} - \frac{r_1 }{n} \sum_{i=1}^n \frac{1}{q_i} }     \| g \chi_{3 Q_{v, m}  }  \|_{ L^{\vec q} }   ^{r_1}	\right) ^{1/r_1} \\
		& \lesssim  \left( \sum_{ m \in \mathbb Z^n}  | Q_{v, m}|^{ \frac{r_0}{t_0} - \frac{r_0 }{n} \sum_{i=1}^n \frac{1}{s_i} }  \|  f \chi_{ Q_{v , m} } \|_{L^{\vec s } } ^{r_0} \right) ^{1/r_0}  
		\left(  \sum_{ m \in \mathbb Z^n}  | Q_{v, m}|^{ \frac{r_1}{t_1} - \frac{r_1 }{n} \sum_{i=1}^n \frac{1}{q_i} }     \| g \chi_{ Q_{v, m}  }  \|_{ L^{\vec q} }   ^{r_1}	\right) ^{1/r_1} 
	\end{align*}
	where we used Young's inequality for the $\ell^r$-norm in the second inequality.  
	Note that for each $j =1,2 $, $1 <r_j <r <\infty$ and hence $ \ell^{r_j} (\mathbb Z) \hookrightarrow \ell^{r} (\mathbb Z)  \hookrightarrow \ell^{\infty} (\mathbb Z)$. 
	Consequently,
	\begin{align*}
		\| f*g\|_{  M_{\vec p} ^{t,r}  } 
		& \lesssim \left\|\left( \sum_{ m \in \mathbb Z^n}  | Q_{v, m}|^{ \frac{r_0}{t_0} - \frac{r_0 }{n} \sum_{i=1}^n \frac{1}{s_i} }  \|  f \chi_{ Q_{v,  m} } \|_{L^{\vec s } } ^{r_0} \right) ^{1/r_0}  \right\|_{\ell_v^{r} }  
		\left\|	\left(  \sum_{ m \in \mathbb Z^n}  | Q_{v, m}|^{ \frac{r_1}{t_1} - \frac{r_1 }{n} \sum_{i=1}^n \frac{1}{q_i} }     \| g \chi_{ Q_{v, m}  }  \|_{ L^{\vec q} }   ^{r_1}	\right) ^{1/r_1}  \right\|_{\ell_v^{\infty} } \\
		& \le  \left\|\left( \sum_{ m \in \mathbb Z^n}  | Q_{v, m}|^{ \frac{r_0}{t_0} - \frac{r_0 }{n} \sum_{i=1}^n \frac{1}{s_i} }  \|  f \chi_{ Q_{v , m} } \|_{L^{\vec s } } ^{r_0} \right) ^{1/r_0}  \right\|_{\ell_v^{r_0 } } 
		\left\|	\left(  \sum_{ m \in \mathbb Z^n}  | Q_{v, m}|^{ \frac{r_1}{t_1} - \frac{r_1 }{n} \sum_{i=1}^n \frac{1}{q_i} }     \| g \chi_{ Q_{v, m}  }  \|_{ L^{\vec q} }   ^{r_1}	\right) ^{1/r_1}  \right\|_{\ell_v^{r_1} } \\
		& = \|f\|_{ M_{\vec s} ^{t_0,r_0}    } \|g\|_{ M_{\vec q} ^{t_1,r_1}   },
	\end{align*}
	as desired.
\end{proof}

\subsection{Nontriviality}
Next we consider the nontriviality of mixed Bourgain-Morrey spaces.
\begin{lemma}[(2),  \cite{N19}] \label{chi Q mixed Lp}
	Let $Q$  be a cube. Then for $0  < \vec p \le \infty$, 
	\begin{equation*}
		\| \chi_Q \|_{ L^ {\vec p} }  =|Q|^{\frac{1}{n}  \sum_{i=1}^n   \frac{1}{p_i}} .
	\end{equation*}
\end{lemma}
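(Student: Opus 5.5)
The plan is to compute the iterated integral directly, exploiting that $\chi_Q$ factors as a product of one-dimensional indicators. Write $Q=\prod_{i=1}^n I_i$, where each $I_i$ is an interval of length $\ell(Q)$. Then
\[
\chi_Q(x_1,\ldots,x_n)=\prod_{i=1}^n \chi_{I_i}(x_i),
\]
and each successive integration in the definition of $\|\cdot\|_{L^{\vec p}}$ only acts on one factor.

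The key step is an induction on the number of integrated variables. Suppose $p_1<\infty$. Integrating $|\chi_Q|^{p_1}$ in $x_1$ gives $\ell(Q)\prod_{i\ge 2}\chi_{I_i}(x_i)$. Raising this to the power $1/p_1$ gives
\[
\ell(Q)^{1/p_1}\prod_{i\ge2}\chi_{I_i}(x_i),
\]
since the indicators are idempotent under powers. If $p_1=\infty$, the essential supremum in $x_1$ gives $1\cdot\prod_{i\ge2}\chi_{I_i}(x_i)$, which agrees with the same formula because $1/p_1=0$. Repeating this for $x_2,\ldots,x_n$, where at step $k$ the function is a constant times $\prod_{i\ge k}\chi_{I_i}$, yields
\[
\|\chi_Q\|_{L^{\vec p}}=\prod_{i=1}^n \ell(Q)^{1/p_i}.
\]

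It remains to rewrite this using $\ell(Q)=|Q|^{1/n}$, which gives $|Q|^{\frac1n\sum_{i=1}^n 1/p_i}$.

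There is no real obstacle here. The only points needing care are the convention for $p_i=\infty$, which is handled as above, and the observation that the constants pulled out at each stage are positive scalars. Because the mixed norm is positively homogeneous in each layer, these constants factor out of the remaining integrals without difficulty.
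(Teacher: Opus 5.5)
Your computation is correct: factoring $\chi_Q=\prod_{i}\chi_{I_i}(x_i)$ and integrating out one variable at a time gives $\ell(Q)^{\sum_i 1/p_i}=|Q|^{\frac1n\sum_i 1/p_i}$. The $p_i=\infty$ layers are handled correctly because $|I_i|>0$, and the paper gives no argument of its own, only citing \cite[(2)]{N19}, of which this iterated computation is the standard verification. The one implicit assumption is that cubes are axis-parallel, which you share with the paper. It is genuinely needed: for a square rotated by $45^\circ$ and $\vec p=(1,\infty)$, the left-hand side is $\sqrt2\,|Q|^{1/2}$ rather than $|Q|^{1/2}$.
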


\begin{example} \label{exa Q0 iff}
	Let $ 0 < \vec p \le \infty $. Let $ 0 <t <\infty $ and $0 < r \le \infty$. Let $Q_0 = [0,1)^n$. Then
	$\chi_{Q_0 }  \in M_{\vec p} ^{t,r} $  if and only if  $   0 < n / ( \sum_{i=1}^n  1/p_{i})    < t <r <\infty $ or $ 0< n / ( \sum_{i=1}^n  1/p_{i})   \le t < r=\infty  $. 
\end{example}
\begin{proof}
	
	Case $r<\infty$. We first calculate the norm $\| \chi_{Q_0 } \|_{M_{\vec p} ^{t,r}  }$. By Lemma \ref{chi Q mixed Lp},	we obtain
	\begin{align*}
		 \sum_{v\ge 0} \sum_{ m \in \mathbb Z^n} \left(|Q_{v, m}|^{ 1/t- \frac{1}{n} \sum_{i=1}^n   \frac{1}{p_i}}   \| \chi_{Q_0 }  \chi_{Q_{v, m} }\|_{L^{\vec p}}    \right) ^r  =  \sum_{v\ge 0} 2^{vn}  2^{-vn  r/t } ,
	\end{align*}
	and
	\begin{align*}
		\sum_{v< 0} \sum_{ m \in \mathbb Z^n} \left(|Q_{v, m}|^{ 1/t- \frac{1}{n} \sum_{i=1}^n   \frac{1}{p_i}}   \| \chi_{Q_0 }  \chi_{Q_{v, m} }\|_{L^{\vec p}}    \right) ^r  =  \sum_{v< 0}  \left(   2^{-vn (  1/t- \frac{1}{n} \sum_{i=1}^n \frac{1}{ p_i}   ) }    \right)^r .
	\end{align*}
	Hence 
	\begin{equation*}
		\| \chi_{Q_0 } \|_{M_{\vec p} ^{t,r}  }  = \left( \sum_{v\ge 0} 2^{vn}  2^{-vn  (r/t) }  +   \sum_{v< 0}  \left(   2^{-vn (  1/t- \frac{1}{n} \sum_{i=1}^n \frac{1}{ p_i}   ) }    \right)^r   \right)^{1/r}.
	\end{equation*}
	The right-hand side is finite if and only if $   0 < n / ( \sum_{i=1}^n  1/p_{i})    < t <r <\infty $.
	
	Case $r  = \infty$. If $v \ge 0$  and $Q_0 \cap Q_{v, m}  \neq \emptyset$, then $  Q_{v, m} \subset Q_0 $ and
	\begin{align*}
		|Q_{v, m}|^{ 1/t- \frac{1}{n} \sum_{i=1}^n   \frac{1}{p_i}}   \| \chi_{Q_0 }  \chi_{Q_{v, m} }\|_{L^{\vec p}} 
		= |Q_{v, m}|^{ 1/t} = 2^{ -vn /t}.
	\end{align*}
	If $v < 0$  and $Q_0 \cap Q_{v, m}  \neq \emptyset$, then $  Q_0  \subset Q_{v, m}$ and
	\begin{align*}
		|Q_{v, m}|^{ 1/t- \frac{1}{n} \sum_{i=1}^n   \frac{1}{p_i}}   \| \chi_{Q_0 }  \chi_{Q_{v, m} }\|_{L^{\vec p}} 
		= 	|Q_{v, m}|^{ 1/t- \frac{1}{n} \sum_{i=1}^n   \frac{1}{p_i}}  = 2^{ -vn  ( 1/t- \frac{1}{n} \sum_{i=1}^n  \frac{1}{ p_i}  )}.
	\end{align*}
	Hence
	\begin{equation*}
		\| \chi_{Q_0 } \|_{M_{\vec p} ^{t,\infty}  }  = \max \left\{  \sup_{v \ge 0, v \in \mathbb Z}  2^{ -vn /t},\sup_{v< 0, v\in \mathbb Z} 2^{ -vn  ( 1/t- \frac{1}{n} \sum_{i=1}^n  \frac{1}{ p_i}  )}  \right \}. 
	\end{equation*}
	The right-hand side is finite if and only if $ 0< n / ( \sum_{i=1}^n  1/p_{i})    \le t < r=\infty  $.
\end{proof}

Now we are ready to obtain the nontriviality of mixed Bourgain-Morrey spaces.

\begin{proposition}
	Let $ 0 < \vec p \le \infty $. Let $ 0 <t <\infty $.
	Then    $ M_{\vec p}^{t,r} \neq \{ 0\}$ if and only if $   0 < n / ( \sum_{i=1}^n  1/p_{i})    < t <r <\infty $ or $ 0< n / ( \sum_{i=1}^n  1/p_{i})    \le t < r=\infty  $.
\end{proposition}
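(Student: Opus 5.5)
Write $\frac{1}{p} := \frac{1}{n}\sum_{i=1}^n \frac{1}{p_i}$, so the standing condition reads $t \ge p$. The \emph{sufficiency} direction is immediate from Example \ref{exa Q0 iff}. Under either of the two parameter conditions, $\chi_{Q_0}$ belongs to $M_{\vec p}^{t,r}$, and $\chi_{Q_0}\neq 0$. Hence $M_{\vec p}^{t,r}\neq\{0\}$.

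For \emph{necessity}, I argue by contraposition. Take $f\in M_{\vec p}^{t,r}$ with $f\neq 0$; I will show the parameters must satisfy one of the two conditions. There are only two bad cases to exclude.

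The first bad case is $t<p$. Here the definition of the space itself requires $t\ge p$, so this case is excluded by hypothesis.

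The second bad case is $r<\infty$ with $t=p$, which makes the exponent of $|Q|$ equal to $0$. Then
\begin{equation*}
\|f\|_{M_{\vec p}^{t,r}}^r=\sum_{Q\in\D}\|f\chi_Q\|_{L^{\vec p}}^r .
\end{equation*}
Since $f\neq0$, there is a dyadic cube $Q_*$ with $\|f\chi_{Q_*}\|_{L^{\vec p}}=:\delta>0$. For every $v$ small enough that $\ell(Q_*)\le 2^{-v}$, the cube $Q_*$ is contained in a unique $Q\in\D_v$, and by the lattice property $\|f\chi_Q\|_{L^{\vec p}}\ge\delta$. There are infinitely many such $v$, so the sum is at least $\sum_{v\le v_0}\delta^r=\infty$. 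This contradicts $f\in M_{\vec p}^{t,r}$.

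Two possible gaps remain. The case $t=r$ is, I believe, also meant to be excluded; the Example needs $t<r$ in both alternatives. When $t=r<\infty$ and $t>p$, look at the small cubes inside $Q_*$. By Lebesgue differentiation (or by positivity of the $L^{\vec p}$ quasi-norm), there is a set of positive measure where $|f|\ge c>0$. Take $E$ of positive measure with $|f|\ge c$ on $E$, and use the decay of $\|\chi_{E\cap Q}\|_{L^{\vec p}}$. The cleanest route is to treat $p$ as fixed and compare with the scalar case: the sum over $Q\in\D_v$ of $|Q|^{1-r/p}\|f\chi_Q\|_{L^{\vec p}}^r$ should not tend to zero as $v\to\infty$. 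For density cubes $Q$ (those with $|E\cap Q|\ge\frac12|Q|$), $\|f\chi_Q\|_{L^{\vec p}}\gtrsim c\,|Q|^{1/p}$ when $E$ is suitably regular. Then each level $v$ contributes $\gtrsim c^r\,|E|$, and summing over infinitely many $v$ diverges.

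The main obstacle is this lower bound for $\|\chi_{E\cap Q}\|_{L^{\vec p}}$ with mixed norms, since $E\cap Q$ need not be a cube. I would first try to reduce $E$, via a Lebesgue density / Vitali argument, to a set containing a large fraction of a tiny cube, where Lemma \ref{chi Q mixed Lp} applies after shrinking. If that is messy, the fallback is to observe that $r=\infty$ with $t=r$ forces $t=\infty$, which is outside the case $t<\infty$ considered here. The case $r=\infty$, $t\ge p$ is covered by the second alternative, so no further work is needed there.
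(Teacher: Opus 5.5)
You handle sufficiency and the case $r<\infty$, $t=p$ correctly. The nested-dyadic-cube argument there is clean and works for every $0<\vec p\le\infty$.

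\textbf{The gap: the case $r<\infty$, $p<t=r$.} This case is not vacuous. It is exactly where $\chi_{Q_0}$ leaves the space, and only because the sum over small cubes diverges. Your sketch has three problems.
\begin{itemize}
\item[(1)] It rests on the bound $\|f\chi_Q\|_{L^{\vec p}}\gtrsim c|Q|^{1/p}$ for density cubes, which you leave unproved (``when $E$ is suitably regular'').
\item[(2)] The claim that every large level contributes $\gtrsim c^r|E|$ is asserted, not derived.
\item[(3)] The proposed fallback concerns $t=r=\infty$, which is already excluded by $t<\infty$. It says nothing about $t=r<\infty$.
\end{itemize}
As written, necessity is not established.

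\textbf{The missing step.} It is elementary and needs no regularity of $E$. Keep your notation $1/p=\frac1n\sum_i 1/p_i$ and put $u:=\min\{1,\min_i p_i\}$, so that $\vec p/u\ge 1$. Let $F\subset Q$ be measurable.
\begin{itemize}
\item[(a)] Since $\chi_F^u=\chi_F$, we have $\|\chi_F\|_{L^{\vec p}}=\|\chi_F\|_{L^{\vec p/u}}^{1/u}$.
\item[(b)] Iterated one-dimensional H\"older together with Lemma \ref{chi Q mixed Lp} gives $|F|\le\|\chi_F\|_{L^{\vec p/u}}\|\chi_Q\|_{L^{(\vec p/u)'}}=\|\chi_F\|_{L^{\vec p/u}}|Q|^{1-u/p}$.
\end{itemize}
Combining (a) and (b),
\begin{equation*}
\|\chi_F\|_{L^{\vec p}}\ge |Q|^{1/p}\Big(\frac{|F|}{|Q|}\Big)^{1/u}.
\end{equation*}
Now take $|f|\ge c\chi_E$ with $|E|>0$, and $t=r$. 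Applying the bound with $F=E\cap Q$, for every $v\in\mathbb Z$,
\begin{equation*}
\sum_{Q\in\D_v}|Q|^{1-r/p}\|f\chi_Q\|_{L^{\vec p}}^r\ge c^r\sum_{Q\in\D_v}|Q|\Big(\frac{|E\cap Q|}{|Q|}\Big)^{r/u}=c^r\int_{\rn}\big(\mathbb E_v\chi_E\big)^{r/u}\,\d x .
\end{equation*}
Since $\mathbb E_v\chi_E\to\chi_E$ a.e., Fatou's lemma gives $\liminf_{v\to\infty}$ of the right-hand side $\ge c^r|E|>0$. Summing over infinitely many $v$ gives $\|f\|_{M_{\vec p}^{t,t}}=\infty$, which closes the case. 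No density-cube selection or Vitali argument is needed.

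\textbf{Comparison with the paper.} The paper does not split into cases. It reduces any nonzero element to $\chi_{Q_0}$:
\begin{itemize}
\item[(i)] a power trick to get $\min\vec p>1$;
\item[(ii)] convolution with $\chi_{[0,1]^n}$, giving a continuous function bounded below on a ball;
\item[(iii)] translation and dilation.
\end{itemize}
It then reads off both exclusions, $t=p$ and $t=r$, from the explicit computation in Example \ref{exa Q0 iff}. Your direct route, once completed as above, avoids that machinery, whose supporting lemmas the paper states only for $t<r$. It is self-contained.
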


\begin{proof}
	We use the idea from \cite[Theorem 2.10]{HNSH23}.
	From Example \ref{exa Q0 iff}, the ``if'' part is clear. Thus let us prove the ``only if'' part. Since 
$
		\|  |f|^u \|_{   M_{\vec p}^{t,r}  }  =  \|  f \|_{   M_{ u \vec p}^{ ut,ur}  } ^u
$
	for all $f \in L^0$, we can assume $\min \vec p >1$. In this case $ M_{\vec p}^{t,r}$ is a Banach space. Let $f \neq 0 \in  M_{\vec p}^{t,r} $. We may suppose $ f \ge 0 $ by replacing $f$ with $|f|$. By  replacing $f$ with $\min(1, |f|) $, we assume $f \in L^\infty$. Thanks to Corollary \ref{convolution}, $\chi_{[0,1]^n  } * f \in  M_{\vec p}^{t,r} \backslash \{0\}$. Since $\chi_{[0,1]^n  } * f$ is a non-negative non-zero continuous function, there exist $x_0 \in \rn, \epsilon >0 $  and $R>0$  such that $\chi_{[0,1]^n  } * f \ge \epsilon \chi_{B(x_0, R)} $. Thus $ \chi_{B(x_0, R) }  \in M_{\vec p}^{t,r} $. Since $ M_{\vec p}^{t,r}$  is invariant under translation and dilation, $ \chi_{[0,1]^n } \in M_{\vec p}^{t,r} $. Thus from Example \ref{exa Q0 iff}, we obtain $   0 < n / ( \sum_{i=1}^n  1/p_{i})     < t <r <\infty $ or $ 0< n / ( \sum_{i=1}^n  1/p_{i})     \le t < r=\infty  $.
\end{proof}

\subsection{Approximation, density and separability}
Here we investigate the approximation property of $ M_{\vec p}^{t,r} $ when $r<\infty$. 
We first recall the definition of filtrations, adaptedness and martingales; for example, see \cite[Definition 3.1.1]{HNVW16}.
\begin{definition}
	Let $(S, \mathcal A, \mu )$ be a measure space and $(I, \le) $ an ordered set.
	
	(i) A family of sub-$\sigma$-algebras $\mathscr F$ of $\mathcal  A$ is called a filtration in $(S, \mathcal A, \mu )$ 
	if $\mathscr F_m \subset \mathscr F_n$ whenever $m, n \in I$ and $m \le n$. The filtration is called $\sigma$-finite if $\mu$ is $\sigma$-finite on each $ \mathscr F_n$.
	
	(ii) A family of functions $(\mathscr F_n )_{n \in I} $ in $L^0(S)$ (the measurable functions on $S$) is adapted to the filtration $(\mathscr F_n )_{n \in I} $ if $f_n \in L^0(S)$ for all $n \in I$.
	
	(iii) A family of functions $(\mathscr F_n )_{n \in I} $ in $L^0(S)$  is called a martingale with
	respect to a $\sigma$-finite filtration $(\mathscr F_n )_{n \in I} $ if it is adapted to $(\mathscr F_n )_{n \in I} $, and
	for all indices $m\le n$ the function $f_n$ is $\sigma$-integrable over $\mathscr F_m$ and satisfies
	\begin{equation*}
		\mathbb E(f_n | \mathscr F_m) = f_m.
	\end{equation*}
\end{definition}
The following example comes from \cite[Example 2.6.13, Example 3.1.3]{HNVW16}.
\begin{example}
	Let $(S, \mathcal A, \mu ) = (\rn, \mathcal B (\rn), \d x )$.
	Let $ \mathcal D_j := \{ 2^{-j} ( [0,1)^n +k ) : k\in \mathbb Z^n \}  , j\in \mathbb Z$ be the standard dyadic cubes of side-length $2^{-j} $ as in Section \ref{preliminaries}. Let $\mathscr F_j := \sigma (  \mathcal D_j  )$ its 	generated $\sigma$-algebra. 
	Then $( \mathscr F_j)_{j \in \mathbb Z}$ is a $\sigma$-finite filtration. Every function $f\in L^1_{\operatorname{loc}}$ is $\sigma$-integrable over every $\mathscr F_j$
	and therefore generates a martingale
	$(\mathbb E (f| \mathscr F_j)) _{j\in \mathbb Z}$. In this case, a conditional expectation of $f$ with respect to $\mathscr F_j $
	exists and is given by
	\begin{equation*}
		\sum_{Q \in \D_k}  \frac{1}{|Q|} \int_Q f (y) \d y   \chi_Q (x), \quad x \in \rn.
	\end{equation*}
\end{example}

For each $k \in \mathbb Z$, for a   measurable function $f$ and a cube $Q$, define the  operator $	\mathbb E_k  $  by

\begin{equation} \label{def E_k}
	\mathbb E_k   (f) (x) = \sum_{Q \in \D_k}  \frac{1}{|Q|} \int_Q f (y) \d y   \chi_Q (x), \quad x \in \rn.
\end{equation}

By virtue of the Lebesgue differentiation theorem, for almost every $x\in \rn $, 	$\mathbb E_k   (f)(x)$ converges to $f(x)$ as $k \to \infty $. 
We define 
\begin{equation*}
	\mathbb M f (x)= \sup _{ k \in\mathbb Z} \mathbb E_k   (f) (x) .
\end{equation*}
The following result is the Doob's inequality in mixed Lebesgue spaces.
\begin{lemma}
	[Theorem 2, \cite{SW21}]  \label{doob}
	If $1<\vec p <\infty$ or $\vec p = (\infty, \ldots, \infty ,p_{k+1}, \ldots, p_n)$ with $ 1<,p_{k+1}, \ldots, p_n <\infty $ for some $k \in \{1, \ldots, n \} $. Then the maximal operator $\mathbb M$ is bounded on $L^{\vec p}$, that is, for all $f\in L^{\vec p}$
	\begin{equation*}
		\| \mathbb M f \|_{ L^{\vec p}} \lesssim 	\| f \|_{ L^{\vec p}} .
	\end{equation*}
\end{lemma}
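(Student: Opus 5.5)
The plan is to deduce the mixed-norm bound for the dyadic maximal operator $\mathbb M$ from the one-dimensional Doob inequality, applied one coordinate at a time. The key observation is that the dyadic filtration on $\rn$ has a product structure: every $Q\in\D_k$ is a product $I_1\times\cdots\times I_n$ of one-dimensional dyadic intervals of length $2^{-k}$. It follows that
\begin{equation*}
\mathbb E_k f = \mathbb E_k^{(1)}\mathbb E_k^{(2)}\cdots \mathbb E_k^{(n)} f,
\end{equation*}
where $\mathbb E_k^{(i)}$ is the one-dimensional dyadic averaging operator acting only in the variable $x_i$. For $f\ge 0$ (we may replace $f$ by $|f|$ throughout) we then get the pointwise domination
\begin{equation*}
\mathbb M f \le \mathbb M^{(1)}\mathbb M^{(2)}\cdots\mathbb M^{(n)} f .
\end{equation*}
Here $\mathbb M^{(i)} g(x)=\sup_k \mathbb E^{(i)}_k g(x)$ is the one-dimensional dyadic maximal function in $x_i$. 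The domination holds because each $\mathbb E^{(i)}_k$ is positive, so $\mathbb E^{(i)}_k g\le \mathbb E^{(i)}_k h$ whenever $0\le g\le h$. Consequently
\begin{equation*}
\mathbb E_k f\le \mathbb E^{(1)}_k\cdots\mathbb E^{(n-1)}_k\mathbb M^{(n)}f\le\cdots\le \mathbb M^{(1)}\cdots\mathbb M^{(n)}f .
\end{equation*}
Taking the supremum over $k$ gives the claimed domination.

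It then suffices to show that each $\mathbb M^{(i)}$ is bounded on $L^{\vec p}$. Bounding the iterated operators one at a time would, however, require the partial operators to commute with the order in which the norms are taken. The cleaner route is to apply the composition in the order $\mathbb M^{(1)}(\mathbb M^{(2)}(\cdots))$ and estimate the norm from the inside out. Fix $(x_2,\dots,x_n)$. The one-dimensional Doob inequality in $L^{p_1}(\R)$ (for $1<p_1<\infty$), or trivial boundedness when $p_1=\infty$, gives
\begin{equation*}
\|\mathbb M^{(1)}g(\cdot,x_2,\dots)\|_{L^{p_1}}\lesssim\|g(\cdot,x_2,\dots)\|_{L^{p_1}} .
\end{equation*}
Taking the remaining norms then removes $\mathbb M^{(1)}$ with $g=\mathbb M^{(2)}\cdots\mathbb M^{(n)}f$.

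For $\mathbb M^{(i)}$ with $i\ge2$ we cannot argue directly, since the $L^{p_1},\dots,L^{p_{i-1}}$ norms sit inside it. Instead we use the vector-valued form of the one-dimensional Doob inequality. The dyadic maximal operator in $x_i$, acting on functions with values in the Banach lattice $X=L^{(p_1,\dots,p_{i-1})}(\R^{i-1})$, is bounded on $L^{p_i}(\R;X)$. The reason is that a lattice of mixed $L^q$ spaces with $1<q<\infty$ is UMD and has the Hardy--Littlewood property, or equivalently satisfies a Fefferman--Stein-type inequality (Bagby's theorem applied to dyadic averages). When $p_i=\infty$ the bound is trivial, since dyadic averages are contractive in sup norm. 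Applying this for $i=2,\dots,n$ successively to $\mathbb M^{(i)}(\mathbb M^{(i+1)}\cdots\mathbb M^{(n)}f)$ yields
\begin{equation*}
\|\mathbb M f\|_{L^{\vec p}}\lesssim\|f\|_{L^{\vec p}} .
\end{equation*}
The case $\vec p=(\infty,\dots,\infty,p_{k+1},\dots,p_n)$ is handled the same way: the inner sup-norms pass through the averages trivially, and the outer coordinates use the vector-valued Doob inequality with values in $L^\infty$-type lattices. This is the delicate point, because $L^\infty$ is not UMD. There I would instead majorize pointwise, using $\mathbb E_k^{(i)}(\sup_{x'}|g|)\ge \sup_{x'}\mathbb E_k^{(i)}|g|$, which reduces the inner sup-norm variables to a scalar problem.

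The main obstacle is the vector-valued (Fefferman--Stein) maximal estimate needed to pass $\mathbb M^{(i)}$, $i\ge2$, through the inner mixed norms. I would obtain it by dominating $\mathbb M^{(i)}$ pointwise by the one-dimensional Hardy--Littlewood maximal operator in $x_i$, and then invoking Bagby's extension of the Fefferman--Stein inequality to mixed Lebesgue spaces \cite{B75}. That result gives boundedness of the one-variable maximal operator on $L^{p_i}(L^{(p_1,\dots,p_{i-1})})$ for finite exponents greater than $1$, which is exactly what the induction requires.
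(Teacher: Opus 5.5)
The paper gives no proof of this lemma; it is quoted from \cite[Theorem 2]{SW21}, a martingale result. Your argument is a correct, self-contained alternative, and it takes a different route.

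Your chain of steps works as follows. The product structure $\mathbb E_j=\mathbb E_j^{(1)}\cdots\mathbb E_j^{(n)}$ together with positivity gives $\mathbb M|f|\le\mathbb M^{(1)}\cdots\mathbb M^{(n)}|f|$. Each $\mathbb M^{(i)}$ is bounded on $L^{\vec p}$ for $1<\vec p<\infty$: for $i=1$ by one-dimensional Doob, and for $i\ge 2$ by domination by the one-dimensional Hardy--Littlewood operator in $x_i$ plus Bagby's mixed-norm Fefferman--Stein inequality \cite{B75}, with $x_{i+1},\dots,x_n$ frozen. A composition of bounded operators is then bounded.

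Two simplifications are available. First, your concern about the order of the norms disappears once each factor is bounded on all of $L^{\vec p}$, and the UMD discussion is unnecessary. Since the $\mathbb E_j^{(i)}$ commute, you can order the domination as $\mathbb M|f|\le\mathbb M^{(n)}\cdots\mathbb M^{(1)}|f|\le\Mit f$. Then you can simply quote Lemma~\ref{Mit mixed weight} with $\omega_k\equiv1$, a tool the paper already uses in Section~\ref{sec operator}.

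Second, for $\vec p=(\infty,\dots,\infty,p_{k+1},\dots,p_n)$ no $L^\infty$-valued maximal inequality is needed, so the non-UMD issue never arises. Write $x=(x',x'')$ with $x'\in\R^k$, and set $G(x''):=\operatorname{ess\,sup}_{x'}|f(x',x'')|$. Tonelli gives $\mathbb E_j|f|(x',x'')\le\widetilde{\mathbb E}_jG(x'')$, where $\widetilde{\mathbb E}_j$ is the dyadic averaging on $\R^{n-k}$. Hence $\mathbb Mf(x',x'')\le\widetilde{\mathbb M}G(x'')$, and
$\|\mathbb Mf\|_{L^{\vec p}}\le\|\widetilde{\mathbb M}G\|_{L^{(p_{k+1},\dots,p_n)}}\lesssim\|G\|_{L^{(p_{k+1},\dots,p_n)}}=\|f\|_{L^{\vec p}}$
by the finite-exponent case on $\R^{n-k}$; the case $k=n$ is trivial. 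This single pointwise bound is exactly what your majorization amounts to.

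As for what each approach buys: the citation places the result within martingale theory. Your route is elementary, is tailored to the dyadic product filtration on $\rn$ that the paper actually uses, and rests only on Euclidean maximal inequalities already in the paper's references.
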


For a real-valued measurable function $f$, we set $f_+ := \max (f,0)$ and $f_{-} := \max (-f, 0)$. Then $ \mathbb E_k f = \mathbb E_k f_+  - \mathbb E_k f_-$.

\begin{theorem} \label{E_k^q f to f}
	Let $1 < \vec p <\infty $. 
	Let $1 \le  n / ( \sum_{i=1}^n  1/p_{i})     < t <r <\infty $.
	Then for a real-valued functions  $f \in M_{\vec p}^{t,r} $, the sequence $  \{\mathbb E_k (f_+) -  \mathbb E_k (f_-)  \} $ converges to $f$ in $ M_{\vec p}^{t,r} $.
\end{theorem}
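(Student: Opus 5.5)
We reduce at once to $f\ge 0$: since $f=f_+-f_-$ and $0\le f_\pm\le|f|$, the lattice property gives $f_\pm\in M_{\vec p}^{t,r}$, and the triangle inequality (valid because $\min\vec p>1$) shows it suffices to prove $\mathbb E_k f\to f$ in $M_{\vec p}^{t,r}$ for nonnegative $f$. The plan is a dominated convergence argument on the sum over dyadic cubes. We want a summable majorant for the family
\[
a_Q(k):=|Q|^{\frac rt-\frac rn\sum_{i=1}^n\frac1{p_i}}\,\|(\mathbb E_kf-f)\chi_Q\|_{L^{\vec p}}^r,\qquad Q\in\D,
\]
independent of $k$, together with $a_Q(k)\to0$ for each fixed $Q$.

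\emph{Pointwise convergence for fixed $Q$.} Fix $Q\in\D$ and let $k\ge j$, where $\ell(Q)=2^{-j}$. Each cube of $\D_k$ is then either contained in $Q$ or disjoint from it, so $(\mathbb E_kf)\chi_Q=\mathbb E_k(f\chi_Q)$. Also $f\chi_Q\in L^{\vec p}$, and $|\mathbb E_k(f\chi_Q)-f\chi_Q|\le 2\,\mathbb M(f\chi_Q)$, which lies in $L^{\vec p}$ by Doob's inequality (Lemma \ref{doob}). Since $\mathbb E_k(f\chi_Q)\to f\chi_Q$ a.e. by the Lebesgue differentiation theorem, the dominated convergence theorem in the iterated norm $L^{\vec p}$ (applied coordinate by coordinate, with $p_i<\infty$) gives $a_Q(k)\to0$ as $k\to\infty$.

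\emph{Majorant.} This is where the real work lies, because for $k<j$ the cubes of $\D_k$ are larger than $Q$ and $\mathbb E_kf$ on $Q$ depends on $f$ outside $Q$. We split the cubes by generation. If $k\ge j$, the previous step gives $\|(\mathbb E_kf-f)\chi_Q\|_{L^{\vec p}}\lesssim\|f\chi_Q\|_{L^{\vec p}}$. If $k<j$, let $R\in\D_k$ be the dyadic ancestor of $Q$. Then $\mathbb E_kf$ is constant on $Q$, equal to $|R|^{-1}\int_R f$. Hölder's inequality (Lemma \ref{Holder mixed}) together with Lemma \ref{chi Q mixed Lp} gives $|R|^{-1}\int_R f\le|R|^{-\frac1n\sum 1/p_i}\|f\chi_R\|_{L^{\vec p}}$, and hence
\[
\|(\mathbb E_kf)\chi_Q\|_{L^{\vec p}}\le (|Q|/|R|)^{\frac1n\sum 1/p_i}\|f\chi_R\|_{L^{\vec p}}.
\]
Multiplying by the weight $|Q|^{\frac1t-\frac1n\sum1/p_i}$, we get
\[
|Q|^{\frac1t-\frac1n\sum\frac1{p_i}}\,\|(\mathbb E_kf)\chi_Q\|_{L^{\vec p}}
\le (|Q|/|R|)^{1/t}\,|R|^{\frac1t-\frac1n\sum\frac1{p_i}}\|f\chi_R\|_{L^{\vec p}}.
\]
A single $k$-dependent term cannot serve as a majorant, so we bound uniformly in $k$ by the supremum over all ancestors $R\supseteq Q$. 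Set
\[
b_Q:=|Q|^{\frac1t-\frac1n\sum\frac1{p_i}}\|f\chi_Q\|_{L^{\vec p}}+\sum_{R\in\D,\,R\supsetneq Q}(|Q|/|R|)^{1/t}\,|R|^{\frac1t-\frac1n\sum\frac1{p_i}}\|f\chi_R\|_{L^{\vec p}}.
\]
Then $a_Q(k)\lesssim b_Q^r$ uniformly in $k$.

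\emph{Summability of the majorant.} It remains to show $\sum_Q b_Q^r\lesssim\|f\|_{M_{\vec p}^{t,r}}^r$. Write $\beta_R:=|R|^{\frac1t-\frac1n\sum1/p_i}\|f\chi_R\|_{L^{\vec p}}$. For the second part of $b_Q$, sum over ancestors $R$ at distance $d\ge1$ generations, where $|Q|/|R|=2^{-dn}$. We use $\ell^1\hookrightarrow\ell^r$ when $r\le1$, or Hölder's inequality with a geometric weight when $r>1$. Then, for fixed $R$ and $d$, we count the $2^{dn}$ descendants $Q$. This gives
\[
\sum_Q\Big(\sum_{d\ge1}2^{-dn/t}\beta_{R_d(Q)}\Big)^r\lesssim\sum_{d\ge1}2^{-dn\varepsilon}\,2^{dn}\,2^{-dnr/t}\sum_R\beta_R^r,
\]
for a small $\varepsilon>0$. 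The sum over $d$ converges precisely because $r>t$. This is the main obstacle: we need $2^{dn(1-r/t)}$ to be summable with a little room to spare for $\varepsilon$, and this is exactly where the strict inequality $t<r$ is used. With the majorant summable, dominated convergence over $Q\in\D$ yields $\|\mathbb E_kf-f\|_{M_{\vec p}^{t,r}}\to0$.
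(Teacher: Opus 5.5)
Your proof is correct, but it is organized differently from the paper's. The paper fixes $\epsilon$ and picks $J$ so that the part of the $M_{\vec p}^{t,r}$-sum over cubes $Q_{j,m}$ with $|j|>J$ or $m\notin B_J$ is below $\epsilon$. It then splits $\|f-\mathbb E_kf\|_{M_{\vec p}^{t,r}}$ into four pieces. The first is finitely many cubes, handled by local convergence. The second is the tail of $f$. The third is the levels $j\le k$ with $|j|>J$, handled by Doob's inequality. The fourth is the levels $j>k$; these are all controlled by the single level-$k$ sum $\sum_{Q\in\D_k}\beta_Q^r$ (your notation) through the same H\"older-and-counting estimate you use, and that sum is small because $k>J$.

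You instead build one $k$-independent summable majorant and apply dominated convergence over $Q\in\D$. Because the majorant must serve every $k$ at once, you need all ancestors of $Q$, not just the one at level $k$. You pay for this with the geometric sum over generations, which is the same computation as in the proof of Theorem \ref{HL M}. In exchange you avoid the $\epsilon$--$J$ bookkeeping.

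Your pointwise step is also more careful than the paper's. You get $\|(\mathbb E_kf-f)\chi_Q\|_{L^{\vec p}}\to0$ from $|\mathbb E_k(f\chi_Q)-f\chi_Q|\le 2\,\mathbb M(f\chi_Q)$, Lemma \ref{doob}, and dominated convergence in the iterated norm. The paper instead appeals to convergence in $L^{\max\vec p}_{\mathrm{loc}}$, which tacitly presumes $f\in L^{\max\vec p}_{\mathrm{loc}}$.

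There are two minor slips. First, the H\"older factor in your display should be $2^{+dn\varepsilon}$, not $2^{-dn\varepsilon}$; your remark about needing room for $\varepsilon$ shows this is what you meant. Minkowski's inequality in $\ell^r$ gives the bound $\sum_{d\ge1}2^{dn(1/r-1/t)}\|f\|_{M_{\vec p}^{t,r}}$ directly, with no $\varepsilon$ at all. Second, the case $r\le1$ does not arise, since $r>t>1$.
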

Note that our proof below shows $\mathbb E_k (f_+) - \mathbb E_k (f_-)  $ in $ M_{\vec p}^{t,r} $.
\begin{proof}
	We use the idea from \cite[Theorem 2.20]{HNSH23}.
	Let $f\in M_{\vec p}^{t,r} $ and $\epsilon>0 $ be fixed. 
	Since 
	\begin{equation*}
		\| f - ( \mathbb E_k  (f_+) -  \mathbb E_k (f_-) ) \|_{M_{\vec p}^{t,r}  }  \lesssim \| f_+ -  \mathbb E_k (f_+) \|_{M_{\vec p}^{t,r}  }  +\| f_{-} -   \mathbb E_k (f_-)  \|_{M_{\vec p}^{t,r}  } , 
	\end{equation*}
	we need to show that $ \mathbb E_k (f_+)$ converges to $f_+$ and that $ \mathbb E_k (f_-)$ converges to $f_-$ in  $ M_{\vec p}^{t,r} $. Thus, we may assume that $f \in   M_{\vec p}^{t,r} $ is non-negative and we will prove that $ \{ \mathbb E_k (f_+) \}_{k =1}^\infty $  converges back to $f$ in $M_{\vec p}^{t,r} $.
	
	Since $f \in M_{\vec p}^{t,r}$, there exists $J \in \mathbb N$ large enough such that
	\begin{align} \label{J epsilon}	
		&	\left(\sum_{j\in \mathbb Z}  \sum_{m \in \mathbb Z^n}  ( \chi_{\mathbb Z \backslash [-J,J]} (j)  + \chi_{\mathbb Z^n \backslash B_J  }  (m) )  |Q_{j,m}|^{r/t-\frac{r}{n} \sum_{i=1}^n \frac{1}{ p_i}  } 
		\| \chi_{Q_{j,m}  } f \|_{ L^{\vec p}} ^ r
		\right)^{1/r}  <\epsilon.
	\end{align}
	Fix $k \in \mathbb N \cap (J,\infty)$. Set
	
	\begin{align*}
		I & := \left\| \left\{       |Q_{j,m}|^{1/t-\frac{1}{n} \sum_{i=1}^n \frac{1}{ p_i} } 
		\| ( f -\mathbb E_k (f) ) \chi_{Q_{j,m} } \|_{L^{\vec p}  }
		\right\}_{j \in [-J,J], m\in \mathbb Z^n}  \right\|_{\ell^r} , \\
		II & := \left\| \left\{       |Q_{j,m}|^{1/t-\frac{1}{n} \sum_{i=1}^n \frac{1}{ p_i} } 	\| f \chi_{Q_{j,m} } \|_{L^{\vec p}  } \right\}_{j \in \mathbb Z \backslash [-J,J], m\in \mathbb Z^n}  \right\|_{\ell^r} ,\\
		III & := \left\| \left\{       |Q_{j,m}|^{1/t-\frac{1}{n} \sum_{i=1}^n \frac{1}{ p_i} } \| \mathbb E_k (f)  \chi_{Q_{j,m} } \|_{L^{\vec p}  } \right\}_{j \in ( (-\infty, -J) \cup (J,k]  )\cap \mathbb Z, m\in \mathbb Z^n}  \right\|_{\ell^r}, \\
		IV & := \left\| \left\{       |Q_{j,m}|^{1/t-\frac{1}{n} \sum_{i=1}^n \frac{1}{ p_i} } \| \mathbb E_k (f)  \chi_{Q_{j,m} } \|_{L^{\vec p}  }   \right\}_{j \in  \mathbb Z \backslash (-\infty,k], m\in \mathbb Z^n}  \right\|_{\ell^r}.
	\end{align*}
	Then we decompose
	\begin{equation*}
		\| f-   \mathbb E_k (f) \|_{M_{\vec p}^{t,r}  }  \le I +II+ III+ IV.
	\end{equation*}
	For $I$, using the  Minkowski inequality and (\ref{J epsilon}), we see
	\begin{align*}
		I  \le \left\| \left\{       |Q_{j,m}|^{1/t-\frac{1}{n} \sum_{i=1}^n \frac{1}{ p_i}  } 	\| ( f -\mathbb E_k (f) ) \chi_{Q_{j,m} } \|_{L^{\vec p}  }   \right\}_{j \in [-J,J], m\in \mathbb Z^n \cap B_J}  \right\|_{\ell^r} +\epsilon.
	\end{align*}
	Since $	\mathbb E_k   (f) $ converges back to $f$ in $L^{p}_{\operatorname{loc}}$ for $ p \in [1,\infty)$ (for example, sees \cite[Theorem 3.3.2]{HNVW16}) and $ L_{\operatorname{loc}} ^{\max \vec p}  \subset  L_{\operatorname{loc}} ^{\vec p}  \subset L_{\operatorname{loc}} ^{1} $, we have  $	\mathbb E_k   (f) $ converges back to $f$ in $L_{\operatorname{loc}} ^{\vec p}$.
	Hence
	
	\begin{equation*}
		I  < 2\epsilon
	\end{equation*} 
	as long as $k$ large enough.
	
	For $II$,  simply use (\ref{J epsilon}) to conclude $II< \epsilon$.
	
	Next, we estimate $III$. By Lemma \ref{doob},	
	\begin{align*}
		& \| \mathbb E_k (f)  \chi_{Q_{j,m} } \|_{L^{\vec p}  } = \| \mathbb E_k (f \chi_{Q_{j,m} } )  \chi_{Q_{j,m} } \|_{L^{\vec p}  }  \le  \| \mathbb M (f \chi_{Q_{j,m} } )  \chi_{Q_{j,m} } \|_{L^{\vec p}  } \lesssim  \| f \chi_{Q_{j,m} }  \|_{L^{\vec p}  } .
	\end{align*}
	Taking the $\ell^r$-norm over $j \in ( (-\infty, -J) \cup (J,k]  )\cap \mathbb Z, m\in \mathbb Z^n $, we obtain
	\begin{align*}
		III \le \left(   \sum_{ j \in ( (-\infty, -J) \cup (J,k]  )\cap \mathbb Z}   \sum_{ m\in \mathbb Z^n }  |Q_{j,m}|^{r/t-\frac{r}{n} \sum_{i=1}^n \frac{1}{ p_i} } \| f  \chi_{Q_{j,m} } \|_{L^{\vec p}  } ^r \right)^{1/r} <\epsilon.
	\end{align*}
	Finally, we estimate $IV$. Let $j \in \mathbb Z \backslash (-\infty,k]$. Since $Q_{j,m} \subset Q$ for $Q\in \D_k$  as long as $Q_{j,m} \cap Q \neq\emptyset$, using H\"older's inequality with $\vec p >  1$, we have
	\begin{align*}
		| IV |^r & \le  \sum_{j=k+1}^\infty \sum_{m \in \mathbb Z^n}  |Q_{j,m}|^{r/t-\frac{r}{n} \sum_{i=1}^n \frac{1}{ p_i} }  \| \mathbb E_k (f)  \chi_{Q_{j,m} } \|_{L^{\vec p}  } ^r  \\
		& \le \sum_{j=k+1}^\infty   \sum_{ Q \in \D _k} \sum_{m \in \mathbb Z^n,   Q_{j,m} \subset Q}  |Q_{j,m}|^{r/t   } |Q|^{ - \frac{r}{n} \sum_{i=1}^n \frac{1}{ p_i}   } \| f\chi_Q \|_{ L^{\vec p} } ^r .
	\end{align*}
	Note that $   |Q_{j,m}| = 2^{ -(j-k)n } |Q|$. Since $t<r<\infty$, 
	we obtain
	\begin{align*}
		| IV |^r & \le   \sum_{j=k+1}^\infty \sum_{ Q \in \D _k}  \sum_{m \in \mathbb Z^n,   Q_{j,m} \subset Q}  2^{ -(j-k)n r/t }  |Q|^{r/t - \frac{r}{n} \sum_{i=1}^n \frac{1}{ p_i}  } \| f\chi_Q \|_{ L^{\vec p} } ^r  \\
		& \le \sum_{j=k+1}^\infty  2^{ -(j-k)n (1- r/t) }\sum_{  Q\in \D_k}|Q|^{r/t - \frac{r}{n} \sum_{i=1}^n \frac{1}{ p_i}  } \| f\chi_Q \|_{ L^{\vec p} } ^r \\
		& \lesssim \sum_{  Q\in \D_k}|Q|^{r/t - \frac{r}{n} \sum_{i=1}^n \frac{1}{ p_i}  } \| f\chi_Q \|_{ L^{\vec p} } ^r.
	\end{align*}
	Since $J \le k$, from (\ref{J epsilon}), we conclude $IV \lesssim \epsilon$.
	
	Combining the estimates $I -IV$, we get 
	\begin{equation*}
		\limsup_{k\to \infty} \| f- \mathbb E_k (f) \|_{M_{\vec p}^{t,r}}   \lesssim \epsilon.
	\end{equation*} 
	Since $\epsilon>0$ is arbitrary, we  obtain the desired result.
\end{proof}

As a further corollary, the set $L_c^\infty$ of all compactly supported  bounded functions  is dense in $M_{\vec p}^{t,r} $.
\begin{corollary} \label{dense L_c mixed BM}
	Let $1< \vec p <\infty $. 
	Let $ 1 <   n / ( \sum_{i=1}^n  1/p_{i})      < t <r <\infty $. Then $L_c^\infty $  is dense in $M_{\vec p}^{t,r} $.
\end{corollary}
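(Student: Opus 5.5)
The plan is to combine Theorem \ref{E_k^q f to f} with a truncation argument. Fix $f \in M_{\vec p}^{t,r}$ and $\epsilon>0$. Writing $f = (\operatorname{Re} f)_+ - (\operatorname{Re} f)_- + i(\operatorname{Im} f)_+ - i(\operatorname{Im} f)_-$ and using the lattice property and the triangle inequality (the space is normed, since $\min\vec p>1$), it suffices to treat $f\ge 0$. First I will show that the truncations $f\chi_{B_R}$ approximate $f$ in $M_{\vec p}^{t,r}$ as $R\to\infty$. For each $Q\in\D$ we have
\[
\|(f-f\chi_{B_R})\chi_Q\|_{L^{\vec p}} = \|f\chi_{Q\setminus B_R}\|_{L^{\vec p}} \le \|f\chi_Q\|_{L^{\vec p}} .
\]
Moreover, $\|f\chi_{Q\setminus B_R}\|_{L^{\vec p}}\to 0$ as $R\to\infty$, by the dominated convergence theorem applied iteratively in the mixed norm, since $\vec p<\infty$ and $\|f\chi_Q\|_{L^{\vec p}}<\infty$. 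The sum $\sum_{Q\in\D}|Q|^{r/t-\frac{r}{n}\sum_i 1/p_i}\|f\chi_Q\|_{L^{\vec p}}^r$ is finite, so dominated convergence for series gives $\|f-f\chi_{B_R}\|_{M_{\vec p}^{t,r}}\to 0$.

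Second, $g:=f\chi_{B_R}$ is compactly supported but not necessarily bounded. I will handle this either by truncating in height, or by using Theorem \ref{E_k^q f to f}.

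\emph{Height truncation.} Consider $\min(g,N)$. By the same domination argument (pointwise $|g-\min(g,N)|\le g$, and $\|(g-\min(g,N))\chi_Q\|_{L^{\vec p}}\to0$ for each $Q$ as $N\to\infty$ since $g<\infty$ a.e.), we get $\min(g,N)\to g$ in $M_{\vec p}^{t,r}$. Since $\min(g,N)\in L_c^\infty$, this finishes the proof.

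\emph{Via Theorem \ref{E_k^q f to f}.} The functions $\mathbb E_k(g)$ converge to $g$ in $M_{\vec p}^{t,r}$. Because $g\in L^{\vec p}$ has compact support, we have $g\in L^1$, so each $\mathbb E_k g$ is bounded by $2^{kn}\|g\|_{L^1}$ and is supported on the finite union of cubes of $\D_k$ meeting $B_R$. Hence $\mathbb E_k g\in L_c^\infty$ for $k\ge 0$.

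The main point needing care is justifying the per-cube convergence $\|f\chi_{Q\setminus B_R}\|_{L^{\vec p}}\to0$ in the mixed norm. I would do this by applying monotone or dominated convergence successively in $x_1,\dots,x_n$, using $\vec p<\infty$. Alternatively, use the continuity from above implied by Lemma \ref{dual mixed Lp} (ii) (the order continuity of $L^{\vec p}$ for finite exponents). Everything else is routine.
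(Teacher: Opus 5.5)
Your proof is correct, but your main route differs from the paper's.

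\textbf{The paper's route.} For $f\ge 0$ it first invokes Theorem \ref{E_k^q f to f} to replace $f$ by $\mathbb E_k(f)$. This function is constant on each cube of $\D_k$, hence bounded on $B_R$. Only then does the paper truncate in space, proving $\|\mathbb E_k(f)-\mathbb E_k(f)\chi_{B_R}\|_{M_{\vec p}^{t,r}}\to 0$ by dominated convergence. So boundedness of the approximant comes from the averaging.

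\textbf{Your route.} Your height-truncation argument never touches $\mathbb E_k$. Both the spatial cut-off and the cut-off $\min(g,N)$ are handled by the same two-level dominated convergence: first in $L^{\vec p}$ on each dyadic cube, then over the series indexed by $\D$, which uses $r<\infty$.

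\textbf{Comparison.} Your argument is more elementary. It needs neither Doob's inequality (Lemma \ref{doob}) nor the hypothesis $n/(\sum_i 1/p_i)>1$; it only uses $\vec p<\infty$ and $r<\infty$. The same reasoning, applied to $f\chi_{E_k}$, also gives directly the absolute continuity of the norm (Lemma \ref{lem Absolutely continuous norm}). The paper derives that only later, via density of simple functions.

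What the paper's route buys is that its approximants $\mathbb E_k(f)\chi_{B_R}$ are already simple functions, at the cost of relying on the martingale machinery. Your alternative via Theorem \ref{E_k^q f to f} is essentially the paper's argument with the two steps in reverse order.

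\textbf{A small remark.} The step you flag as ``needing care'' is trivial for the spatial truncation. Each $Q\in\D$ is bounded, so $f\chi_{Q\setminus B_R}=0$ as soon as $R$ is large. Dominated convergence in the mixed norm is genuinely needed only for the height truncation, where it works as you describe, since $g<\infty$ a.e.
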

\begin{proof}
	
	We need to approximate $f \in M_{\vec p}^{t,r}$ with functions in $L_c^\infty$.
	We may assume that $f$ is non-negative from the decomposition $f =  f_ + - f_- $. Fix $\epsilon>0$. 
	By Theorem \ref{E_k^q f to f}, there exists $k$ large enough such that 
	\begin{equation} \label{f -Ek f < eps}
		\| f - \mathbb E_k (f) \|_{M_{\vec p}^{t,r} } < \epsilon.
	\end{equation}
	Note that $\mathbb E_k (f)\chi_{B_R}  \in L_c^\infty $  for all $R>0$.
	Fix a large number $k\in \mathbb N$ such that (\ref{f -Ek f < eps}) holds. By the disjointness of the family $\{ Q_{k,m} \}_{m\in \mathbb Z^n}$, 
	\begin{align*}
			\| \mathbb E_k (f) - \mathbb E_k  (f)\chi_{B_R} \|_{ M_{\vec p}^{t,r}  }  
		& = \left( \sum_{j\in \mathbb Z, m\in \mathbb Z^n} |Q_{j,m}|^{r/t-\frac{r}{n} \sum_{i=1}^n \frac{1}{ p_i}  }   \|\mathbb E_k (f) - \mathbb E_k  (f)\chi_{B_R} \|_{L^{\vec p} } ^r   \right)^{1/r}  \\
		& \le  \left( \sum_{j\in \mathbb Z}  \sum_{ m\in \mathbb Z^n, Q_{j,m} \cap B_R^c \neq \emptyset} |Q_{j,m}|^{r/t-\frac{r}{n} \sum_{i=1}^n \frac{1}{ p_i}  }  \|\mathbb E_k (f) - \mathbb E_k  (f)\chi_{B_R} \|_{L^{\vec p} } ^r    \right)^{1/r} .
	\end{align*}
	Note that $\mathbb E_k (f) \in M_{\vec p}^{t,r}$ according to Theorem \ref{E_k^q f to f}. By the Lebesgue convergence theorem, we obtain
	\begin{equation*}
		\| \mathbb E_k (f) - \mathbb E_k  (f)\chi_{B_R} \|_{ M_{\vec p}^{t,r}  }    <\epsilon
	\end{equation*}
	as long as $R$ large enough. 
	Then by the Minkowski inequality, we have
	\begin{equation*}
		\| f - E_k (f)\chi_{B_R} \|_{ M_{\vec p}^{t,r} } \lesssim \epsilon.
	\end{equation*}
	Since $\epsilon>0$ is arbitrary, the proof is complete.
\end{proof}
\begin{remark}
	A consequence of Corollary \ref{dense L_c mixed BM} and Lemma \ref{embed ell r} is that $ M_{\vec p}^{t,r} \subset  \widetilde{ M_{\vec p}^{t,\infty} }  $, where $\widetilde{ M_{\vec p}^{t,\infty} }$ stands for the closure of $L_c^\infty $ in $M_{\vec p}^{t,\infty} $.
\end{remark}

\begin{lemma}[Lusin's property, Theorem 1.14-4 (c), \cite{C13}]
	Let $A$ be a subset of $\rn$. Let $f:\rn \to \mathbb R$ be a measurable function with $|A| <\infty$, where $A = \{ x \in \rn: f(x) \neq 0\}$. Then given any $\epsilon >0$, there exists a function $f_\epsilon \in C(\rn)$, the continuous functions on $\rn$, whose support is a compact subset of $A$ and such that 
	$
	\sup_{x\in \rn } |f_\epsilon (x) | \le \sup_{x\in \rn } |f (x) |
	$
	and $ | \{ x\in \rn :  f(x) \neq f _\epsilon (x) \}| \le \epsilon$.
\end{lemma}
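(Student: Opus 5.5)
The plan is to follow the classical proof of Lusin's theorem: first make $f$ continuous on a large compact set, then extend it continuously to all of $\rn$ without increasing the bound. Set $M:=\sup_{x\in\rn}|f(x)|$. If $M=\infty$ the bound on $f_\epsilon$ is automatic, so only the case $M<\infty$ needs care, and this is the case the argument is built around.

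\emph{Step 1: reduce to a bounded set.} Since $|A|<\infty$, continuity of measure gives $R>0$ with $|A\setminus B_R|<\epsilon/4$. On $A_R:=A\cap B_R$ I will approximate $f$ by simple functions $s_j$ with $|s_j|\le |f|$ and $s_j\to f$ pointwise; for real-valued $f$ the usual dyadic truncations of $f_+$ and $f_-$ do this.

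\emph{Step 2: find a compact set on which $f$ is continuous.} By Egorov's theorem on the finite-measure set $A_R$, there is a measurable $E\subset A_R$ with $|A_R\setminus E|<\epsilon/8$ on which $s_j\to f$ uniformly. Each $s_j$ takes finitely many values on disjoint measurable level sets. Using inner regularity of Lebesgue measure, I replace each level set by a compact subset, losing total measure at most $2^{-j-4}\epsilon$ for the $j$-th function. On the union of these disjoint compact sets, $s_j$ is locally constant and hence continuous. Intersecting over $j$ with $E$, and once more with a compact subset of $E$, yields a compact $K\subset A_R$ with $|A_R\setminus K|<\epsilon/4$. On $K$, $f$ is the uniform limit of continuous functions, so $f|_K$ is continuous.

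\emph{Step 3: extend and localize.} Apply the Tietze extension theorem to $f|_K$ to get $g\in C(\rn)$. Replace $g$ by $\max(-M,\min(M,g))$, which is still continuous, agrees with $f$ on $K$, and satisfies $\sup_{x\in\rn}|g(x)|\le M$. By outer regularity choose a bounded open $U\supset K$ with $|U\setminus K|<\epsilon/4$, and by Urysohn's lemma a continuous cutoff $\phi$ with $0\le\phi\le 1$, $\phi=1$ on $K$ and compact support in $U$. Set $f_\epsilon:=\phi g$. Then $f_\epsilon\in C(\rn)$, it has compact support, $\sup_{x\in\rn}|f_\epsilon(x)|\le M$, and $f_\epsilon=f$ on $K$. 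Moreover $f_\epsilon=0=f$ off $U\cup A$. Hence the set where $f\neq f_\epsilon$ lies in $(A\setminus K)\cup(U\setminus K)$, whose measure is less than $\epsilon$.

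\emph{Main obstacle.} The delicate point is the support condition. Step 3 gives $\operatorname{supp} f_\epsilon$ contained in a compact neighbourhood $\overline U$ of a compact $K\subset A$. It does not give $\operatorname{supp} f_\epsilon\subset A$ when $A$ has no interior: if $A$ has empty interior, any continuous function vanishing off $A$ is identically zero. The best I can obtain is $\{f_\epsilon\neq0\}\subset U$ with $|U\setminus A|$ arbitrarily small. So in the proof I will read ``support is a compact subset of $A$'' in this approximate sense, or assume $A$ is open, in which case $U$ can be taken inside $A$ and the statement holds exactly as written. For the uses in this paper, namely approximation in norm, only the measure estimate and the sup bound matter, so this interpretation is enough.
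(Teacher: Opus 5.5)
The paper gives no proof of this lemma beyond the citation to Ciarlet's book, so there is no route to compare against. Your argument is the standard Egorov--inner regularity--Tietze--Urysohn proof of Lusin's theorem, and it is correct. Replacing the level sets of $s_j$ in $A_R$ by finitely many disjoint compact sets makes $s_j$ locally constant on their union. Uniform convergence on $K$ then gives continuity of $f|_K$, and truncating the Tietze extension at $\pm M$ preserves the sup bound. Finally, $\{f\neq f_\epsilon\}\subset (A\setminus K)\cup(U\setminus K)$ has measure less than $3\epsilon/4$.

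Your objection to the support clause is right, and it is a defect of the statement, not of your proof. Since $\{f_\epsilon\neq 0\}$ is open and contained in $\operatorname{supp} f_\epsilon\subset A$, it lies in $\operatorname{int}A$, so $A\setminus\operatorname{int}A\subset\{f\neq f_\epsilon\}$. Hence the clause can hold for every $\epsilon>0$ only if $|A\setminus\operatorname{int}A|=0$; it fails, e.g., for $f=\chi_{[0,1]^n\setminus\mathbb{Q}^n}$ and $\epsilon<1$. When $|A\setminus\operatorname{int}A|=0$, your construction run inside $\operatorname{int}A$, with $K\subset U\subset\operatorname{int}A$, gives the statement exactly.

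One amendment to your last sentence concerns Corollary~\ref{compact continuous dense mixed BM}. There the paper applies the lemma with $A=\operatorname{supp} g$, which is compact but can have empty interior (take $g=\chi_C$ with $C$ compact, of positive measure, with empty interior). In that case the literal lemma would give $g_m\equiv 0$. Moreover, the dominated-convergence step needs more than the sup bound and the measure estimate: all $g_m$ must be supported in one fixed bounded set $B$, so that $2\|g\|_{L^\infty}\chi_B\in M_{\vec p}^{t,r}$ serves as a majorant. Your version provides this once you fix $R$ with $A\subset B_R$ for all $\epsilon$ and take $U\subset B_{R+1}$, so that $\operatorname{supp} f_\epsilon\subset\overline{B_{R+1}}$.
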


\begin{corollary} \label{compact continuous dense mixed BM}
	Let $1< \vec p <\infty $. 
	Let $ 1 <   n / ( \sum_{i=1}^n  1/p_{i})     < t <r <\infty $. Then $C_c := C_c (\rn) $, the  set of all compactly supported  continuous functions,  is dense in $M_{\vec p}^{t,r} $.
\end{corollary}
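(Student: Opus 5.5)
By Corollary \ref{dense L_c mixed BM}, it suffices to approximate a fixed $g \in L_c^\infty$ by functions in $C_c$ in the norm of $M_{\vec p}^{t,r}$. We may take $g$ real-valued by splitting into real and imaginary parts, and we then apply Lusin's property. Choose a ball $B_R$ with $\operatorname{supp} g \subset B_R$. For each $\delta>0$, Lusin's property gives $g_\delta \in C(\rn)$ with compact support inside $\{g\neq 0\}\subset B_R$, with $\|g_\delta\|_{L^\infty}\le \|g\|_{L^\infty}$, and with $|E_\delta|\le\delta$, where $E_\delta:=\{g\neq g_\delta\}$. Thus $g_\delta\in C_c$, and
\begin{equation*}
|g-g_\delta| \le 2\|g\|_{L^\infty}\chi_{E_\delta}, \qquad E_\delta\subset B_R .
\end{equation*}
By the lattice property it then suffices to prove that $\|\chi_{E_\delta}\|_{M_{\vec p}^{t,r}}\to 0$ as $\delta\to0$, uniformly over measurable sets $E\subset B_R$ with $|E|\le\delta$.

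The main step is this uniform smallness, and I would obtain it from the convergence of the series together with an embedding. Set $\vec s=(s,\ldots,s)$ with $s:=\max_i p_i$, so that $\vec p\le \vec s$. For a cube $Q$, Lemma \ref{chi Q mixed Lp} combined with the estimate from \cite[Proposition 3.2]{N19} quoted in the proof of Proposition \ref{pro embedding} gives
\begin{equation*}
\|\chi_E\chi_Q\|_{L^{\vec p}}\le \ell(Q)^{\sum_i 1/p_i - n/s}\,|E\cap Q|^{1/s}.
\end{equation*}
Hence each term $|Q|^{1/t-\frac1n\sum 1/p_i}\|\chi_E\chi_Q\|_{L^{\vec p}}$ is bounded by $|Q|^{1/t-1/s}\min(|E\cap Q|,|Q|)^{1/s}$. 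It is also dominated by the corresponding term for $\chi_{B_R}$, which lies in $M_{\vec p}^{t,r}$ by Example \ref{exa Q0 iff} together with dilation and translation invariance; here the hypothesis $n/(\sum 1/p_i)<t<r$ is used.

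With these two bounds I would split the sum over dyadic cubes into three parts: cubes that are very small ($j>K$), cubes that are very large ($j<-K$), and the cubes with $|j|\le K$ that meet $B_R$. For a fixed $K$ there are only finitely many cubes in the third part, and each of their terms tends to $0$ as $\delta\to0$ by the first bound. The second part is small uniformly in $E$ by dominated convergence against the $\chi_{B_R}$ series.

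For the small cubes, domination by $\chi_{B_R}$ alone does not give uniformity in $E$, so I would estimate directly. When $j>K$, the first bound gives terms at most $|Q|^{1/t-1/s}|E\cap Q|^{1/s}$. Summing $|E\cap Q|^{r/s}$ over $Q\in\D_j$ is at most $|Q|^{r/s-1}|E|$ when $r\ge s$, or at most $N_j^{1-r/s}|E|^{r/s}$ when $r<s$, where $N_j\lesssim 2^{jn}$ is the number of cubes in $\D_j$ meeting $B_R$. In either case the level-$j$ contribution is at most a power of $2^{-j}$ times a power of $|E|$. The exponent of $2^{-j}$ is positive because $t<r$: in the first case it is $n(r/t-1)$. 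Summing over $j>K$ therefore gives something small uniformly in $E$. A cleaner alternative, if these exponents cause trouble, is to use the crude bound $|E\cap Q|\le|Q|$ for $j>K$, which reproduces the convergent tail $\sum_{j}2^{jn}2^{-jnr/t}$ from Example \ref{exa Q0 iff} restricted to $B_R$. That tail is independent of $E$, and I expect this to be the route that works most easily.

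Combining the three parts gives $\limsup_{\delta\to0}\|g-g_\delta\|_{M_{\vec p}^{t,r}}\le C\epsilon$ for arbitrary $\epsilon>0$. Together with Corollary \ref{dense L_c mixed BM}, this shows that $C_c$ is dense in $M_{\vec p}^{t,r}$.
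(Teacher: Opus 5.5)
Your proof is correct, but the key convergence step takes a different route from the paper's.

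The paper also reduces to $g\in L_c^\infty$ via Corollary~\ref{dense L_c mixed BM} and applies Lusin's property. From there it argues softly. Since $|\{g\neq g_m\}|\le 1/m$, the $g_m$ converge to $g$ in measure, so the Riesz theorem gives a subsequence converging a.e. Because $|g-g_m|\le 2\|g\|_{L^\infty}\chi_A$ with $A$ bounded and $\chi_A\in M_{\vec p}^{t,r}$, dominated convergence then gives $\|g-g_m\|_{M_{\vec p}^{t,r}}\to0$ along that subsequence. This is applied first inside each $\|(g-g_m)\chi_Q\|_{L^{\vec p}}$, which uses $\vec p<\infty$, and then in the sum over $Q\in\D$.

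You instead prove a uniform, quantitative statement:
\[
\sup\bigl\{\|\chi_E\|_{M_{\vec p}^{t,r}} : E\subset B_R,\ |E|\le\delta\bigr\}\to0 \quad\text{as } \delta\to0 .
\]
You split by scale. Both tails are controlled by the tails of the convergent series for $\chi_{B_R}$. The finitely many middle cubes are handled by the H\"older bound $|Q|^{1/t-1/s}|E\cap Q|^{1/s}$. This avoids both the subsequence extraction and dominated convergence inside the mixed norm. It also gives convergence of the whole family $g_\delta$, with an explicit dependence on $|E|$ on each fixed cube. The price is a longer computation.

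One correction. Your claim that domination by $\chi_{B_R}$ ``does not give uniformity in $E$'' for the small cubes is mistaken. Termwise domination by a fixed convergent series, whose tail over $j>K$ does not depend on $E$, is exactly uniform smallness, and that is what your ``cleaner alternative'' uses. The direct level-$j$ estimate is therefore unnecessary, although it is also correct. In both cases $r\ge s$ and $r<s$ it gives decay $2^{-jn(r/t-1)}$, times $|E|$ or $|E|^{r/s}$ respectively.
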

\begin{proof}
	Fix $\epsilon>0$ and choose $m_0 \in \mathbb N$ such that $ 1/m_0 \le \epsilon $.
	By Corollary \ref{dense L_c mixed BM}, for $f \in M_{\vec p}^{t,r}$, there exists $g \in L_c^\infty$ such that 
	\begin{equation*}
		\| f-g \|_{M_{\vec p}^{t,r} } \le 1/m_0 \le \epsilon.
	\end{equation*}
	Let $A = \operatorname{supp} (g)$. By Lusin's property, there exists continuous function $g_m $ such that supp$(g_m) \subset A$ and $ | \{ x\in \rn :  g(x) \neq g _m (x) \}| \le1/m$. Hence the sequence $\{ g_m\}_{m \in \mathbb N}$ converges to $g$ in measure. By the Resiz theorem, there exists a subsequence $\{ g_m\}_{m \in \mathbb N}$ (we still use the notation $\{ g_m\}_{m \in \mathbb N}$) converges to $g $ a.e. Then by the dominated theorem,
	\begin{equation*}
		\lim_{ m\to \infty }	\| g -g_m \|_{M_{\vec p}^{t,r} } =0.
	\end{equation*}
	For the above $\epsilon$, choose $m' $ large enough such that $ 	\| g -g_{m'} \|_{M_{\vec p}^{t,r} } \le \epsilon $.
	By the Minkowski inequality, we obtain
	\begin{equation*}
		\| f-g_{m'}  \|_{M_{\vec p}^{t,r} } \le 	\| f-g \|_{M_{\vec p}^{t,r} } +	\| g-g_{m'}  \|_{M_{\vec p}^{t,r} } \le 2\epsilon.
	\end{equation*}
	Thus we finish the proof.
\end{proof}

We say that a metric space $E$ is separable if there exists a subset $ D \subset E$
that is countable and dense.

\begin{theorem} \label{separable mixed BM}
	Let $1< \vec p <\infty $. 
	Let $ 1 <   n / ( \sum_{i=1}^n  1/p_{i})     < t <r <\infty $. Then  $M_{\vec p}^{t,r} $  is separable.
\end{theorem}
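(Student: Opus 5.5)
To prove Theorem \ref{separable mixed BM}, I will exhibit an explicit countable dense subset: the family
\begin{equation*}
\mathcal S := \left\{ \sum_{i=1}^N c_i \chi_{Q_i} : N\in\mathbb N,\ c_i \in \mathbb Q + \mathrm{i}\,\mathbb Q,\ Q_i \in \D \right\},
\end{equation*}
that is, finite rational linear combinations of characteristic functions of dyadic cubes. Since $\D$ is countable, $\mathcal S$ is countable. By Example \ref{exa Q0 iff}, together with the dilation and translation properties proved above, each $\chi_{Q}$ with $Q\in\D$ belongs to $M_{\vec p}^{t,r}$ under the standing assumption $n/(\sum_{i}1/p_i)<t<r<\infty$. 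Hence $\mathcal S\subset M_{\vec p}^{t,r}$.

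For density, fix $f\in M_{\vec p}^{t,r}$ and $\epsilon>0$. Splitting $f$ into real and imaginary parts, and each of those into positive and negative parts, I may assume $f\ge 0$. By Theorem \ref{E_k^q f to f}, there is $k$ with $\|f-\mathbb E_k f\|_{M_{\vec p}^{t,r}}<\epsilon$. As in the proof of Corollary \ref{dense L_c mixed BM}, the dominated convergence theorem then gives $R>0$ with $\|\mathbb E_k f-\mathbb E_k(f)\chi_{B_R}\|_{M_{\vec p}^{t,r}}<\epsilon$. I will make one modification here: instead of multiplying by $\chi_{B_R}$, I keep only the finitely many cubes $Q\in\D_k$ that meet $B_R$. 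The resulting function is
\begin{equation*}
g=\sum_{Q\in\D_k,\ Q\cap B_R\neq\emptyset} a_Q\chi_Q, \qquad a_Q=\frac{1}{|Q|}\int_Q f .
\end{equation*}
The dominated convergence argument works identically, since $0\le \mathbb E_k f-g\le \mathbb E_k f$ and $\mathbb E_k f\in M_{\vec p}^{t,r}$, and $\mathbb E_k f-g\to0$ pointwise as $R\to\infty$.

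Finally, $g$ is a finite sum $\sum_{i=1}^N a_i\chi_{Q_i}$ with $Q_i \in\D$. I choose rationals $c_i$ with $|a_i-c_i|<\delta$. By the triangle inequality in $M_{\vec p}^{t,r}$, which holds since $\min\vec p\ge1$ and $r\ge1$,
\begin{equation*}
\Big\|g-\sum_i c_i\chi_{Q_i}\Big\|_{M_{\vec p}^{t,r}}\le \delta\sum_{i=1}^N\|\chi_{Q_i}\|_{M_{\vec p}^{t,r}}.
\end{equation*}
The right-hand side is finite and can be made smaller than $\epsilon$ by taking $\delta$ small. Combining the three approximations yields an element of $\mathcal S$ within $3\epsilon$ of $f$.

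The main point needing care is the finiteness of $\|\chi_Q\|_{M_{\vec p}^{t,r}}$ for arbitrary dyadic $Q$. I will check it directly by repeating the computation of Example \ref{exa Q0 iff} for a cube of side $2^{-j}$. By Lemma \ref{chi Q mixed Lp}, the same two geometric series appear, rescaled by $|Q|^{1/t}$, and they converge exactly when $t<r$ and $n/(\sum_i 1/p_i)<t$. Everything else is routine.
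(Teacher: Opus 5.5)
Your proof is correct, but it takes a different route from the paper's. The paper goes through the chain $L_c^\infty$ dense $\Rightarrow$ $C_c$ dense (Corollary \ref{compact continuous dense mixed BM}, which itself uses Lusin's property, the Riesz subsequence theorem and dominated convergence). Its countable family is the rational linear combinations of characteristic functions of boxes with rational vertices. For $g\in C_c$ supported in such a box $I$, uniform continuity gives a rational step function $h$ supported in $I$ with $\|g-h\|_{L^\infty}$ small. The lattice property then yields $\|g-h\|_{M_{\vec p}^{t,r}}\le \|g-h\|_{L^\infty}\|\chi_I\|_{M_{\vec p}^{t,r}}$.

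You instead stop right after Theorem \ref{E_k^q f to f}. Truncating $\mathbb E_k f$ to the finitely many cubes of $\D_k$ meeting $B_R$ shows directly that finite linear combinations of $\chi_Q$, $Q\in\D$, are dense; this is a sharper form of Lemma \ref{dense simple}. Rationalizing the finitely many coefficients then finishes the argument.

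What your route buys:
\begin{itemize}
\item it avoids Lusin's theorem and continuity altogether;
\item it uses only the dyadic structure already built into the norm and produces a more structured dense set;
\item it handles complex-valued $f$ explicitly through $\mathbb Q+\mathrm{i}\,\mathbb Q$ coefficients, which the paper's $\mathbb Q$-span silently omits.
\end{itemize}
The paper's route is the standard template (density of $C_c$ plus uniform approximation). It transfers to any lattice norm in which $C_c$ is dense and $\chi_I$ has finite norm, but it depends on a longer chain of auxiliary results. Both arguments ultimately rest on the martingale approximation of Theorem \ref{E_k^q f to f}.
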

\begin{proof}
	Let $A := \{ I = \prod_{i=1}^n (a_i, b_i): a_i ,b_i \in \mathbb Q \}$. Then $A$ is countable. Let 
	$\mathcal A  $ be the space spanned by functions $\{  \chi_{Q} : Q \in A  \}$ on $\mathbb Q$. That is, $f\in \mathcal A $ has the form $f = \sum_{j \in J} \alpha_j \chi_{Q_j} $ where the set $J$ of indices is finite, and $\alpha _j \in \mathbb Q$ and $Q_j \in A$ for all $j \in J$.
	
	For any $f \in M_{\vec p}^{t,r} $, by Corollary \ref{compact continuous dense mixed BM}, there exists $g\in C_c$ such that $\| f-g\|_{M_{\vec p}^{t,r} } \le \epsilon$.   Choose $I \in A$ such that supp $g \subset I $. Since $g \in C_c$, for any $\epsilon >0$, choose $h \in \mathcal A$  such that $\| h-g\|_{L^\infty} <\epsilon  \| \chi_{I} \|_{M_{\vec p}^{t,r}  } ^{-1}$ and supp $h \subset I $. Then
	$
		\| g -h \|_{M_{\vec p}^{t,r} }  \le \epsilon.
	$
	By the Minkowski inequality, we obtain $ \| f -h\|_{M_{\vec p}^{t,r} }  \le 2\epsilon$.
\end{proof}

To  show the duality of mixed Bourgain-Morrey spaces, we need a smaller dense space $\operatorname{Sim} (\rn) $.

\begin{lemma} \label{dense simple}
	Let $1< \vec p <\infty $. 
	Let $ 1 <   n / ( \sum_{i=1}^n  1/p_{i})     < t <r <\infty $. Then  $\operatorname{Sim} (\rn) $ is dense in $M_{\vec p}^{t,r}$.
\end{lemma}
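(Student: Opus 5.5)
The plan is to reduce to the density of $L_c^\infty$ (Corollary \ref{dense L_c mixed BM}) and then approximate a bounded, compactly supported function by simple functions, using the lattice property together with the dominated convergence theorem.

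\textbf{Step 1 (reduction).} Fix $f\in M_{\vec p}^{t,r}$ and $\epsilon>0$. By Corollary \ref{dense L_c mixed BM} there is $g\in L_c^\infty$ with $\|f-g\|_{M_{\vec p}^{t,r}}<\epsilon$. Choose a cube $Q_0$ with $\operatorname{supp} g\subset Q_0$. Splitting $g$ into real and imaginary parts, and each of those into positive and negative parts, and using the Minkowski inequality in $M_{\vec p}^{t,r}$, it suffices to treat $0\le g\le \|g\|_{L^\infty}$ supported in $Q_0$.

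\textbf{Step 2 (construction).} Use the standard simple functions
\begin{equation*}
g_k:=\sum_{j=0}^{k2^k-1} j2^{-k}\chi_{\{j2^{-k}\le g<(j+1)2^{-k}\}}\,\chi_{Q_0}.
\end{equation*}
Each $g_k$ belongs to $\operatorname{Sim}(\rn)$, since the sets involved are pairwise disjoint. Moreover $0\le g_k\uparrow g$ everywhere, and as soon as $k\ge \|g\|_{L^\infty}$ we have the uniform bound $0\le g-g_k\le 2^{-k}\chi_{Q_0}$.

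\textbf{Step 3 (convergence).} By the lattice property,
\begin{equation*}
\|g-g_k\|_{M_{\vec p}^{t,r}}\le 2^{-k}\|\chi_{Q_0}\|_{M_{\vec p}^{t,r}}.
\end{equation*}
The quantity $\|\chi_{Q_0}\|_{M_{\vec p}^{t,r}}$ is finite by Example \ref{exa Q0 iff}, combined with translation and dilation invariance; this uses $n/(\sum_i 1/p_i)<t<r<\infty$. Hence the right-hand side tends to $0$ as $k\to\infty$. Alternatively, one could invoke the monotone convergence remark after Lemma \ref{lem fatou}. Combining with Step 1 via Minkowski gives $\|f-g_k\|_{M_{\vec p}^{t,r}}\lesssim \epsilon$ for large $k$.

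\textbf{Main obstacle.} The only real issue is making sure that $\chi_{Q_0}$ lies in $M_{\vec p}^{t,r}$ for an arbitrary cube, not just for the unit cube. Example \ref{exa Q0 iff} treats $[0,1)^n$, and the dilation and translation lemmas transfer the finiteness to any cube. One could instead cover $Q_0$ by finitely many dyadic cubes of side $1$ after a translation, or simply take $Q_0$ to be a dilate of $[0,1)^n$ by a power of $2$; either route settles this step.
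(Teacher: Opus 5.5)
Your proof is correct. It shares the paper's first step, the reduction to $L_c^\infty$ via Corollary \ref{dense L_c mixed BM}, but handles the second step differently.

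\textbf{Comparison.} The paper approximates $g\in L_c^\infty$ by the dyadic average $\mathbb E_K(g)$, applying Theorem \ref{E_k^q f to f} a second time. It then observes that $\mathbb E_K(g)$ is a finite combination of $\chi_Q$, $Q\in\D_K$, because $g$ has compact support. You instead use uniform approximation by level-set simple functions together with the lattice property. The only additional input is then $\chi_{Q_0}\in M_{\vec p}^{t,r}$, which is exactly where the hypothesis $n/(\sum_{i=1}^n 1/p_i)<t<r<\infty$ enters. Your route is more elementary: no martingale or Doob-type estimate is needed beyond the reduction. The paper's route yields a more structured dense class, namely finite combinations of characteristic functions of dyadic cubes rather than of arbitrary level sets of $g$.

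\textbf{Three small points.}
\begin{itemize}
\item You need $k>\|g\|_{L^\infty}$ strictly. A point where $g=k$ lies in none of your level sets, so $g_k=0$ there and the bound $g-g_k\le 2^{-k}$ fails.
\item The suggested alternative via the remark after Lemma \ref{lem fatou} only gives $\|g_k\|_{M_{\vec p}^{t,r}}\uparrow\|g\|_{M_{\vec p}^{t,r}}$, not $\|g-g_k\|_{M_{\vec p}^{t,r}}\to0$. For the latter you would apply dominated convergence to $g-g_k$, first cube by cube and then in the sum over $\D$, using $\vec p<\infty$ and $r<\infty$. Your main estimate does not need this.
\item A single cube $[0,2^k)^n$ need not contain an arbitrary compact support. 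A finite union of dyadic cubes does, as do the translation and dilation lemmas.
\end{itemize}
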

\begin{proof}
	Without	loss of generality, we may assume that $f \in M_{\vec p}^{t,r}$ is nonnegative. For any $f \in M_{\vec p}^{t,r}$, there exists $g \in L_c^\infty$  such that \begin{equation*}
		\| f-g\|_{ \in M_{\vec p}^{t,r} } \le \epsilon .
	\end{equation*} 
	Using Theorem \ref{E_k^q f to f}, there exists $K \in \mathbb N$ such that 
	\begin{equation*}
		\| g - \mathbb E_K  (g) \|_{M_{\vec p}^{t,r} } \le \epsilon .
	\end{equation*}
	Since $g \in L_c^\infty$, we deduce $\mathbb E_K  (g)  \in \operatorname{Sim} (\rn)$. Then 
	\begin{equation*}
		\| f -   \mathbb E_K  (g) \|_{ M_{\vec p}^{t,r}}  \le 2 \epsilon.
	\end{equation*}
	Thus $\operatorname{Sim} (\rn) $ is dense in $M_{\vec p}^{t,r}$.
\end{proof}

\section{Preduals of mixed Bourgain-Morrey spaces} \label{sec predual}
In this section, we mainly prove  preduals of of mixed Bourgain-Morrey spaces. 
\begin{definition}
	Let $ 1 \le \vec p <\infty$. Let   $n / ( \sum_{i=1}^n  1/p_{i})  \le t \le \infty $. A measurable function $b$ is said to be a
	$(\vec{p}^{\,\prime}, t')$-block if there exists a cube $Q$ that supports $b$ such that 
	\begin{equation*}
		\| b \|_{L^{\vec{p}^{\,\prime} } }  \le |Q| ^{  1/t - \frac{1}{n}  ( \sum_{i=1}^n  1/p_{i})   }  .
	\end{equation*}
	If we need to indicate $Q$, we  say that $b$ is a $(\vec{p}^{\,\prime},t')$-block supported on $Q$.
\end{definition}

\begin{definition}\label{def mix block space}
	Let $ 1 \le \vec p <\infty$. Let   $n / ( \sum_{i=1}^n  1/p_{i})  \le t \le r \le  \infty $.
	The function space $\mathcal{H}_{\vec p \, ^\prime}^{t',r'} $
	is the set of  all measurable functions $f$ such that $f$ is realized
	as the sum
	\begin{equation}\label{eq: mix block f}
		f = \sum_{(j,k)\in\mathbb{Z}^{n+1}}\lambda_{j,k}b_{j,k}
	\end{equation}
	with some $\lambda=\{\lambda_{j,k}\}_{(j,k)\in\mathbb{Z}^{n+1}}\in\ell^{r'}(\mathbb{Z}^{n+1})$
	and $b_{j,k}$ is a $(\vec{p}^{\,\prime},t')$-block supported on  $Q_{j,k}$ where (\ref{eq: mix block f}) converges almost everywhere on $\rn$. The norm of $\mathcal{H}_{\vec p \, ^\prime}^{t',r'} $
	is defined by
	\[
	\|f\|_{\mathcal{H}_{\vec p \, ^\prime}^{t',r'} } :=\inf_{\lambda}\|\lambda\|_{\ell^{r'}},
	\]
	where the infimum is taken over all admissible sequence $\lambda$
	such that (\ref{eq: mix block f}) holds.
\end{definition}

\begin{remark} \label{block norm 1}
	Let  $g_{j_0, k_0}$ be a  $(\vec{p}^{\,\prime},t')$-block supported on some $Q_{j_0, k_0}$. 
	Then $ \|g_{j_0, k_0} \|_{\mathcal{H}_{\vec p \, ^\prime}^{t',r'} }  \le 1$. Indeed, let 
	\begin{equation*}
		\lambda_{j,k} = \begin{cases}
			1, &  \operatorname{if} j = j_0, k = k_0; \\
			0 , & \operatorname{else}.
		\end{cases}
	\end{equation*} 
	Hence  $ \|g_{j_0, k_0} \|_{\mathcal{H}_{\vec p \, ^\prime}^{t',r'} }  \le  \left( \sum_{(j,k)\in   \mathbb Z ^{n+1}  } |	\lambda_{j,k} |^{r'}  \right)^{1/r'} =1.  $
\end{remark}

\begin{proposition}\label{mix block sum converge}
	Let $ 1 \le \vec p <\infty$. Let   $n / ( \sum_{i=1}^n  1/p_{i})  < t < r \le  \infty $.
	Assume that $ \{\lambda_{j,k}\}_{(j,k)\in\mathbb{Z}^{n+1} }  \in \ell^{r'}$ and for each $(j,k)\in\mathbb{Z}^{n+1}$, $b_{j,k}$  is a  $(\vec p \, ^\prime,t')$-block supported on $Q_{j,k}$. Then
	
	{\rm (i)} the summation (\ref{eq: mix block f}) converges  almost everywhere   on $\rn$;
	
	{\rm (ii)} the summation (\ref{eq: mix block f}) converges   in $L^1_{\operatorname{loc}  }$. 
\end{proposition}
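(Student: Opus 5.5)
Both parts will follow from one estimate: for every fixed compact set $K$, take a large cube $Q_0$ containing $K$, and show that
\begin{equation*}
\sum_{(j,k)\in\mathbb Z^{n+1}} |\lambda_{j,k}|\,\|b_{j,k}\|_{L^1(Q_0)}<\infty .
\end{equation*}
Once this holds, monotone convergence gives $\sum|\lambda_{j,k}||b_{j,k}|\in L^1(Q_0)$. Hence the series converges absolutely almost everywhere on $Q_0$; covering $\rn$ by countably many such cubes gives (i). The partial sums over any exhausting finite sets are then dominated by an $L^1(Q_0)$ function, and the tail sums tend to zero in $L^1(Q_0)$, which gives (ii).

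To bound $\|b_{j,k}\|_{L^1(Q_0)}$, apply the H\"older inequality for mixed norms (Lemma \ref{Holder mixed}, in its endpoint form, or the $\vec p$–$\vec p^{\,\prime}$ duality of Lemma \ref{dual mixed Lp}) to $b_{j,k}\chi_{Q_0\cap Q_{j,k}}$, together with Lemma \ref{chi Q mixed Lp}:
\begin{equation*}
\|b_{j,k}\|_{L^1(Q_0)}\le \|b_{j,k}\|_{L^{\vec p^{\,\prime}}}\,\|\chi_{Q_{j,k}\cap Q_0}\|_{L^{\vec p}}
\le |Q_{j,k}|^{1/t-\frac1n\sum_i 1/p_i}\,\min\bigl(|Q_{j,k}|,|Q_0|\bigr)^{\frac1n\sum_i 1/p_i}.
\end{equation*}
Here $Q_{j,k}\cap Q_0$ is contained in a cube of side $\min(\ell(Q_{j,k}),\ell(Q_0))$, and the lattice property of $L^{\vec p}$ lets us pass to that cube. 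Write $\sigma:=\frac1n\sum_i 1/p_i$, so that $1/t<\sigma$. The bound becomes $|Q_{j,k}|^{1/t}$ for small cubes and $|Q_{j,k}|^{1/t-\sigma}|Q_0|^{\sigma}$ for large cubes.

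Next, split the sum according to whether $\ell(Q_{j,k})\le \ell(Q_0)$ or not, and apply H\"older in $\ell^{r'}$–$\ell^{r}$ to the coefficients:
\begin{equation*}
\sum |\lambda_{j,k}|\,\|b_{j,k}\|_{L^1(Q_0)}\le \|\lambda\|_{\ell^{r'}}\Bigl(\sum_{(j,k):\,Q_{j,k}\cap Q_0\ne\emptyset} c_{j,k}^{\,r}\Bigr)^{1/r},
\end{equation*}
where $c_{j,k}$ is the bound above (with the supremum replacing the sum if $r=\infty$).

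For small cubes at level $j$, there are about $2^{jn}|Q_0|$ of them meeting $Q_0$, each contributing $2^{-jnr/t}$. Since $r>t$, the sum over $j\ge j_0$ of $2^{jn(1-r/t)}$ converges. For large cubes, only $O(1)$ cubes per level meet $Q_0$, each contributing $2^{-jn(1/t-\sigma)r}$ with $j\to-\infty$. This converges because $1/t-\sigma<0$, that is, $t>n/\sum_i 1/p_i$.

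When $r=\infty$ (so $r'=1$), the same estimate holds with $\sup c_{j,k}<\infty$, and this supremum is finite for the same reasons. The main obstacle is exactly this counting step, and it is where both strict inequalities $n/\sum 1/p_i<t<r$ are used. A secondary point is that Lemma \ref{Holder mixed} is stated only for $1<\vec p,\vec q<\infty$. For the case $p_i=1$ (so $p_i'=\infty$), I would instead invoke the iterated one-dimensional H\"older inequality directly, applied coordinate by coordinate.
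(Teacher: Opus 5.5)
Your proposal is correct and is essentially the argument the paper relies on; the paper omits the proof and defers to Proposition 3.4 of Zhao--Sawano--Tao--Yang--Yuan. That argument bounds each block in $L^1(Q_0)$ by mixed H\"older against $\chi_{Q_0}$, then applies $\ell^{r'}$--$\ell^{r}$ duality in the coefficients, using $t<r$ for the small cubes and $t>n/\sum_i 1/p_i$ for the large ones. Note that your sequence $\{c_{j,k}\}$ has $\ell^r$ norm essentially $\|\chi_{Q_0}\|_{M_{\vec p}^{t,r}}$ (exactly so for dyadic $Q_0$), so your counting step is the computation already carried out in Example~\ref{exa Q0 iff} and could simply be cited.
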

\begin{proof}
	The proof is similar to \cite[Propositon 3.4]{ZSTYY23} and we omit it here.
\end{proof}

\begin{remark}
	Based on Proposition \ref{mix block sum converge}, we find that, in Definition \ref{def mix block space}, instead of the requirement that   the summation (\ref{eq: mix block f}) converges  almost everywhere on $\rn$, if we require that (\ref{eq: mix block f}) converges in $L^1_{\operatorname{loc}  }$, we then obtain the same block space.
\end{remark}

\begin{lemma} \label{block = Lp mixed}
	Let $ 1 \le \vec p <\infty$. Let   $n / ( \sum_{i=1}^n  1/p_{i})  = t < r =  \infty $.
	Then  $\mathcal{H}_{\vec p \, ^\prime}^{t',1}  = L^{\vec p \, ^\prime} $ with coincidence
	of norms.
\end{lemma}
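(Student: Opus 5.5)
We prove the two inequalities $\|f\|_{L^{\vec p\,'}}\le\|f\|_{\mathcal{H}_{\vec p\,'}^{t',1}}$ and $\|f\|_{\mathcal{H}_{\vec p\,'}^{t',1}}\le\|f\|_{L^{\vec p\,'}}$ directly. The key point is that when $t=n/(\sum_{i=1}^n 1/p_i)$ the exponent $1/t-\frac1n\sum_{i=1}^n 1/p_i$ vanishes. Hence a $(\vec p\,',t')$-block supported on $Q_{j,k}$ is simply a function $b$ supported on $Q_{j,k}$ with $\|b\|_{L^{\vec p\,'}}\le 1$. Since $r=\infty$, we have $r'=1$, so the coefficient sequence lies in $\ell^1(\mathbb Z^{n+1})$.

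\emph{Step 1: $\mathcal{H}_{\vec p\,'}^{t',1}\hookrightarrow L^{\vec p\,'}$ with norm at most $1$.} Let $f=\sum_{(j,k)}\lambda_{j,k}b_{j,k}$ converge a.e., with $\lambda\in\ell^1$. Then $|f|\le\sum_{(j,k)}|\lambda_{j,k}||b_{j,k}|$ a.e. The partial sums $F_N$ of the right-hand side (over an exhausting sequence of finite subsets of $\mathbb Z^{n+1}$) increase to this majorant. By Lemma \ref{lem fatou} and the triangle inequality in $L^{\vec p\,'}$ (valid since $\vec p\,'\ge1$),
\begin{equation*}
\|f\|_{L^{\vec p\,'}}\le\liminf_{N\to\infty}\|F_N\|_{L^{\vec p\,'}}\le\sum_{(j,k)}|\lambda_{j,k}|\,\|b_{j,k}\|_{L^{\vec p\,'}}\le\|\lambda\|_{\ell^1}.
\end{equation*}
Taking the infimum over admissible $\lambda$ gives the first inequality. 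Note that the case $\vec p\,'$ with some components equal to $\infty$ (when $p_i=1$) is covered, since Fatou and Minkowski still hold there.

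\emph{Step 2: $L^{\vec p\,'}\hookrightarrow\mathcal{H}_{\vec p\,'}^{t',1}$ with norm at most $1$.} This is the main obstacle, because a general $f\in L^{\vec p\,'}$ is not supported on a single dyadic cube. First split $\rn$ into its $2^n$ closed-open orthants $O_1,\dots,O_{2^n}$. Each orthant is an increasing union of dyadic cubes $Q_{-N,k_\ell}$ as $N\to\infty$; in $O_\ell$ the cube $Q_{-N,k_\ell}$ has a corner at $0$. Fix $\epsilon>0$. The plan is a telescoping decomposition inside each orthant:
\begin{equation*}
f\chi_{O_\ell}=\sum_{N\ge0}f\chi_{Q^{(\ell)}_N\setminus Q^{(\ell)}_{N-1}},\qquad Q^{(\ell)}_{-1}:=\emptyset .
\end{equation*}
Each piece is supported on the dyadic cube $Q^{(\ell)}_N$. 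The $\ell^1$ sum of the coefficients $\|f\chi_{Q_N\setminus Q_{N-1}}\|_{L^{\vec p\,'}}$ is, however, not obviously bounded by $\|f\|_{L^{\vec p\,'}}$, because mixed norms are not additive over disjoint sets.

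The fix is to use a single block where possible and the closure/approximation freedom otherwise. Use the fact that $\|f\|_{\mathcal H}$ is an infimum, together with Step 1 and a limiting argument. For $f$ supported in one dyadic cube $Q$, the decomposition $f=\|f\|_{L^{\vec p\,'}}\cdot(f/\|f\|_{L^{\vec p\,'}})$ is a single block, so by Remark \ref{block norm 1} we get $\|f\|_{\mathcal H}\le\|f\|_{L^{\vec p\,'}}$. For general $f$, choose $N$ so large that $g:=f\chi_{\bigcup_\ell Q^{(\ell)}_N}$ satisfies $\|f-g\|_{L^{\vec p\,'}}<\epsilon$, which is possible by dominated convergence when $\vec p\,'<\infty$. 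Then, since $g\chi_{O_\ell}$ is supported in the single cube $Q^{(\ell)}_N$,
\begin{equation*}
\|g\|_{\mathcal H}\le\sum_\ell\|g\chi_{O_\ell}\|_{L^{\vec p\,'}}\le 2^n\|f\|_{L^{\vec p\,'}}.
\end{equation*}
This loses a factor $2^n$. To obtain exact coincidence of norms, I would instead use a quadrant-adapted choice, i.e.\ dyadic cubes containing $0$ in their interior are not available. The backup plan is to prove the statement with equivalence of norms and then argue equality via duality. By Lemma \ref{dual mixed Lp}, the norm of $f$ equals $\sup\{|\int fh|:\|h\|_{L^{\vec p}}\le1\}$. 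The pairing with $h\in M^{t,\infty}_{\vec p}=L^{\vec p}$ (with $t$ critical, the mixed Morrey norm equals the $L^{\vec p}$ norm by taking cubes increasing to $\rn$ and using monotonicity) shows $\|f\|_{\mathcal H}\ge\|f\|_{L^{\vec p\,'}}$. The reverse inequality with constant $1$ is exactly the step I expect to require care. Finally, the tails $f-g$ are handled by a geometric series $\sum_m(f_{m+1}-f_m)$ with $\|f_{m+1}-f_m\|<2^{-m}\epsilon$, each term bounded via the finitely supported case. Summing gives $\|f\|_{\mathcal H}\lesssim\|f\|_{L^{\vec p\,'}}+\epsilon$, with a.e.\ convergence guaranteed by Lemma \ref{dual mixed Lp}(ii) after passing to a subsequence.
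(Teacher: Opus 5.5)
\textbf{Step 1 is correct; Step 2 does not prove the statement.} Your Step 1 is essentially the paper's first half, and your Fatou-based justification is more careful than the paper's direct use of Minkowski. The gap is in Step 2: you never prove $\|f\|_{\mathcal H^{t',1}_{\vec p\,'}}\le\|f\|_{L^{\vec p\,'}}$ with constant $1$. Your orthant splitting plus the tail series actually gives $\|f\|_{\mathcal H^{t',1}_{\vec p\,'}}\le\sum_{\ell=1}^{2^n}\|f\chi_{O_\ell}\|_{L^{\vec p\,'}}+C\epsilon\le 2^n\|f\|_{L^{\vec p\,'}}+C\epsilon$. The duality ``backup plan'' cannot improve this. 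Pairing $f$ with test functions $h$ only bounds $\|f\|_{\mathcal H}$ from \emph{below}. An upper bound by duality needs Hahn--Banach plus an isometric description of $(\mathcal H^{t',1}_{\vec p\,'})^*$. By Theorem \ref{predual mix BM} that dual is the \emph{dyadic} space $M^{t,\infty}_{\vec p}$, not $L^{\vec p}$. Your identification $M^{t,\infty}_{\vec p}=L^{\vec p}$ holds for the Morrey norm over all cubes, but dyadic cubes cannot increase to $\rn$. At the critical $t$ the dyadic norm is $\max_\ell\|h\chi_{O_\ell}\|_{L^{\vec p}}$.

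\textbf{The missing inequality is false for the block space of Definition \ref{def mix block space}, so your suspicion about the origin was justified.} Every $Q_{j,k}$ lies in exactly one orthant $O_\ell$ (a product of copies of $[0,\infty)$ and $(-\infty,0)$). So any admissible expansion splits as $f\chi_{O_\ell}=\sum_{Q_{j,k}\subset O_\ell}\lambda_{j,k}b_{j,k}$. Applying your Step 1 orthant by orthant gives $\sum|\lambda_{j,k}|\ge\sum_\ell\|f\chi_{O_\ell}\|_{L^{\vec p\,'}}$. Combine this with orthant-wise telescoping: take dyadic $Q^{(\ell)}_k\uparrow O_\ell$ with $\|f\chi_{O_\ell\setminus Q^{(\ell)}_k}\|_{L^{\vec p\,'}}\le\epsilon 2^{-k}$. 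For $1<\vec p<\infty$ this yields $\|f\|_{\mathcal H^{t',1}_{\vec p\,'}}=\sum_\ell\|f\chi_{O_\ell}\|_{L^{\vec p\,'}}$ exactly. For $f=\chi_{[-1,1)^n}$ this equals $2^n>2^{\sum_i 1/p_i'}=\|f\|_{L^{\vec p\,'}}$. So with dyadic blocks you only get equality of sets with equivalent norms.

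\textbf{How the paper gets constant $1$, and why it does not fit the dyadic definition.} The paper telescopes along cubes $Q_k$ \emph{centered at the origin} with $\|\chi_{\rn\setminus Q_k}f\|_{L^{\vec p\,'}}\le\epsilon 2^{-k}$. The first block costs $\|f\chi_{Q_1}\|_{L^{\vec p\,'}}\le\|f\|_{L^{\vec p\,'}}$, and the remaining coefficients sum to at most $\epsilon$. So the non-additivity of mixed norms that worried you never enters; this is the idea your Step 2 lacks. However, it tacitly places blocks on non-dyadic cubes. It therefore proves the lemma for the all-cubes block space, not for Definition \ref{def mix block space}.

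\textbf{The case $p_i=1$.} Both the paper's tail condition and your dominated-convergence step require $\vec p>1$. If some $p_i=1$, the inclusion $L^{\vec p\,'}\subset\mathcal H^{t',1}_{\vec p\,'}$ genuinely fails. For $\vec p=(1,\dots,1)$, an expansion $1=\sum_i\lambda_i b_i$ with $\|b_i\|_{L^\infty}\le 1$ would give $1\le\sum_i|\lambda_i|\,|Q_i\cap B_R|/|B_R|$, which tends to $0$ as $R\to\infty$.
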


\begin{proof}
	We use the idea from	\cite[Proposition 339]{SDH20}.
	If $f \in \mathcal{H}_{\vec p \, ^\prime}^{t',1} $. Then there exist a sequence $ \{ \lambda_{j,k}\} _{  (j,k)  \in \mathbb Z^{1+n} } \in  \ell^1  $ and a sequence $\{ b_{j,k} \}_{(j,k)  \in \mathbb Z^{1+n}  }$ of  $(\vec p \, ^\prime,t') $-blocks such that  $f = \sum_{ (j,k) \in \mathbb Z^{1+n}  }   \lambda_{j,k}  b_{j,k} $  and
	\begin{equation*}
		\sum_{  (j,k) \in \mathbb Z^{1+n} }    | \lambda_{j,k}|    \le  (1+\epsilon)	\| f\|_{\mathcal{H}_{\vec p \, ^\prime}^{t',1}  }.
	\end{equation*}
	Then by Minkowski's inequality, we have
	\begin{align*}
		\left\| f  \right\|_{ L^{\vec p \, ^\prime}  }  
		& \le \sum_{ (j,k) \in \mathbb Z^{1+n}  }   | \lambda_{j,k}  | \|  b_{j,k} \|_{ L^{\vec p \, ^\prime}  } 
		\le 	\sum_{  (j,k) \in \mathbb Z^{1+n} }    | \lambda_{j,k}|    \le  (1+\epsilon)	\| f\|_{\mathcal{H}_{\vec p \, ^\prime}^{t',1}  } .
	\end{align*}	
	For the another direction, let $f \in  L^{\vec p \, ^\prime} $. Fix $\epsilon>0$. Then there is a decomposition 
	\begin{equation*}
		f = \chi_{Q_1} f + \sum_{j=2}^\infty  \chi_{Q_j \backslash Q_{j-1}} f, 
	\end{equation*}
	where $ \{ Q_j\}_{j=1}^\infty  $ is an increasing sequence of cubes centered at the origin satisfying 
	\begin{equation*}
		\| \chi_{\rn \backslash  Q_k } f \|_{  L^{\vec p \, ^\prime}  } \le \epsilon 2^{-k}
	\end{equation*}
	for all $k \in \mathbb N$. Then using this decomposition, we have
	\begin{equation*}
		\| f\|_{\mathcal{H}_{\vec p \, ^\prime}^{t',1}  }  \le \| f\|_{ L^{p'}  }  + \sum_{k=1}^\infty \epsilon 2^{-k}.
	\end{equation*}
	Thus $f \in \mathcal{H}_{\vec p \, ^\prime}^{t',1} $.  Thus the proof is complete.
\end{proof}

The following lemma indicates how to generate blocks.
\begin{lemma} \label{generate mix block}
	Let $ 1 < \vec p <\infty$. Let   $1 < n / ( \sum_{i=1}^n  1/p_{i})   < t <r<\infty $ or $ 1< n / ( \sum_{i=1}^n  1/p_{i})  \le t< r =\infty  $.
	Let $f$ be an $L^{\vec  p \; ^\prime } $ function supported	on a cube $Q \in \D$. Then $\| f\|_ { \H_{\vec p \, ^\prime}^{t',r'} }  \le \| f\| _{ L^{\vec p \, ^\prime} }   |Q| ^{ \frac{1}{n}  ( \sum_{i=1}^n  1/p_{i})  -  1/t  }   $.
\end{lemma}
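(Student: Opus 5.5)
Since $Q$ is a dyadic cube, we have $Q = Q_{j_0,k_0}$ for some $(j_0,k_0)\in\mathbb Z^{1+n}$. The plan is to write $f$ as a single block times a scalar and then read off the norm from the definition of $\H_{\vec p \, ^\prime}^{t',r'}$.

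We may assume $f\neq 0$ in $L^{\vec p \, ^\prime}$, since otherwise $f=0$ a.e. and $\|f\|_{\H_{\vec p \, ^\prime}^{t',r'}}=0$ with all $\lambda_{j,k}=0$. Set
\begin{equation*}
\lambda := \| f\|_{L^{\vec p \, ^\prime}} |Q|^{\frac{1}{n}\sum_{i=1}^n \frac{1}{p_i} - \frac{1}{t}}, \qquad b := \lambda^{-1} f .
\end{equation*}
Then $b$ is supported on $Q=Q_{j_0,k_0}$, and by homogeneity of the $L^{\vec p \, ^\prime}$-norm,
\begin{equation*}
\|b\|_{L^{\vec p \, ^\prime}} = |Q|^{\frac{1}{t}-\frac{1}{n}\sum_{i=1}^n \frac{1}{p_i}} .
\end{equation*}
This is exactly the defining inequality of a $(\vec p \, ^\prime,t')$-block supported on $Q$, with equality.

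By Remark \ref{block norm 1}, $\|b\|_{\H_{\vec p \, ^\prime}^{t',r'}}\le 1$. We use the coefficient sequence equal to $1$ at $(j_0,k_0)$ and $0$ elsewhere, together with $b_{j_0,k_0}=b$ and arbitrary (say zero) blocks elsewhere; the sum then trivially converges almost everywhere. Since the norm of $\H_{\vec p \, ^\prime}^{t',r'}$ is positively homogeneous, as is seen by scaling the coefficient sequence in any admissible decomposition, we get
\begin{equation*}
\|f\|_{\H_{\vec p \, ^\prime}^{t',r'}} = \lambda \|b\|_{\H_{\vec p \, ^\prime}^{t',r'}} \le \lambda ,
\end{equation*}
which is the claim. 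Equivalently, one can directly use the decomposition $f=\lambda b$, with $\lambda$ placed at index $(j_0,k_0)$ and zeros elsewhere, whose $\ell^{r'}$-norm is $\lambda$.

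There is essentially no obstacle. The only point to check is that Definition \ref{def mix block space} indexes blocks by dyadic cubes $Q_{j,k}$, which is why the hypothesis $Q\in\D$ is needed. The parameter restrictions (on $t$, $r$, $\vec p$) play no role beyond ensuring the block space is defined.
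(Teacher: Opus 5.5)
Your proof is correct and follows the paper's argument: you normalize $f$ by $\lambda=\|f\|_{L^{\vec p \, ^\prime}}|Q|^{\frac{1}{n}\sum_{i=1}^n 1/p_i-1/t}$ to obtain a $(\vec p \, ^\prime,t')$-block $b$ supported on $Q$, then use the one-term decomposition $f=\lambda b$ at the index of $Q$. Your separate handling of the case $f=0$ a.e. and your note on why $Q\in\mathcal{D}$ is needed only make explicit what the paper leaves implicit.
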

\begin{proof}
	Assume that $f$ is not zero almost everywhere, let 
	\begin{equation*}
		b := \frac{   |Q| ^{  1/t - \frac{1}{n}  ( \sum_{i=1}^n  1/p_{i})   }    }{ \|f\|_{L^{\vec p \, ^\prime}    }  } f.
	\end{equation*}
	Then $b$ is supported on $Q \in \D$, and 
	\begin{equation*}
		\| b \|_{L^{\vec  p'}  }  =     |Q| ^{  1/t - \frac{1}{n}  ( \sum_{i=1}^n  1/p_{i})   }     .  
	\end{equation*}
	Hence $b$ is a $ (\vec p \, ^\prime,t')$-block and $\|b\|_{  \H_{\vec p \, ^\prime}^{t',r'} }  \le  1$. 
	Equating this inequality, we obtain the desired result.
\end{proof}

\begin{theorem} \label{dense block}
	Let $ 1 < \vec p <\infty$. Let   $1 < n / ( \sum_{i=1}^n  1/p_{i})   < t <r<\infty $ or $ 1< n / ( \sum_{i=1}^n  1/p_{i})  \le t< r =\infty  $. Then $L^{\vec p \, ^\prime}  _c $ is dense in $\H_{\vec p \, ^\prime}^{t',r'}  $.
\end{theorem}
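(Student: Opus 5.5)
We first check that $L^{\vec p\,'}_c \subset \H_{\vec p\,'}^{t',r'}$, and then approximate a general element by truncations of its block decomposition.

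\emph{Inclusion.} Let $g \in L^{\vec p\,'}_c$. Its support meets at most $2^n$ dyadic cubes of a sufficiently large fixed side length. Splitting $g$ accordingly into at most $2^n$ pieces, each supported on a single dyadic cube $Q$, Lemma \ref{generate mix block} gives
\[
\|g\chi_Q\|_{\H_{\vec p\,'}^{t',r'}} \le \|g\|_{L^{\vec p\,'}}|Q|^{\frac1n\sum_{i=1}^n 1/p_i-1/t} < \infty .
\]
Summing the pieces with the (quasi-)triangle inequality gives $g \in \H_{\vec p\,'}^{t',r'}$. The triangle inequality in $\H_{\vec p\,'}^{t',r'}$ holds because concatenating block decompositions supported on the same cube $Q_{j,k}$ adds the corresponding coefficients, so the $\ell^{r'}$ norms combine by Minkowski.

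\emph{Density.} Fix $f \in \H_{\vec p\,'}^{t',r'}$ and $\epsilon>0$. Choose a decomposition $f=\sum_{(j,k)}\lambda_{j,k}b_{j,k}$ with $\|\lambda\|_{\ell^{r'}} \le \|f\|_{\H}+\epsilon$. Since $r>t>1$ we have $1 \le r'<\infty$ (when $r=\infty$, $r'=1$), so $\lambda\in\ell^{r'}$ has tails tending to zero. Pick a finite set $F \subset \mathbb Z^{n+1}$ with $\|\lambda\chi_{F^c}\|_{\ell^{r'}}<\epsilon$, and set
\[
f_F := \sum_{(j,k)\in F}\lambda_{j,k}b_{j,k}.
\]
This is a finite sum of $L^{\vec p\,'}$ functions, each supported on a bounded cube $Q_{j,k}$, so $f_F \in L^{\vec p\,'}_c$. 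By Proposition \ref{mix block sum converge} the tail $\sum_{F^c}\lambda_{j,k}b_{j,k}$ converges a.e., and it equals $f-f_F$ a.e. It is therefore an admissible block decomposition of $f-f_F$, with coefficient sequence $\lambda\chi_{F^c}$. By the definition of the norm as an infimum, $\|f-f_F\|_{\H} \le \|\lambda\chi_{F^c}\|_{\ell^{r'}} < \epsilon$.

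The main point to check is that the tail sum is a legitimate representation, in the sense of Definition \ref{def mix block space}, of $f-f_F$. This needs a.e. convergence of the rearranged series; Proposition \ref{mix block sum converge}(i) supplies it for any $\ell^{r'}$ coefficient sequence. The second point is the case $r=\infty$, $t=n/(\sum_i 1/p_i)$, where that proposition assumes strict $t<r$ but not strictness on the lower side. If needed, we treat this case separately via Lemma \ref{block = Lp mixed}: there $\H = L^{\vec p\,'}$, and density of $L^{\vec p\,'}_c$ follows from truncating $f\chi_{Q_j}$ with the monotone convergence theorem in $L^{\vec p\,'}$.
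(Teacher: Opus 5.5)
Your proposal is correct and is essentially the paper's proof. Both truncate a near-optimal block decomposition to finitely many indices (the paper takes $|(j,k)|_\infty\le N$), note that the truncation is a finite sum of blocks and so lies in $L^{\vec p\,'}_c$, and bound $\|f-f_F\|_{\mathcal{H}_{\vec p\,'}^{t',r'}}$ by the $\ell^{r'}$-tail of $\lambda$, which tends to $0$ because $r'<\infty$. Your checks of the inclusion $L^{\vec p\,'}_c\subset\mathcal{H}_{\vec p\,'}^{t',r'}$ and of the admissibility of the tail are details the paper leaves implicit; the tail's a.e.\ convergence is in fact automatic, since removing finitely many terms from an a.e.\ convergent series preserves a.e.\ convergence, so your separate treatment of the endpoint case is unnecessary.
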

\begin{proof}
	We use the idea from \cite[Theorem 345]{SDH20}.
	Since $ f \in \H_{\vec p \, ^\prime}^{t',r'} $, there exist a sequence $\{ \lambda_{j,k} \}_{ (j,k) \in \mathbb Z ^{1+n} }  \in \ell ^{r'} $  and a sequence  $(\vec p \, ^\prime,t')$-block $\{  b_{j,k} \}  _{ (j,k) \in \mathbb Z ^{1+n} } $ 
	such that $f 	=\sum_{(j,k)\in\mathbb{Z}^{n+1}}\lambda_{j,k}b_{j,k} $  and $\| \{ \lambda_{j,k} \} \|_{\ell ^{r'}}  \le \| f\|_{\H_{\vec p \, ^\prime}^{t',r'} }    +\epsilon$.	
	Define 
	\begin{equation} \label{f_N dense}
		f_N = \sum_{|(j,k)|_\infty  \le N}\lambda_{j,k}b_{j,k},
	\end{equation}
	where $|(j,k)|_\infty  = \max  \{ |j|, |k_1|, \ldots, |k_n| \}  $.
	Since $r' \in [1,\infty)$, we get
	\begin{equation*}
		\| f -f_N \|_{ \H_{\vec p \, ^\prime}^{t',r'} }  \le  \left( \sum_{|j| \le N ,|k|_\infty  > N} | \lambda_{j,k} |^{r'}  + \sum_{ |j| > N ,|k|_\infty  \le  N}| \lambda_{j,k} |^{r'}  \right)^{1/r'}   \to 0 
	\end{equation*}
	as $N\to \infty.$ It is not hard to show $f_N \in L^{\vec p \, ^\prime } $,  (for example, see \cite[Theorem 4.3]{BGX25}).	 
	Hence
	$L^{\vec p \, ^\prime}  _c $ is dense in $\H_{\vec p \, ^\prime}^{t',r'} $. 
\end{proof}

From the above theorem, when we investigate $  \H_{\vec p \, ^\prime}^{t',r'}$, the space $L^{\vec p \, ^\prime}  _c $ is a useful space. When considering the action of  linear operators defined  and continuous on    $ \H_{\vec p \, ^\prime}^{t',r'}$ and $L^{\vec p \, ^\prime}  _c $, it will be helpful to have a finite decomposition in $L^{\vec p \, ^\prime}  _c  $.
The following result says that each function $f \in L^{\vec p \, ^\prime}  _c $ has a finite admissible expression. That is, the sum is finite.
\begin{theorem}\label{finite decom}
	Let $ 1 < \vec p <\infty$. Let   $1 < n / ( \sum_{i=1}^n  1/p_{i})   < t <r<\infty $ or $ 1< n / ( \sum_{i=1}^n  1/p_{i})  \le t< r =\infty  $. Then each $ f \in L^{\vec p \, ^\prime}  _c $ admits the finite decomposition $ f  = \sum_{v=1}^M \lambda_v b_v $ where $ \lambda_1, \lambda_2, \ldots, \lambda_M \ge 0 $ and each $ b_v $ is a $(\vec p \, ^\prime,t')$-block. Furthermore, 
	\begin{equation*}
		\| f\|_{\H_{\vec p \, ^\prime}^{t',r'}} \approx \inf_\lambda \left( \sum_{v=1}^M \lambda_v ^{r'} \right)^{1/r'},
	\end{equation*}
	where $ \lambda = \{ \lambda_v\}_{v=1}^M $ runs over all finite admissible expressions:
	\begin{equation*}
		f= \sum_{v=1}^M \lambda_v b_v,
	\end{equation*}
	$ \lambda_1, \lambda_2, \ldots, \lambda_M \ge 0 $ and each $ b_v $ is a $(
	\vec p \, ^\prime,t')$-block for each $v = 1,2,\ldots ,M$.
\end{theorem}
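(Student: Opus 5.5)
Fix $f \in L^{\vec p\,'}_c$ and a dyadic cube $Q_0$ of side length $1$, or a union of finitely many dyadic cubes of a fixed generation $j_0$, containing $\operatorname{supp} f$. The plan is to show that the finite-expression quantity $\inf_\lambda (\sum_v \lambda_v^{r'})^{1/r'}$, call it $\|f\|_{\rm fin}$, is comparable to $\|f\|_{\H_{\vec p \, ^\prime}^{t',r'}}$.

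\emph{Existence and the bound $\|f\|_{\H} \le \|f\|_{\rm fin}$.} A finite decomposition always exists. Write $f=\sum_{m} f\chi_{Q_{j_0,m}}$ over the finitely many cubes $Q_{j_0,m}$ meeting the support. By Lemma \ref{generate mix block}, each piece equals $\lambda_m b_m$ with $b_m$ a $(\vec p\,',t')$-block and $\lambda_m=\|f\chi_{Q_{j_0,m}}\|_{L^{\vec p\,'}}|Q_{j_0,m}|^{\frac1n\sum 1/p_i-1/t}$. A finite admissible expression is a particular infinite one after padding with zero coefficients; here each $b_v$ is supported on a dyadic cube, and blocks on the same cube are merged. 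The merging uses that the sum of blocks on one cube, normalised by the sum of coefficients, is again a block, and $\ell^{r'}$ norms only decrease. Hence $\|f\|_{\H} \le \|f\|_{\rm fin}$.

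\emph{The reverse bound $\|f\|_{\rm fin}\lesssim \|f\|_{\H}$.} This is the main obstacle. Take an admissible expression $f=\sum_{(j,k)}\lambda_{j,k}b_{j,k}$ with $\|\lambda\|_{\ell^{r'}}\le 2\|f\|_{\H}$. I would split the index set into three parts.
\begin{enumerate}
\item Cubes $Q_{j,k}$ not meeting $Q_0$. These contribute nothing after multiplying by $\chi_{Q_0}$. Since $f=f\chi_{Q_0}$, replace every $b_{j,k}$ by $b_{j,k}\chi_{Q_0}$. This is still a block when $Q_{j,k}\subset Q_0$. When $Q_{j,k}\supset Q_0$ (large cubes, $j< 0$), $b_{j,k}\chi_{Q_0}$ is supported on $Q_0$ with norm at most $|Q_{j,k}|^{1/t-\frac1n\sum1/p_i}$.
\item Large cubes ($j<0$, at most one $k$ per $j$ meets $Q_0$). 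Since $1/t-\frac1n\sum 1/p_i<0$, we get $|Q_{j,k}|^{1/t-\frac1n\sum1/p_i}=2^{j n(\frac1n\sum 1/p_i-1/t)}|Q_0|^{\dots}$, which decays geometrically as $j\to-\infty$. So $g_1:=\sum_{j<0}\lambda_{j,k}b_{j,k}\chi_{Q_0}$ is a single block on $Q_0$ times a coefficient $\lesssim \sum_{j<0}2^{jn(\cdot)}|\lambda_{j,k}|\lesssim\|\lambda\|_{\ell^\infty}\le \|\lambda\|_{\ell^{r'}}$.
\item Small cubes ($j\ge N$, $N$ large). The tail $g_2$ is the difficulty, since it is an infinite sum. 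Put $g_2 := f - g_1 - \sum_{0\le j<N}\lambda_{j,k}b_{j,k}$, where the middle sum is finite and is kept as is. This $g_2$ lies in $L^{\vec p\,'}$ and is supported in $Q_0$. I would estimate $\|g_2\chi_{Q_{N,m}}\|_{L^{\vec p\,'}}$ on each $Q_{N,m}\subset Q_0$. By Minkowski and Hölder in $\ell$, the sum over subcubes of generation $j\ge N$ gives a bound of the form $\sum_{j\ge N}2^{-(j-N)n(\cdot)}\cdot(\text{local }\ell^{r'}\text{ norm})\cdot|Q_{N,m}|^{1/t-\frac1n\sum1/p_i}$. The decay exponent here comes from $t<r$, mirroring estimate $IV$ in Theorem \ref{E_k^q f to f}.
\end{enumerate}

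Applying Lemma \ref{generate mix block} to each $g_2\chi_{Q_{N,m}}$ then gives finitely many blocks, with coefficients whose $\ell^{r'}$ norm is $\lesssim\|\lambda\|_{\ell^{r'}}$. Collecting the three groups yields a finite expression with $(\sum\lambda_v^{r'})^{1/r'}\lesssim\|f\|_{\H}$. Nonnegativity of the coefficients is arranged by absorbing phases into the blocks. The hard step is this geometric-decay estimate for the small cubes. I would attempt it first via Hölder with exponents $r',r$ across generations, using $t<r$; the case $r=\infty$, where $r'=1$, is simply $\ell^1$ summation.
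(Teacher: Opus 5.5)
Your existence argument and the treatment of the large cubes are fine. The third step fails, however, and it is the heart of the matter.

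Put $\sigma:=\frac1n\sum_{i=1}^n\frac1{p_i}-\frac1t\ge 0$, with $\sigma>0$ when $t<r<\infty$. A $(\vec p\,',t')$-block on $Q$ satisfies $\|b\|_{L^{\vec p\,'}}\le |Q|^{-\sigma}$, so finer cubes permit \emph{larger} $L^{\vec p\,'}$ norms. For $Q_{j,k}\subset Q_{N,m}$ with $j\ge N$, a block on $Q_{j,k}$, viewed on $Q_{N,m}$, is up to $2^{(j-N)n\sigma}$ times a block on $Q_{N,m}$. Hölder over the $2^{(j-N)n}$ subcubes of generation $j$ can only add a further nonnegative power of $2^{j-N}$. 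So the factor in front of your local $\ell^{r'}$ norm grows geometrically in $j-N$; it does not decay.

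Consequently your bound for $\|g_2\chi_{Q_{N,m}}\|_{L^{\vec p\,'}}$ is already false for a single block on a cube of generation $J\gg N$. A near-optimal expression generally has terms at every generation, so you cannot choose $N$ beyond all of them. This persists for $r=\infty$ unless $t=n/\sum_i 1/p_i$.

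The analogy with $IV$ in Theorem \ref{E_k^q f to f} is misplaced. The decay $2^{-(j-k)n(r/t-1)}$ there is a property of the Bourgain--Morrey side. On the block side, $t<r$ works against you: writing $\chi_{Q_0}$ with $\ell(Q_0)=1$ through Lemma \ref{generate mix block} on its generation-$N$ subcubes costs $2^{Nn(1/t-1/r)}$ in $\ell^{r'}$, although $\chi_{Q_0}$ is itself a single block. So estimating $g_2=f-g_1-(\text{middle sum})$ by the triangle inequality does not rescue the re-blocking either, and this strategy does not close.

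The argument that works is by duality.

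First, finite expressions must use distinct dyadic cubes, i.e. a finitely supported $\lambda$ in Definition \ref{def mix block space}. Your merging remark is backwards: $(\sum_{v\in V}\lambda_v)^{r'}\ge\sum_{v\in V}\lambda_v^{r'}$. Indeed, if repeats were allowed, writing $b=\sum_{v=1}^{M}M^{-1}b$ would push the infimum to $0$ when $r'>1$. With that convention, $\|f\|_{\mathcal{H}_{\vec p\,'}^{t',r'}}\le\|f\|_{\mathrm{fin}}$ is immediate. Also $\|\cdot\|_{\mathrm{fin}}$ is a norm on $L^{\vec p\,'}_c$: merge on common cubes and apply Minkowski in $\ell^{r'}$.

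Next, Hahn--Banach gives a linear functional $\Lambda$ on $L^{\vec p\,'}_c$ with $\Lambda(f)=\|f\|_{\mathrm{fin}}$ and $|\Lambda(h)|\le\|h\|_{\mathrm{fin}}$. Since $\|h\|_{\mathrm{fin}}\le|Q|^{\sigma}\|h\|_{L^{\vec p\,'}}$ for $h$ supported in a dyadic cube $Q$, Lemma \ref{dual mixed Lp} yields $g\in L^{\vec p}_{\rm loc}$ with $\Lambda(h)=\int_{\rn}hg$ for all $h\in L^{\vec p\,'}_c$.

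Now test $\Lambda$ on finite sums $\sum_{(j,k)\in K}\lambda_{j,k}\tilde b_{j,k}$ of norming blocks for $g$. This is exactly what the proof of Theorem \ref{predual mix BM}(ii) does with $g_K$, and that proof only uses finite sums. It gives $\|g\|_{M^{t,r}_{\vec p}}\le 1$. Then part (i) of that theorem gives $\|f\|_{\mathrm{fin}}=\int_{\rn}fg\le\|f\|_{\mathcal{H}_{\vec p\,'}^{t',r'}}$. So the two norms in fact coincide on $L^{\vec p\,'}_c$.
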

\begin{proof}
	The proof is similar to \cite[Theorem 4.4]{BGX25} and we omit it here.	
\end{proof}

\begin{remark}
	Let $ 1 < \vec p <\infty$. Let   $1 < n / ( \sum_{i=1}^n  1/p_{i})   < t <r<\infty $ or $ 1< n / ( \sum_{i=1}^n  1/p_{i})  \le t< r =\infty  $. Then  $\H_{\vec p \, ^\prime}^{t',r'}  $ is separable. The proof is similar with Theorem \ref{separable mixed BM}.
\end{remark}

In \cite[Theorem 2.7]{N192}, Nogayama showed that the predual space of  $M_{\vec p}^{t,\infty} $ is $\mathcal{H}_{\vec p \, ^\prime}^{t',1} $.
Before proving the following result,   we give notation for the mixed Lebesgue norm $\| \cdot \|_{L^{\vec p} }$. The mapping
\begin{equation*}
	(x_2, \ldots , x_n) \in \mathbb R^{n-1} \mapsto \| f\|_{ (p_1) }  (x_2, \ldots , x_n) := \left(\int_{\mathbb R}  |f (x_1, x_2, \ldots, x_n) |^{p_1} \d x_1 \right) ^{1/p_1}
\end{equation*}
is a measurable function  on  $\mathbb R^{n-1}$. Moreover, let
\begin{equation*}
	\| f \|_{ \vec q } =\| f \|_{ (p_1, \ldots, p_j) } := \left\|  [ \| f\|_{ (p_1, \ldots, p_{j-1})  } ]  \right\|_{  (  p_j ) },
\end{equation*}
where $\| f \|_{ (p_1, \ldots, p_{j-1}) }$ denotes $|f|$ if $j=1$  and $\vec q = (p_1, \ldots, p_j), j\le n$.
Then $ \| f \|_{ \vec q  }$  is a measurable function of $ (x_{j+1}, \ldots, x_n )$ for $j <n$.

\begin{theorem} \label{predual mix BM}
	Let $ 1 < \vec p <\infty$. Let   $1 < n / ( \sum_{i=1}^n  1/p_{i})   < t <r<\infty $ or $ 1< n / ( \sum_{i=1}^n  1/p_{i})  \le t< r =\infty  $.
	Then the dual  space of
	$\mathcal{H}_{\vec p \, ^\prime}^{t',r'} $, denoted by $ \Big( \mathcal{H}_{\vec p \, ^\prime}^{t',r'} \Big)^* $, is
	$M_{\vec p}^{t,r} $, that is,  
	\begin{equation*}
		\Big(  \mathcal{H}_{\vec p \, ^\prime}^{t',r'} \Big)^*   = M_{\vec p}^{t,r} 
	\end{equation*}
	in the following sense:
	
	{\rm (i)} if $f \in M_{\vec p}^{t,r} $, then the linear functional 
	\begin{equation} \label{Jf}
		L_f : f \to  L_f (g) :=   \int_\rn   g (x) f (x)  \d x
	\end{equation}
	is bounded on $ \mathcal{H}_{\vec p \, ^\prime}^{t',r'} $.
	
	{\rm (ii)} conversely, any continuous linear functional on $\mathcal{H}_{\vec p \, ^\prime}^{t',r'} $ arises as in  (\ref{Jf}) with a unique $f \in  M_{\vec p}^{t,r}  $.
	
	Moreover, $\|f\|_{   M_{\vec p}^{t,r}  }  = \| L_f \|_{ ( \mathcal{H}_{\vec p \, ^\prime}^{t',r'}  )^*}$ and 
	\begin{equation} \label{H = max M le 1}
		\|g\|_{\mathcal{H}_{\vec p \, ^\prime}^{t',r'}   } =\max \left\{\left|\int_\rn  g (x) f (x)  \d x    \right|: f\in   M_{\vec p}^{t,r} , \|f\|_{   M_{\vec p}^{t,r} } \le 1 \right\}.
	\end{equation}
\end{theorem}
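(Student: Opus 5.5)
The proof has two halves. For (i), the boundedness of $L_f$, I work block by block. Let $g=\sum_{(j,k)}\lambda_{j,k}b_{j,k}$ be an admissible decomposition with $\|\lambda\|_{\ell^{r'}}\le \|g\|_{\H_{\vec p\,'}^{t',r'}}+\epsilon$. Each $b_{j,k}$ is supported on $Q_{j,k}$, so H\"older's inequality for mixed norms (the case $\vec r=1$ of Lemma \ref{Holder mixed}, or directly Lemma \ref{dual mixed Lp}) gives
\begin{equation*}
\int_\rn |b_{j,k} f|\,\d x \le \|b_{j,k}\|_{L^{\vec p\,'}}\|f\chi_{Q_{j,k}}\|_{L^{\vec p}}\le |Q_{j,k}|^{\frac1t-\frac1n\sum_{i=1}^n\frac1{p_i}}\|f\chi_{Q_{j,k}}\|_{L^{\vec p}} .
\end{equation*}
Summing over $(j,k)$ with H\"older in $\ell^{r'}\times\ell^{r}$ yields $\sum|\lambda_{j,k}|\int|b_{j,k}f|\le\|\lambda\|_{\ell^{r'}}\|f\|_{M_{\vec p}^{t,r}}$. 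To exchange sum and integral, I use Fubini/dominated convergence: the bound above shows $\sum|\lambda_{j,k}||b_{j,k}||f|\in L^1$, and the series converges a.e.\ to $g$ by Proposition \ref{mix block sum converge}. This gives $|L_f(g)|\le \|f\|_{M_{\vec p}^{t,r}}\|g\|_{\H}$, and it also shows that $L_f$ does not depend on the decomposition chosen.

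For (ii), take $L\in(\H_{\vec p\,'}^{t',r'})^*$. For a dyadic cube $Q$, Lemma \ref{generate mix block} says $g\mapsto L(g\chi_Q)$ is a bounded functional on $L^{\vec p\,'}(Q)$ with norm at most $\|L\|\,|Q|^{\frac1n\sum 1/p_i-1/t}$. By Lemma \ref{dual mixed Lp} it is represented by some $f_Q\in L^{\vec p}(Q)$. Uniqueness forces $f_{Q'}=f_Q$ on $Q'$ when $Q'\subset Q$. Since the cubes $Q_{-j}$ of generation $-j$ containing the origin-adjacent region exhaust each of the $2^n$ dyadic quadrants, I glue these into a single $f\in L^1_{\rm loc}$ with $L(g)=\int gf$ for all $g\in L^{\vec p\,'}_c$. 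Alternatively, I can work with the $2^n$ quadrant chains separately, since dyadic cubes never straddle quadrants.

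The main obstacle is the norm estimate $\|f\|_{M_{\vec p}^{t,r}}\le\|L\|$, which needs a dual realization of the norm by a single test function. Fix a finite set $F\subset\D$ and nonnegative $a_Q$ with $\|a\|_{\ell^{r'}}\le1$. For each $Q\in F$ I choose $h_Q$ supported on $Q$ with $\|h_Q\|_{L^{\vec p\,'}}=1$ and $\int h_Qf=\|f\chi_Q\|_{L^{\vec p}}$; this is the norming function of mixed $L^{\vec p}$, built iteratively using the notation $\|f\|_{(p_1,\dots,p_j)}$ introduced before the theorem. Then $g=\sum_{Q\in F}a_Q|Q|^{\frac1t-\frac1n\sum1/p_i}h_Q$ is a finite combination of blocks, so $\|g\|_{\H}\le\|a\|_{\ell^{r'}}\le1$. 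Since $g\in L^{\vec p\,'}_c$, we also have $L(g)=\int gf=\sum_Q a_Q|Q|^{\frac1t-\frac1n\sum1/p_i}\|f\chi_Q\|_{L^{\vec p}}$. Taking the supremum over $a$ and then over $F$ gives $\|f\|_{M_{\vec p}^{t,r}}\le\|L\|$, with the $r=\infty$ case handled identically using $r'=1$. In particular $f\in M_{\vec p}^{t,r}$.

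It remains to identify $L$ with $L_f$ everywhere and to prove the norm formula. By Theorem \ref{dense block}, $L^{\vec p\,'}_c$ is dense in $\H$, and $L$ and $L_f$ are both continuous and agree on it, so $L=L_f$. Uniqueness of $f$ follows by testing against $\chi_E$ for bounded sets $E$. Combining (i) with the reverse estimate gives $\|L_f\|=\|f\|_{M_{\vec p}^{t,r}}$. Finally, (\ref{H = max M le 1}) follows from Hahn--Banach: choose a norming functional of $g$ and represent it by some $f$ with $\|f\|\le1$ via (ii). This shows the supremum is attained, and (i) supplies the inequality $\le$.
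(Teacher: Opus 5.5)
Your proposal is correct and follows essentially the same route as the paper. Both use H\"older block by block with $\ell^{r'}$--$\ell^{r}$ duality for (i), local $L^{\vec p\,'}$--$L^{\vec p}$ duality on dyadic cubes to build $f$, and the norming test functions $|Q|^{\frac1t-\frac1n\sum_i\frac1{p_i}}h_Q$ over a finite family of cubes to get $\|f\|_{M_{\vec p}^{t,r}}\le\|L\|$, followed by density of $L^{\vec p\,'}_c$ to conclude $L=L_f$. The differences are only cosmetic: you justify the sum--integral interchange, you test uniqueness with $\chi_E$ instead of $\chi_Q\overline{\sgn(f-\tilde f)}$, and you apply Hahn--Banach directly where the paper cites a norm-attainer theorem.
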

\begin{proof}
	(i) Let $g \in \mathcal{H}_{\vec p \, ^\prime}^{t',r'} $ and $\epsilon>0$. Then $g = \sum_{(j,k)\in\mathbb{Z}^{n+1}}\lambda_{j,k}b_{j,k}$ with $\lambda=\{\lambda_{j,k}\}_{(j,k)\in\mathbb{Z}^{n+1}}\in\ell^{r'}(\mathbb{Z}^{n+1})$
	and $b_{j,k}$ is a  $(\vec p \, ^\prime,t')$-block supported on $Q_{j,k}$ such that 
	\begin{equation*}
		\left( \sum_{(j,k)\in\mathbb{Z}^{n+1}} |\lambda_{j,k}|^{r'}\right)^{1/r'}  \le (1+\epsilon) \| g\|_{ \mathcal{H}_{\vec p \, ^\prime}^{t',r'} }.
	\end{equation*}
	By  H\"older's inequality,
	\begin{align} \label{f g le M H}
		\nonumber
		\left| \int_\rn g (x) f (x)  \d x\right| &  = 	\left|	\int_\rn   \sum_{(j,k)\in\mathbb{Z}^{n+1}}\lambda_{j,k}b_{j,k} (x)   f (x)  \d x\right|  \le \sum_{(j,k)\in\mathbb{Z}^{n+1}} |\lambda_{j,k}| |Q| ^{  1/t - \frac{1}{n}  ( \sum_{i=1}^n  1/p_{i})   }  \| f \|_{L^{\vec p } }    \\
		& \le \left( \sum_{(j,k)\in\mathbb{Z}^{n+1}} |\lambda_{j,k}|^{r'}\right)^{1/r'} \| f\|_{M_{\vec p}^{t,r}  }  \le (1+\epsilon) \| g\|_{  \mathcal{H}_{\vec p \, ^\prime}^{t',r'}  }  \| f\|_{M_{\vec p}^{t,r}   }.
	\end{align}
	Letting $\epsilon\to 0^+$,  we prove (i).
	
	(ii) Let $L$ be a continuous linear functional on $\mathcal{H}_{\vec p \, ^\prime}^{t',r'}$.	
	By Lemma \ref{generate mix block}, since we can regard the element of $ L^{\vec p \, ^\prime} (Q_{j,k})$ as a $ (\vec p \, ^\prime,t')$-block modulo multiplicative constant, the functional $g \mapsto L(g) $  is well defined and bounded on $ L^{\vec p \, ^\prime} (Q_{j,k})$. Thus, by the $L^{\vec p} -L^{\vec{p}^{\,\prime}} $  duality (Lemma \ref{dual mixed Lp}), there exists $f_{j,k} \in  L^{\vec p} (Q_{j,k})$ such that
	\begin{equation} \label{L g jk}
		L(g) = \int_{Q_{j,k}} f_{j,k} (x) g(x) \d x
	\end{equation}
	for all $g \in L^{\vec p \, ^\prime} (Q_{j,k})$. By the uniqueness of this theorem, we can find $  L^{\vec p}_{\operatorname{loc}} $-function $f$ such that 
	\begin{equation*}
		f \chi_{Q_{j,k}}  = f_{j,k} \quad \operatorname{a.e.}
	\end{equation*}
	for any $Q_{j,k} \in \D $. We shall prove $f \in M_{\vec p}^{t,r}$. 
	
	Fix  a finite set $K \subset \mathbb Z^{n+1}$. For each $(j,k) \in K$, 
	let 
	\begin{equation*}
		g_{j,k} := 
		\frac{|Q_{j,k} |^{  1/t - \frac{1}{n}  ( \sum_{i=1}^n  1/p_{i})  } }{ \| f\chi_{ Q_{j,k} } \|_{L^{\vec p}  } ^{q_n -1} } (\overline{ \sgn f}) |f|^{p_1 -1} \chi_{ Q_{j,k} }  \| f \chi_{ Q_{j,k} } \|_{(q_1) }^{q_2 -q_1} \cdots \| f \chi_{ Q_{j,k} } \|_{(q_1, \ldots , q_{n-1} ) }^{q_n -q_{n-1} }   
	\end{equation*}
	if $ \| f\chi_{ Q_{j,k} } \|_{L^{\vec p}  } >0 $  and let $g_{j,k} = 0$ if   $ \| f\chi_{ Q_{j,k} } \|_{L^{\vec p}  } =0 $.
	Then if $ \| f\chi_{ Q_{j,k} } \|_{L^{\vec p}  } >0 $,
	\begin{equation} \label{int f g jk}
		\int_{Q_{j,k}} f (x) g_{j,k}(x) \d x = |Q_{j,k} |^{  1/t - \frac{1}{n}  ( \sum_{i=1}^n  1/p_{i})  } 	\| f\chi_{ Q_{j,k} }\|_{L^{\vec p}  } ,
	\end{equation}
	and 
	\begin{equation*}
		\| g_{j,k} \|_{L^{\vec{p}^{\,\prime} }  }  =  |Q_{j,k} |^{  1/t - \frac{1}{n}  ( \sum_{i=1}^n  1/p_{i})  } .
	\end{equation*}
	Note that $   g_{j,k} $  is a $(\vec p \, ^\prime , t') $-block supported on $Q_{j,k}$.
	Take an arbitrary nonnegative
	sequence $\{  \lambda_{j,k}\} \in \ell^{r'} (\mathbb Z^{1+n})$
	supported on $K$ and set
	\begin{equation}\label{g K decom}
		g_{K}  = \sum_{(j,k) \in K}  \lambda_{j,k}   g_{j,k}  \in \mathcal{H}_{\vec p \, ^\prime}^{t',r'}  .
	\end{equation}
	Then  from  (\ref{L g jk}), (\ref{int f g jk}), the fact that $K$ is a finite set, and the linearity of $L$, we get 
	\begin{align*}
		\sum_{(j,k) \in K}  \lambda_{j,k}  |Q_{j,k}| ^{1/t - \frac{1}{n}  ( \sum_{i=1}^n  1/p_{i}) } \left\|   f \chi_{Q_{j,k}}    \right\|_{L^{\vec p} }  = \sum_{(j,k) \in K}  \lambda_{j,k}  \int_{Q_{j,k}} f (x) g_{j,k}(x) \d x   =   \int_\rn   g_{K} (x) f(x ) \d x = L (g_{K}  ).
	\end{align*}
	By the decomposition (\ref{g K decom}) and $L$ is a continuous linear functional on $ \mathcal{H}_{ \vec p \, ^\prime}^{t',r'} $, we obtain
	\begin{equation*}
		L (g_{K}  )  \le  \| L \|_{ (\mathcal{H}_{\vec p \, ^\prime}^{t',r'})^*  }  \| g_{K} \|_{  \mathcal{H}_{\vec p \, ^\prime}^{t',r'}  } \le \| L \|_{ (\mathcal{H}_{\vec p \, ^\prime}^{t',r'})^*  } \left( \sum_{(j,k) \in K}   \lambda_{j,k} ^ {r'} \right)^{1/r'}.
	\end{equation*}
	Since $r >1$ and $K  \subset \mathbb Z^{1+n}$ and $ \{\lambda_{j,k} \}_{(j,k) \in K } $ are arbitrary, we conclude 
	\begin{align*}
		 \| f \|_{M_{\vec p}^{t,r}  }   = \sup \left\{  \sum_{ (j,k) \in \mathbb Z^{1+n}  }  \lambda_{j,k} |Q_{j,k}| ^{1/t - \frac{1}{n}  ( \sum_{i=1}^n  1/p_{i}) } \left\|   f \chi_{Q_{j,k}}    \right\|_{L^{\vec p} } :  \| \{\lambda_{j,k} \} \|_{\ell^{r' } (\mathbb Z^{1+n})}  = 1     \right\} 
	 \le   \| L \|_{ (\mathcal{H}_{\vec p \, ^\prime}^{t',r'})^*  }  < \infty .
	\end{align*}
	Thus $f \in M_{\vec p}^{t,r}$ and $\| f \|_{M_{\vec p}^{t,r}  } \le  \| L \|_{ (\mathcal{H}_{\vec p \, ^\prime}^{t',r'})^*  } $. Together (i), we obtain $\| f \|_{M_{\vec p}^{t,r}  } =  \| L \|_{ (\mathcal{H}_{\vec p \, ^\prime}^{t',r'})^*  } $ 
	
	Hence we conclude that $L$  is realized as $L = L_f$ for $f \in M_{\vec p}^{t,r}$ at least on $g \in L_c^{\vec p} $. Since $ L_c^{\vec p}$  is dense in $\mathcal{H}_{\vec p \, ^\prime}^{t',r'}$ by Theorem \ref{dense block}, we can obtain the desired result.
	
	Next we show that $f$ is unique. Suppose that there exists another $\tilde f \in M_{\vec p}^{t,r}$ such that $L$ arises as in (\ref{Jf}) with $f $ replaced by $\tilde f$.
	Then for any $ Q \in \D$, since $\chi_Q \overline{\sgn ( f(x) - \tilde f (x) ) }  \in  \mathcal{H}_{\vec p \, ^\prime}^{t',r'} $ , we have
	\begin{equation*}
		\int_Q	 \overline{\sgn ( f(x) - \tilde f (x) ) } ( f(x) - \tilde f (x) )  \d x = 	\int_Q	 | f(x) - \tilde f (x) |  \d x =  0.
	\end{equation*}
	This, combined with the arbitrariness of $ Q \in \D$ further implies that $f=  \tilde f $ almost everywhere.
	
	Using  \cite[Theorem 87, Existence of the norm attainer]{SDH20}, (\ref{H = max M le 1}) is included in what we have proven.
	Thus we complete the proof.
\end{proof}

\section{Properties of  block spaces}\label{property block}

In this section, we discuss the completeness,  Fatou property, lattice and associate space of 
the function space $\mathcal{H}_{\vec p \, ^\prime}^{t',r'} $.  


We first show the completeness.

\begin{theorem} \label{Banach}
	Let $ 1 < \vec p <\infty$. Let   $1 < n / ( \sum_{i=1}^n  1/p_{i})   < t <r<\infty $ or $ 1< n / ( \sum_{i=1}^n  1/p_{i})  \le t< r =\infty  $. Then $\mathcal{H}_{\vec p \, ^\prime}^{t',r'} $ is a Banach space.
\end{theorem}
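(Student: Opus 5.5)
We will prove that $\|\cdot\|_{\mathcal{H}_{\vec p \, ^\prime}^{t',r'}}$ is a norm and then that absolutely convergent series converge in the space, which gives completeness.

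\emph{Norm axioms.} Homogeneity is immediate from the definition. For the triangle inequality, take $f=\sum\lambda_{j,k}b_{j,k}$ and $g=\sum\mu_{j,k}c_{j,k}$ with blocks on the same $Q_{j,k}$. Write
\[
f+g=\sum(|\lambda_{j,k}|+|\mu_{j,k}|)\,d_{j,k},\qquad d_{j,k}=\frac{\lambda_{j,k}b_{j,k}+\mu_{j,k}c_{j,k}}{|\lambda_{j,k}|+|\mu_{j,k}|}.
\]
Minkowski's inequality in $L^{\vec p\,'}$ shows that each $d_{j,k}$ is again a $(\vec p\,',t')$-block on $Q_{j,k}$. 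Then $\|f+g\|\le\|\lambda\|_{\ell^{r'}}+\|\mu\|_{\ell^{r'}}$, and we take infima.

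For definiteness, if $\|f\|_{\mathcal H}=0$, Theorem \ref{predual mix BM}(i) (estimate (\ref{f g le M H})) gives $\int fh=0$ for all $h\in M_{\vec p}^{t,r}$. Taking $h=\chi_Q\,\overline{\sgn f}\in L_c^\infty\subset M_{\vec p}^{t,r}$ (by Example \ref{exa Q0 iff}, translation and dilation) yields $f=0$ a.e. Since $\int|f|\chi_Q\le \|f\|_{\mathcal H}\|\chi_Q\|_{M_{\vec p}^{t,r}}$, the same argument also gives the continuous embedding $\mathcal{H}_{\vec p \, ^\prime}^{t',r'}\hookrightarrow L^1_{\rm loc}$.

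\emph{Completeness.} We show that $\sum_l\|f_l\|_{\mathcal H}<\infty$ implies that $\sum_l f_l$ converges in $\mathcal H$. For each $l$ choose $f_l=\sum_{j,k}\lambda^l_{j,k}b^l_{j,k}$ with $\|\lambda^l\|_{\ell^{r'}}\le 2\|f_l\|_{\mathcal H}+2^{-l}$. Set $\Lambda_{j,k}:=\sum_l|\lambda^l_{j,k}|$. By Minkowski in $\ell^{r'}$,
\[
\|\Lambda\|_{\ell^{r'}}\le\sum_l\|\lambda^l\|_{\ell^{r'}}<\infty .
\]
Define $B_{j,k}:=\Lambda_{j,k}^{-1}\sum_l\lambda^l_{j,k}b^l_{j,k}$, which converges in $L^{\vec p\,'}$ and is a block on $Q_{j,k}$, again by Minkowski. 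By Proposition \ref{mix block sum converge}, $F:=\sum\Lambda_{j,k}B_{j,k}$ converges a.e. and in $L^1_{\rm loc}$, so $F\in\mathcal H$.

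Next we identify $F$ with $\sum_l f_l$. The series $\sum_l f_l$ converges in $L^1_{\rm loc}$ by the embedding above. Interchanging the double sum is justified by absolute convergence in $L^1(Q)$: apply (\ref{f g le M H}) to $|\lambda^l_{j,k}||b^l_{j,k}|$ with $h=\chi_Q$. Hence $F=\sum_l f_l$ a.e.

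Finally, the tail satisfies
\[
F-\sum_{l\le L}f_l=\sum_{j,k}\Big(\sum_{l>L}|\lambda^l_{j,k}|\Big)\tilde B^{L}_{j,k},
\]
where $\tilde B^L_{j,k}$ are analogous blocks. Therefore the norm of the tail is at most $\sum_{l>L}\|\lambda^l\|_{\ell^{r'}}\to0$.

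The main obstacle is the justification of rearranging the double series and identifying the pointwise limit. This is handled through the $L^1_{\rm loc}$ control coming from the duality pairing with $\chi_Q\in M_{\vec p}^{t,r}$ (valid under the stated parameter restrictions via Example \ref{exa Q0 iff}).
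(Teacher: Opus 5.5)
The paper omits this proof and defers to [BGX25, Theorem 4.1]; your argument is correct and follows the standard route for block spaces of this type. You obtain the norm axioms by merging coefficients and blocks on each fixed dyadic cube $Q_{j,k}$, and completeness from convergence of absolutely summable series via $\Lambda_{j,k}=\sum_l|\lambda^l_{j,k}|$, with the $L^1_{\operatorname{loc}}$ control from pairing with $\chi_Q\in M_{\vec p}^{t,r}$ justifying the rearrangement. One citation needs fixing: Proposition \ref{mix block sum converge} excludes the endpoint $t=n/(\sum_{i=1}^n 1/p_i)$, $r=\infty$, but your own bound $\int_Q\sum_{j,k}\Lambda_{j,k}|B_{j,k}|\,\d x\le\|\Lambda\|_{\ell^{r'}}\|\chi_Q\|_{M_{\vec p}^{t,r}}$ already gives the needed a.e.\ convergence there.
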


\begin{proof}
	The proof is similar to \cite[Theorem 4.1]{BGX25} and we omit it here.
\end{proof}

Next we consider the Fatou property of block spaces.

%
\begin{theorem} \label{Fatou general}
	Let $ 1 < \vec p <\infty$. Let   $1 < n / ( \sum_{i=1}^n  1/p_{i})   < t <r<\infty $ or $ 1< n / ( \sum_{i=1}^n  1/p_{i})  \le t< r =\infty  $.
	If a bounded sequence $\{f_\ell \}_{\ell\in \mathbb N} \subset \H_{\vec p \, ^\prime}^{t',r'} \cap L_{\operatorname{loc}} ^{\vec p \, ^\prime}$ converges locally to $f$  in the weak topology of $  L^{\vec p \, ^\prime} $, then $f \in \H_{\vec p \, ^\prime}^{t',r'}$ and 
	\begin{equation*}
		\|f\|_{ \H_{\vec p \, ^\prime}^{t',r'} }  \le \liminf _{\ell \to \infty} \|f_\ell\|_{ \H_{\vec p \, ^\prime}^{t',r'} } .
	\end{equation*}
\end{theorem}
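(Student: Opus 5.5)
The plan is to use the duality of Theorem \ref{predual mix BM}, in particular the norming formula (\ref{H = max M le 1}), together with density results, rather than to build block decompositions of $f$ directly. Set $L:=\liminf_{\ell\to\infty}\|f_\ell\|_{\H_{\vec p \, ^\prime}^{t',r'}}$ and pass to a subsequence realizing the $\liminf$; it still converges locally weakly in $L^{\vec p \, ^\prime}$ to $f$. The target is to show
\begin{equation*}
\left|\int_\rn f(x)h(x)\,\d x\right| \le L\,\|h\|_{M_{\vec p}^{t,r}}
\end{equation*}
for all $h$ in a dense subclass of $M_{\vec p}^{t,r}$, and then to argue that this forces $f\in\H_{\vec p \, ^\prime}^{t',r'}$ with norm at most $L$.

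\emph{Step 1.} Take $h\in L_c^\infty$, supported in a cube $Q$. Then $h\in L^{\vec p}(Q)=(L^{\vec p \, ^\prime}(Q))^*$ by Lemma \ref{dual mixed Lp}. Local weak convergence gives $\int f_\ell h\to\int fh$. Part (i) of Theorem \ref{predual mix BM}, more precisely estimate (\ref{f g le M H}), gives $|\int f_\ell h|\le \|f_\ell\|_{\H_{\vec p \, ^\prime}^{t',r'}}\|h\|_{M_{\vec p}^{t,r}}$. Taking limits yields the target inequality for all $h\in L_c^\infty$.

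\emph{Step 2.} In the case $r<\infty$, $L_c^\infty$ is dense in $M_{\vec p}^{t,r}$ by Corollary \ref{dense L_c mixed BM}. So $h\mapsto\int fh$ extends to a bounded functional $\Lambda$ on $M_{\vec p}^{t,r}$ with $\|\Lambda\|\le L$. The hard point is to identify $\Lambda$ with an element of $\H_{\vec p \, ^\prime}^{t',r'}$, because a priori it lies only in $(M_{\vec p}^{t,r})^*$.

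I would avoid this identification with a truncation and compactness argument. Let $f^{(N)}:=f\chi_{Q_N}$, where $Q_N=[-2^N,2^N)^n$ (a union of $2^n$ dyadic cubes). Each $f^{(N)}$ is in $L^{\vec p \, ^\prime}_c\subset\H_{\vec p \, ^\prime}^{t',r'}$ by Lemma \ref{generate mix block}. By (\ref{H = max M le 1}), its norm equals $\sup|\int f^{(N)}h|$ over $\|h\|_{M_{\vec p}^{t,r}}\le1$, and by density of $L_c^\infty$ the sup may be taken over $h\in L_c^\infty$. Then
\begin{equation*}
\int f^{(N)}h=\int f\,(h\chi_{Q_N}) \quad\text{and}\quad \|h\chi_{Q_N}\|_{M_{\vec p}^{t,r}}\le\|h\|_{M_{\vec p}^{t,r}}
\end{equation*}
by the lattice property. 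Hence $\|f^{(N)}\|_{\H_{\vec p \, ^\prime}^{t',r'}}\le L$ uniformly in $N$.

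\emph{Step 3 (main obstacle).} It remains to pass from the uniformly bounded truncations $f^{(N)}$ to $f$. I would do this via the block decompositions. For each $N$, choose coefficients $\lambda^{(N)}$ with $\|\lambda^{(N)}\|_{\ell^{r'}}\le L+1/N$ and blocks $b^{(N)}_{j,k}$ on $Q_{j,k}$. Then use a diagonal argument: for each $(j,k)$, extract a subsequence along which $\lambda^{(N)}_{j,k}\to\lambda_{j,k}$ and $\lambda^{(N)}_{j,k}b^{(N)}_{j,k}\to\lambda_{j,k}b_{j,k}$ weakly in $L^{\vec p \, ^\prime}(Q_{j,k})$. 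This uses reflexivity for $1<\vec p \, ^\prime<\infty$; the limits $b_{j,k}$ are still blocks by weak lower semicontinuity of the norm. Fatou's lemma for $\ell^{r'}$ gives $\|\lambda\|_{\ell^{r'}}\le L$.

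Finally, show $f=\sum\lambda_{j,k}b_{j,k}$. Test against $h\in L_c^\infty$ and control the tails uniformly in $N$. The tail estimate is the delicate point. For cubes of large side length, $h\in L^{\vec p}$ with compact support has a small contribution, since $|Q|^{1/t-\frac1n\sum 1/p_i}\|h\|_{L^{\vec p}}$ decays there. For small cubes and cubes far away, the restriction $h\chi_{Q_{j,k}}$ belongs to $M_{\vec p}^{t,r}$ with $\ell^r$-summable local pieces. Combined with Hölder in $\ell^{r'}$–$\ell^r$, as in (\ref{f g le M H}), this gives tails that are small uniformly in $N$. Once the identity is established and the series converges a.e. by Proposition \ref{mix block sum converge}, the result follows. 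The case $r=\infty$ is handled identically, with $\ell^1$ coefficients.
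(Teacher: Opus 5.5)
Your argument is sound for $r<\infty$. Step 3 is the standard diagonal and weak-compactness argument for block spaces: the tail over $(j,k)\notin F$ is controlled uniformly in $N$ by H\"older in $\ell^{r'}$--$\ell^{r}$ against $h\in L_c^\infty\subset M_{\vec p}^{t,r}$. You are also right to avoid Theorem \ref{dual mixed BM}, whose proof relies on the very statement being proved. Steps 1--2, however, are an unnecessary detour. You can run Step 3 directly on near-optimal decompositions of the $f_\ell$, since the hypothesis already gives $\int_\rn fh=\lim_\ell\int_\rn f_\ell h$ for $h\in L_c^\infty$. If you keep the truncation, you do not need density at all: for $\|h\|_{M_{\vec p}^{t,r}}\le 1$, the function $h\chi_{Q_N}$ is a compactly supported $L^{\vec p}$-function with $\|h\chi_{Q_N}\|_{M_{\vec p}^{t,r}}\le 1$, so local weak convergence applies to it directly.

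The genuine gap is the sentence ``the case $r=\infty$ is handled identically''. Corollary \ref{dense L_c mixed BM} is only available for $r<\infty$, but that is harmless by the remark above. The real problem is that the range $r=\infty$ in the statement includes the endpoint $t=n/(\sum_{i=1}^n 1/p_i)$, where $1/t-\frac1n\sum_{i=1}^n 1/p_i=0$. There a block is any function on a dyadic cube of arbitrary size with $L^{\vec p \, ^\prime}$-norm at most $1$. With $r'=1$, your tail bound becomes $\|\lambda^{(N)}\|_{\ell^1}\sup_{(j,k)\notin F}\|h\chi_{Q_{j,k}}\|_{L^{\vec p}}$, and this does not become small: for $h$ supported in $[0,1)^n$, every cube $[0,2^N)^n$ contributes $\|h\|_{L^{\vec p}}$.

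Mass really does escape in this case. Take $f_\ell=f$ supported in $[0,1)^n$. Writing $f=\|f\|_{L^{\vec p \, ^\prime}}\cdot (f/\|f\|_{L^{\vec p \, ^\prime}})$ as a single block on $[0,2^N)^n$ is an admissible optimal decomposition. Every fixed coefficient then tends to $0$, so your diagonal limit is $\lambda\equiv 0$ and $\sum\lambda_{j,k}b_{j,k}=0\neq f$. Proposition \ref{mix block sum converge} also requires $t>n/(\sum 1/p_i)$, so it cannot be cited here either.

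This subcase must be treated separately. Since no dyadic cube crosses a coordinate hyperplane, one checks that $\|g\|_{\H_{\vec p \, ^\prime}^{t',1}}=\sum_{O}\|g\chi_O\|_{L^{\vec p \, ^\prime}}$, the sum running over the $2^n$ coordinate orthants (compare Lemma \ref{block = Lp mixed}). The claim then follows from the weak lower semicontinuity of each $\|\cdot\,\chi_O\|_{L^{\vec p \, ^\prime}}$ under local weak convergence.

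For $r=\infty$ and $t>n/(\sum 1/p_i)$ your scheme does work, but the tail must be handled by $\ell^1$--$\ell^\infty$ rather than by $\ell^r$-summability:
\begin{itemize}
\item cubes of small side contribute at most $|Q|^{1/t}\|h\|_{L^\infty}$;
\item large cubes contribute at most $|Q|^{1/t-\frac1n\sum 1/p_i}\|h\|_{L^{\vec p}}$;
\item only finitely many cubes of intermediate size meet $\operatorname{supp}h$.
\end{itemize}
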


\begin{proof}
	The proof is similar to \cite[Theorem 4.6]{BGX25} and we omit it here. 
\end{proof}

Then we use the Fatou property of $\H_{\vec p \, ^\prime}^{t',r'} $ to show the following result.
\begin{corollary}
	Let $ 1 < \vec p <\infty$. Let   $1 < n / ( \sum_{i=1}^n  1/p_{i})   < t <r<\infty $ or $ 1< n / ( \sum_{i=1}^n  1/p_{i})  \le t< r =\infty  $.  Suppose that  $ fg \in L^1 $ for all $f  \in  M_{\vec p}^{t,r} $. Then $g \in \H_{\vec p \, ^\prime}^{t',r'}  $.
\end{corollary}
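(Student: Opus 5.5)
The plan is to truncate $g$, show that the truncations lie in $\H_{\vec p \, ^\prime}^{t',r'}$ with norms bounded uniformly, and then invoke the Fatou property (Theorem \ref{Fatou general}).

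\textbf{Step 1: a functional estimate.} The assumption says $fg\in L^1$ for every $f\in M_{\vec p}^{t,r}$. The map $f\mapsto fg$ is linear from $M_{\vec p}^{t,r}$ into $L^1$. Its graph is closed: if $f_\ell\to f$ in $M_{\vec p}^{t,r}$, then $f_\ell\to f$ in $L^{\vec p}(Q)$ on each dyadic cube $Q$. By Lemma \ref{dual mixed Lp}(ii) a subsequence converges a.e., so the $L^1$-limit of $f_\ell g$ must be $fg$. Both spaces are complete (the Remark after Lemma \ref{lem fatou}), so the closed graph theorem gives a constant $C$ with
\begin{equation*}
\int_\rn |f(x) g(x)|\,\d x\le C\|f\|_{M_{\vec p}^{t,r}}\qquad (f\in M_{\vec p}^{t,r}).
\end{equation*}
As an alternative, one could argue by contradiction with a gliding-hump series $f=\sum_k 2^{-k}f_k$, $f_k\ge 0$, using the lattice property.

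\textbf{Step 2: truncations lie in the block space.} Set $g_\ell:=g\chi_{\{|g|\le \ell\}}\chi_{[-2^\ell,2^\ell)^n}$. Each $g_\ell$ is bounded with compact support, so $g_\ell\in L^{\vec p \, ^\prime}_c$. Cover its support by finitely many dyadic cubes and apply Lemma \ref{generate mix block} to each piece; this gives $g_\ell\in \H_{\vec p \, ^\prime}^{t',r'}$.

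\textbf{Step 3: uniform norm bound.} Use the norm formula (\ref{H = max M le 1}) from Theorem \ref{predual mix BM}. For any $f$ with $\|f\|_{M_{\vec p}^{t,r}}\le 1$ we have $|fg_\ell|\le |fg|$, so
\begin{equation*}
\|g_\ell\|_{\H_{\vec p \, ^\prime}^{t',r'}}=\max_{\|f\|_{M_{\vec p}^{t,r}}\le1}\Big|\int g_\ell f\Big|\le \sup_{\|f\|_{M_{\vec p}^{t,r}}\le 1}\int|fg|\le C.
\end{equation*}
Hence $\{g_\ell\}$ is bounded in $\H_{\vec p \, ^\prime}^{t',r'}$.

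\textbf{Step 4: local weak convergence.} We need $g\in L^{\vec p \, ^\prime}_{\rm loc}$. Fix a cube $Q$ and take any $h\in L^{\vec p}(Q)$. By Lemma \ref{chi Q mixed Lp}-type scaling, $h\in M_{\vec p}^{t,r}$ with $\|h\|_{M_{\vec p}^{t,r}}\lesssim_Q\|h\|_{L^{\vec p}}$: the small cubes contribute through the $t<r$ geometric sum, exactly as in the estimate of $IV$ in Theorem \ref{E_k^q f to f}, and the large cubes form a convergent tail since $t>n/\sum 1/p_i$. Step 1 then gives $\int_Q|hg|\lesssim_Q\|h\|_{L^{\vec p}(Q)}$, and by Lemma \ref{dual mixed Lp} this means $g\chi_Q\in L^{\vec p \, ^\prime}$. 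Consequently $g_\ell\chi_Q\to g\chi_Q$ in $L^{\vec p \, ^\prime}$ norm by dominated convergence (valid since $\vec p \, ^\prime<\infty$), and in particular weakly. Theorem \ref{Fatou general} then yields $g\in\H_{\vec p \, ^\prime}^{t',r'}$ with norm at most $C$.

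\textbf{Main obstacle.} The delicate point is Step 1, the passage from the qualitative hypothesis to a quantitative bound, together with verifying the hypotheses of the closed graph theorem. If $M_{\vec p}^{t,r}$-completeness is unavailable in the needed range, I will fall back on the gliding-hump argument instead.
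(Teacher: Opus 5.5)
Steps 1--3 are essentially the paper's argument: the closed graph theorem, truncation to a bounded function supported in $2^n$ dyadic cubes together with Lemma \ref{generate mix block}, and the uniform bound via (\ref{H = max M le 1}). After that, the paper simply asserts that $g_\ell\to g$ locally weakly in $L^{\vec p \, ^\prime}$. Your Step 4, which tries to justify this, is where the proposal breaks, in two ways.

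First, the embedding $L^{\vec p}(Q)\hookrightarrow M_{\vec p}^{t,r}$ is false whenever $t>n/\sum_i 1/p_i$. The bound for $IV$ worked only because $\mathbb E_k f$ is constant on each cube of $\D_k$, so H\"older could be applied to an average; a general $h$ has no such structure. For instance, $h(x)=\prod_{i=1}^n|x_i|^{-a_i}\chi_{[0,1)^n}(x)$ with $a_i<1/p_i$ and $\sum_i a_i>n/t$ lies in $L^{\vec p}$. But on $Q=[0,2^{-j})^n$ one computes $|Q|^{1/t-\frac1n\sum_i 1/p_i}\|h\chi_Q\|_{L^{\vec p}}\approx 2^{j(\sum_i a_i-n/t)}\to\infty$, so $h\notin M_{\vec p}^{t,\infty}\supset M_{\vec p}^{t,r}$.

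Second, and decisively, the target of Step 4 is \emph{false}: $g$ need not lie in $L^{\vec p \, ^\prime}_{\rm loc}$. Every $g\in\H_{\vec p \, ^\prime}^{t',r'}$ satisfies the hypothesis, since the H\"older argument in Theorem \ref{predual mix BM}(i) bounds $\int|fg|$ by $\|\lambda\|_{\ell^{r'}}\|f\|_{M_{\vec p}^{t,r}}$. With $Q_j=[0,2^{-j})^n$, the function $g=\sum_{j\ge0}2^{-j\epsilon}|Q_j|^{-1/t'}\chi_{Q_j}$ belongs to $\H_{\vec p \, ^\prime}^{t',r'}$, because $\bigl\| |Q_j|^{-1/t'}\chi_{Q_j}\bigr\|_{L^{\vec p \, ^\prime}}=|Q_j|^{1/t-\frac1n\sum_i1/p_i}$ by Lemma \ref{chi Q mixed Lp}. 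Yet $g\ge 2^{J(n/t'-\epsilon)}$ on $Q_J$, so $\|g\chi_{Q_J}\|_{L^{\vec p \, ^\prime}}\ge 2^{J(\sum_i 1/p_i-n/t-\epsilon)}\to\infty$ for $0<\epsilon<\sum_i1/p_i-n/t$. So no argument can give $g_\ell\chi_Q\to g\chi_Q$ in $L^{\vec p \, ^\prime}$, outside the degenerate case $r=\infty$, $t=n/\sum_i1/p_i$, where $M_{\vec p}^{t,\infty}=L^{\vec p}$.

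What does survive is $g\in L^1_{\rm loc}$, since $\chi_Q\in M_{\vec p}^{t,r}$ for every cube (Example \ref{exa Q0 iff} with translation and dilation). Hence $g_\ell\to g$ a.e.\ and in $L^1_{\rm loc}$. What you actually need is the Fatou property of $\H_{\vec p \, ^\prime}^{t',r'}$ for bounded sequences converging only a.e. This is also the form in which the paper invokes Theorem \ref{Fatou general} in the proof of Theorem \ref{associate space}, where only $f_k\uparrow f$ a.e.\ is known.

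You can prove that version directly, for $t>n/\sum_i1/p_i$:
\begin{enumerate}
\item Write $g_\ell=\sum\lambda^\ell_{j,k}b^\ell_{j,k}$ with $\|\lambda^\ell\|_{\ell^{r'}}\le C+1$.
\item By a diagonal argument and reflexivity of $L^{\vec p \, ^\prime}(Q_{j,k})$, pass to a subsequence with $\lambda^\ell_{j,k}\to\lambda_{j,k}$ and $b^\ell_{j,k}\rightharpoonup b_{j,k}$ weakly. Then each $b_{j,k}$ is still a block and $\|\lambda\|_{\ell^{r'}}\le C+1$.
\item Test against $h\in L^\infty_c$. The numbers $|Q_{j,k}|^{1/t-\frac1n\sum_i1/p_i}\|h\chi_{Q_{j,k}}\|_{L^{\vec p}}$ are small in $\ell^r$ off a finite set of indices, so $\int g_\ell h\to\int(\sum\lambda_{j,k}b_{j,k})h$.
\item Dominated convergence gives $\int g_\ell h\to\int gh$.
\end{enumerate}
Therefore $g=\sum\lambda_{j,k}b_{j,k}\in\H_{\vec p \, ^\prime}^{t',r'}$.
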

\begin{proof}
	By the closed graph theorem, $ \|  fg \|_{L^1}   \le L \|f\|_{M_{\vec p}^{t,r}}  $  for some $L$ independent of $f  \in M_{\vec p}^{t,r}$. Set   $g_N := \chi_{B(0,N)}  \chi_{ [0,N] } (|g|) g $ for each $N \in \mathbb N$. Then using Lemma \ref{generate mix block} and the fact that $B(0,N)$ can be covered by at most $2^n$ dyadic cubes, we have $g_N \in \H_{\vec p \, ^\prime}^{t',r'} $. Using Theorem \ref{predual mix BM}, we obtain $ \| g_N \| _{ \H_{\vec p \, ^\prime}^{t',r'}  }  \le L. $
	Since $g_N$  converges locally to $g$  in the weak topology of $  L^{\vec p \, ^\prime} $,
	using Theorem \ref{Fatou general}, we see that $g \in  \H_{\vec p \, ^\prime}^{t',r'}  $.
\end{proof}

The following result is the  lattice property of block spaces $\H_{\vec p \, ^\prime}^{t',r'} $.
\begin{lemma} \label{lem lattice block}
	Let $ 1 < \vec p <\infty$. Let   $1 < n / ( \sum_{i=1}^n  1/p_{i})   < t <r<\infty $ or $ 1< n / ( \sum_{i=1}^n  1/p_{i})  \le t< r =\infty  $.
	Then a measurable function $ f$  belongs to $\H_{\vec p \, ^\prime}^{t',r'}  $ if and only if there exists a measurable non-negative function  $ g \in \H_{\vec p \, ^\prime}^{t',r'} $ such that  $|f(x)| \le g (x)$ a.e.
\end{lemma}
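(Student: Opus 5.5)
The ``only if'' direction is immediate: if $f\in \H_{\vec p \, ^\prime}^{t',r'}$, we take $g:=|f|$. What has to be checked is that $|f|$ again belongs to the block space with the same norm. Take an admissible expression $f=\sum_{(j,k)}\lambda_{j,k}b_{j,k}$ with $\|\lambda\|_{\ell^{r'}}\le \|f\|_{\H_{\vec p \, ^\prime}^{t',r'}}+\epsilon$, and rewrite $|f| = \overline{\sgn f}\, f=\sum_{(j,k)}\lambda_{j,k}\,(\overline{\sgn f}\, b_{j,k})$. Multiplying by the unimodular (or zero) function $\overline{\sgn f}$ preserves the support and does not increase the $L^{\vec p \, ^\prime}$-norm. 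Hence each $\overline{\sgn f}\,b_{j,k}$ is still a $(\vec p \, ^\prime,t')$-block on $Q_{j,k}$, and the series still converges a.e. (Proposition \ref{mix block sum converge}). Therefore $|f|\in\H_{\vec p \, ^\prime}^{t',r'}$ and $\||f|\|_{\H_{\vec p \, ^\prime}^{t',r'}}\le\|f\|_{\H_{\vec p \, ^\prime}^{t',r'}}$.

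For the ``if'' direction, suppose $|f|\le g$ a.e. with $g\in\H_{\vec p \, ^\prime}^{t',r'}$. The plan is to transfer a decomposition of $g$ to $f$ through the bounded multiplier $h:=f/g$ on $\{g>0\}$ and $h:=0$ elsewhere; since $|f|\le g$ a.e., we have $f=0$ a.e. on $\{g=0\}$, so $f=hg$ a.e. with $|h|\le 1$. Write $g=\sum\lambda_{j,k}b_{j,k}$ with $\|\lambda\|_{\ell^{r'}}\le\|g\|_{\H_{\vec p \, ^\prime}^{t',r'}}+\epsilon$. Then $f=hg=\sum\lambda_{j,k}(hb_{j,k})$, where the series converges a.e. because it is the pointwise product of an a.e. convergent series with the fixed function $h$. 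Each $hb_{j,k}$ is supported on $Q_{j,k}$ and satisfies $\|hb_{j,k}\|_{L^{\vec p \, ^\prime}}\le\|b_{j,k}\|_{L^{\vec p \, ^\prime}}$, by the monotonicity of the iterated mixed norm (the lattice property of $L^{\vec p \, ^\prime}$, visible directly from the definition). So this is an admissible expression, giving $f\in\H_{\vec p \, ^\prime}^{t',r'}$ with $\|f\|_{\H_{\vec p \, ^\prime}^{t',r'}}\le\|g\|_{\H_{\vec p \, ^\prime}^{t',r'}}$.

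An alternative route would go through the duality of Theorem \ref{predual mix BM} and the Fatou property of Theorem \ref{Fatou general}, but the direct multiplier argument avoids both. The main point needing care is the a.e. convergence and the measurability of $h$. Measurability is clear since $h$ is a quotient of measurable functions on a measurable set. For convergence, the partial sums of $\sum\lambda_{j,k}hb_{j,k}$ are exactly $h$ times the partial sums for $g$, so they converge wherever those do. If the definition is instead read with convergence in $L^1_{\operatorname{loc}}$ (as the remark after Proposition \ref{mix block sum converge} allows), the same bound $|h|\le1$ transfers that convergence as well. Letting $\epsilon\to0$ finishes both directions and yields the norm comparison $\|f\|_{\H_{\vec p \, ^\prime}^{t',r'}}\le\|g\|_{\H_{\vec p \, ^\prime}^{t',r'}}$ as a by-product.
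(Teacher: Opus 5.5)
Your proof is correct. It shows slightly more than the lemma: for any measurable $h$ with $|h|\le 1$ and any $g\in\mathcal{H}_{\vec p \, ^\prime}^{t',r'}$, multiplying an admissible decomposition of $g$ termwise by $h$ gives an admissible decomposition of $hg$, so $\|hg\|_{\mathcal{H}_{\vec p \, ^\prime}^{t',r'}}\le\|g\|_{\mathcal{H}_{\vec p \, ^\prime}^{t',r'}}$. Both directions of the lemma are the cases $h=\overline{\sgn f}$ and $h=f/g$.

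The paper gives no argument of its own here; it only refers to \cite[Lemma 4.10]{BGX25}. The lemma is placed right after the Fatou property (Theorem \ref{Fatou general}) and the corollary that $g\in\mathcal{H}_{\vec p \, ^\prime}^{t',r'}$ whenever $fg\in L^1$ for all $f\in M_{\vec p}^{t,r}$. That placement suggests the intended route is through duality. If $|f|\le g$, then Theorem \ref{predual mix BM}(i), applied to $g\ge 0$ and $|h|$, gives $\int_{\rn}|fh|\,\d x\le\|g\|_{\mathcal{H}_{\vec p \, ^\prime}^{t',r'}}\|h\|_{M_{\vec p}^{t,r}}$ for every $h\in M_{\vec p}^{t,r}$. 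The corollary (closed graph theorem, truncation, norm attainment, Fatou property) then puts $f$ in the block space.

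Your multiplier argument works directly from Definition \ref{def mix block space} and avoids all of this:
\begin{itemize}
\item It needs neither the duality theorem nor the Fatou property, whose proof the paper only outsources.
\item It does not use the restrictions $1<\vec p$, $1<n/(\sum_{i=1}^n 1/p_i)$ or $t<r$.
\item It gives the constant-free bound $\|f\|_{\mathcal{H}_{\vec p \, ^\prime}^{t',r'}}\le\|g\|_{\mathcal{H}_{\vec p \, ^\prime}^{t',r'}}$, and in particular $\||f|\|_{\mathcal{H}_{\vec p \, ^\prime}^{t',r'}}=\|f\|_{\mathcal{H}_{\vec p \, ^\prime}^{t',r'}}$.
\end{itemize}
The duality route fits the K\"othe-dual viewpoint of Theorem \ref{associate space}, but for this lemma it is strictly heavier.
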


\begin{proof}
	The proof is similar to \cite[Lemma 4.10]{BGX25} and we omit it here.
\end{proof}

Finally we research the associate spaces and dual spaces.
Recall ball Banach function norm and ball Banach function space.
\begin{definition} \label{ball Bf norm}
	A mapping $\rho: \mathbb M ^+ \to [0,\infty]$  is called a ball Banach  function norm (over $\rn$) if, for all $f, g, f_k (k \in \mathbb N)$  in $\mathbb M ^+ $, for all constants $a \ge 0$ and for all balls $B$ in $\rn$, the	following properties hold:
	
	{\rm (P1)} $\rho (f) = 0 $ if and only if $f =0$ a.e., $\rho (a f) = a \rho (f)$, $\rho(f+g) \le \rho (f) +\rho (g)$,
	
	{\rm (P2)} $\rho (g) \le \rho (f)$  if $ g \le f $ a.e.,
	
	{\rm (P3)} the Fatou property; $\rho (f_j) \uparrow \rho (f)$ holds whenever $0 \le f_j \uparrow f$ a.e.,
	
	{\rm (P4)} For all balls $B$, $\rho (\chi_B) <\infty$,
	
	{\rm (P5)} $\| \chi_B f \|_{L^1} \lesssim_B \rho (f) $ with the implicit constant depending on $B$ and $\rho$ but independent of $f.$
	
	The space generated by such $\rho$ is called a ball Banach function	space.
	
	A mapping $\tau: \mathbb M ^+ \to [0,\infty]$  is called a  Banach  function norm (over $\rn$) 	if satisfies (P1), (P2), (P3), (P5) and (P4)$^\prime$ : if $E \subset \rn$ and $|E| <\infty$, then  $\tau (E) <\infty$.
	
	The space generated by a  Banach  function norm is called a  Banach function	space.
\end{definition}

\begin{definition}
	If $\rho$  is a a ball Banach function norm, its associate norm $\rho '$ is defined in  $ \mathbb M ^+$  by
	\begin{equation*}
		\rho ' (g) := \sup\left\{ \| fg\|_{L^1} :f \in \mathbb M ^+, \rho (f) \le 1 \right\}, g \in  \mathbb M ^+ .
	\end{equation*}
\end{definition}

\begin{definition}
	Let $\rho$ be a ball Banach function norm, and let $ \mathcal X = \mathcal X (\rho) $ be the ball Banach function space determined by $\rho$ as in Definition \ref{ball Bf norm}. Let $\rho '$ be the associate norm of $\rho$. The Banach function space $\mathcal X(\rho ') = \mathcal X ' (\rho)$ determined by $\rho '$ is called the associate space or the K\"othe dual of $\mathcal X$.
\end{definition}

\begin{lemma}[Theorem 352, \cite{SDH20}] \label{E= E''}
	Every ball Banach function space $E(\rn)$ coincides with its
	second associate space $E'' (\rn)$. In other words, a function $f$ belongs to $E(\rn)$
	if and only if it belongs to $E ''(\rn)$, and in that case $\|f\|_{E} = \|f\|_{E''}$ or all
	$f \in E (\rn) = E'' (\rn)$.
\end{lemma}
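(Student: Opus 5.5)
The statement is the Lorentz–Luxemburg type theorem $E = E''$ for ball Banach function spaces, quoted from \cite[Theorem 352]{SDH20}. I would prove it directly from the axioms (P1)–(P5) and the definition of the associate norm $\rho'$.

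\emph{The easy inequality.} For $f \in \mathbb M^+$ and $g$ with $\rho'(g) \le 1$, the definition of $\rho'$ gives
\begin{equation*}
\|fg\|_{L^1} \le \rho(f)\,\rho'(g).
\end{equation*}
Taking the supremum over such $g$ yields $\rho''(f) \le \rho(f)$. This immediately gives the inclusion $E \subset E''$ with norm bound $\|f\|_{E''} \le \|f\|_{E}$.

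\emph{The reverse inequality.} I would argue it in two stages.

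First assume $f \in \mathbb M^+$ is bounded and supported in a ball $B$, so $f \le c\chi_B$. By (P4), $\rho(f) < \infty$.
\begin{itemize}
\item Consider the convex set $K = \{h \in L^1(B) : h \ge 0,\ \rho(h) \le 1\}$.
\item The main step is to show $K$ is closed in $L^1(B)$. Take $h_k \to h$ in $L^1$. Pass to an a.e.\ convergent subsequence and set $H_k = \inf_{j \ge k} h_j$. Then $H_k \uparrow h$ a.e., and by (P2) each $\rho(H_k) \le 1$. The Fatou axiom (P3) then gives $\rho(h) \le 1$.
\item If $\rho(f) > 1$, then $f \notin K$. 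Hahn–Banach separation in $L^1(B)$, with dual $L^\infty(B)$, produces $g \in L^\infty(B)$ such that $\int f g > \sup_{h \in K} \int h g$.
\item Replacing $g$ by $g_+$ does not decrease the left side after noting $K$ is a lattice under truncation. More precisely, for $h \in K$ the function $h\chi_{\{g>0\}}$ is again in $K$ by (P2), so we may take $g \ge 0$.
\item Normalizing so that $\sup_{h \in K} \int hg \le 1$ means $\rho'(g) \le 1$, since any $h$ with $\rho(h) \le 1$ can be truncated to $B$ and to bounded levels and then handled by monotone convergence. Then $\rho''(f) \ge \int fg > 1$.
\end{itemize}
By homogeneity this yields $\rho(f) \le \rho''(f)$ for such $f$.

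Second, for a general $f \in \mathbb M^+$, take $f_k = \min(f,k)\chi_{B(0,k)}$, so $f_k \uparrow f$. Apply the first stage to each $f_k$. Then use (P3) for $\rho$ together with monotonicity of $\rho''$, which gives $\rho''(f) \ge \sup_k \rho''(f_k) = \sup_k \rho(f_k) = \rho(f)$.

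Since both norms act on $|f|$, the two stages give $E = E''$ with equality of norms.

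\emph{Main obstacle.} The critical point is the closedness of $K$, which is exactly where the Fatou property (P3) is used. A secondary point is checking that the separating functional is in fact an admissible element for $\rho'$, and (P5) is what guarantees $\rho'$ is a genuine norm on bounded functions supported on balls. In the paper's context the lemma is simply cited from \cite{SDH20}. What matters for applying it is verifying that $\H_{\vec p \, ^\prime}^{t',r'}$ satisfies (P1)–(P5), using Theorem \ref{Fatou general}, Lemma \ref{lem lattice block} and Lemma \ref{generate mix block}.
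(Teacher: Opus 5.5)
Your proof is correct. It is the standard Lorentz--Luxemburg argument: $\rho''\le\rho$ from the definition of $\rho'$; then strict Hahn--Banach separation of $f$ from the convex set $K$, which is closed in $L^1(B)$ by (P2)--(P3), where the separating level can be taken strictly positive since $0\in K$ forces $\int f g_+>0$; and finally the monotone approximation $f_k\uparrow f$ together with (P3). The paper gives no proof of its own and only cites \cite[Theorem 352]{SDH20}; your argument is the standard proof of that result.
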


\begin{theorem} \label{associate space}
	Let $ 1 < \vec p <\infty$. Let   $1 < n / ( \sum_{i=1}^n  1/p_{i})   < t <r<\infty $ or $ 1< n / ( \sum_{i=1}^n  1/p_{i})  \le t< r =\infty  $.
	Then the associate space $( M_{\vec p}^{t,r} )  ' $ as a ball Banach function space coincides with the block space $\mathcal{H}_{\vec p \, ^\prime}^{t',r'}  $.
\end{theorem}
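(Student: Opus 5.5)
To prove Theorem \ref{associate space}, we show two norm inequalities for $g \in \mathbb M^+$: $\rho'(g) \le \|g\|_{\H_{\vec p \, ^\prime}^{t',r'}}$ and $\|g\|_{\H_{\vec p \, ^\prime}^{t',r'}} \le \rho'(g)$, where $\rho = \|\cdot\|_{M_{\vec p}^{t,r}}$. Before this, we check that $\rho$ is a ball Banach function norm, so that the associate space is defined. Properties (P1)--(P3) follow from the remark after Lemma \ref{lem fatou}: the triangle inequality comes from the triangle inequality in $L^{\vec p}$ followed by Minkowski in $\ell^r$, and monotone convergence gives (P3). Property (P4) follows from Example \ref{exa Q0 iff} together with translation and dilation invariance, since a ball is contained in finitely many dyadic cubes of comparable size, each of which has finite norm. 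For (P5) we fix a ball $B$ and cover it by $2^n$ dyadic cubes $Q$ of comparable size. H\"older's inequality (Lemma \ref{Holder mixed}, with exponent $1$ on the $\vec p^{\,\prime}$ side, or directly iterated H\"older) gives $\|f\chi_Q\|_{L^1}\le \|\chi_Q\|_{L^{\vec p\,'}}\|f\chi_Q\|_{L^{\vec p}}$, and $\|f\chi_Q\|_{L^{\vec p}}$ is bounded by a $Q$-dependent constant times the single term of $\rho(f)$ indexed by $Q$.

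For the first inequality we use Theorem \ref{predual mix BM}(i), more precisely estimate (\ref{f g le M H}). Let $g\in\H_{\vec p \, ^\prime}^{t',r'}$ be nonnegative and $f\ge0$ with $\rho(f)\le1$. Taking a near-optimal block decomposition $g=\sum\lambda_{j,k}b_{j,k}$, we have $\int fg\le\sum|\lambda_{j,k}|\int f|b_{j,k}|$, and the chain in (\ref{f g le M H}) gives $\|fg\|_{L^1}\le(1+\epsilon)\|g\|_{\H}$. Hence $\rho'(g)\le\|g\|_{\H}$. This shows $\H_{\vec p \, ^\prime}^{t',r'}\hookrightarrow (M_{\vec p}^{t,r})'$, using Lemma \ref{lem lattice block} to pass from $g$ to $|g|$.

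For the converse, let $g\in (M_{\vec p}^{t,r})'$, so $\|fg\|_{L^1}\le\rho'(|g|)\rho(f)$ for all $f$. We reuse the argument of the corollary following Theorem \ref{Fatou general}. Set $g_N:=\chi_{B(0,N)}\chi_{[0,N]}(|g|)\,g$. Then $g_N\in L^\infty_c\subset L^{\vec p\,'}_c$, so $g_N\in\H$ by Lemma \ref{generate mix block} and finitely many cubes. By the norm formula (\ref{H = max M le 1}) in Theorem \ref{predual mix BM},
\begin{equation*}
\|g_N\|_{\H_{\vec p \, ^\prime}^{t',r'}}=\max_{\rho(f)\le1}\left|\int g_N f\right|\le\sup_{\rho(f)\le 1}\|f g\|_{L^1}=\rho'(|g|).
\end{equation*}
Since $g_N\to g$ pointwise with $|g_N|\le|g|$, and $g\in L^{\vec p\,'}_{\rm loc}$ (by (P5) for the associate norm, or by testing against $f=h\chi_Q$ with $h\in L^{\vec p}$ and using Lemma \ref{dual mixed Lp}), the sequence $g_N$ converges locally weakly in $L^{\vec p\,'}$ by dominated convergence. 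The Fatou property (Theorem \ref{Fatou general}) then yields $g\in\H$ with $\|g\|_{\H}\le\liminf\|g_N\|_{\H}\le\rho'(|g|)$. Combining both directions gives equality of norms, and so $(M_{\vec p}^{t,r})'=\H_{\vec p \, ^\prime}^{t',r'}$.

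I expect the main obstacle to be justifying that $g\in L^{\vec p\,'}_{\rm loc}$, so that the weak-convergence hypothesis of Theorem \ref{Fatou general} applies. I would handle it by bounding $\sup\{\int_Q |g||h|:\|h\|_{L^{\vec p}}\le1\}$ via $\rho(h\chi_Q)\lesssim_Q\|h\|_{L^{\vec p}}$. This bound holds because cubes at scales finer than $Q$ contribute a geometric sum (this uses $t<r$ or $r=\infty$), and cubes containing $Q$ contribute a sum of powers of $|Q'|^{1/t-\frac1n\sum 1/p_i}$, which converges when $t>n/\sum 1/p_i$ or $r=\infty$. The full $L^{\vec p\,'}(Q)$ bound then follows from Lemma \ref{dual mixed Lp}.
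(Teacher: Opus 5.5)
Your route is the same as the paper's. You get the inclusion $\mathcal{H}_{\vec p^{\,\prime}}^{t',r'}\subset (M_{\vec p}^{t,r})'$ from (\ref{f g le M H}). For the converse you use the truncations $g_N$, the norm formula (\ref{H = max M le 1}) giving $\|g_N\|_{\mathcal{H}_{\vec p^{\,\prime}}^{t',r'}}\le\rho'(|g|)$, and the Fatou property (Theorem \ref{Fatou general}).

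The gap is exactly at the step you singled out. The claim that every $g\in(M_{\vec p}^{t,r})'$ lies in $L^{\vec p^{\,\prime}}_{\rm loc}$ is false whenever $t>n/(\sum_{i=1}^n 1/p_i)$. That is every case except the endpoint $t=n/(\sum_{i=1}^n 1/p_i)$, $r=\infty$, where the block space is $L^{\vec p^{\,\prime}}$ by Lemma \ref{block = Lp mixed}.

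Your supporting bound $\rho(h\chi_Q)\lesssim_Q\|h\|_{L^{\vec p}}$ fails at the fine scales, not the coarse ones. For $Q'\subset Q$ the weight $|Q'|^{1/t-\frac1n\sum_i 1/p_i}$ has a negative exponent and blows up as $\ell(Q')\to0$, while $h\in L^{\vec p}$ gives no decay rate for $\|h\chi_{Q'}\|_{L^{\vec p}}$. For instance, take $n=1$ and $1<p<t<r$. The function $h(x)=x^{-b}\chi_{(0,1)}(x)$ with $1/t<b<1/p$ lies in $L^p$, but $|[0,2^{-j})|^{1/t-1/p}\|h\chi_{[0,2^{-j})}\|_{L^p}\approx 2^{j(b-1/t)}\to\infty$. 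Hence $h\notin M_p^{t,\infty}\supset M_p^{t,r}$.

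Dually, take $g(x)=x^{-a}\chi_{(0,1)}(x)$ with $1/p'\le a<1/t'$. It splits into the pieces $g\chi_{[2^{-j-1},2^{-j})}$, $j\ge 0$. Each is a block on the dyadic interval $[2^{-j-1},2^{-j})$ times a coefficient $\approx 2^{j(a-1/t')}$, and these coefficients decay geometrically. So $g\in\mathcal{H}_{p'}^{t',r'}\subset(M_p^{t,r})'$, yet $g\notin L^{p'}_{\rm loc}$. Therefore $g_N$ cannot converge to $g$ locally weakly in $L^{\vec p^{\,\prime}}$, and Theorem \ref{Fatou general} cannot be invoked through that hypothesis. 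Your fallback, (P5) for $\rho'$, only gives $g\in L^1_{\rm loc}$.

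What you need is the Fatou property in the form in which the paper's proof actually applies it, namely to $f_k\uparrow f$ a.e. The statement is: if $0\le g_N\uparrow g$ a.e.\ and $\sup_N\|g_N\|_{\mathcal{H}_{\vec p^{\,\prime}}^{t',r'}}<\infty$, then $g\in\mathcal{H}_{\vec p^{\,\prime}}^{t',r'}$ with norm at most $\liminf_N\|g_N\|_{\mathcal{H}_{\vec p^{\,\prime}}^{t',r'}}$. There is no local $L^{\vec p^{\,\prime}}$ assumption on $g$.

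Away from the endpoint this can be proved using only $g\in L^1_{\rm loc}$:
\begin{itemize}
\item Take near-optimal decompositions $g_N=\sum\lambda^N_{j,k}b^N_{j,k}$ with $\lambda^N_{j,k}\ge0$.
\item By a diagonal argument, extract $\lambda^N_{j,k}\to\lambda_{j,k}$ and $\lambda^N_{j,k}b^N_{j,k}\rightharpoonup c_{j,k}$ weakly in the reflexive space $L^{\vec p^{\,\prime}}(Q_{j,k})$. Then $G:=\sum c_{j,k}$ lies in the block space with norm at most $\liminf_N\|\lambda^N\|_{\ell^{r'}}$.
\item Show $G=g$ by testing against $\phi\in L^\infty_c$. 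Large cubes contribute uniformly little in $N$ because $|Q_{j,k}|^{1/t-\frac1n\sum_i 1/p_i}\to0$ as $\ell(Q_{j,k})\to\infty$.
\item Small cubes contribute uniformly little because $\|\lambda^N_{j,k}b^N_{j,k}\|_{L^1}\le\lambda^N_{j,k}|Q_{j,k}|^{1/t}$, by H\"older and Lemma \ref{chi Q mixed Lp}. After H\"older in $k$ this yields a factor $2^{jn(1/r-1/t)}$ at level $j\ge0$, summable since $t<r$.
\item Finally, $\int g_N\phi\to\int g\phi$ by dominated convergence.
\end{itemize}

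With this replacement the rest of your proof goes through. At the endpoint your original argument is fine, since there the $L^{\vec p^{\,\prime}}_{\rm loc}$ claim is true.
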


\begin{proof}
	We use the idea from \cite[Theorem 355]{SDH20}. 
	As we have seen in Theorem \ref{predual mix BM}, $\mathcal{H}_{\vec p \, ^\prime}^{t',r'} \subset  ( M_{\vec p}^{t,r} )  '  $. Hence, we will verify the converse. Suppose that $f\in L^0$ satisfies 
	\begin{equation} \label{fg le 1}
		\sup \left\{ \| fg\|_{L^1} : \|g\|_{ M_{\vec p}^{t,r} } \le  1 \right\}  \le 1.
	\end{equation}
	Then we first see that $|f(x)|<\infty$, a.e. $x\in \rn$. Splitting $f$ into its real
	and imaginary parts and each of these into its positive and negative parts,
	we may assume without loss of generality that $f \in \mathbb  M^+.$ For $k \in \mathbb N$, set $f_k := \min(f,k) \chi_{B_k }$. By Theorem \ref{predual mix BM}, Lemma \ref{lem lattice block} and (\ref{fg le 1}), we have $f_k \in \mathcal{H}_{\vec p \, ^\prime}^{t',r'}$ and $ \| f_k \|_{\mathcal{H}_{\vec p \, ^\prime}^{t',r'}} \le 1$. Since $f_k \uparrow f$ a.e., by Theorem \ref{Fatou general}, we obtain $f \in \mathcal{H}_{\vec p \, ^\prime}^{t',r'}$ and $ \| f\|_{\mathcal{H}_{\vec p \, ^\prime}^{t',r'}} \le 1$. 
	
	By Theorem \ref{Fatou general} and Lemma \ref{E= E''}, we have $ \mathcal{H}_{\vec p \, ^\prime}^{t',r'} = (\mathcal{H}_{\vec p \, ^\prime}^{t',r'})''$. This proves the theorem.
\end{proof}

Theorem \ref{associate space} and Lemma \ref{E= E''} yield $( \mathcal{H}_{\vec p \, ^\prime}^{t',r'} ) ' = ( M_{\vec p}^{t,r} )  '' = M_{\vec p}^{t,r}  $.

\begin{definition}[Absolutely continuous norm] 
	Let $ 0 < \vec p \le \infty $. Let $ 0 <t <\infty $.
	Then    $ M_{\vec p}^{t,r} \neq \{ 0\}$ if and only if $   0 < n / ( \sum_{i=1}^n  1/p_{i})    < t <r <\infty $ or $ 0< n / ( \sum_{i=1}^n  1/p_{i})    \le t < r=\infty  $.
	A function $f \in M_{\vec p}^{t,r} $ has absolutely continuous norm in $ M_{\vec p}^{t,r}$ if  $\| f\chi_{E_k}\| _{M_{\vec p}^{t,r}} \to 0  $ 
	for every sequence $\{ E_k \}_{k=1}^\infty $ satisfying $E_k \to \emptyset $ a.e., in the sense that
	$\lim_{k \to \infty} \chi_{E_k} =0$ for almost all $x\in \rn$.
\end{definition}

\begin{lemma} \label{lem Absolutely continuous norm}
	Let $1< \vec p < \infty $ and 
	$ 1 <  n / ( \sum_{i=1}^n  1/p_{i})    < t <r <\infty $.  For $f \in  M_{\vec p}^{t,r}$, $f$ has the
	absolutely continuous norm.
\end{lemma}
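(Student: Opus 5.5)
The plan is to reduce everything to the dominated convergence theorem, applied twice: once inside each mixed Lebesgue norm and once on the sum over dyadic cubes. Fix $f \in M_{\vec p}^{t,r}$ and a sequence $\{E_k\}$ with $\chi_{E_k} \to 0$ a.e. By definition,
\begin{equation*}
\| f\chi_{E_k}\|_{M_{\vec p}^{t,r}}^r = \sum_{Q\in\D} |Q|^{\frac{r}{t}-\frac{r}{n}\sum_{i=1}^n \frac{1}{p_i}} \| f\chi_{E_k}\chi_Q\|_{L^{\vec p}}^r .
\end{equation*}
We show that each summand tends to $0$ and that the summands are dominated by a summable sequence independent of $k$.

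First, the domination. Since $|f\chi_{E_k}\chi_Q| \le |f\chi_Q|$, the lattice property of $L^{\vec p}$ bounds each summand by $|Q|^{r/t-\frac{r}{n}\sum_{i=1}^n 1/p_i}\|f\chi_Q\|_{L^{\vec p}}^r$. The sum of these bounds is $\|f\|_{M_{\vec p}^{t,r}}^r<\infty$, because $r<\infty$.

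Second, the termwise convergence, which is the only step with content: for fixed $Q\in\D$ we need $\|f\chi_Q\chi_{E_k}\|_{L^{\vec p}}\to 0$. Note that $f\chi_Q \in L^{\vec p}$ because its term in the norm sum is finite. Since $1<\vec p<\infty$, every exponent is finite, so we argue by iterated dominated convergence.
\begin{itemize}
\item In the innermost variable $x_1$: for a.e. $(x_2,\dots,x_n)$, the function $|f\chi_Q|^{p_1}(\cdot,x_2,\dots,x_n)$ is integrable and dominates $|f\chi_Q\chi_{E_k}|^{p_1}$, which tends to $0$ a.e. in $x_1$ by Fubini. Hence $\|f\chi_Q\chi_{E_k}\|_{(p_1)}\to 0$ a.e., and it is dominated by $\|f\chi_Q\|_{(p_1)}$.
\item We then repeat the argument for the $p_2,\dots,p_n$ integrals, each time with dominating function $\|f\chi_Q\|_{(p_1,\dots,p_j)}$. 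These are finite a.e. and integrable at the next stage, precisely because $\|f\chi_Q\|_{L^{\vec p}}<\infty$.
\end{itemize}
The one point needing care is the null sets. The set where $\chi_{E_k}\not\to 0$ is null in $\rn$, so by Fubini its $x_1$-sections are null for a.e. $(x_2,\dots,x_n)$, and the same holds at later stages. This is where finiteness of all $p_i$ is essential: with $p_i=\infty$ the termwise convergence fails.

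With termwise convergence to $0$ and a summable dominating sequence, dominated convergence for the series over $\D$ gives $\|f\chi_{E_k}\|_{M_{\vec p}^{t,r}}\to 0$. This is the same reasoning the paper invoked via the ``Lebesgue convergence theorem'' in Corollary \ref{dense L_c mixed BM}.

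Alternatively, Lemma \ref{lem fatou} could be used in place of the inner step. Apply it to $g_k := |f\chi_Q| - |f\chi_Q\chi_{E_k}| \ge 0$, which tends to $|f\chi_Q|$ a.e.: this gives $\liminf_k\|g_k\|_{L^{\vec p}}\ge\|f\chi_Q\|_{L^{\vec p}}$. However, turning that lower bound into $\|f\chi_Q\chi_{E_k}\|_{L^{\vec p}}\to 0$ needs a Brezis–Lieb type step, so the iterated dominated convergence argument is the cleaner choice.
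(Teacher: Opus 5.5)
Your proof is correct, and it takes a genuinely different, more direct route than the paper.

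\textbf{The paper's route.} The paper argues by density. Via Lemma~\ref{dense simple} it chooses simple functions $f_k$ with $\|f-f_k\|_{M_{\vec p}^{t,r}}\to 0$. It then uses
$\|f\chi_{F_j}\|_{M_{\vec p}^{t,r}}\le \|f-f_k\|_{M_{\vec p}^{t,r}}+\|f_k\chi_{F_j}\|_{M_{\vec p}^{t,r}}$
and lets $j\to\infty$, then $k\to\infty$. This imports the whole approximation machinery: the martingale convergence of Theorem~\ref{E_k^q f to f} with Doob's inequality (Lemma~\ref{doob}), and the density of $L_c^\infty$. That is why the lemma carries the hypotheses $1<\vec p$ and $1<n/(\sum_{i=1}^n 1/p_i)$.

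\textbf{An unjustified step in the paper.} The limit $j\to\infty$ tacitly uses $\|f_k\chi_{F_j}\|_{M_{\vec p}^{t,r}}\to 0$ for the approximants $f_k$, which are finite combinations of indicators of cubes in $\D_K$. The paper does not justify this. Proving it requires exactly your two-step dominated convergence argument, with majorant $\|f_k\|_{L^\infty}^r\,|Q|^{\frac{r}{t}-\frac{r}{n}\sum_{i=1}^n\frac{1}{p_i}}\,\|\chi_{K}\chi_Q\|_{L^{\vec p}}^r$, where $K=\operatorname{supp} f_k$.

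\textbf{Your route.} You apply that argument directly to $f$. Iterated dominated convergence gives $\|f\chi_Q\chi_{E_k}\|_{L^{\vec p}}\to 0$ for each fixed $Q\in\D$; this is absolute continuity of the mixed Lebesgue norm when all $p_i<\infty$. Dominated convergence for counting measure on $\D$, with the summands of $\|f\|_{M_{\vec p}^{t,r}}^r$ as majorant, then finishes.

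\textbf{What each buys.} Your argument uses only $r<\infty$ and $p_i<\infty$ for every $i$. It therefore proves the lemma for all $0<\vec p<\infty$ and all $t\le r<\infty$, needs no density result, and supplies the step the paper leaves implicit. The paper's route gains only brevity once Lemma~\ref{dense simple} is available. Your closing remark about Fatou's lemma is unnecessary and can be dropped.
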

\begin{proof}
	Let $f \in M_{\vec p}^{t,r}$. By Lemma \ref{dense simple}, we can find a sequence of simple functions $\{f_k\}_{k\in \mathbb N} \subset \operatorname{Sim} (\rn)$ such that 
	\begin{equation*}
		\lim_{k\to \infty} \| f- f_k\|_{M_{\vec p}^{t,r}}  =0.
	\end{equation*}
	Let $\{F_k \}_{k\in \mathbb N}$ be an arbitrary sequence for which $F_k \to \emptyset$ a.e. Then
	\begin{equation*}
		\|f \chi_{F_j} \|_{ M_{\vec p}^{t,r}} \le \| f_k -f \|_{M_{\vec p}^{t,r}}  + \| f_k  \chi_{F_j} \|_{ M_{\vec p}^{t,r}} .
	\end{equation*}
	Letting $j \to \infty$, we obtain
	\begin{equation*}
		\limsup_{j\to \infty} 	\|f \chi_{F_j} \|_{ M_{\vec p}^{t,r}} \le \| f_k -f \|_{M_{\vec p}^{t,r}} .
	\end{equation*}
	Since $k$ is arbitrary, we have $	\lim_{j\to \infty} 	\|f \chi_{F_j} \|_{ M_{\vec p}^{t,r}} =0$.
\end{proof}

The following result comes from \cite[Theorem 1.2.18]{ML19} if we replace `ball Banach function space' by 	`Banach function space'. 
\begin{lemma} \label{acn dual}
	Let $X $ be a  ball Banach function space. Then the following 	conditions are equivalent:
	
	{\rm (i)}  $X^\ast = X '$;
	
	{\rm (ii)} $X$ has absolutely continuous norm.
\end{lemma}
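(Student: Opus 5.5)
The plan is to transplant the classical proof for Banach function spaces (as in \cite[Theorem 1.2.18]{ML19}, going back to Bennett--Sharpley) to the ball setting. The point to check is that the properties of $X$ used there only involve (P1)--(P3), (P5) and the Fatou property. Property (P4)$^\prime$ enters only through the density of simple functions with supports of finite measure. In the ball setting we replace such functions by simple functions supported in balls, which lie in $X$ by (P4).

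For (ii)$\Rightarrow$(i), every $g\in X'$ induces a bounded functional $f\mapsto\int fg$ with norm at most $\|g\|_{X'}$. Conversely, let $L\in X^*$. For each ball $B$ and each measurable $E\subset B$, put $\nu_B(E):=L(\chi_E)$.
\begin{itemize}
\item[(a)] Absolute continuity of the norm gives $\|\chi_{E_k}\|_X\to 0$ whenever $E_k\downarrow\emptyset$ inside $B$; apply it to $\chi_B\in X$, dominating $\chi_{E_k}\le\chi_B\chi_{E_k}$.
\item[(b)] Hence $\nu_B$ is countably additive.
\item[(c)] If $|E|=0$, then $\chi_E=0$ in $X$, so $\nu_B$ is absolutely continuous with respect to Lebesgue measure.
\end{itemize}
Radon--Nikodym then yields $g_B\in L^1(B)$ with $L(\chi_E)=\int_E g_B$. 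The $g_B$ are consistent on overlapping balls, so they glue to $g\in L^1_{\operatorname{loc}}$. By linearity, $L(f)=\int fg$ for simple $f$ supported in a ball. Extend this to bounded $f$ supported in a ball by uniform approximation, using $\|h\|_X\le\|h\|_{L^\infty}\|\chi_B\|_X$.

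Next show $g\in X'$ with $\|g\|_{X'}\le\|L\|$. For $f\ge0$ with $\|f\|_X\le1$, take $f_k:=\min(f,k)\chi_{B_k}\,\overline{\sgn g}$. Then $\int f_k g\le\|L\|$, and monotone convergence gives $\int f|g|\le\|L\|$.

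Finally, the functionals $L$ and $f\mapsto\int fg$ coincide on bounded functions supported in balls. This class is dense in $X$ when $X$ has absolutely continuous norm: use $E_k=\{|f|>k\}\cup B_k^c$, so $\chi_{E_k}\to0$ a.e. because $f$ is finite a.e. (from (P5) applied to $\chi_B f$). Hence $L$ equals the integral functional on all of $X$.

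For (i)$\Rightarrow$(ii), I argue by contradiction. Suppose $f\in X$, $f\ge0$, and $E_k\to\emptyset$ a.e. with $\|f\chi_{E_k}\|_X\not\to0$. Replacing $E_k$ by $\bigcup_{j\ge k}E_j$, I may assume the sets are decreasing, using the lattice property (P2). Along the decreasing sequence, $\|f\chi_{E_k}\|_X\downarrow\delta>0$.

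Consider the subspace $X_a$ of functions with absolutely continuous norm; it is closed. The function $f$ has positive distance from $X_a$, since $f\chi_{E_k^c}$ is the natural competitor. Hahn--Banach then gives $L\in X^*$ vanishing on $X_a$ with $L(f)\ne0$. By (i), $L(h)=\int hg$ for some $g\in X'$.

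Bounded functions supported in balls belong to $X_a$, by (P4) and dominated convergence in $L^1(B)$ combined with the lattice bound $\|h\chi_E\|_X\le\|h\|_\infty\|\chi_{B\cap E}\|_X$. Here $\|\chi_{B\cap E}\|_X\to0$ needs care: it is where (P3) is used. Since $L$ vanishes on all such functions, $g=0$ a.e., contradicting $L(f)\ne0$.

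The main obstacle is exactly this step: showing that bounded functions supported in balls have absolutely continuous norm in a general ball Banach function space. This is false in general (e.g.\ $L^\infty$), so the argument must be adjusted. Instead I would let $X_b$ be the closure of these functions in $X$ and take a functional vanishing on $X_b$. It is represented by $g\in X'$ with $\int hg=0$ for all $h$ in the class, which forces $g=0$. This shows $X_b=X$.

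Then every $f\in X$ is approximable by $f_m:=\min(|f|,m)\chi_{B_m}$ in norm. The norm of $\chi_{E_k}f$ is controlled by $\|f-f_m\|_X+\|f_m\chi_{E_k}\|_X$. For the second term I would argue via duality: use (i) together with the norm formula $\|h\|_X=\sup_{\|g\|_{X'}\le1}\int |hg|$ from Lemma \ref{E= E''}, and weak compactness in $X'=X^*$. This is the delicate point, and I would follow the structure of \cite[Theorem 1.2.18]{ML19}, verifying each use of (P4)$^\prime$ is only applied to sets inside balls.
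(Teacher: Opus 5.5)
Your proof of (ii)$\Rightarrow$(i) is sound. It is the standard Radon--Nikodym argument with everything localized to balls, which is exactly what (P4) allows. The paper itself gives no argument here, only a citation of the Banach function space version, so this part genuinely verifies the transfer.

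\textbf{The gap is in (i)$\Rightarrow$(ii).} You correctly show $X_b=X$ (with $X_b$ the closure of bounded ball-supported functions) and reduce via truncation. After that, the whole implication comes down to showing that, under (i), each $\chi_B$ has absolutely continuous norm. This is precisely the step you leave as ``the delicate point,'' and the tools you name do not supply it:
\begin{itemize}
\item The norm formula $\|h\|_X=\sup_{\|g\|_{X'}\le1}\int|hg|$ holds in every ball Banach function space, including $L^\infty$, so it cannot produce absolute continuity by itself.
\item Suppose you pick $g_k$ in the unit ball of $X'$ almost norming $\chi_{B\cap E_k}$ and pass to a weak$^*$ cluster point. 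You learn nothing, because the test functions $\chi_{B\cap E_k}$ move with $k$.
\end{itemize}
The truncation claim $\min(|f|,m)\chi_{B_m}\to|f|$ also needs a line of proof. For example, $|f|\chi_{\{|f|>m\}\cup B_m^c}\le 2|f-h|$ once $m\ge 2\|h\|_{L^\infty}$ and $B_m$ contains the support of an approximant $h$.

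\textbf{The missing idea is to move the sets $E_k$ into the functionals.} Take $E_k$ decreasing with $|\bigcap_k E_k|=0$ and $\|f\chi_{E_k}\|_X\ge\delta>0$.
\begin{itemize}
\item Choose $L_k\in X^*$ with $\|L_k\|=1$ and $L_k(f\chi_{E_k})=\|f\chi_{E_k}\|_X$. Put $T_k(h):=L_k(h\chi_{E_k})$; by (P2), $\|T_k\|\le1$.
\item Let $\Lambda$ be a weak$^*$ cluster point of $(T_k)$, which exists by Banach--Alaoglu. Then $\Lambda(f)\ge\delta$.
\item Since $T_k(h\chi_{E_m^c})=0$ for $k\ge m$, you get $\Lambda(h\chi_{E_m^c})=0$ for all $h\in X$ and all $m$.
\item By (i), $\Lambda(h)=\int hg$ for some $g\in X'$. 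Testing with $h=|f|\,\overline{\sgn g}$ yields $\int_{E_m^c}|fg|=0$ for every $m$.
\item Hence $fg=0$ a.e., so $\Lambda(f)=0$, contradicting $\Lambda(f)\ge\delta$.
\end{itemize}
This handles every $f\in X$ directly and uses only the lattice property. It also makes your $X_b=X$ and truncation steps unnecessary.
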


By Theorem \ref{associate space}  and Lemmas \ref{lem Absolutely continuous norm}, \ref{acn dual}, we obtain the following result.

\begin{theorem} \label{dual mixed BM}
	Let $1< \vec p < \infty $ and 
	$ 1 <  n / ( \sum_{i=1}^n  1/p_{i})    < t <r <\infty $.  
	Then the dual of $  M_{\vec p}^{t,r} $ is $ \H_{\vec p \, ^\prime}^{t',r'}  $ in the following sense:
	
	If $g \in \H_{\vec p \, ^\prime}^{t',r'} $, then $L_g : f \mapsto \int_\rn f(x) g(x) \d x $ is an element of $ (  M_{\vec p}^{t,r} ) ^\ast  $. Moreover, for any $L \in  (  M_{\vec p}^{t,r} ) ^\ast$, there exists $g \in\H_{\vec p \, ^\prime}^{t',r'}$ such that $L = L_g$.
\end{theorem}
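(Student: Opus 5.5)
The plan is to combine three earlier results: the identification of the associate space in Theorem \ref{associate space}, the absolute continuity of the norm in Lemma \ref{lem Absolutely continuous norm}, and the abstract criterion of Lemma \ref{acn dual}. First I would check that $M_{\vec p}^{t,r}$, in the stated range, is a ball Banach function space in the sense of Definition \ref{ball Bf norm}.
\begin{itemize}
\item (P1)--(P3) follow from the remark after Lemma \ref{lem fatou} (lattice property and monotone convergence) and the Minkowski inequality in $L^{\vec p}$.
\item (P4) holds because, by Example \ref{exa Q0 iff}, translation and dilation, every cube has finite norm, and hence so does every ball.
\item For (P5), fix a ball $B$ and choose a dyadic cube $Q$ (or at most $2^n$ of them) covering $B$. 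By H\"older's inequality (Lemma \ref{Holder mixed}) and Lemma \ref{chi Q mixed Lp},
\[
\|f\chi_B\|_{L^1}\le \|f\chi_Q\|_{L^{\vec p}}\,|Q|^{1-\frac1n\sum_i 1/p_i}\lesssim_B \|f\|_{M^{t,r}_{\vec p}},
\]
since a single term of the defining $\ell^r$ sum is dominated by the whole norm.
\end{itemize}

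The first claim, that each $g\in\H_{\vec p\,'}^{t',r'}$ induces a bounded functional $L_g$, is exactly estimate (\ref{f g le M H}) in the proof of Theorem \ref{predual mix BM}(i), read with the roles of $f$ and $g$ interchanged. It gives $|L_g(f)|\le \|g\|_{\H_{\vec p\,'}^{t',r'}}\|f\|_{M_{\vec p}^{t,r}}$, and since $r<\infty$ no density issue arises.

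For the converse, Lemma \ref{lem Absolutely continuous norm} shows that every $f\in M_{\vec p}^{t,r}$ has absolutely continuous norm. Lemma \ref{acn dual} then gives $(M_{\vec p}^{t,r})^\ast=(M_{\vec p}^{t,r})'$. Concretely, every $L\in(M_{\vec p}^{t,r})^\ast$ has the form $L(f)=\int fg$ for some $g$ with $\rho'(|g|)<\infty$, where $\rho'$ is the associate norm. Theorem \ref{associate space} identifies $(M_{\vec p}^{t,r})'$ with $\H_{\vec p\,'}^{t',r'}$, so $g\in\H_{\vec p\,'}^{t',r'}$ and $L=L_g$.

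The main obstacle is that Lemma \ref{acn dual} is quoted for Banach function spaces, while $M_{\vec p}^{t,r}$ is only a ball Banach function space. If the transfer is doubted, I would construct $g$ by hand instead.
\begin{enumerate}
\item For each dyadic cube $Q$, the restriction of $L$ to $L^{\vec p}(Q)$ is bounded, since $\|f\chi_Q\|_{M^{t,r}_{\vec p}}\lesssim_Q\|f\|_{L^{\vec p}(Q)}$ by Proposition \ref{pro embedding}-type estimates. Lemma \ref{dual mixed Lp} then gives $g_Q\in L^{\vec p\,'}(Q)$ representing $L$ there.
\item By uniqueness these $g_Q$ are compatible, so they glue to a single $g\in L^{\vec p\,'}_{\rm loc}$.
\item The representation $L(f)=\int fg$ holds on $L^\infty_c$, hence on simple functions with bounded support, by linearity.
\item Testing on nonnegative $f\in L^\infty_c$ gives $\sup\{\|fg\|_{L^1}:\|f\|\le1\}\le\|L\|$, because one may insert $\overline{\sgn g}$ and use the lattice property. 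Theorem \ref{associate space} (via its Fatou argument, Theorem \ref{Fatou general}) then yields $g\in\H_{\vec p\,'}^{t',r'}$.
\item Finally, $L=L_g$ on all of $M_{\vec p}^{t,r}$, because both are continuous and agree on the dense subspace $L^\infty_c$ (Corollary \ref{dense L_c mixed BM}).
\end{enumerate}
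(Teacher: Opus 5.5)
Your main argument is the paper's own proof. The paper derives Theorem \ref{dual mixed BM} in one line from Theorem \ref{associate space}, Lemma \ref{lem Absolutely continuous norm} and Lemma \ref{acn dual}, and it takes Lemma \ref{acn dual} for ball Banach function spaces from the Banach-function-space case without further argument. Your check of (P1)--(P5) and your use of (\ref{f g le M H}) for the easy half only make explicit what the paper leaves implicit.

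\textbf{Step 1 of your fallback is false.} The fallback you offer for the ball-versus-Banach issue fails at Step 1. For $t>n/(\sum_{i=1}^n 1/p_i)$ it is not true that $\|f\chi_Q\|_{M_{\vec p}^{t,r}}\lesssim_Q\|f\|_{L^{\vec p}(Q)}$, because the weight $|R|^{1/t-\frac1n\sum_i 1/p_i}$ blows up on small subcubes $R\subset Q$.

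For a counterexample, take $\vec p=(p,\ldots,p)$ with $1<p<t<r<\infty$, let $c$ be the lower-left vertex of $Q$, and set $f=|\cdot-c|^{-a}\chi_Q$ with $n/t<a<n/p$. Then $f\in L^p(Q)$. But along the dyadic cubes $R\subset Q$ with vertex $c$ one gets $|R|^{1/t-1/p}\|f\chi_R\|_{L^p}\gtrsim \ell(R)^{n/t-a}\to\infty$, so $f\notin M_{p}^{t,r}$. The embedding you are transposing does hold on the predual side (Lemma \ref{generate mix block}), which is why it works in the proof of Theorem \ref{predual mix BM}(ii).

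\textbf{How to repair it.} Restrict $L$ to $L^{\vec s}(Q)$ with $\vec s>\vec p$ and $\sigma:=\frac1n\sum_i 1/s_i<\frac1t-\frac1r$.
\begin{itemize}
\item For $R\subset Q$ with $|R|=2^{-kn}|Q|$, H\"older (Lemma \ref{Holder mixed}) gives $|R|^{1/t-\frac1n\sum_i 1/p_i}\|f\chi_R\|_{L^{\vec p}}\le |R|^{1/t-\sigma}\|f\|_{L^{\vec s}}$.
\item The $2^{kn}$ cubes at level $k$ therefore contribute at most $2^{kn(1-r(1/t-\sigma))}|Q|^{r(1/t-\sigma)}\|f\|_{L^{\vec s}}^r$, which is summable in $k$ because $r(1/t-\sigma)>1$.
\item The ancestors of $Q$ give a convergent geometric series because $1/t<\frac1n\sum_i 1/p_i$.
\end{itemize}
This yields $g_Q\in L^{\vec s^{\,\prime}}(Q)$, and your Steps 2--5 then go through unchanged.

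Alternatively, $E\mapsto L(\chi_E)$ for $E\subset Q$ is a countably additive measure, absolutely continuous with respect to Lebesgue measure, by Lemma \ref{lem Absolutely continuous norm}. Radon--Nikodym then gives $g_Q$, and this is exactly the mechanism behind Lemma \ref{acn dual}.
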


\begin{corollary}\label{reflexive BMX}
	Let $1< \vec p < \infty $ and 
	$ 1 <  n / ( \sum_{i=1}^n  1/p_{i})     < t <r <\infty $.     Then  $   M_{\vec p}^{t,r} $ is reflexive.
\end{corollary}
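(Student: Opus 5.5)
The plan is to compute the bidual of $M_{\vec p}^{t,r}$ using the two duality results already in hand, and then check that the canonical embedding is onto. By Theorem \ref{dual mixed BM}, every $L \in (M_{\vec p}^{t,r})^\ast$ is of the form $L_g$ for some $g \in \H_{\vec p \, ^\prime}^{t',r'}$. The pairing is $L_g(f) = \int_\rn f g\, \d x$, and by (\ref{H = max M le 1}) of Theorem \ref{predual mix BM} we have $\|L_g\| = \|g\|_{\H_{\vec p \, ^\prime}^{t',r'}}$. So the map $T: g \mapsto L_g$ is an isometric isomorphism from $\H_{\vec p \, ^\prime}^{t',r'}$ onto $(M_{\vec p}^{t,r})^\ast$. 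The identity $\|L_g\| = \|g\|$ also shows that $g$ is unique.

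Dualizing, $(M_{\vec p}^{t,r})^{\ast\ast}$ is isometrically isomorphic to $(\H_{\vec p \, ^\prime}^{t',r'})^\ast$ via $\Phi \mapsto \Phi\circ T$. Since $r<\infty$, we are in the range of Theorem \ref{predual mix BM}(ii). Hence every $\Lambda \in (\H_{\vec p \, ^\prime}^{t',r'})^\ast$ has the form $\Lambda(g) = \int_\rn g f\,\d x$ for a unique $f \in M_{\vec p}^{t,r}$.

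Now fix $\Phi \in (M_{\vec p}^{t,r})^{\ast\ast}$. Put $\Lambda := \Phi\circ T$ and let $f \in M_{\vec p}^{t,r}$ be the function it produces. For every $L = L_g \in (M_{\vec p}^{t,r})^\ast$ we then get
\begin{equation*}
\Phi(L) = \Lambda(g) = \int_\rn f g \,\d x = L_g(f) = L(f).
\end{equation*}
Thus $\Phi = J f$, where $J$ is the canonical embedding into the bidual. Since $\Phi$ was arbitrary, $J$ is surjective, and $M_{\vec p}^{t,r}$ is reflexive.

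The main point to verify is that the two pairings agree: the functional on $\H_{\vec p \, ^\prime}^{t',r'}$ coming from $f$ must be the same integral $\int fg$ as the functional on $M_{\vec p}^{t,r}$ coming from $g$. This holds because both are the absolutely convergent integral $\int_\rn fg\,\d x$. Absolute convergence is guaranteed by Hölder's inequality, estimate (\ref{f g le M H}), applied to $|f|$ and $|g|$ via the lattice properties. No further technical obstacle should arise, since the hypotheses of Theorems \ref{predual mix BM} and \ref{dual mixed BM} both hold under the stated assumptions $1<\vec p<\infty$ and $1<n/(\sum_i 1/p_i)<t<r<\infty$.
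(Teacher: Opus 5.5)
Your proposal is correct and follows essentially the paper's route. The paper states the corollary without proof as a consequence of Theorem \ref{dual mixed BM} (every functional on $M_{\vec p}^{t,r}$ is $L_g$ with $g\in\H_{\vec p \, ^\prime}^{t',r'}$, isometrically by (\ref{H = max M le 1})) together with Theorem \ref{predual mix BM} ($(\H_{\vec p \, ^\prime}^{t',r'})^\ast = M_{\vec p}^{t,r}$), and your explicit check that the resulting identification of the bidual is realized by the canonical embedding $J$, rather than being only an abstract isomorphism, is exactly the detail the paper leaves implicit, and you handle it correctly.
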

\begin{remark}
	Let $\vec p = p$, then Theorem \ref{dual mixed BM} becomes \cite[Theorem 5.5]{HNSH23} and Corollary \ref{reflexive BMX}  becomes \cite[Remark 5.7]{HNSH23}.
\end{remark}

\section{Operators on mixed Bourgain-Morrey spaces and their preduals} \label{sec operator}
In this section, we discuss some operators on mixed Bourgain-Morrey spaces and their preduals.
\subsection{The Hardy-Littlewood maximal operator on mixed Bourgain-Morrey spaces} 
In this subsection, we consider the boundedness property of the Hardy-Littlewood maximal operator and iterated maximal operator on mixed Bourgain-Morrey spaces.

For the boundedness of Hardy-Littlewood maximal operator,
we define a equivalent norm of $M_{\vec p}^{t,r}  $ below. To do so, we use a dyadic grid $\mathcal D_{k,\vec a}$, $k \in \mathbb Z$, $\vec a \in \{ 0,1,2\}^n$. More precisely, let
\begin{equation*}
	\mathcal D_{k,\vec a}^0  :=  \{ 2^{-k} [ m +a/3, m+a/3 +1) : m\in \mathbb Z   \}
\end{equation*}
for $k \in \mathbb Z$  and $a = 0,1,2$. Consider
\begin{equation*}
	\mathcal D_{k,\vec a} := \{ Q_1 \times Q_2 \times \cdots \times Q_n :Q_j \in	\mathcal D_{k,\vec a_j}^0 ,  j =1,2,\ldots , n \}
\end{equation*}
for $k \in \mathbb Z$  and  $\vec a = (a_1, a_2, \ldots, a_n) \in  \{ 0,1,2\}^n$. Hereafter,  a  dyadic grid is the family $\mathcal D _{\vec a} := \cup_{k\in \mathbb Z} \mathcal D_{k,\vec a} $ for $\vec a \in \{0,1,2\}^n$.

The following result is an important property of the dyadic grids to prove the equivalent norm of the mixed Bourgain-Morrey space.
\begin{lemma}[Lemma 2.8, \cite{HNSH23}] \label{cube be covered}
	For any cube $Q$ there exists $R\in \bigcup_{\vec a \in \{0,1,2\}^n}  \mathcal D _{\vec a}$ such that $Q \subset R$  and $|R| \le 6^n |Q|$.
\end{lemma}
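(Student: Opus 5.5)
The statement to prove is Lemma \ref{cube be covered}: every cube $Q$ lies in some $R\in\bigcup_{\vec a}\mathcal D_{\vec a}$ with $|R|\le 6^n|Q|$. The plan is to reduce it to a one-dimensional covering claim and then take products. The one-dimensional claim reads as follows. For every interval $I\subset\R$ of length $\ell>0$ there exist $k\in\mathbb Z$ and $a\in\{0,1,2\}$ such that $I$ is contained in an interval $J\in\mathcal D_{k,a}^0$. Moreover, $k$ is determined by $\ell$ alone, so that $2^{-k}\le 6\ell$.

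With this in hand, let $Q=I_1\times\cdots\times I_n$ with every side of length $\ell$. Each $I_j$ is handled with the same $k$, since $k$ depends only on $\ell$, and with its own $a_j$. This gives $R=J_1\times\cdots\times J_n\in\mathcal D_{k,\vec a}$ with $\vec a=(a_1,\ldots,a_n)$. Then $Q\subset R$ and $|R|=2^{-kn}\le 6^n\ell^n=6^n|Q|$. It is essential that the same $k$ serves every coordinate, because elements of $\mathcal D_{k,\vec a}$ are cubes built from intervals of a common length $2^{-k}$.

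To prove the one-dimensional claim, choose $k\in\mathbb Z$ with $3\ell\le 2^{-k}<6\ell$; this is possible because consecutive dyadic lengths differ by a factor of 2. Consider the three grids $\mathcal D_{k,a}^0$ for $a=0,1,2$. Their left endpoints are $2^{-k}(m+a/3)$, so together they form the lattice $\frac{2^{-k}}{3}\mathbb Z$. An interval of length $\ell\le 2^{-k}/3$ therefore contains at most one point of this lattice in its interior. Call that point $x_0=2^{-k}(m_0+a_0/3)$ if it exists.

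Pick the grid $a$ whose endpoints avoid the interior of $I$. The endpoints of grid $a$ are spaced $2^{-k}$ apart, and only one lattice point can possibly meet the interior of $I$. So at least two of the three grids have no endpoint in the interior of $I$, and we may choose either. Since $\ell<2^{-k}$, the interval $I$ then lies inside one half-open interval $J$ of that grid. The only care needed is at the boundary: the intervals of $\mathcal D^0_{k,a}$ are half-open, so a closed $I$ could have its right endpoint exactly at an endpoint of $J$.

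This boundary issue is the main obstacle. I would handle it by ensuring that neither endpoint of $I$ coincides with a grid point from the left side, either by requiring that no point of the grid lies in $(\inf I,\sup I]$, or by simply allowing the factor $6$ as slack. Concretely, the half-open interval $(\inf I,\sup I]$ of length $\ell\le 2^{-k}/3$ contains at most one lattice point, so at least two of the three grids have no endpoint in $(\inf I,\sup I]$. For such a grid, $\bar I=[\inf I,\sup I]\subset[x,x+2^{-k})$, where $x$ is the largest grid endpoint $\le\inf I$. This settles the claim and hence the lemma with constant $6^n$.
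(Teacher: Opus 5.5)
Your proof is correct and complete. The paper gives no argument of its own and only cites \cite[Lemma 2.8]{HNSH23}; to my understanding that proof is the same one-third trick you use: a single scale $2^{-k}\in[3\ell,6\ell)$ shared by all coordinates, the fact that the endpoints of $\mathcal D^0_{k,0},\mathcal D^0_{k,1},\mathcal D^0_{k,2}$ together form $\frac{2^{-k}}{3}\mathbb Z$, and a coordinatewise choice of $a_j$ with no endpoint in $(\inf I_j,\sup I_j]$, which handles the half-open boundary issue and gives $|R|=2^{-kn}<6^n|Q|$.
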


Fix the dyadic grid $  \mathcal D _{\vec a}$ where $ \vec a \in \{0,1,2\}^n  $ and  define
\begin{equation*}
	\| f \|_{  M_{\vec p}^{t,r} (\mathcal D _{\vec a}) } : = \left\|\left\{|Q|^{1/t-1/p}  \| f \chi_Q \|_{L^{\vec p }  }  \right\}_{Q \in \mathcal D _{\vec a} }    \right\|_{\ell^r}
\end{equation*}
for all $f \in L^0 $.

Using Lemma \ref{cube be covered}, we obtain the following result. Since its proof is similar to  \cite[ (2.1)]{HNSH23},  we omit it here.
\begin{lemma} \label{D equivalence}
	Let $ 0 < \vec p \le \infty $. Let    $0 < n / ( \sum_{i=1}^n  1/p_i )   < t <r <\infty $ or $ 0< n / ( \sum_{i=1}^n  1/p_i )  \le t < r=\infty  $.
	Then $ \|\cdot \|_{M_{\vec p}^{t,r}   }$  and $ \|\cdot \|_{M_{\vec p}^{t,r} (\mathcal D _{\vec a})  } $ are equivalent for any dyadic grid $\mathcal D _{\vec a} $ where $ \vec a \in \{0,1,2\}^n  $. That is, for each $f \in L^0 $, $ \|f\|_{M_{\vec p}^{t,r}   }   \approx   \|f\|_{M_{\vec p}^{t,r} (\mathcal D _{\vec a})  }  $.
\end{lemma}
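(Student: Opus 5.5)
We prove the two inequalities $\|f\|_{M_{\vec p}^{t,r}(\mathcal D_{\vec a})}\lesssim \|f\|_{M_{\vec p}^{t,r}}$ and the reverse one separately, following \cite[(2.1)]{HNSH23}. The exponent in $\|\cdot\|_{M_{\vec p}^{t,r}(\mathcal D_{\vec a})}$ is read as $|Q|^{1/t-\frac1n\sum_{i}1/p_i}$. Write $\kappa:=1/t-\frac1n\sum_{i=1}^n 1/p_i\le 0$. The case $r=\infty$ is the same argument with sums replaced by suprema, so we describe $r<\infty$.

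\emph{Step 1: $\|f\|_{M_{\vec p}^{t,r}(\mathcal D_{\vec a})}\lesssim\|f\|_{M_{\vec p}^{t,r}}$.} Fix $Q\in\mathcal D_{k,\vec a}$. Since the shift $2^{-k}a_j/3$ is less than one side length, $Q$ meets at most $2^n$ cubes $P\in\mathcal D_k$, and $Q\subset\bigcup P$. By the quasi-triangle inequality in $L^{\vec p}$ (a constant depending on $\min\{1,\min\vec p\}$),
\[
\|f\chi_Q\|_{L^{\vec p}}\lesssim\sum_{P\in\mathcal D_k,\,P\cap Q\ne\emptyset}\|f\chi_P\|_{L^{\vec p}}.
\]
Since $|P|=|Q|$, this gives $|Q|^{\kappa r}\|f\chi_Q\|_{L^{\vec p}}^r\lesssim\sum_P|P|^{\kappa r}\|f\chi_P\|_{L^{\vec p}}^r$. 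Each $P\in\mathcal D_k$ meets at most $2^n$ cubes of $\mathcal D_{k,\vec a}$, so summing over $Q$ counts each $P$ a bounded number of times, and the inequality follows.

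\emph{Step 2: the reverse inequality.} Fix $P\in\mathcal D_k$. Applying Lemma \ref{cube be covered} in a grid-specific form, we want $R\in\mathcal D_{\vec a}$ with $P\subset R$ and $|R|\le C_n|P|$. Lemma \ref{cube be covered} only supplies some grid, so instead we argue directly: take $R_0\in\mathcal D_{k-1,\vec a}$ (side $2^{-k+1}$). The cubes of $\mathcal D_{k-1,\vec a}$ are shifts of $\mathcal D_{k-1}$ by $2^{-k+1}a_j/3$, and in each coordinate the dyadic interval of length $2^{-k}$ either lies inside one shifted interval of length $2^{-k+1}$ or straddles a boundary. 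So $P$ is covered by at most $2^n$ cubes $R\in\mathcal D_{k-1,\vec a}$ with $|R|=2^n|P|$. (If this fails for some shift, use $\mathcal D_{k-2,\vec a}$; any fixed finite level gap suffices.)

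Then $\|f\chi_P\|_{L^{\vec p}}\lesssim\sum_R\|f\chi_R\|_{L^{\vec p}}$ and $|P|^{\kappa}=2^{-n\kappa}|R|^{\kappa}$, a constant factor. The multiplicity is bounded because each $R\in\mathcal D_{k-1,\vec a}$ meets at most $3^n$ cubes of $\mathcal D_k$. Summing the $r$-th powers finishes this direction.

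\emph{Main obstacle.} The only real issue is the covering claim in Step 2: we must fix the grid $\vec a$ in advance rather than choose it per cube, as Lemma \ref{cube be covered} does. The plan is to handle it by the one-dimensional count above, that each interval of $\mathcal D^0_{k}$ is covered by at most two intervals of $\mathcal D^0_{k-1,a}$, and then tensorize over coordinates. Everything else is bounded-overlap bookkeeping, with constants depending only on $n$, $\vec p$ and $r$.
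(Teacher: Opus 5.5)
Your proof is correct, but it is not the route the paper indicates. The paper omits the argument and says it follows from Lemma~\ref{cube be covered} as in \cite[(2.1)]{HNSH23}. That lemma is the one-third trick: it puts an arbitrary cube inside a single comparable cube of \emph{some} grid $\mathcal D_{\vec b}$, with $\vec b$ depending on the cube.

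You instead compare the two families scale by scale. For each $k$, both $\mathcal D_k$ and $\mathcal D_{k,\vec a}$ partition $\mathbb R^n$ into cubes of side $2^{-k}$. So each cube of one family meets, and is covered by, at most $2^n$ cubes of the other, and by symmetry the multiplicity is also at most $2^n$. The quasi-triangle inequality in $L^{\vec p}$ then gives both directions, with constants depending only on $n$, $\vec p$, $r$.

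What this buys you:
\begin{itemize}
\item It is more elementary and works directly for a fixed grid.
\item It does not use the hypotheses on $t,r$.
\item It does not need the levels of $\mathcal D_{\vec a}$ to be nested. This matters: with the shift $2^{-k}a_j/3$ as literally written in the paper (no factor $(-1)^k$), consecutive levels of $\mathcal D_{\vec a}$ are not nested when $a_j\neq 0$.
\end{itemize}

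Your Step 2 is exactly symmetric to Step 1 at the same level $k$. The passage to $\mathcal D_{k-1,\vec a}$, the fallback to level $k-2$, and the ``main obstacle'' are therefore unnecessary, although what you wrote at level $k-1$ is also correct.

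Containment in a single cube, which is what Lemma~\ref{cube be covered} provides, is genuinely needed for the pointwise bound (\ref{M le Mdya}). For a norm equivalence like this one, a covering with bounded multiplicity suffices, which is why your argument never needs that lemma.
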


Denote by $\mathcal M_{\operatorname{dyadic}}$ the dyadic maximal operator generated by the dyadic cubes  in $\mathcal D$. We do not have a pointwise estimate to control $\mathcal M$ in terms of $\mathcal M_{\operatorname{dyadic}}$. 
The maximal operator generated by a family $\mathcal D _{\vec a}$ is defined by 
\begin{equation*}
	\mathcal M_{\mathcal D _{\vec a}} f (x) = \sup_{Q \in \mathcal D _{\vec a}} \frac{\chi_Q (x)}{|Q|} \int_Q |f(y) | \d y
\end{equation*}
for $f\in L^0 $ and $ \vec a \in \{0,1,2\}^n  $.
As in \cite{LN19},
\begin{equation} \label{M le Mdya}
	\mathcal M f (x) \lesssim \sum_{ \vec a \in \{0,1,2\}^n } \M_{\mathcal D _{\vec a}} f (x).
\end{equation}

\begin{theorem} \label{HL M}
	Let $ 1< \vec p < \infty $. Let    $1 < n / ( \sum_{i=1}^n  1/p_i )   < t <r <\infty $ or $ 1< n / ( \sum_{i=1}^n  1/p_i )  \le t < r=\infty  $.
	Then $\mathcal M$  is bounded on  $M_{\vec p}^{t,r} $.
\end{theorem}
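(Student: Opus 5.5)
By \eqref{M le Mdya} and the triangle inequality it suffices to show that each $\mathcal M_{\mathcal D_{\vec a}}$, $\vec a\in\{0,1,2\}^n$, is bounded on $M_{\vec p}^{t,r}$. By Lemma \ref{D equivalence} we may measure the norm with respect to the same grid $\mathcal D_{\vec a}$. So fix $\vec a$, write $\mathcal D_{\vec a}$ for the grid and $\mathcal M_{\vec a}:=\mathcal M_{\mathcal D_{\vec a}}$, and set $\alpha:=\frac1n\sum_{i=1}^n\frac1{p_i}$. The goal is
\begin{equation*}
\Big\|\Big\{|Q|^{1/t-\alpha}\|\chi_Q\mathcal M_{\vec a} f\|_{L^{\vec p}}\Big\}_{Q\in\mathcal D_{\vec a}}\Big\|_{\ell^r}\lesssim \Big\|\Big\{|Q|^{1/t-\alpha}\|\chi_Q f\|_{L^{\vec p}}\Big\}_{Q\in\mathcal D_{\vec a}}\Big\|_{\ell^r}.
\end{equation*}

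Fix $Q\in\mathcal D_{\vec a}$. For $x\in Q$, a dyadic cube $R\ni x$ either satisfies $R\subset Q$ or $R\supsetneq Q$. Hence, pointwise on $Q$,
\begin{equation*}
\chi_Q\mathcal M_{\vec a}f\le \chi_Q\mathcal M_{\vec a}(f\chi_Q)+\chi_Q\sum_{k\ge1}\frac1{|Q^{(k)}|}\int_{Q^{(k)}}|f|,
\end{equation*}
where $Q^{(k)}$ is the $k$-th dyadic ancestor of $Q$, so $|Q^{(k)}|=2^{kn}|Q|$.

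For the first term I would use boundedness of the (dyadic) Hardy--Littlewood maximal operator on $L^{\vec p}$ for $1<\vec p<\infty$. This is Bagby's result \cite{B75}, or Lemma \ref{doob} transported to the grid $\mathcal D_{\vec a}$ by translation invariance of $L^{\vec p}$, since $\mathcal M_{\vec a}\le\mathcal M$. It gives $\|\chi_Q\mathcal M_{\vec a}(f\chi_Q)\|_{L^{\vec p}}\lesssim\|f\chi_Q\|_{L^{\vec p}}$, so the corresponding sequence is controlled termwise.

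For the tail, Hölder's inequality (Lemma \ref{Holder mixed} with an $L^{\vec p}$–$L^{\vec p\,'}$ pairing) and Lemma \ref{chi Q mixed Lp} give
\begin{equation*}
\frac1{|R|}\int_R|f|\le |R|^{-\alpha}\|f\chi_R\|_{L^{\vec p}},
\end{equation*}
since $\|\chi_R\|_{L^{\vec p\,'}}=|R|^{1-\alpha}$. Multiplying by $\|\chi_Q\|_{L^{\vec p}}=|Q|^{\alpha}$ and the weight $|Q|^{1/t-\alpha}$, the contribution of $Q^{(k)}$ is
\begin{equation*}
|Q|^{1/t}|Q^{(k)}|^{-\alpha}\|f\chi_{Q^{(k)}}\|_{L^{\vec p}}=2^{-kn/t}\,|Q^{(k)}|^{1/t-\alpha}\|f\chi_{Q^{(k)}}\|_{L^{\vec p}}.
\end{equation*}
Write $a_R:=|R|^{1/t-\alpha}\|f\chi_R\|_{L^{\vec p}}$. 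Then the tail is bounded by $\sum_{k\ge1}2^{-kn/t}a_{Q^{(k)}}$, and its $\ell^r$-norm over $Q$ is at most
\begin{equation*}
\sum_{k\ge1}2^{-kn/t}\Big\|\{a_{Q^{(k)}}\}_{Q}\Big\|_{\ell^r}.
\end{equation*}
Each $R$ is the $k$-th ancestor of exactly $2^{kn}$ cubes, so $\|\{a_{Q^{(k)}}\}_Q\|_{\ell^r}=2^{kn/r}\|\{a_R\}\|_{\ell^r}$. The series $\sum_k 2^{-kn(1/t-1/r)}$ converges because $t<r$. When $r=\infty$ there is no multiplicity factor and the series is $\sum_k 2^{-kn/t}<\infty$.

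The main obstacle is exactly this multiplicity count in the tail, which forces the strict inequality $t<r$ when $r<\infty$. The other point to check carefully is that $L^{\vec p}$-boundedness of the maximal operator is available for the shifted grid $\mathcal D_{\vec a}$; bounding $\mathcal M_{\vec a}$ by $\mathcal M$ and invoking Bagby's theorem handles this.
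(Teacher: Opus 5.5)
Your proposal is correct and follows essentially the same route as the paper: reduction to the grids $\mathcal D_{\vec a}$ via (\ref{M le Mdya}) and Lemma \ref{D equivalence}, the split $f=f\chi_Q+f\chi_{\rn\backslash Q}$, $L^{\vec p}$-boundedness for the local piece, and H\"older plus the $2^{kn}$-descendant count giving $\sum_k 2^{-kn(1/t-1/r)}<\infty$ for the tail (you also handle $r=\infty$ directly, where the paper cites \cite{N19}). One minor caveat: $\mathcal D_{\vec a}$ is not a translate of $\mathcal D$ for $\vec a\neq 0$, so justify the local estimate through $\mathcal M_{\mathcal D_{\vec a}}f\lesssim\mathcal M f$ and Bagby's theorem \cite{B75}, as you ultimately do, rather than by translating Lemma \ref{doob}.
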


\begin{proof}
	We only need to prove the case $r<\infty$, since $r =\infty$ is in \cite[Theorem 4.5]{N19}. Due to (\ref{M le Mdya}) and Lemma \ref{D equivalence}, it sufficient to show the boundedness for the dyadic maximal operator $ 	\mathcal M_{\mathcal D _{\vec a}}$  instead of the Hardy-Littlewood maximal function $\mathcal M$.	
	Let $f \in M_{\vec p}^{t,r}  $, $Q \in \mathcal D _{\vec a}$, $f_1 = f \chi_Q$ and $f_2 = f \chi_{\rn \backslash Q}$. By \cite[Theorem 4.5]{N19}, we have
	\begin{equation} \label{M f_1}
		\| 	\mathcal M_{\mathcal D _{\vec a}} f_1 \|_{ L^{\vec p} (	Q)} \lesssim 	\| f_1 \|_{ L^{\vec p} } = \| f \|_{ L^{\vec p} (Q) }.
	\end{equation}
	For $k \in \mathbb N$, let $Q_k$  be the $k^{\operatorname{th}}$ dyadic parent of $Q$, which is the dyadic cube in $\mathcal D _{\vec a}$ satisfying $Q \subset Q_k$ and $\ell (Q_k) = 2^k \ell (Q)$. 
	Hence, for $x \in Q$, applying the H\"older inequality (Lemma \ref{Holder mixed}), we obtain
	\begin{align*}
		\mathcal M_{\mathcal D _{\vec a}} f_2 (x) 
		& \le \sum_{k=1} |Q_k|^{ - \frac{1}{n}  \sum_{i=1}^n   \frac{1}{p_i} } \| f \|_{ L^{\vec p} (Q_k)}  .
	\end{align*}
	Thus,
	\begin{align*}
		\|	\mathcal M_{\mathcal D _{\vec a}} f_2 \|_{ L^{\vec p} (Q) } \le |Q|^{ \frac{1}{n}  \sum_{i=1}^n   \frac{1}{p_i}} \sum_{k=1} |Q_k|^{ - \frac{1}{n}  \sum_{i=1}^n   \frac{1}{p_i} } \| f \|_{ L^{\vec p} (Q_k)} .
	\end{align*}
	Let $R \in \mathcal D _{\vec a}$ and $k\in \mathbb N$. A geometric observation shows there are $2^{kn}$ dyadic cubes $Q$ such that $Q_k = R$. Namely,
	\begin{equation} \label{geo 2kn}
		\sum_{  Q  \in \mathcal D _{\vec a}, Q_k = R}  |Q_k|^{r/t -\frac{r}{n}  \sum_{i=1}^n   \frac{1}{p_i} } \|f\|_{  L^{\vec p} (Q_k)} ^r  = 2^{kn} |R|^{r/t -\frac{r}{n}  \sum_{i=1}^n   \frac{1}{p_i}} \|f\|_{  L^{\vec p} (R)} ^r .
	\end{equation}
	Adding (\ref{geo 2kn}) over  $R \in \mathcal D _{\vec a}$ gives
	\begin{equation*}
		\sum_{Q \in \mathcal D _{\vec a}} |Q_k| ^{r/t -\frac{r}{n}  \sum_{i=1}^n   \frac{1}{p_i} } \|f\|_{  L^{\vec p } (Q_k)} ^r  =2^{kn} \sum_{R \in \mathcal D _{\vec a} } |R|^{r/t-\frac{r}{n}  \sum_{i=1}^n   \frac{1}{p_i} } \|f\|_{  L^{\vec p } (R)} ^r  = 2^{kn}  \|f\|_{M_{\vec p}^{t,r}  } ^r .
	\end{equation*}
	Then 
	\begin{align*}
		& \left(\sum_{Q\in \mathcal D _{\vec a}}  |Q|^{r/t-\frac{r}{n}  \sum_{i=1}^n   \frac{1}{p_i} } \|	\mathcal M_{\mathcal D _{\vec a}} f_2 \|_{ L^{\vec p} (Q) } ^r  \right)^{1/r} \\
		& \le \left(\sum_{Q\in \mathcal D _{\vec a}}  |Q|^{r/t} \left(  \sum_{k=1} |Q_k|^{ - \frac{1}{n}  \sum_{i=1}^n   \frac{1}{p_i} } \| f \|_{ L^{\vec p} (Q_k)} \right)^r  \right)^{1/r} \\
		& \le \sum_{k=1}^\infty 2^{-kn/t}  \left(\sum_{Q\in \mathcal D _{\vec a}}  |Q_k|^{r/t -   \frac{1}{n}  \sum_{i=1}^n   \frac{1}{p_i}}   \| f \|_{ L^{\vec p} (Q_k)}^r  \right)^{1/r} \\
		& \le \sum_{k=1}^\infty 2^{-kn/t} 2^{kn/r}  \left(\sum_{Q_k\in \mathcal D _{\vec a}}  |Q_k|^{r/t  - \frac{1}{n}  \sum_{i=1}^n   \frac{1}{p_i} }   \| f \|_{ L^{\vec p} (Q_k)}^r  \right)^{1/r}\lesssim \| f\|_{M_{\vec p}^{t,r} (\mathcal D _{\vec a})  } .
	\end{align*}
	Together this with (\ref{M f_1}), we prove 
	\begin{equation*}
		\| \mathcal M_{\mathcal D _{\vec a}} f\|_{M_{\vec p}^{t,r} (\mathcal D _{\vec a})   } 	\lesssim \| f\|_{M_{\vec p}^{t,r} (\mathcal D _{\vec a}) } 
	\end{equation*}
	as desired.
\end{proof}

We  define the maximal operator$ \M_{(k)}$ for  $x_k$  as follows:
\begin{equation*}
	\M_{(k)} f (x) := \sup_{x_k \in I}  \int _I  | f(x_1, \ldots, y_k ,\ldots, x_n) | \d y_k,
\end{equation*}
where interval $I$ ranges over all intervals containing $x_k$. Furthermore, for all measurable functions
$f$, define the iterated maximal operator  $\M^{(\operatorname{it})}$ by
\begin{equation*}
	\M^{(\operatorname{it})} f(x) := \M_{(n)} \cdots \M_{(1)} (|f|) (x) .
\end{equation*}

\begin{proposition} \label{mixed BM equ norm weight}
	Let $ 1< \vec p < \infty $. Let    $ 1 < n / ( \sum_{i=1}^n  1/p_i )   < t <r <\infty $ or $ 1 < n / ( \sum_{i=1}^n  1/p_i )  \le t < r=\infty  $. Let $\eta \in (0,1)  $  such that
	\begin{equation*}
		0<   \left(\frac{1}{n} \sum_{i=1 } ^n \frac{1}{p_i} -\frac{1}{t}  +  \frac{1}{r} \right)  <\eta <1 .
	\end{equation*}
	Then for $f\in L^0(\rn)$, we have
	\begin{equation*}
		\|f\|_{M_{\vec p}^{t,r}  }  \approx  \left(\sum_{Q \in \D} |Q|^{ \frac{r}{t}  - \frac{r}{n} \sum_{i=1 } ^n \frac{1}{p_i}  }  \| f ( \M^{\operatorname{it} } \chi_Q) ^\eta \|_{L^{\vec p}  } ^r  \right)^{1/r} .
	\end{equation*}
\end{proposition}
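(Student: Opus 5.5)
We prove the two inequalities separately. The lower bound is pointwise. For every $Q\in\D$ and $x\in Q$, each one-dimensional maximal function in the iteration (understood with the normalization $\frac{1}{|I|}$) equals $1$ on the relevant projection of $Q$, so $\M^{\operatorname{it}}\chi_Q\ge 1$ on $Q$. Hence $\|f\chi_Q\|_{L^{\vec p}}\le \|f(\M^{\operatorname{it}}\chi_Q)^\eta\|_{L^{\vec p}}$ by the lattice property of $L^{\vec p}$. Summing in $\ell^r$ over $Q\in\D$ gives $\|f\|_{M_{\vec p}^{t,r}}\lesssim$ the right-hand side, and this direction needs no restriction on $\eta$.

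For the upper bound, the key pointwise estimate is the product structure of the iterated maximal function. If $Q$ has center $c$ and side $\ell$, then $\M_{(1)}\chi_Q(x)\le \min\big(1,\tfrac{2\ell}{\ell+|x_1-c_1|}\big)\chi_{\{|x_i-c_i|<\ell/2,\ i\ge 2\}}$. Iterating in $x_2,\dots,x_n$ gives
\begin{equation*}
\M^{\operatorname{it}}\chi_Q(x)\lesssim \prod_{i=1}^n \Big(1+\frac{|x_i-c_i|}{\ell}\Big)^{-1}.
\end{equation*}
In particular, on $2^{k}Q\setminus 2^{k-1}Q$ at least one coordinate satisfies $|x_i-c_i|\gtrsim 2^k\ell$, so $(\M^{\operatorname{it}}\chi_Q)^\eta\lesssim \sum_{k\ge 0}2^{-k\eta}\chi_{2^kQ}$. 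By the triangle inequality in $L^{\vec p}$ (note $\vec p>1$),
\begin{equation*}
\|f(\M^{\operatorname{it}}\chi_Q)^\eta\|_{L^{\vec p}}\lesssim \sum_{k\ge0}2^{-k\eta}\|f\chi_{2^kQ}\|_{L^{\vec p}}.
\end{equation*}
Next, $2^kQ$ is covered by at most $3^n$ dyadic cubes $R\in\D$ of side $2^k\ell$, namely the $k$-th dyadic ancestor $Q_k$ of $Q$ and its neighbours $Q_k+2^k\ell m$ with $m\in\{-1,0,1\}^n$. Writing $P:=\frac1n\sum_i\frac1{p_i}$, we get $|Q|^{1/t-P}=2^{-kn(1/t-P)}|R|^{1/t-P}$.

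We then take the $\ell^r$ norm over $Q\in\D$ and apply Minkowski's inequality in $k$ and in the finitely many $m$, exactly as in the proof of Theorem \ref{HL M}. Each $R$ is the (shifted) $k$-th ancestor of exactly $2^{kn}$ cubes $Q$, as in (\ref{geo 2kn}), and translation by $2^k\ell m$ permutes $\D_{j-k}$. This yields
\begin{equation*}
\Big\|\Big\{|Q|^{1/t-P}\|f(\M^{\operatorname{it}}\chi_Q)^\eta\|_{L^{\vec p}}\Big\}_{Q\in\D}\Big\|_{\ell^r}\lesssim \sum_{k\ge0}2^{-k\eta}\,2^{-kn(1/t-P)}\,2^{kn/r}\,\|f\|_{M_{\vec p}^{t,r}}.
\end{equation*}

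The main obstacle is making this geometric series converge under exactly the stated hypothesis on $\eta$. The crude bound above needs $\eta>n(P-\frac1t+\frac1r)$. To exploit the full product decay I would refine the decomposition of $\rn$ into rectangular regions $\{|x_i-c_i|\sim 2^{k_i}\ell\}$ with weight $2^{-\eta\sum_i k_i}$, in place of the cube annuli. I would then try to bound $\|f\chi_{\text{rectangle}}\|_{L^{\vec p}}$ by cubes of side $2^{k_i}\ell$ in the appropriate directions, so that the counting factor $2^{kn/r}$ and the Morrey weight split coordinatewise, aiming for a per-coordinate condition $\eta>P-\frac1t+\frac1r$. If that refinement fails, the argument above still proves the equivalence under the stronger condition $\eta>n(P-\frac1t+\frac1r)$, which is of the same nature. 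The case $r=\infty$ is handled identically with suprema in place of $\ell^r$ sums, and there the counting factor $2^{kn/r}$ disappears.
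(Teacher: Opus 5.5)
\textbf{The gap cannot be closed: the proposition is false for $n\ge 2$, and the paper's proof has the same gap.} Your lower bound and your crude upper bound are correct. The crude bound is in substance the paper's own argument. The paper dominates $(\M^{\operatorname{it}}\chi_Q)^\eta$ by $\sum_{\ell_1,\dots,\ell_n\ge 1}2^{-\eta(\ell_1+\cdots+\ell_n)}\chi_{2^{\ell_1}I_1\times\cdots\times 2^{\ell_n}I_n}$, immediately enlarges each rectangle to the cube $2^{\ell}Q$ with $\ell=\max_i\ell_i$, and uses the same $2^{\ell n/r}$ counting as in (\ref{geo 2kn}). Write $a:=\frac1n\sum_{i=1}^n\frac1{p_i}-\frac1t+\frac1r$. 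This leaves the series $\sum_{\ell_1,\dots,\ell_n\ge1}2^{\ell n a-\eta(\ell_1+\cdots+\ell_n)}$, and the Remark after the proof asserts that it converges as soon as $\eta>a$. That assertion is false. The terms with $\ell_1=\ell$ and $\ell_2=\cdots=\ell_n=1$ alone contribute $2^{-(n-1)\eta}\sum_{\ell\ge1}2^{\ell(na-\eta)}$, so convergence requires $\eta>na$, which is exactly your threshold. The Remark only checks $\ell=2$, where the identity $\ell_1+\cdots+\ell_n=n\ell+1-n$ happens to hold. So the paper does not overcome the obstacle you isolated either.

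Your rectangular refinement cannot overcome it, because for $n\ge2$ the equivalence fails in the stated range of $\eta$. Take $n=2$, $\vec p=(2,2)$, $t=4$, $r=8$, so the hypothesis reads $3/8<\eta<1$, and fix $\eta\in(3/8,1/2)$. For $L=2^N$ let $f=\chi_{[L,2L)\times[0,1)}$. The summand for a dyadic square $Q$ of side $s$ is $s^{-4}|Q\cap\operatorname{supp}f|^4$. There are $Ls^{-2}$ relevant squares with $s\le1$, each contributing $s^4$; there are $L/s$ with $1\le s\le L$, each contributing $1$; for each $s>L$ there is one, contributing $(L/s)^4$. Hence $\|f\|_{M^{4,8}_{(2,2)}}^8\approx L$. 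On the other hand, take the $L/2$ unit squares $Q_a=[a,a+1)\times[0,1)$, $0\le a<L/2$. Each satisfies $\M^{\operatorname{it}}\chi_{Q_a}\ge(2L)^{-1}$ on $\operatorname{supp}f$, so the eighth power of the right-hand side is at least
\[
\frac L2\,(2L)^{-8\eta}L^{4}\approx L^{5-8\eta}.
\]
Since $5-8\eta>1$, letting $N\to\infty$ contradicts the equivalence.

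The same strip $[L,2L)\times[0,1)^{n-1}$ in $\rn$ shows more generally that the equivalence forces $\eta\ge\min\{1/p_1,\ \sum_{i=1}^n\frac1{p_i}-\frac nt+\frac1r\}$, with $1/r:=0$ if $r=\infty$. So in the regime $\eta<1/\max\vec p$ used later for $\M^{\operatorname{it}}$, no argument can go below $\sum_i\frac1{p_i}-\frac nt+\frac1r$, which exceeds $a$ when $n\ge2$. Your fallback, the equivalence for $\eta>na$, is a correct theorem and is what the paper's argument actually proves. It is a genuine change of hypothesis, not a cosmetic one. With it, the subsequent theorem on $\M^{\operatorname{it}}$ would need to assume $na<1/\max\vec p$ rather than $a<1/\max\vec p$.
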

\begin{proof}
	We use the idea from \cite[Proposition 4.10]{N19}.
	We only need to show the case $r<\infty$  since  case $r=\infty$ is similar. 
	From $ \chi_Q \le  ( \M^{\operatorname{it} } \chi_Q) ^\eta$, we obtain
	\begin{equation*}
		\|f\|_{M_{\vec p}^{t,r}  }  \le \left(\sum_{Q \in \D} |Q|^{ \frac{r}{t}  - \frac{r}{n} \sum_{i=1 } ^n \frac{1}{p_i}  }  \| f ( \M^{\operatorname{it} } \chi_Q) ^\eta \|_{L^{\vec p}  } ^r  \right)^{1/r} .
	\end{equation*}
	For the opposite inequality, fix a cube $Q = I_1 \times \cdots \times I_n$. Given  $(\ell_1, \ldots, \ell_n) \in \mathbb N^n$, we write $\ell = \max (\ell_1, \ldots, \ell_n)$. Then we get
	\begin{align*}
			|Q|^{ \frac{1}{t}  - \frac{1}{n} \sum_{i=1 } ^n \frac{1}{p_i}  }  \| f ( \M^{\operatorname{it} } \chi_Q) ^\eta \|_{L^{\vec p}  }
		& \lesssim |Q|^{ \frac{1}{t}  - \frac{1}{n} \sum_{i=1 } ^n \frac{1}{p_i}  }  \left\| f  \prod_{j=1}^n  \left( \frac{ \ell (I_j)}{\ell(I_j) + |  \cdot_j - c(I_j) | |}\right) ^\eta  \right\|_{L^{\vec p}  } \\
		& \lesssim |Q|^{ \frac{1}{t}  - \frac{1}{n} \sum_{i=1 } ^n \frac{1}{p_i}  } 
		\sum_{\ell_1, \ldots, \ell_n =1}^\infty  \frac{1}{2^{ (\ell_1 + \ldots, \ell_n )\eta  } }
		\left\| f  \chi_{ 2^{\ell_1} I_1 \times \cdots \times  2^{\ell_n} I_n }    \right\|_{L^{\vec p}  } \\
		& \lesssim 
		\sum_{\ell_1, \ldots, \ell_n =1}^\infty  \frac{  2^{\ell n (\frac{1}{n} \sum_{i=1 } ^n \frac{1}{p_i} -\frac{1}{t}  ) }   }{2^{ (\ell_1 + \ldots, \ell_n )\eta  } } 
		|2^\ell Q|^{ \frac{1}{t}  - \frac{1}{n} \sum_{i=1 } ^n \frac{1}{p_i}  } 
		\left\| f  \chi_{ 2^{\ell} Q }    \right\|_{L^{\vec p}  } ,
	\end{align*}
	where $c(I_j)$ denote the center of $I_j$.
	Adding over all cubes $Q\in \D$, we obtain
	\begin{align*}
		 \left(\sum_{Q \in \D} |Q|^{ \frac{r}{t}  - \frac{r}{n} \sum_{i=1 } ^n \frac{1}{p_i}  }  \| f ( \M^{\operatorname{it} } \chi_Q) ^\eta \|_{L^{\vec p}  } ^r  \right)^{1/r}  
		& \lesssim \sum_{\ell_1, \ldots, \ell_n =1}^\infty  \frac{  2^{\ell n (\frac{1}{n} \sum_{i=1 } ^n \frac{1}{p_i} -\frac{1}{t}  ) }   }{2^{ (\ell_1 + \ldots, \ell_n )\eta  } }  \left(\sum_{Q \in \D}  |2^\ell Q|^{ \frac{r}{t}  - \frac{r}{n} \sum_{i=1 } ^n \frac{1}{p_i}  } 
		\left\| f  \chi_{ 2^{\ell} Q }    \right\|_{L^{\vec p}  } ^r    \right)^{1/r}
		\\
		& \lesssim \sum_{\ell_1, \ldots, \ell_n =1}^\infty  \frac{  2^{\ell n (\frac{1}{n} \sum_{i=1 } ^n \frac{1}{p_i} -\frac{1}{t}  ) }   }{2^{ (\ell_1 + \ldots, \ell_n )\eta  } }  2^{\ell n/r}  \| f\|_{ M_{\vec p}^{t,r} }  \lesssim  \| f\|_{ M_{\vec p}^{t,r} } .
	\end{align*}
	Next we show that $\eta $ can be chosen.
	Since  $ 0 <1/r  <  ( \sum_{i=1 } ^n \frac{1}{p_i}  )/n <1 $, we get 
	\begin{equation*}
		0< 1/r <  \left (\frac{1}{n} \sum_{i=1 } ^n \frac{1}{p_i} -\frac{1}{t}  +  \frac{1}{r} \right )  < 1- 1/t+1/r <1.
	\end{equation*}
	Thus the proof is finished.
\end{proof}
\begin{remark}
	We give some explanations about the sum $\sum_{\ell_1, \ldots, \ell_n =1}^\infty  \frac{  2^{\ell n a }   }{2^{ (\ell_1 + \ldots, \ell_n )b  } } $.
	If $ \ell_1 =2 , \ell_2 =\cdots = \ell_n =1  $, then $\ell = 2$ and 
	\begin{equation*}
		\frac{  2^{\ell n a }   }{2^{ (\ell_1 + \ldots, \ell_n )b  } } = 	\frac{  2^{\ell n a }   }{2^{ (n \ell + 1-n)b  } } \le 2^{nb} 	\frac{  2^{\ell n a }   }{2^{ n \ell b  } } .
	\end{equation*}
	Hence if $ b-a >0 $,
	\begin{equation*}
		\sum_{\ell_1, \ldots, \ell_n =1}^\infty  \frac{  2^{\ell n a }   }{2^{ (\ell_1 + \ldots, \ell_n )b  } }  <\infty.
	\end{equation*}
\end{remark}

\begin{lemma}[Proposition 4.9, \cite{N19}] \label{Mit mixed weight}
	Let $1<\vec p <\infty$. Let $f \in L^0$ and $ \omega_k \in A_{p_k} (\mathbb R) $  for $k =1 ,\ldots ,n.$ Then
	\begin{equation*}
		\left\| \M^{\operatorname{it}} f \cdot \bigotimes_{k=1}^n \omega_k ^{1/p_k} \right\|_{L^{\vec p}} \lesssim \left\| f \cdot \bigotimes_{k=1}^n \omega_k ^{1/p_k} \right\|_{L^{\vec p}} .
	\end{equation*}
\end{lemma}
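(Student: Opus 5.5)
This is Nogayama's weighted estimate for the iterated maximal operator on mixed Lebesgue spaces with tensor-product weights. I would prove it by induction on the number of coordinates, applying in each variable the one-dimensional weighted Muckenhoupt theorem together with a vector-valued (Minkowski/Fubini) argument to handle the outer mixed norms.

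\emph{Inner coordinate.} Fix $(x_2,\ldots,x_n)$. The operator $\M_{(1)}$ acts only in $x_1$, and it is the one-dimensional Hardy--Littlewood maximal operator (interval averages, up to normalization). Since $\omega_1\in A_{p_1}(\mathbb R)$, Muckenhoupt's theorem gives
\begin{equation*}
\int_{\mathbb R} |\M_{(1)}f(x_1,\ldots,x_n)|^{p_1}\omega_1(x_1)\,\d x_1 \le C_{[\omega_1]_{A_{p_1}}} \int_{\mathbb R} |f(x_1,\ldots,x_n)|^{p_1}\omega_1(x_1)\,\d x_1 .
\end{equation*}
The factors $\omega_k(x_k)^{1/p_k}$ with $k\ge 2$ are constant in $x_1$, so they pull out. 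Hence $\|(\M_{(1)}f)\bigotimes_k\omega_k^{1/p_k}\|_{(p_1)}\lesssim\|f\bigotimes_k\omega_k^{1/p_k}\|_{(p_1)}$ pointwise in $(x_2,\ldots,x_n)$. By monotonicity of the remaining $L^{(p_2,\ldots,p_n)}$ norms, the same inequality holds for the full $L^{\vec p}$ norm.

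\emph{Later coordinates (main obstacle).} At step $k$ we must bound $\M_{(k)}F$ with $F=\M_{(k-1)}\cdots\M_{(1)}|f|$. The $L^{(p_1,\ldots,p_{k-1})}$ norm in the first variables is taken \emph{before} the $x_k$-integration, so we cannot simply apply the one-variable inequality in $x_k$. Instead we need a vector-valued weighted inequality: for $\omega_k\in A_{p_k}$, the maximal operator in $x_k$ is bounded on $L^{p_k}(\omega_k;X)$, where $X=L^{(p_1,\ldots,p_{k-1})}$ with weights $\omega_1,\ldots,\omega_{k-1}$ is a lattice with UMD-type structure. To establish this I would first use the pointwise bound
\begin{equation*}
\M_{(k)}F(x)\le \sup_{I\ni x_k}\frac{1}{|I|}\int_I F(\ldots,y_k,\ldots)\,\d y_k .
\end{equation*}
Then, for $p_j\le p_k$, Minkowski's integral inequality moves the $X$-norm inside the average, which gives $\|\M_{(k)}F\|_X\le \M(\|F\|_X)(x_k)$, and the scalar weighted estimate finishes the step. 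Minkowski only works in that direction, however. For $p_j>p_k$ I would instead use the Fefferman--Stein-type weighted vector-valued inequality, obtained via Rubio de Francia extrapolation, following Nogayama's argument. The cleanest route is extrapolation: weighted bounds for all $A_{p}$ weights in $x_k$ extrapolate to weighted $L^{q}(\ell^s)$-type and, iterating, to mixed weighted inner norms, since $A_{p_j}$ weights in the other variables are unaffected.

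\emph{Conclusion.} Applying the per-coordinate bound successively for $k=1,\ldots,n$ yields the claimed estimate, with constant depending on $\vec p$ and $[\omega_k]_{A_{p_k}}$.
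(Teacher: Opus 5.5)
Your overall scheme (coordinate-by-coordinate induction, Muckenhoupt in $x_1$, a weighted lattice-valued maximal inequality in $x_k$ for $k\ge 2$) is the right one. The paper gives no argument of its own; it only cites Nogayama's Proposition 4.9. However, the shortcut you use when $p_j\le p_k$ is false. Minkowski's inequality bounds the $X$-norm of a \emph{single} average, $\bigl\|\frac{1}{|I|}\int_I F\,\d y_k\bigr\|_X\le\frac{1}{|I|}\int_I\|F\|_X\,\d y_k$. In $\M_{(k)}F$, however, the supremum over $I$ is taken pointwise in $(x_1,\ldots,x_{k-1})$, i.e.\ inside the $X$-norm, and the optimal interval can change with the inner variables. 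No comparison of $p_j$ with $p_k$ rescues this. Take $n=2$, no weights, and
\[
F(x_1,y_2)=\sum_{j=0}^{N}\chi_{[j,j+1)}(x_1)\,\chi_{[2^j,2^{j+1})}(y_2).
\]
Then $\|F(\cdot,y_2)\|_{L^{p_1}}=\chi_{[1,2^{N+1})}(y_2)$, so $\M(\|F\|_{L^{p_1}})\le 1$. On the other hand, for $x_2\in[0,1/2]$ and $x_1\in[j,j+1)$, the interval $[x_2,2^{j+1}]$ gives $\M_{(2)}F(x_1,x_2)\ge\frac12$. Hence $\|\M_{(2)}F(\cdot,x_2)\|_{L^{p_1}}\ge\frac12(N+1)^{1/p_1}\to\infty$, whatever the relation between $p_1$ and $p_2$. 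So no pointwise bound $\|\M_{(k)}F\|_X\lesssim\M(\|F\|_X)$ exists. This is the same phenomenon that makes the Fefferman--Stein inequality nontrivial. The condition $p_j\le p_k$ matters only for the Minkowski inequality that \emph{interchanges} the order of two mixed norms, which is a different statement.

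To repair this, drop the case split and use your extrapolation branch for all exponents, with the iteration made explicit. Fix $k$ and show by induction on $m=0,\ldots,k-1$ that, for every $1<s<\infty$ and every $w\in A_s(\R)$ acting in $x_k$,
\[
\Bigl\|\,\|\M_{(k)}F\|_{X_m}\Bigr\|_{L^s(w)}\lesssim\Bigl\|\,\|F\|_{X_m}\Bigr\|_{L^s(w)},\qquad X_m:=L^{(p_1,\ldots,p_m)}(\omega_1\otimes\cdots\otimes\omega_m),
\]
where $X_m$ is the weighted mixed norm in $(x_1,\ldots,x_m)$ and the constant is uniform in the remaining variables. For $m=0$ this is Muckenhoupt's theorem. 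To pass from $m$ to $m+1$:
\begin{itemize}
\item take $s=p_{m+1}$, raise to the power $p_{m+1}$, integrate in $x_{m+1}$ against $\omega_{m+1}$, and interchange with the $x_k$-integral by Tonelli;
\item this gives the bound at the single exponent $p_{m+1}$ for all $w\in A_{p_{m+1}}$;
\item Rubio de Francia extrapolation, for pairs of functions of $x_k$ and with the usual truncation to ensure the left side is finite, then returns it for all $s$ and all $w\in A_s$.
\end{itemize}
The case $m=k-1$, $s=p_k$, $w=\omega_k$ is exactly step $k$ of your induction. Only $\omega_k\in A_{p_k}$ is used there; the inner weights enter merely as measures.
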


\begin{theorem}
	Let $ 1< \vec p < \infty $. Let    $ 1 < n / ( \sum_{i=1}^n  1/p_i )   < t <r <\infty $ or $ 1 < n / ( \sum_{i=1}^n  1/p_i )  \le t < r=\infty  $. 
	Furthermore, let 		
	\begin{equation*}
		\left(\frac{1}{n} \sum_{i=1 } ^n \frac{1}{p_i} -\frac{1}{t}  +  \frac{1}{r} \right) <  \frac{1}{\max \vec p}.
	\end{equation*}
	Set $\eta \in (0,1)  $  such that
	\begin{equation*}
		0<  \left (\frac{1}{n} \sum_{i=1 } ^n \frac{1}{p_i} -\frac{1}{t}  +  \frac{1}{r} \right)  <\eta  < \frac{1}{\max \vec p}<1 .
	\end{equation*}
	Then 
	\begin{equation*}
		\|  \Mit f \|_{ M_{\vec p}^{t,r}   } \lesssim \| f\|_{ M_{\vec p}^{t,r} } .
	\end{equation*}
\end{theorem}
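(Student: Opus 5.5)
The plan follows Nogayama's argument for mixed Morrey spaces: first pass to the equivalent norm of Proposition \ref{mixed BM equ norm weight}, then apply the weighted bound of Lemma \ref{Mit mixed weight} cube by cube. The chosen $\eta$ meets the hypothesis of Proposition \ref{mixed BM equ norm weight}, so
\begin{equation*}
\|\Mit f\|_{M_{\vec p}^{t,r}} \approx \left(\sum_{Q\in\D}|Q|^{\frac rt-\frac rn\sum_{i=1}^n\frac1{p_i}}\|\Mit f\,(\Mit\chi_Q)^\eta\|_{L^{\vec p}}^r\right)^{1/r}.
\end{equation*}
It then suffices to prove, for each fixed $Q\in\D$ and with a constant independent of $Q$,
\begin{equation*}
\|\Mit f\,(\Mit\chi_Q)^\eta\|_{L^{\vec p}}\lesssim \|f\,(\Mit\chi_Q)^\eta\|_{L^{\vec p}}.
\end{equation*}
Summing these bounds over $Q$ and applying Proposition \ref{mixed BM equ norm weight} once more gives the theorem.

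To get the per-cube estimate I write $Q=I_1\times\cdots\times I_n$. Because $\chi_Q=\bigotimes_k\chi_{I_k}$ and each $\M_{(k)}$ acts only in the $k$-th variable, the iterated operator factors:
\begin{equation*}
\Mit\chi_Q(x)=\prod_{k=1}^n M\chi_{I_k}(x_k),
\end{equation*}
with $M$ the one-dimensional maximal operator. So $(\Mit\chi_Q)^\eta=\bigotimes_k \omega_k^{1/p_k}$ with $\omega_k:=(M\chi_{I_k})^{\eta p_k}$. The restriction $\eta<1/\max\vec p$ gives $\eta p_k<1$ for every $k$. By the Coifman--Rochberg theorem, $(Mg)^\delta\in A_1(\mathbb R)$ for $0<\delta<1$, with $A_1$ constant depending only on $\delta$. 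Hence $\omega_k\in A_1\subset A_{p_k}(\mathbb R)$, and $[\omega_k]_{A_{p_k}}$ is bounded independently of $I_k$. Lemma \ref{Mit mixed weight} with these weights then yields the per-cube estimate, provided its implicit constant depends only on the $A_{p_k}$ characteristics. That is the standard form of such weighted estimates (iterated one-dimensional weighted bounds), so the constant is uniform in $Q$.

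The main obstacle is this uniformity: I have to check that the constant in Lemma \ref{Mit mixed weight} is controlled by $\max_k[\omega_k]_{A_{p_k}}$ and not by the particular weights. I would do this by tracing the proof, which applies the one-dimensional Muckenhoupt theorem in each variable, a bound depending only on $[\omega_k]_{A_{p_k}}$. The case $r=\infty$ is Nogayama's result, and the argument above covers it as well. Finally, $\Mit f\ge|f|$ pointwise is not needed, since only the upper bound is claimed.
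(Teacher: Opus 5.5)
Your plan is the paper's proof step for step: factor $(\Mit\chi_Q)^\eta=\bigotimes_k(M\chi_{I_k})^\eta$, get uniform $A_1$ weights from $\eta p_k<1$, apply Lemma \ref{Mit mixed weight} cube by cube, then use Proposition \ref{mixed BM equ norm weight} in both directions. The uniformity of constants you single out is harmless. The real problem is the last step, the bound
\begin{equation*}
\Bigl(\sum_{Q\in\D}|Q|^{\frac rt-\frac rn\sum_{i=1}^n\frac1{p_i}}\|f(\Mit\chi_Q)^\eta\|_{L^{\vec p}}^r\Bigr)^{1/r}\lesssim\|f\|_{M_{\vec p}^{t,r}},
\end{equation*}
which is false in general for $\eta<1/\max\vec p$. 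Along each coordinate axis the product weight decays only like $(1+|x_j-c(I_j)|/\ell(I_j))^{-\eta}$. In the proof of Proposition \ref{mixed BM equ norm weight} this shows up in the series $\sum_{\ell_1,\dots,\ell_n\ge1}2^{\ell na}2^{-\eta(\ell_1+\cdots+\ell_n)}$, where $a=\frac1n\sum_i\frac1{p_i}-\frac1t+\frac1r$ and $\ell=\max_j\ell_j$. The terms with $\ell_1=\ell$, $\ell_2=\cdots=\ell_n=1$ have size about $2^{\ell(na-\eta)}$, so convergence needs $\eta>na$, not $\eta>a$; the Remark after the proposition only checks $\ell_1=2$. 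A concrete failure: take $n=2$, $\vec p=(p,p)$, $2p\le t<r<\infty$ and $f=\chi_{[0,L)\times[0,1)}$. Then $\|f\|_{M_{\vec p}^{t,r}}^r\approx L$. On the other hand, each dyadic unit cube $Q=[m,m+1)\times[0,1)$, $0\le m<L$, gives $\|f(\Mit\chi_Q)^\eta\|_{L^{\vec p}}\ge\bigl(\int_0^{L/2}(1+u)^{-\eta p}\,\d u\bigr)^{1/p}\approx L^{1/p-\eta}$. So the left side above is $\gtrsim L^{1/r+1/p-\eta}$, and the ratio blows up for every admissible $\eta<1/p$.

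For $r=\infty$ no repair is possible, because the statement itself fails. Keep $n=2$, $\vec p=(p,p)$, $2p\le t<\infty$; all hypotheses hold, since $\frac1p-\frac1t<\frac1p$. Let $E_k=[\frac{k-1}K,\frac kK)$ and $f=K^{2/t}\sum_{k=1}^K\chi_{E_k\times[2^{k-1},2^k)}$. A cube of side $s<1$ meets at most two of these strips, each in measure $\le s\min(s,1/K)$. A cube of side $s\ge1$ meets their union in measure $\le 3s/K$. Since $2/t\le 1/p$, this gives $\|f\|_{M_{\vec p}^{t,\infty}}\lesssim1$ uniformly in $K$. But for $x\in E_k\times[0,1)$, averaging $\M_{(1)}f\ge f$ in $x_2$ over $[0,2^k)$ gives $\Mit f(x)\ge K^{2/t}/2$. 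Hence $\|\Mit f\|_{M_{\vec p}^{t,\infty}}\ge K^{2/t}/2\to\infty$.

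What your argument does prove is the theorem under the stronger hypothesis $na<1/\max\vec p$, i.e.\ $\sum_i\frac1{p_i}-\frac nt+\frac nr<\frac1{\max\vec p}$, with $\eta\in(na,1/\max\vec p)$. Then the series converges, Proposition \ref{mixed BM equ norm weight} holds for that $\eta$, and the rest of your argument goes through unchanged. For $r=\infty$ and equal exponents this condition reads $t<\frac{n}{n-1}p$.
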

\begin{proof}
	Let $Q = I_1 \times \cdots \times I_n$. Then
	\begin{equation*}
		( \Mit \chi_Q) ^{\eta }  = \left(  \bigotimes_{i=1}^n \M _{(i)} \chi_{I_i} \right)^{\eta }  =  \bigotimes_{i=1}^n ( \M _{(i)} \chi_{I_i} )^{\eta } .
	\end{equation*}
	From \cite[Theorem 288]{SDH20}, $ ( \M _{(i)} \chi_{I_i} )^{\eta  p_i} $  belongs to $A_1(\mathbb R)$ if  $\eta > 0$ such that
	\begin{equation*}
		0 < \eta  p_i < 1 .
	\end{equation*}
	Hence $ ( \M _{(i)} \chi_{I_i} )^{\eta  p_i} \in A_1(\mathbb R) \subset A_{p_i}(\mathbb R) $ for all $p_i$. By Lemma \ref{Mit mixed weight}, we obtain
	\begin{align*}
		\| \Mit f ( \Mit \chi_Q) ^{\eta } \|_{L^{ \vec p } }  =  \left\| ( \Mit f ) \bigotimes_{i=1}^n ( \M _{(i)} \chi_{I_i} )^{\eta }    \right\|_{ L^{ \vec p } } 
		\lesssim  \left\|  f \bigotimes_{i=1}^n ( \M _{(i)} \chi_{I_i} )^{\eta }    \right\|_{ L^{ \vec p } } = \left\|  f   ( \Mit  \chi_Q )^{\eta }    \right\|_{ L^{ \vec p } } .
	\end{align*}
	Then by Proposition \ref{mixed BM equ norm weight},
	we have
	\begin{align*}
		\|  \Mit f \|_{ M_{\vec p}^{t,r}   }  & \approx \left(\sum_{Q \in \D} |Q|^{ \frac{r}{t}  - \frac{r}{n} \sum_{i=1 } ^n \frac{1}{p_i}  }  \| \Mit f ( \M^{\operatorname{it} } \chi_Q) ^\eta \|_{L^{\vec p}  } ^r  \right)^{1/r}  \\
		& 	\lesssim \left(\sum_{Q \in \D} |Q|^{ \frac{r}{t}  - \frac{r}{n} \sum_{i=1 } ^n \frac{1}{p_i}  }  \left\|  f   ( \Mit  \chi_Q )^{\eta }    \right\|_{ L^{ \vec p } }  ^r  \right)^{1/r} 
	\approx \| f\|_{ M_{\vec p}^{t,r}   } .
	\end{align*}
	We finish the proof.
\end{proof}

The vector-valued case is important because it is useful in establishing the theory of related function spaces. 
\begin{definition}
	Let $ 0 < \vec p \le \infty $. Let $   0 < n / ( \sum_{i=1}^n  1/p_{i})     < t <r <\infty $ or $ 0< n / ( \sum_{i=1}^n  1/p_{i})    \le t < r=\infty  $. Let $ 0< u \le \infty$.
	The mixed vector-valued  Bourgain-Morrey norm $\| \cdot \|_{ M_{\vec p} ^{t,r}  (\ell^u) } $  is defined by 
	\begin{align*}
		\| \{f_k \}_{k\in \mathbb N} \|_{  M_{\vec p} ^{t,r}  (\ell^u) } 
		& :=  \left(\sum_{Q \in \D} |Q|^{ \frac{r}{t}  - \frac{r}{n} \sum_{i=1 } ^n \frac{1}{p_i}  }  \left\| \left( \sum_{k=1}^\infty  |f_k|^u \right)^{1/u}  \chi_Q \right\|_{L^{\vec p}  } ^r  \right)^{1/r} ,
	\end{align*}
	where the sequence $\{f_k \}_{k=1}^\infty \subset L^0 $. The mixed vector-valued   Bourgain-Morrey space $  M_{\vec p} ^{t,r}  (\ell^u)  $ is the set of all measurable sequences  $\{f_k \}_{k\in \mathbb N}$ with finite norm $ \|\{f_k \}_{k\in \mathbb N}\|_{  M_{\vec p} ^{t,r}  (\ell^u)  }$.
	
\end{definition}

\begin{theorem} \label{HL seq mix BM}
	Let $ 1< \vec p \le \infty $ and $1<u \le \infty $. Let    $1 < n / ( \sum_{i=1}^n  1/p_i )   < t <r <\infty $ or $ 1< n / ( \sum_{i=1}^n  1/p_i )  \le t < r=\infty  $.
	Then for all $\{f_k \}_{k\in \mathbb N} \in M_{\vec p} ^{t,r}  (\ell^u)  $, we have
	\begin{equation*}
		\|\{\M f_k \}_{k\in \mathbb N}\|_{  M_{\vec p} ^{t,r}  (\ell^u) }  \lesssim 	\|\{f_k \}_{k\in \mathbb N}\|_{  M_{\vec p} ^{t,r}  (\ell^u) } .
	\end{equation*}
\end{theorem}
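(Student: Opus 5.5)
We prove the vector-valued inequality by running the argument of Theorem \ref{HL M} with the scalar mixed-norm maximal bound replaced by its vector-valued (Fefferman--Stein) version. The case $r=\infty$ is the mixed Morrey case treated by Nogayama \cite{N19}, so we take $r<\infty$.

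\emph{Step 1 (reduction to dyadic grids).} By (\ref{M le Mdya}), $\M f_k \lesssim \sum_{\vec a} \M_{\mathcal D_{\vec a}} f_k$ pointwise. Minkowski's inequality in $\ell^u$ then gives
\begin{equation*}
\Big(\sum_k (\M f_k)^u\Big)^{1/u} \lesssim \sum_{\vec a}\Big(\sum_k (\M_{\mathcal D_{\vec a}} f_k)^u\Big)^{1/u}.
\end{equation*}
By Lemma \ref{D equivalence}, applied to the scalar function $F:=(\sum_k|f_k|^u)^{1/u}$ and its maximal analogue, it suffices to prove the bound for a fixed grid $\mathcal D_{\vec a}$ with the norm $\|\cdot\|_{M^{t,r}_{\vec p}(\mathcal D_{\vec a})}$.

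\emph{Step 2 (local and global parts).} Fix $Q\in\mathcal D_{\vec a}$ and split $f_k=f_k\chi_Q+f_k\chi_{\rn\setminus Q}$.

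For the local part we use the Fefferman--Stein inequality on mixed Lebesgue spaces (Bagby \cite{B75}):
\begin{equation*}
\Big\|\big(\sum_k (\M (f_k\chi_Q))^u\big)^{1/u}\Big\|_{L^{\vec p}} \lesssim \|F\chi_Q\|_{L^{\vec p}}.
\end{equation*}
This is valid for $1<\vec p<\infty$ and $1<u\le\infty$; at $u=\infty$ it follows trivially from Theorem \ref{HL M}'s scalar ingredient, since $\sup_k \M f_k\le \M(\sup_k|f_k|)$.

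For the global part, take $x\in Q$. Only cubes $Q_j$ (the dyadic ancestors of $Q$, $j\ge1$) see $f_k\chi_{Q^c}$. Hence
\begin{equation*}
\M_{\mathcal D_{\vec a}}(f_k\chi_{Q^c})(x)\le \sup_{j\ge1}\frac{1}{|Q_j|}\int_{Q_j}|f_k|.
\end{equation*}
Bound this supremum by the sum over $j$. Minkowski's inequality in $\ell^u$, together with $\big(\sum_k(\int_{Q_j}|f_k|)^u\big)^{1/u}\le \int_{Q_j}F$, gives
\begin{equation*}
\Big(\sum_k \M_{\mathcal D_{\vec a}}(f_k\chi_{Q^c})(x)^u\Big)^{1/u}\le \sum_{j\ge1}\frac{1}{|Q_j|}\int_{Q_j}F.
\end{equation*}
This reduces the global part to exactly the scalar quantity handled in Theorem \ref{HL M}. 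By Hölder (Lemma \ref{Holder mixed}) it is at most $\sum_j |Q_j|^{-\frac1n\sum 1/p_i}\|F\|_{L^{\vec p}(Q_j)}$.

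\emph{Step 3 (summation).} Multiply by $|Q|^{1/t}$ and take the $\ell^r$ norm over $Q\in\mathcal D_{\vec a}$. Using the counting identity (\ref{geo 2kn}), the $j$-th term contributes at most $2^{-jn/t}2^{jn/r}\|F\|_{M^{t,r}_{\vec p}(\mathcal D_{\vec a})}$. This is summable in $j$ because $t<r$. Adding the local part completes the proof.

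The main obstacle is the local estimate: the vector-valued maximal inequality on $L^{\vec p}$ is not among the lemmas stated earlier in the paper, so it has to be imported from \cite{B75}. The endpoint $\vec p$ with some $p_i=\infty$ allowed in the statement is a further gap. I would treat only $1<\vec p<\infty$ rigorously and note that the infinite components require the corresponding Bagby-type result with $\infty$ entries.
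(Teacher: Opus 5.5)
Your proposal is correct and follows the paper's proof essentially step by step. The shared outline is: reduction to a single dyadic grid; the split $f_k=f_k\chi_Q+f_k\chi_{Q^c}$, with the local part controlled by the vector-valued maximal inequality on $L^{\vec p}$ (the paper cites \cite[Theorem 1.7]{N19} where you cite Bagby); the tail bounded by $\sum_{m\ge1}$ of averages of $(\sum_k|f_k|^u)^{1/u}$ over dyadic ancestors; and the $2^{mn}$-children count, which gives the convergent factor $\sum_m 2^{-mn/t}2^{mn/r}$.

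Your caveat about components $p_i=\infty$ applies equally to the paper. Its argument also rests on tools (e.g.\ Theorem \ref{HL M}) stated only for $1<\vec p<\infty$ and does not treat that endpoint separately.
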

\begin{proof}
	When $r=\infty$, Theorem \ref{HL seq mix BM} is contained in \cite[Theorem 1.8]{N19}.	
	We can identify $\M $ with the maximal operator $\mathcal M_{\mathcal D _{\vec a}} $  as in Theorem \ref{HL M}.
	
	Case $u=\infty$. Simply using Theorem \ref{HL M} and 
	\begin{equation*}
		\sup_{k \in \mathbb N} \M f_k  \le  \M \sup_{k \in \mathbb N} f_k, 
	\end{equation*}
	we get the result.
	
	Case $1<u <\infty $. For $Q \in \D$, denote by $Q_m$ the $m^{\operatorname{th}}$ dyadic parent. Observe that
	\begin{equation*}
		\left(  \sum_{k=1}^\infty (\M f_k ) ^u  \right)^{1/u} \le \left(  \sum_{k=1}^\infty (\M [ \chi_Q  f_k ] ) ^u  \right)^{1/u} + \sum_{m=1}^\infty \left(  \sum_{k=1}^\infty \left( \frac{1}{|Q_m|} \int_{Q_m} |f_k (y) |\d y   \right) ^u  \right)^{1/u} .
	\end{equation*}
	Consequently,
	\begin{align*}
		 |Q|^{ \frac{1}{t}  - \frac{1}{n} \sum_{i=1 } ^n \frac{1}{p_i}  }  \left\| \left( \sum_{k=1}^\infty  |\M f_k|^u \right)^{1/u}  \chi_Q \right\|_{L^{\vec p}  } 
		& \le |Q|^{ \frac{1}{t}  - \frac{1}{n} \sum_{i=1 } ^n \frac{1}{p_i}  }  \left\| \left(  \sum_{k=1}^\infty (\M [ \chi_Q  f_k ] ) ^u  \right)^{1/u}  \chi_Q \right\|_{L^{\vec p}  } \\
		& \quad + |Q|^{1/t}  \sum_{m=1}^\infty \left(  \sum_{k=1}^\infty \left( \frac{1}{|Q_m|} \int_{Q_m} |f_k (y) |\d y   \right) ^u  \right)^{1/u} =: I+ II.
	\end{align*}
	For the first part, by the Fefferman-Stein vector-valued maximal inequality for mixed spaces (for example, see \cite[Theorem 1.7]{N19}), we obtain
	\begin{equation*}
		I \lesssim |Q|^{ \frac{1}{t}  - \frac{1}{n} \sum_{i=1 } ^n \frac{1}{p_i}  }  \left\| \left(  \sum_{k=1}^\infty |  f_k | ^u  \right)^{1/u}  \chi_Q \right\|_{L^{\vec p}  }.
	\end{equation*}
	By the Minkowski inequality, the H\"older inequality and $|Q| = 2^{-mn} |Q_m| $, we obtain
	\begin{align*}
		II  \le  |Q|^{1/t}  \sum_{m=1}^\infty 
		\frac{1}{|Q_m|} \int_{Q_m} 	\left(  \sum_{k=1}^\infty |f_k (y) | ^u  \right) ^{1/u} \d y 
	 \le     \sum_{m=1}^\infty  2^{-mn/t}  |Q_m|^{1/t - \frac{1}{n}  \sum_{i=1}^n \frac{1}{p_i } }   \left\|\left(  \sum_{k=1}^\infty |f_k (y) | ^u  \right) ^{1/u} \chi_{Q_m}  \right\|_{L^{\vec p } }  .
	\end{align*}
	Writing $Q_0 = Q$, we have
	\begin{align*}
	 |Q|^{ \frac{1}{t}  - \frac{1}{n} \sum_{i=1 } ^n \frac{1}{p_i}  }  \left\| \left( \sum_{k=1}^\infty  |\M f_k|^u \right)^{1/u}  \chi_Q \right\|_{L^{\vec p}  }  
	 \lesssim \sum_{m=0 }^\infty  2^{-mn/t}  |Q_m|^{1/t - \frac{1}{n}  \sum_{i=1}^n \frac{1}{p_i } }   \left\|\left(  \sum_{k=1}^\infty |f_k (y) | ^u  \right) ^{1/u} \chi_{Q_m}  \right\|_{L^{\vec p } }  .
	\end{align*}
	Adding this estimate over $Q\in \D $, then
	\begin{align*}
		& \left(\sum_{Q \in \D} |Q|^{ \frac{r}{t}  - \frac{r}{n} \sum_{i=1 } ^n \frac{1}{p_i}  }  \left\| \left( \sum_{k=1}^\infty  |\M f_k|^u \right)^{1/u}  \chi_Q \right\|_{L^{\vec p}  } ^r  \right)^{1/r}   \\
		& \lesssim \sum_{m=0 }^\infty  2^{-mn/t}  \left(\sum_{Q \in \D} |Q_m|^{ \frac{r}{t}  - \frac{r}{n} \sum_{i=1 } ^n \frac{1}{p_i}  }  \left\| \left( \sum_{k=1}^\infty  | f_k|^u \right)^{1/u}  \chi_{Q_m} \right\|_{L^{\vec p}  } ^r  \right)^{1/r}.
	\end{align*}
	Using the fact that there exist $2^{mn}$ children of the cube $Q_m$, namely, there exists $2^{mn}$ dyadic cubes in $\D_{ -\log_2  \ell (Q)  +m } $ that contains $Q$, we see that
	\begin{align*}
		& \left(\sum_{Q \in \D} |Q|^{ \frac{r}{t}  - \frac{r}{n} \sum_{i=1 } ^n \frac{1}{p_i}  }  \left\| \left( \sum_{k=1}^\infty  |\M f_k|^u \right)^{1/u}  \chi_Q \right\|_{L^{\vec p}  } ^r  \right)^{1/r}   \\
		& \lesssim \sum_{m=0 }^\infty  2^{-mn/t}  \left( 2^{mn} \sum_{Q \in \D} |Q|^{ \frac{r}{t}  - \frac{r}{n} \sum_{i=1 } ^n \frac{1}{p_i}  }  \left\| \left( \sum_{k=1}^\infty  | f_k|^u \right)^{1/u}  \chi_{Q} \right\|_{L^{\vec p}  } ^r  \right)^{1/r} \\
		& \le \sum_{m=0 }^\infty  2^{-mn/t} 2^{mn /r} 	\|\{f_k \}_{k\in \mathbb N}\|_{  M_{\vec p} ^{t,r}  (\ell^u) }  \lesssim 	\|\{f_k \}_{k\in \mathbb N}\|_{  M_{\vec p} ^{t,r}  (\ell^u) }.
	\end{align*}
	Thus the proof is finished.
\end{proof}

Moreover, to consider the application of the wavelet characterization, we need the following vector-valued inequality.

\begin{theorem} \label{mixed vector HL mixed BM}
	Let $ 1< \vec p \le \infty $ and $1<u_1, u_2 < \infty $. Let    $1 < n / ( \sum_{i=1}^n  1/p_i )   < t <r <\infty $ or $ 1< n / ( \sum_{i=1}^n  1/p_i )  \le t < r=\infty  $.
	Then for all $\{ f_{k_1, k_2}  \}_{k_1, k_2 \in \mathbb N} \subset L^0 $, we have
	\begin{equation*}
		\left\| \left( \sum_{k_2 =1}^\infty \left( \sum_{k_1 =1}^\infty  |\M f_{k_1, k_2}|^{u_1} \right) ^{u_2 /u_1 } \right)^{1/u_2}  \right\|_{ M_{\vec p}^{t,r} }   \lesssim  \left\| \left( \sum_{k_2 =1}^\infty \left( \sum_{k_1 =1}^\infty  |f_{k_1, k_2}|^{u_1} \right) ^{u_2 /u_1 } \right)^{1/u_2}  \right\|_{ M_{\vec p}^{t,r} }  .
	\end{equation*}
\end{theorem}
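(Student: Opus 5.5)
We follow the same scheme as in the proof of Theorem \ref{HL seq mix BM}: a local/global splitting over dyadic cubes, with the local part handled by an unweighted mixed-norm estimate and the global part by dyadic parents. The case $r=\infty$ is the mixed Morrey space; it follows from the same argument with $\ell^r$ replaced by a supremum, or from \cite{N19}. So we assume $r<\infty$. By (\ref{M le Mdya}) and Lemma \ref{D equivalence}, it suffices to replace $\M$ by $\M_{\mathcal D_{\vec a}}$ and to estimate the norm $\|\cdot\|_{M_{\vec p}^{t,r}(\mathcal D_{\vec a})}$. Write
\begin{equation*}
F:=\Big( \sum_{k_2} \Big( \sum_{k_1} |f_{k_1,k_2}|^{u_1} \Big)^{u_2/u_1}\Big)^{1/u_2}.
\end{equation*}

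Fix $Q\in\mathcal D_{\vec a}$ and split $f_{k_1,k_2}=f_{k_1,k_2}\chi_Q+f_{k_1,k_2}\chi_{\rn\setminus Q}$. For $x\in Q$, the dyadic maximal function of the outer part is bounded by $\sum_{m\ge1}|Q_m|^{-1}\int_{Q_m}|f_{k_1,k_2}|$, where $Q_m$ is the $m$-th dyadic parent of $Q$. Apply Minkowski's inequality in $\ell^{u_1}$ and then in $\ell^{u_2}$ to move the mixed sequence norm inside the integral. This bounds the $\ell^{u_2}(\ell^{u_1})$-norm of the outer part by $\sum_{m\ge1}|Q_m|^{-1}\int_{Q_m}F$. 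By Lemma \ref{Holder mixed} and Lemma \ref{chi Q mixed Lp}, this is at most $\sum_{m\ge1}|Q_m|^{-\frac1n\sum_i 1/p_i}\|F\chi_{Q_m}\|_{L^{\vec p}}$. Multiplying by $|Q|^{1/t-\frac1n\sum_i1/p_i}\|\chi_Q\|_{L^{\vec p}}=|Q|^{1/t}$ gives the bound
\begin{equation*}
\sum_{m\ge1}2^{-mn/t}|Q_m|^{1/t-\frac1n\sum_i1/p_i}\|F\chi_{Q_m}\|_{L^{\vec p}}.
\end{equation*}
Summing in $\ell^r$ over $Q$, we use the fact that each cube has $2^{mn}$ descendants of generation $m$, exactly as in Theorem \ref{HL M}. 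This yields the factor $\sum_m 2^{-mn/t+mn/r}<\infty$, since $t<r$. So the global part is at most a constant times $\|F\|_{M_{\vec p}^{t,r}}$.

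For the local part we need the unweighted mixed-Lebesgue inequality
\begin{equation*}
\Big\|\Big( \sum_{k_2} \Big( \sum_{k_1} |\M g_{k_1,k_2}|^{u_1} \Big)^{u_2/u_1}\Big)^{1/u_2}\Big\|_{L^{\vec p}} \lesssim \Big\|\Big( \sum_{k_2} \Big( \sum_{k_1} |g_{k_1,k_2}|^{u_1} \Big)^{u_2/u_1}\Big)^{1/u_2}\Big\|_{L^{\vec p}},
\end{equation*}
applied with $g_{k_1,k_2}=f_{k_1,k_2}\chi_Q$. I expect this to be the main obstacle. The plan is to get it by extrapolation: \cite{N19} establishes the Fefferman--Stein inequality on $L^{\vec p}$ (used in Theorem \ref{HL seq mix BM}), and its proof goes through weighted $L^{q}(w)$ estimates. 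The scalar weighted inequality $\|\M g\|_{L^q(w)}\lesssim\|g\|_{L^q(w)}$ for $w\in A_q$ and all $1<q<\infty$ gives, by Rubio de Francia extrapolation, the $\ell^{u_1}$-valued weighted inequality for all $w\in A_q$. The operator $\{g_{k_1}\}\mapsto\{\M g_{k_1}\}$ then satisfies weighted bounds for all $A_q$ weights. Extrapolating once more, now on $\ell^{u_1}$-valued functions with outer exponent $u_2$, gives the weighted $\ell^{u_2}(\ell^{u_1})$ inequality. Finally, the extrapolation theorem to mixed-norm spaces of Ho \cite{Ho21} transfers the weighted estimates to $L^{\vec p}$. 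The alternative route via Nogayama's iterated argument would need a Banach-lattice-valued maximal theorem, so I would try the extrapolation route first. For components with $p_i=\infty$ some care is required, and I would restrict to the range covered by \cite{Ho21}, or treat it by the same reduction as in \cite{N19}.

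With the local estimate in hand, $\|(\cdots)\chi_Q\|_{L^{\vec p}}\lesssim\|F\chi_Q\|_{L^{\vec p}}$. Summing $|Q|^{r/t-\frac rn\sum_i1/p_i}\|F\chi_Q\|_{L^{\vec p}}^r$ over $Q$ gives $\|F\|_{M_{\vec p}^{t,r}}^r$. Combining with the global part completes the proof.
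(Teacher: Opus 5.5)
Your argument is the paper's proof except for how one ingredient is justified. The reduction to $\M_{\mathcal D_{\vec a}}$ via (\ref{M le Mdya}) and Lemma \ref{D equivalence} is the same. So are the split $f_{k_1,k_2}=f_{k_1,k_2}\chi_Q+f_{k_1,k_2}\chi_{Q^c}$, the use of Minkowski's inequality twice plus H\"older for the outer part, and the count of $2^{mn}$ descendants that produces $\sum_m 2^{-mn/t+mn/r}$. For $r=\infty$ the paper cites \cite[Proposition 2.15]{N24}, and your supremum version of the same argument also gives it.

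The difference is the local $\ell^{u_2}(\ell^{u_1})$-valued Fefferman--Stein inequality on $L^{\vec p}$. The paper simply quotes \cite[A.2]{N24}. You instead derive it from Muckenhoupt's weighted bound by extrapolating twice (Fubini at exponent $u_1$, then at exponent $u_2$) and then transferring to $L^{\vec p}$ with \cite{Ho21}. That derivation is sound for $1<\vec p<\infty$. It is longer than a citation, but it is self-contained modulo standard extrapolation, gives the weighted $L^q(w)$ version along the way, and handles any finite number of nested $\ell^{u}$-norms in the same way.

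You should, however, drop the fallback for components with $p_i=\infty$. There the inequality is false, so neither your route nor the paper's citation can reach it, and the theorem holds only for $1<\vec p<\infty$, which is exactly the range your argument covers.

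For a counterexample, take $n=2$, $\vec p=(2,\infty)$, $t=5$, $r=6$, $f_{k_1,1}:=\chi_{[0,1)}(x_1)\chi_{[2^{-k_1-1},2^{-k_1})}(x_2)$, and $f_{k_1,k_2}:=0$ for $k_2\ge 2$.
\begin{itemize}
\item The right-hand side is at most $\|\chi_{[0,1)^2}\|_{M_{\vec p}^{t,r}}$, which is finite by Example \ref{exa Q0 iff}.
\item Testing with the ball $B(x,2^{1-k_1})$ shows $\M f_{k_1,1}(x)\ge 1/(16\pi)$ whenever $x_1\in(1/4,3/4)$ and $0<x_2<2^{-k_1-1}$.
\item Hence the function inside the left-hand norm is $\gtrsim(\log_2(1/x_2))^{1/u_1}$ on $(1/4,1/2)\times(0,1/16)$.
\item Its $L^{(2,\infty)}$-norm on $Q=[0,1/2)^2\in\D$ is therefore infinite, so the left-hand side is infinite.
\end{itemize}
Thin slabs orthogonal to the $x_i$-axis give the same failure whenever some $p_i=\infty$. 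This is just the classical failure of the Fefferman--Stein inequality in $L^\infty(\ell^{u})$.
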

\begin{proof}
	The case $1< n / ( \sum_{i=1}^n  1/p_i )  \le t < r=\infty $ is proved in \cite[Proposition 2.15]{N24}. 	
	We can identify $\M $ with the maximal operator $\mathcal M_{\mathcal D _{\vec a}} $  as in Theorem \ref{HL M} where $ \vec a \in \{0,1,2\}^n  $.		
	Let $Q \in \D_{\vec a}$. Denote by $Q_m$ the $m^{\operatorname{th}}$ dyadic parent of $Q$. By the sublinear of $\M$,
	\begin{align*}
		\left( \sum_{k_2 =1}^\infty \left( \sum_{k_1 =1}^\infty  |\M f_{k_1, k_2}|^{u_1} \right) ^{u_2 /u_1 } \right)^{1/u_2} 
		& \le \left( \sum_{k_2 =1}^\infty \left( \sum_{k_1 =1}^\infty  |\M ( f_{k_1, k_2} \chi_Q )|^{u_1} \right) ^{u_2 /u_1 } \right)^{1/u_2} \\
		& + \left( \sum_{k_2 =1}^\infty \left( \sum_{k_1 =1}^\infty  |\M ( f_{k_1, k_2} \chi_{Q^c} ) |^{u_1} \right) ^{u_2 /u_1 } \right)^{1/u_2}  =: I_1 +I_2.
	\end{align*}
	For $I_1$, using \cite[A.2]{N24}, we have
	\begin{equation*}
		\| I_1 \|_{L^{\vec p} } \lesssim 	\left\|  \left( \sum_{k_2 =1}^\infty \left( \sum_{k_1 =1}^\infty  | f_{k_1, k_2}  |^{u_1} \right) ^{u_2 /u_1 } \right)^{1/u_2} \chi_Q \right\|_{L^{\vec p} } .
	\end{equation*}
	For $I_2$, for $x\in Q$, note that
	\begin{align*}
		\M ( f_{k_1, k_2} \chi_{Q^c} ) (x) \le \sum_{m=1}^\infty \frac{1}{|Q_m|} \int_{Q_m}  |f_{k_1, k_2} (y)| \d y.
	\end{align*}
	Using Minkowski's inequality twice, we get
	\begin{align*}
		\| 	\M ( f_{k_1, k_2} \chi_{Q^c} ) (x) \|_{\ell^{(u_1,u_2) } } 
		& \le \sum_{m=1}^\infty \frac{1}{|Q_m|} \int_{Q_m}  \left\| 	f_{k_1, k_2} (y)  \right\|_{\ell^{(u_1,u_2) } } \d y .
	\end{align*}
	Here and what follows, $ \left\| 	f_{k_1, k_2} (y)  \right\|_{\ell^{(u_1,u_2) } } : =  \left( \sum_{k_2 =1}^\infty \left( \sum_{k_1 =1}^\infty  |f_{k_1, k_2}|^{u_1} \right) ^{u_2 /u_1 } \right)^{1/u_2} $.
	Thus by H\"older's inequality (Lemma \ref{Holder mixed}), and $|Q| = 2^{-mn} |Q_m| $, we obtain
	\begin{align*}
		& |Q|^{ \frac{1}{t}  - \frac{1}{n} \sum_{i=1 } ^n \frac{1}{p_i}  }  \left\| \left( \sum_{k_2 =1}^\infty \left( \sum_{k_1 =1}^\infty  |\M ( f_{k_1, k_2} \chi_{Q^c} ) |^{u_1} \right) ^{u_2 /u_1 } \right)^{1/u_2} \chi_Q \right\|_{L^{\vec p}  }  \\
		& \le  |Q|^{ \frac{1}{t}} \sum_{m=1}^\infty \frac{1}{|Q_m|} \int_{Q_m}  \left\| 	f_{k_1, k_2} (y)  \right\|_{\ell^{(u_1,u_2) } } \d y \\
		& \le  \sum_{m=1} ^{\infty}{2^{-mn/t} } |Q_m|^{1/t - \frac{1}{n}  \sum_{i=1}^n \frac{1}{p_i } } \|  \left\| 	f_{k_1, k_2} (y)  \right\|_{\ell^{(u_1,u_2) } } \chi_{Q_m}\|_{L^{\vec p}} .
	\end{align*}
	Writing $Q_0 = Q$, we have
	\begin{align*}
		& |Q|^{ \frac{1}{t}  - \frac{1}{n} \sum_{i=1 } ^n \frac{1}{p_i}  }  \left\| 	\left( \sum_{k_2 =1}^\infty \left( \sum_{k_1 =1}^\infty  |\M f_{k_1, k_2}|^{u_1} \right) ^{u_2 /u_1 } \right)^{1/u_2}   \chi_Q \right\|_{L^{\vec p}  }  \\
		& \lesssim \sum_{m=0 }^\infty  {2^{-mn/t} } |Q_m|^{1/t - \frac{1}{n}  \sum_{i=1}^n \frac{1}{p_i } } \|  \left\| 	f_{k_1, k_2} (y)  \right\|_{\ell^{(u_1,u_2) } } \chi_{Q_m}\|_{L^{\vec p}} .
	\end{align*}
	Adding this estimate over $Q\in \D_{\vec a} $, then
	\begin{align*}
		& \left(\sum_{Q \in \D_{\vec a}} |Q|^{ \frac{r}{t}  - \frac{r}{n} \sum_{i=1 } ^n \frac{1}{p_i}  }  \left\| 	\left( \sum_{k_2 =1}^\infty \left( \sum_{k_1 =1}^\infty  |\M f_{k_1, k_2}|^{u_1} \right) ^{u_2 /u_1 } \right)^{1/u_2}   \chi_Q \right\|_{L^{\vec p}  } ^r  \right)^{1/r}   \\
		& \lesssim \sum_{m=0 }^\infty  2^{-mn/t}  \left(\sum_{Q \in \D_{\vec a}} |Q_m|^{ \frac{r}{t}  - \frac{r}{n} \sum_{i=1 } ^n \frac{1}{p_i}  }  \left\|	 \left\| 	f_{k_1, k_2}   \right\|_{\ell^{(u_1,u_2) } } \chi_{Q_m} \right\|_{L^{\vec p}  } ^r  \right)^{1/r}.
	\end{align*}
	Using the fact that there exist $2^{mn}$ children of the cube $Q_m$, namely, there exists $2^{mn}$ dyadic cubes in $\D_{ -\log_2  \ell (Q)  +m } $ that contains $Q$, we see that
	\begin{align*}
		& \left(\sum_{Q \in \D_{\vec a}} |Q|^{ \frac{r}{t}  - \frac{r}{n} \sum_{i=1 } ^n \frac{1}{p_i}  }  \left\| 	\left( \sum_{k_2 =1}^\infty \left( \sum_{k_1 =1}^\infty  |\M f_{k_1, k_2}|^{u_1} \right) ^{u_2 /u_1 } \right)^{1/u_2}  \chi_Q \right\|_{L^{\vec p}  } ^r  \right)^{1/r}   \\
		& \lesssim \sum_{m=0 }^\infty  2^{-mn/t}  \left( 2^{mn} \sum_{Q \in \D_{\vec a}} |Q|^{ \frac{r}{t}  - \frac{r}{n} \sum_{i=1 } ^n \frac{1}{p_i}  }  \left\| 	\left\| 	f_{k_1, k_2}   \right\|_{\ell^{(u_1,u_2) } }     \chi_{Q} \right\|_{L^{\vec p}  } ^r  \right)^{1/r} \\
		& \lesssim \left(  \sum_{Q \in \D_{\vec a}} |Q|^{ \frac{r}{t}  - \frac{r}{n} \sum_{i=1 } ^n \frac{1}{p_i}  }  \left\| \left\| 	f_{k_1, k_2}   \right\|_{\ell^{(u_1,u_2) } }   \chi_{Q} \right\|_{L^{\vec p}  } ^r  \right)^{1/r}.
	\end{align*}
	Thus the proof is finished.		
\end{proof}

\begin{lemma}[Proposition 6.4, \cite{N19}]
	\label{Mit vector}
	Let $ 1 < \vec p <\infty$  and $\omega _k  \in A_{p_k} (\mathbb R)$ for $k =1 , \ldots, n$. Then for all $f\in L^0 $,
	\begin{equation*}
		\left\|  \left(  \sum_{j=1}^\infty [ \Mit f_j ]^u  \right) ^{1/u}  \cdot \bigotimes_{k=1}^n \omega_k ^{1/p_k} \right\|_{L^{\vec p} } \lesssim \left\|  \left(  \sum_{j=1}^\infty |f_j |^u  \right) ^{1/u}  \cdot \bigotimes_{k=1}^n \omega_k ^{1/p_k} \right\|_{L^{\vec p} }
	\end{equation*}
\end{lemma}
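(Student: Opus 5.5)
The plan is to argue by induction on the number of one-dimensional maximal operators applied, removing them one variable at a time from the outside in. Write $W := \bigotimes_{k=1}^n \omega_k^{1/p_k}$. Since $\Mit f_j = \M_{(n)} \cdots \M_{(1)}(|f_j|)$, set $g_j^{(0)} := |f_j|$ and $g_j^{(k)} := \M_{(k)} g_j^{(k-1)}$ for $k = 1, \ldots, n$. It suffices to prove, for each $k$,
\begin{equation*}
\left\| \Big( \sum_{j} (\M_{(k)} g_j)^u \Big)^{1/u} W \right\|_{L^{\vec p}} \lesssim \left\| \Big( \sum_{j} |g_j|^u \Big)^{1/u} W \right\|_{L^{\vec p}}
\end{equation*}
for arbitrary $\{g_j\} \subset L^0$. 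Chaining these $n$ estimates gives the lemma. I assume $1 < u < \infty$; the endpoint $u = \infty$ follows from $\sup_j \M_{(k)} g_j \le \M_{(k)} \sup_j |g_j|$ and the scalar weighted bound of Lemma \ref{Mit mixed weight}.

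\textbf{Step $k = 1$.} Fix $(x_2, \ldots, x_n)$. The operator $\M_{(1)}$ acts only in $x_1$, and the innermost norm is $L^{p_1}(\omega_1 \, \d x_1)$. The one-dimensional weighted Fefferman--Stein inequality (Andersen--John) applies because $\omega_1 \in A_{p_1}(\mathbb R)$ and $1 < p_1, u < \infty$, and gives
\begin{equation*}
\Big\| \big( \textstyle\sum_j (\M_{(1)} g_j)^u \big)^{1/u} \Big\|_{L^{p_1}(\omega_1)} \lesssim \Big\| \big( \textstyle\sum_j |g_j|^u \big)^{1/u} \Big\|_{L^{p_1}(\omega_1)}
\end{equation*}
pointwise in the remaining variables. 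Multiplying by $\prod_{k \ge 2} \omega_k^{1/p_k}$ and taking the outer norms in $x_2, \ldots, x_n$ finishes this case, by monotonicity of the mixed norm.

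\textbf{Step $k \ge 2$ (main obstacle).} Here $\M_{(k)}$ acts in $x_k$, but the norms in $x_1, \ldots, x_{k-1}$ (with exponents $p_1, \ldots, p_{k-1}$, generally different from $p_k$) are taken \emph{inside}. Freeze $(x_{k+1}, \ldots, x_n)$ and view $x_k \mapsto G(x_k) := \{ g_j(\cdot, x_k) \}_j$ as a function with values in the Banach lattice
\begin{equation*}
X := L^{(p_1, \ldots, p_{k-1})}\big(\omega_1 \otimes \cdots \otimes \omega_{k-1}; \ell^u\big).
\end{equation*}
Since $\M_{(k)}$ commutes with the frozen inner variables, the required estimate is exactly the boundedness of the lattice maximal operator $\widetilde{\M} G(x_k) := \sup_{I \ni x_k} |I|^{-1} \int_I |G(y)| \, \d y$ (supremum taken in the lattice $X$) on $L^{p_k}(\omega_k; X)$ for $\omega_k \in A_{p_k}(\mathbb R)$. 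All exponents lie in $(1, \infty)$, so $X$ is an iterated weighted $L^p(\ell^u)$-type lattice and hence a UMD Banach function space. The Bourgain / Rubio de Francia theorem then gives boundedness of $\widetilde{\M}$ on $L^{s}(\mathbb R; X)$. The weighted version for all $A_{s}$ weights, in particular $s = p_k$ with $\omega_k$, follows either from the weighted lattice result of Garc\'ia-Cuerva--Mac\'ias--Torrea or by Rubio de Francia extrapolation in the single variable $x_k$.

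If I wanted to avoid the UMD machinery, I would instead try to prove the step-$k$ estimate directly by extrapolation in $x_k$. First, the inequality is trivial when all inner exponents are taken equal to a common value $s$, since Fubini commutes the inner $L^s$ norms with the $x_k$ integral, and the one-dimensional Andersen--John inequality applies for every $w \in A_s(\mathbb R)$. Then Rubio de Francia extrapolation in $x_k$ changes the outer exponent to $p_k$. The difficulty is that extrapolation moves only the outer exponent, not the inner exponents $p_1, \ldots, p_{k-1}$. To handle this I would run the extrapolation with the inner norms kept as fixed Banach lattice norms, i.e.\ treat the inner mixed norm as the Banach lattice $X$ above and use the lattice version of extrapolation. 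This is essentially the same obstacle again, which is why I expect the vector-valued (lattice-valued) maximal theorem in the variable $x_k$ to be the heart of the proof. Once it is available, applying it for $k = 1, \ldots, n$ and composing the estimates gives the lemma.
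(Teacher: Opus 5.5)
The paper gives no argument for this lemma; it is quoted directly from Nogayama [N19, Proposition 6.4]. So there is nothing in the text to match your proof against. Judged on its own terms, your proof is correct:
\begin{itemize}
\item reducing to one estimate per $\M_{(k)}$ and chaining them works;
\item the Andersen--John step for $k=1$ is fine;
\item identifying step $k\ge 2$ with the lattice maximal operator on $L^{p_k}(\omega_k;X)$, with $X=L^{(p_1,\ldots,p_{k-1})}(\omega_1\otimes\cdots\otimes\omega_{k-1};\ell^u)$, is sound, and $X$ is indeed a UMD Banach function space;
\item the weighted bound you need is supplied by the Garc\'{\i}a-Cuerva--Mac\'{\i}as--Torrea theory, where the weights come from vector-valued Calder\'on--Zygmund theory.
\end{itemize}
Your fallback ``or by Rubio de Francia extrapolation in $x_k$'' does not work as stated. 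Extrapolation needs an $A_{p_0}$-weighted estimate as input, and the Bourgain/Rubio de Francia lattice theorem is unweighted.

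More importantly, your verdict that the elementary extrapolation route hits ``essentially the same obstacle'' is mistaken. You never need all inner exponents equal at once, only one at a time.
\begin{itemize}
\item Fix $k$. For $0\le m\le k-1$, let $\Phi_m(x_k)$ be the $L^{(p_1,\ldots,p_m)}(\omega_1\otimes\cdots\otimes\omega_m)$-norm of $(\sum_j(\M_{(k)}g_j)^u)^{1/u}$, with all variables other than $x_1,\ldots,x_m,x_k$ frozen. Let $\Psi_m$ be the same quantity with $|g_j|$ in place of $\M_{(k)}g_j$.
\item Prove by induction on $m$ that $\|\Phi_m\|_{L^s(w)}\le C_m([w]_{A_s})\|\Psi_m\|_{L^s(w)}$ for every $1<s<\infty$ and every $w\in A_s(\mathbb R)$ acting in $x_k$.
\item The case $m=0$ is Andersen--John.
\item For the inductive step, apply the case $m-1$ with $s=p_m$, raise to the power $p_m$, multiply by $\omega_m(x_m)$ and integrate in $x_m$. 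Since $x_m$ is only a parameter for $\M_{(k)}$, Tonelli gives $\int\Phi_m^{p_m}w\,\d x_k\le C\int\Psi_m^{p_m}w\,\d x_k$ for all $w\in A_{p_m}(\mathbb R)$.
\item Rubio de Francia extrapolation for pairs then frees the exponent again. Use the usual truncation of $\Phi_m$ so that the left side is finite, then monotone convergence.
\item At $m=k-1$, choose $s=p_k$ and $w=\omega_k$, and take the outer weighted norms.
\end{itemize}
This uses only the one-dimensional Muckenhoupt theorem and extrapolation, and it treats $k=1$ and $k\ge2$ uniformly. Your route instead imports Bourgain's UMD lattice theorem plus weighted vector-valued Calder\'on--Zygmund theory. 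What it buys is generality: any UMD Banach function space could sit inside. This lemma does not need that generality.
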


\begin{theorem}
	Let $ 1< \vec p < \infty $. Let    $ 1 < n / ( \sum_{i=1}^n  1/p_i )   < t <r <\infty $ or $ 1 < n / ( \sum_{i=1}^n  1/p_i )  \le t < r=\infty  $. 
	Furthermore, let  
	\begin{equation*}
		\left(\frac{1}{n} \sum_{i=1 } ^n \frac{1}{p_i} -\frac{1}{t}  +  \frac{1}{r} \right) <  \frac{1}{\max \vec p}.
	\end{equation*}
	Let $1 <u \le \infty $.
	Then for $ \{ f_k\}_{k\in \mathbb N} \in  M_{\vec p}^{t,r} (\ell^u)$,
	\begin{equation*}
		\left(\sum_{Q \in \D} |Q|^{ \frac{r}{t}  - \frac{r}{n} \sum_{i=1 } ^n \frac{1}{p_i}  }  \left\| \left( \sum_{k=1}^\infty  |\Mit f_k|^u \right)^{1/u}  \chi_Q \right\|_{L^{\vec p}  } ^r  \right)^{1/r}   \lesssim \| \{ f_k\}_{k\in \mathbb N} \|_{ M_{\vec p}^{t,r}  } .
	\end{equation*}
\end{theorem}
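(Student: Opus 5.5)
The proof will follow the scalar iterated maximal theorem proved just above, with Lemma \ref{Mit vector} used in place of Lemma \ref{Mit mixed weight}. The right-hand side is meant to be $\|\{f_k\}\|_{M_{\vec p}^{t,r}(\ell^u)}$. Fix $\eta$ with
\begin{equation*}
0<\frac{1}{n}\sum_{i=1}^n\frac{1}{p_i}-\frac{1}{t}+\frac{1}{r}<\eta<\frac{1}{\max\vec p},
\end{equation*}
which is possible by hypothesis.

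\textbf{Step 1: equivalent norm.} Apply Proposition \ref{mixed BM equ norm weight} to the single function $F:=(\sum_k|\Mit f_k|^u)^{1/u}$. Its proof only uses the lattice property of $L^{\vec p}$ and the decay of $(\Mit\chi_Q)^\eta$, so it works for any measurable function. This gives
\begin{equation*}
\|F\|_{M_{\vec p}^{t,r}}\approx\Big(\sum_{Q\in\D}|Q|^{\frac rt-\frac rn\sum_i\frac1{p_i}}\big\|F(\Mit\chi_Q)^\eta\big\|_{L^{\vec p}}^r\Big)^{1/r}.
\end{equation*}
The same equivalence holds for $G:=(\sum_k|f_k|^u)^{1/u}$.

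\textbf{Step 2: weighted estimate on each cube.} For $Q=I_1\times\cdots\times I_n$ the weight factorizes: $(\Mit\chi_Q)^\eta=\bigotimes_{i}(\M_{(i)}\chi_{I_i})^\eta$. Set $\omega_i:=(\M_{(i)}\chi_{I_i})^{\eta p_i}$. Since $0<\eta p_i<1$, \cite[Theorem 288]{SDH20} gives $\omega_i\in A_1(\mathbb R)\subset A_{p_i}(\mathbb R)$, with $A_1$ constants independent of $I_i$ because they depend only on $\eta p_i$. Lemma \ref{Mit vector} then yields
\begin{equation*}
\|F(\Mit\chi_Q)^\eta\|_{L^{\vec p}}\lesssim\|G(\Mit\chi_Q)^\eta\|_{L^{\vec p}},
\end{equation*}
with a constant uniform in $Q$.

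\textbf{Step 3: summation.} Sum the Step 2 bound over $Q\in\D$ with the weights $|Q|^{r/t-\frac rn\sum_i 1/p_i}$ and apply Step 1 to $G$. This gives
\begin{equation*}
\|F\|_{M_{\vec p}^{t,r}}\lesssim\|G\|_{M_{\vec p}^{t,r}}=\|\{f_k\}\|_{M_{\vec p}^{t,r}(\ell^u)}.
\end{equation*}

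\textbf{Main obstacle: $u=\infty$.} Lemma \ref{Mit vector} is stated only for finite $u$. For $u=\infty$ I would avoid the vector-valued lemma altogether. Since $\Mit$ is monotone,
\begin{equation*}
\sup_k\Mit f_k\le\Mit\Big(\sup_k|f_k|\Big)\quad\text{pointwise}.
\end{equation*}
The claim then follows from the scalar theorem proved just above, applied to $\sup_k|f_k|$.

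Two further checks are needed. First, the implicit constant in Lemma \ref{Mit vector} must depend only on the $A_{p_i}$ constants of the $\omega_i$, which is standard for weighted extrapolation-type bounds. Second, Step 1 must be applied to each function separately, without assuming the functions lie in the space.
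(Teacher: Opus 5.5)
Your Steps 1--3 are the paper's proof: Proposition~\ref{mixed BM equ norm weight}, the factorization $(\Mit\chi_Q)^\eta=\bigotimes_i(\M_{(i)}\chi_{I_i})^\eta$, the uniform $A_1$ bound for $(\M_{(i)}\chi_{I_i})^{\eta p_i}$ when $\eta p_i<1$, and then Lemma~\ref{Mit vector} cube by cube. Your reduction of $u=\infty$ to the scalar theorem via $\sup_k\Mit f_k\le\Mit(\sup_k|f_k|)$ is correct, and the paper leaves it implicit.

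There is, however, a genuine gap, and the paper's proof has the same one. In Step~3 you use the inequality $\sum_{Q\in\D}|Q|^{\frac rt-\frac rn\sum_i\frac1{p_i}}\|G(\Mit\chi_Q)^\eta\|_{L^{\vec p}}^r\lesssim\|G\|_{M_{\vec p}^{t,r}}^r$ for some $\eta<1/\max\vec p$. This inequality does not hold on the whole stated range. Your remark that the proposition's proof ``only uses the lattice property and the decay'' is exactly where the problem slips through.

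Set $\kappa:=\frac1n\sum_i\frac1{p_i}-\frac1t+\frac1r$. The proposition's proof ends with the series $\sum_{\ell_1,\dots,\ell_n\ge1}2^{n\kappa\max_i\ell_i}2^{-\eta(\ell_1+\cdots+\ell_n)}$. Along $\ell_2=\cdots=\ell_n=1$ this diverges unless $\eta>n\kappa$, so the hypothesis $\eta>\kappa$ is not enough when $n\ge2$. The remark after the proposition checks only one term and does not cover this.

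The failure is real, not just a defect of the proof. Take $n=2$, $\vec p=(2,2)$, $t=3$, $r=4$, for which all hypotheses of the theorem hold ($\kappa=\frac5{12}<\frac12$), and let $G=\chi_{[0,L)\times[0,1)}$.
\begin{itemize}
\item On the one hand, $\|G\|_{M_{\vec p}^{t,r}}^4\approx L$.
\item On the other hand, each unit cube $Q=[c,c+1)\times[0,1)$ with $L/4\le c\le 3L/4$ gives $\|G(\Mit\chi_Q)^\eta\|_{L^2}\gtrsim L^{1/2-\eta}$.
\item Summing over these roughly $L/2$ cubes, the weighted sum is $\gtrsim L^{3-4\eta}$, which is not $O(L)$ for any $\eta<\frac12=1/\max\vec p$.
\end{itemize}
So for these parameters no $\eta$ compatible with the $A_1$ step exists.

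To make your argument (and the paper's) correct, strengthen the hypothesis to $n\kappa=\sum_i\frac1{p_i}-\frac nt+\frac nr<\frac1{\max\vec p}$ and choose $n\kappa<\eta<1/\max\vec p$. Then the series converges and Proposition~\ref{mixed BM equ norm weight} holds for this $\eta$. With that, your Steps 1--3 and the $u=\infty$ reduction (which relies on the scalar theorem, itself resting on the same proposition) go through as written. In the remaining part of the stated range, this method does not establish the inequality.
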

\begin{proof}
	Let $\eta \in (0,1)  $  such that
	\begin{equation*}
		0<   \left(\frac{1}{n} \sum_{i=1 } ^n \frac{1}{p_i} -\frac{1}{t}  +  \frac{1}{r} \right)  <\eta  < \frac{1}{\max \vec p}<1 .
	\end{equation*}
	By Proposition \ref{mixed BM equ norm weight}, it suffices to show 
	\begin{equation*}
		\left\| \left( \sum_{k=1}^\infty  |\Mit f_k|^u \right)^{1/u}   (\Mit \chi_Q )^\eta  \right\|_{L^{\vec p}  }  \lesssim 	\left\| \left( \sum_{k=1}^\infty  | f_k|^u \right)^{1/u}   (\Mit \chi_Q )^\eta  \right\|_{L^{\vec p}  } .
	\end{equation*}
	Let $Q = I_1 \times \dots \times I_n $. Then 
	\begin{equation*}
		( \Mit \chi_Q) ^{\eta }  = \left(  \bigotimes_{i=1}^n \M _{(i)} \chi_{I_i} \right)^{\eta }  =  \bigotimes_{i=1}^n ( \M_{(i)} \chi_{I_i} )^{\eta } .
	\end{equation*}
	From \cite[Theorem 288]{SDH20}, $ ( \M _{(i)} \chi_{I_i} )^{\eta  p_i} $  belongs to $A_1(\mathbb R)$ if  $\eta > 0$ such that
	$
	0 < \eta  p_i < 1 .
	$
	Hence $ ( \M _{(i)} \chi_{I_i} )^{\eta  p_i} \in A_1(\mathbb R) \subset A_{p_i}(\mathbb R) $ for all $p_i$. By Lemma \ref{Mit vector}, we obtain
	\begin{align*}
		\left\| \left( \sum_{k=1}^\infty  |\Mit f_k|^u \right)^{1/u}   (\Mit \chi_Q )^\eta  \right\|_{L^{\vec p}  }	&  = \left\| \left( \sum_{k=1}^\infty  |\Mit f_k|^u \right)^{1/u}   \bigotimes_{i=1}^n (\M_{(i)} \chi_{I_i } ) ^\eta \right\|_{L^{\vec p}  }	  \\
		& \lesssim \left\| \left( \sum_{k=1}^\infty  | f_k|^u \right)^{1/u}   \bigotimes_{i=1}^n (\M_{(i)} \chi_{I_i } ) ^\eta \right\|_{L^{\vec p}  }	  \\
		&=  \left\| \left( \sum_{k=1}^\infty  | f_k|^u \right)^{1/u}   	( \Mit \chi_Q) ^{\eta }  \right\|_{L^{\vec p}  }	 .
	\end{align*}
	We finish the proof.
\end{proof}

\subsection{The Hardy-Littlewood maximal operator on  block spaces}
We first give the definition of vector valued block space, which is similar with Definition \ref{def mix block space}.

\begin{definition}
	Let $ 1 \le \vec p <\infty$ and $1 < u' \le \infty $. Let   $n / ( \sum_{i=1}^n  1/p_{i})  \le t \le r \le \infty $. A measurable vector valued function $\vec b =\{b_k \}_{k\in \mathbb N_0}$ is said to be a $\ell^{u' }$ valued
	$(\vec{p}^{\,\prime}, t')$-block if there exists a cube $Q$ that supports $\vec b$ such that 
	\begin{equation*}
		\| \vec b \|_{L^{\vec{p}^{\,\prime} } (\ell^{u'}) } : =  \left\| \left( \sum_{m =0 } ^\infty | b_m| ^{u'} \right)^{1/u'} \right\|_{L^{\vec{p}^{\,\prime} } }  \le |Q| ^{  1/t - \frac{1}{n}  ( \sum_{i=1}^n  1/p_{i})   }  .
	\end{equation*}
	If we need to indicate $Q$, we  say that $ \vec b$ is a  $\ell^{u' }$ valued $(\vec{p}^{\,\prime},t')$-block supported on $Q$.
	
	The function space $\mathcal{H}_{\vec p \, ^\prime}^{t',r'} (\ell^ {u'}) $
	is the set of  all measurable functions $\vec f = \{f_m \}_{m \in \mathbb N_0}$ such that $\vec f$ is realized
	as the sum
	\begin{equation}\label{eq: mix block vector f}
		\vec	f = \sum_{(j,k)\in\mathbb{Z}^{n+1}}\lambda_{j,k} \vec b_{j,k}
	\end{equation}
	with some $\lambda=\{\lambda_{j,k}\}_{(j,k)\in\mathbb{Z}^{n+1}}\in\ell^{r'}(\mathbb{Z}^{n+1})$
	and $\vec b_{j,k}$ is a $\ell^{u'}$ valued $(\vec{p}^{\,\prime},t')$-block supported on  $Q_{j,k}$ where (\ref{eq: mix block vector f}) converges almost everywhere on $\rn$. The norm of $\mathcal{H}_{\vec p \, ^\prime}^{t',r'}(\ell^ {u'}) $
	is defined by
	\[
	\|f\|_{\mathcal{H}_{\vec p \, ^\prime}^{t',r'} (\ell^ {u'})} :=\inf_{\lambda}\|\lambda\|_{\ell^{r'}},
	\]
	where the infimum is taken over all admissible sequences $\lambda$
	such that (\ref{eq: mix block vector f}) holds.
\end{definition}

\begin{lemma}[Theorem 2.18, \cite{N24}] \label{HL mix block r = infty}
	Let $ 1< \vec p < \infty $ and $1<u' \le \infty $. Let    $ 1< n / ( \sum_{i=1}^n  1/p_i )  \le t < r=\infty  $.
	Then for all $f  \in \mathcal{H}_{\vec p \, ^\prime}^{t',r'}  (\ell^{u'})  $, we have
	\begin{equation*}
		\left\|  \left(   \sum_{j=0}^\infty \M (f_j) ^{u'}\right)^{1/ u'}   \right\|_{\mathcal{H}_{\vec p \, ^\prime}^{t',r'}  } \lesssim 	\left\|  \left(   \sum_{j=0}^\infty |f_j| ^{u'}\right)^{1/ u'}   \right\|_{\mathcal{H}_{\vec p \, ^\prime}^{t',r'}  } .
	\end{equation*}
\end{lemma}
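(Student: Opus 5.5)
This statement, Lemma~\ref{HL mix block r = infty}, is quoted from \cite[Theorem 2.18]{N24}; I will not reprove it from scratch but indicate how it follows by duality from the Morrey-side vector-valued inequality. The plan is to pass through Theorem \ref{predual mix BM} with $r=\infty$, that is, $(\mathcal{H}_{\vec p \, ^\prime}^{t',1})^*=M_{\vec p}^{t,\infty}$, together with the formula (\ref{H = max M le 1}) expressing the block norm as a supremum of pairings against the unit ball of $M_{\vec p}^{t,\infty}$.

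\textbf{Step 1 (reductions).} By Theorem \ref{dense block} and the Fatou property (Theorem \ref{Fatou general}), together with monotone convergence in $j$, it suffices to treat finitely many nonnegative $f_j\in L^{\vec p\,'}_c$. Set $F:=(\sum_j |f_j|^{u'})^{1/u'}$. The case $u'=\infty$ will be handled separately via $\sup_j \M f_j\le \M(\sup_j|f_j|)$, which reduces it to the scalar case $u'=1$ (one function).

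\textbf{Step 2 (linearization and duality).} For finitely many $j$, the $\ell^{u'}$-norm of $\{\M f_j\}$ is attained by pairing with a sequence of nonnegative measurable functions $\{g_j\}$ with $(\sum_j|g_j|^{u})^{1/u}\le 1$ pointwise. By (\ref{H = max M le 1}) I then need to bound
\[
\int_{\rn}\sum_j \M f_j(x)\,g_j(x)\,h(x)\,\d x,\qquad \|h\|_{M_{\vec p}^{t,\infty}}\le1,\ h\ge0 .
\]
The key tool is the Fefferman--Stein dual inequality $\int \M f\cdot w\lesssim\int |f|\,\M_\eta w$ for $\eta>1$. Since $\M_\eta w\ge w$, the $g_j$ cannot simply be absorbed into a weight, so instead I will use the vector-valued form of the weighted inequality,
\[
\int \sum_j \M f_j\, g_j\, w\lesssim \int \sum_j |f_j|\,\M(g_j w)\ \text{ or }\ \int F\,\Big(\sum_j \M_\eta(g_jh)^{u}\Big)^{1/u}.
\]
Then H\"older in $\ell^{u'}/\ell^{u}$ and the pairing estimate (\ref{f g le M H}) reduce everything to showing
\[
\Big\|\Big(\sum_j \M_\eta(g_jh)^{u}\Big)^{1/u}\Big\|_{M_{\vec p}^{t,\infty}}\lesssim \|h\|_{M_{\vec p}^{t,\infty}}.
\]
With $\eta>1$ small, this is the Fefferman--Stein vector-valued inequality on $M_{\vec p/\eta}^{t/\eta,\infty}(\ell^{u/\eta})$. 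That inequality holds by \cite[Theorem 1.7--1.8]{N19} (Theorem \ref{HL seq mix BM} in the case $r=\infty$) as long as $\min\vec p>\eta$ and $u>\eta$; the right-hand side is then at most $\|(\sum_j g_j^u)^{1/u}h\|_{M_{\vec p}^{t,\infty}}\le\|h\|_{M_{\vec p}^{t,\infty}}$.

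\textbf{Step 3 (main obstacle).} The delicate point is justifying the dual Fefferman--Stein step in vector-valued form. The cleanest route is to write $\int\sum_j \M f_j\, g_jh$ via a linearization of each $\M f_j$ with a measurable choice of balls; this produces an adjoint operator $T^*$ with $T^*(g_jh)\le$ a maximal-type expression. That expression is not pointwise controlled by $\M$, so I would instead use the Fefferman--Stein bound $\int \M f\, w\lesssim\int|f|\,\M w$ applied to the sum $\sum_j$, with the pointwise domination $\M_\eta$ supplying the needed $L\log L$ margin.

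If this fails, the fallback is the approach of \cite{N24}: prove the inequality on single $\ell^{u'}$-valued blocks. For a block $\vec b$ supported on $Q$, $\chi_{2Q}\big(\sum_j(\M b_j)^{u'}\big)^{1/u'}$ is controlled by the vector-valued $L^{\vec p\,'}$ inequality (\cite[A.2]{N24}, valid since $1<\vec p\,'<\infty$, $1<u'$), giving a multiple of a block. Off $2Q$, on the annuli $2^{k+1}Q\setminus2^kQ$, the decay $2^{-kn}|Q|^{-1}\|\vec b\|_{L^1}$ gives a block on $2^{k+1}Q$ with coefficient $2^{-kn(1-1/t)}$, which is summable because $t>1$. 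Finally I will cover $2^{k+1}Q$ by at most $2^n$ dyadic cubes and sum the resulting $\ell^1$ coefficients.
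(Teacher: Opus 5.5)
\textbf{Gap in the main route (Steps 2--3).} These steps do not work, because the ``key tool'' is false. This applies to $\int_{\rn}\M f\cdot w\lesssim\int_{\rn}|f|\,\M_\eta w$, to its vector-valued forms, and to the version $\int\M f\cdot w\lesssim\int |f|\,\M w$ that you fall back on in Step 3.

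Take $f=\chi_{B(0,1)}$ and $w=\chi_{B(0,R)}$. Since $\M f(x)\ge 2^{-n}|x|^{-n}$ for $|x|\ge 1$, the left side is $\gtrsim\log R$. Since $\M_\eta w\le 1$, the right side is at most $|B(0,1)|$. Both sides are linear in $w$, so renormalizing $w$ into the unit ball of $M_{\vec p}^{t,\infty}$ changes nothing. No bump on $w$ (such as $\M_\eta$, $L\log L$, or iterates of $\M$) can repair it either, since here $w$ is the indicator of a huge ball, where every such maximal function of $w$ is at most $1$.

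At the $L^1$ level the Fefferman--Stein inequality is only weak type. The obstruction you correctly identified, that the adjoint of a linearized $\M$ is not dominated by $\M$, is therefore not removed. The reduction to the Morrey-side inequality on $M_{\vec p/\eta}^{t/\eta,\infty}(\ell^{u/\eta})$ never gets started. A working duality proof would have to pass through weighted $L^{\vec p^{\,\prime}}$ estimates with $A_1$-type weights built from $h$. That is a different argument, and a considerably more delicate one for mixed norms.

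\textbf{Your fallback.} The block-by-block argument is a correct proof and is the one you should present. The paper itself gives no argument here; it only quotes Theorem 2.18 of \cite{N24}. Two points need fixing.

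First, the off-diagonal coefficient is wrong. By mixed H\"older and Lemma \ref{chi Q mixed Lp}, $\int_Q\|\vec b\|_{\ell^{u'}}\le |Q|^{1/t}$. Hence $\|\{\M b_j\}_j\|_{\ell^{u'}}\lesssim 2^{-kn}|Q|^{1/t-1}$ on $2^{k+1}Q\setminus 2^kQ$. Cover $2^{k+1}Q$ by at most $2^n$ dyadic cubes of comparable size and apply Lemma \ref{generate mix block}. The $\mathcal{H}_{\vec p^{\,\prime}}^{t',1}$-norm of this piece is then $\lesssim 2^{-kn}|Q|^{1/t-1}|2^{k+1}Q|^{1-1/t}\approx 2^{-kn/t}$, not $2^{-kn(1-1/t)}$. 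The series converges because $t<\infty$, not because $t>1$.

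Second, you must spell out the passage from $\vec f$ to single blocks, because the right-hand side only controls the scalar $F=\|\vec f\|_{\ell^{u'}}$.
\begin{itemize}
\item Write $F=\sum_Q\lambda_Q b_Q$ with $\lambda_Q\ge 0$ and $\sum_Q\lambda_Q\le(1+\epsilon)\|F\|_{\mathcal{H}_{\vec p^{\,\prime}}^{t',1}}$.
\item Set $\vec b_Q:=b_Q\vec f/F$, taken to be zero where $F=0$. This is an $\ell^{u'}$-valued block on $Q$, and $\vec f=\sum_Q\lambda_Q\vec b_Q$.
\item Sublinearity of $\M$ and Minkowski in $\ell^{u'}$ give $\|\{\M f_j\}_j\|_{\ell^{u'}}\le\sum_Q\lambda_Q\|\{\M b_{Q,j}\}_j\|_{\ell^{u'}}$.
\item Because $r'=1$, you may merge all pieces living on a common dyadic cube by adding their coefficients, and then conclude with the lattice property of $\mathcal{H}_{\vec p^{\,\prime}}^{t',1}$.
\end{itemize}
For $r<\infty$ the same merging costs a factor $2^{kn/r}$ by H\"older, which is summable against $2^{-kn/t}$ exactly when $t<r$. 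Your Step 1 density reductions are unnecessary for this route.
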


\begin{theorem} \label{HL mix block r le infty}
	Let $ 1< \vec p <\infty $ and $1<u' \le \infty $. Let    $1 < n / ( \sum_{i=1}^n  1/p_i )   < t <r <\infty $ or $ 1< n / ( \sum_{i=1}^n  1/p_i )  \le t < r=\infty  $.
	Then for all $\{f_j \}_{j\in \mathbb N_0}  \in \mathcal{H}_{\vec p \, ^\prime}^{t',r'}  (\ell^{u'})  $, we have
	\begin{equation*}
		\left\|  \left(   \sum_{j=0}^\infty \M (f_j) ^{u'}\right)^{1/ u'}   \right\|_{\mathcal{H}_{\vec p \, ^\prime}^{t',r'}  } \lesssim 	\left\|  \left(   \sum_{j=0}^\infty |f_j| ^{u'}\right)^{1/ u'}   \right\|_{\mathcal{H}_{\vec p \, ^\prime}^{t',r'}  } .
	\end{equation*}
\end{theorem}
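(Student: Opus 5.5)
We will prove the vector-valued inequality for the block space $\mathcal{H}_{\vec p \, ^\prime}^{t',r'}$ directly from the block decomposition. The case $r=\infty$ is Lemma \ref{HL mix block r = infty}, so we assume $1<n/(\sum_{i=1}^n 1/p_i)<t<r<\infty$. Fix $\epsilon>0$ and decompose the scalar function $F:=(\sum_j|f_j|^{u'})^{1/u'}\in\mathcal{H}_{\vec p \, ^\prime}^{t',r'}$ as $F=\sum_{(j,k)}\lambda_{j,k}b_{j,k}$, with $\|\lambda\|_{\ell^{r'}}\le(1+\epsilon)\|F\|_{\mathcal{H}_{\vec p \, ^\prime}^{t',r'}}$. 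The first step is to transfer this decomposition to the vector $\vec f$. Using the lattice property (Lemma \ref{lem lattice block}), or more concretely the standard trick of writing $f_m=\sum_{(j,k)}\lambda_{j,k}b_{j,k}\,f_m/F$ on $\{F>0\}$, we obtain an $\ell^{u'}$-valued block decomposition $\vec f=\sum\lambda_{j,k}\vec b_{j,k}$, where $\vec b_{j,k}:=b_{j,k}\vec f/F$ satisfies $\|\vec b_{j,k}\|_{L^{\vec p \, ^\prime}(\ell^{u'})}=\|b_{j,k}\|_{L^{\vec p \, ^\prime}}$ and has the same support. Since $\M$ is sublinear and positive, pointwise
\[
\Big(\sum_m \M(f_m)^{u'}\Big)^{1/u'}\le\sum_{(j,k)}|\lambda_{j,k}|\Big(\sum_m \M(b_{j,k,m})^{u'}\Big)^{1/u'}
\]
by Minkowski's inequality in $\ell^{u'}$. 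By the lattice property together with the triangle inequality, it therefore suffices to control each term $G_{j,k}:=(\sum_m\M(b_{j,k,m})^{u'})^{1/u'}$ uniformly.

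The second step is a molecular decomposition of $G_{j,k}$. Write $Q=Q_{j,k}$ and split $\rn=2Q\cup\bigcup_{l\ge1}(2^{l+1}Q\setminus2^lQ)$. On $2Q$ we use the Fefferman--Stein inequality in $L^{\vec p \, ^\prime}$ (\cite[Theorem 1.7]{N19}, valid because $1<\vec p \, ^\prime<\infty$ and $1<u'\le\infty$), which gives $\|G_{j,k}\chi_{2Q}\|_{L^{\vec p \, ^\prime}}\lesssim|Q|^{1/t-\frac1n\sum 1/p_i}$. On $2^{l+1}Q\setminus 2^lQ$ we use the pointwise bound $\M b(x)\lesssim (2^l\ell(Q))^{-n}\|b\|_{L^1}$, the estimate $\|b\|_{L^1}\le|Q|^{\frac1n\sum1/p_i}\|b\|_{L^{\vec p \, ^\prime}}$ (Lemma \ref{Holder mixed} combined with Lemma \ref{chi Q mixed Lp}), and Minkowski's inequality in $\ell^{u'}$. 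Together these give
\[
\|G_{j,k}\chi_{2^{l+1}Q\setminus2^lQ}\|_{L^{\vec p \, ^\prime}}\lesssim 2^{-ln}|2^{l+1}Q|^{\frac1n\sum(1-1/p_i)}|Q|^{1/t}=2^{-ln/t}\,|2^{l+1}Q|^{1/t-\frac1n\sum1/p_i}\cdot C.
\]
Each dilate $2^{l+1}Q$ is covered by at most $2^n$ dyadic cubes of side $2^{l+1}\ell(Q)$, namely ancestors of neighbours of $Q$. Applying Lemma \ref{generate mix block}, $G_{j,k}\chi_{2^{l+1}Q\setminus 2^lQ}$ is therefore a sum of $O(1)$ blocks with coefficients $\lesssim2^{-ln/t}$ attached to dyadic cubes of generation $j-l-1$; the $l=0$ piece is handled in the same way.

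The main obstacle is the third step: collecting coefficients in $\ell^{r'}$. The function $\sum\lambda_{j,k}G_{j,k}$ is dominated by $\sum_{l\ge0}\sum_{R}\mu^{(l)}_R\beta^{(l)}_R$, where $\beta^{(l)}_R$ are blocks on the dyadic cube $R$ and $\mu^{(l)}_R\lesssim2^{-ln/t}\sum_{Q:\,Q\text{ near }R,\ \ell(R)=2^{l+1}\ell(Q)}|\lambda_Q|$. Each $R$ receives contributions from $O(2^{ln})$ cubes $Q$, so Hölder's inequality gives $\|\mu^{(l)}\|_{\ell^{r'}}\lesssim2^{-ln/t}2^{ln/r}\|\lambda\|_{\ell^{r'}}$. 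This is the exact dual of the counting argument in Theorem \ref{HL M}. Since $t<r$, the series $\sum_l2^{-ln(1/t-1/r)}$ converges. Two further points need care: summing over $l$ requires that the block norm be subadditive (Theorem \ref{Banach}), and one must check that the rearranged sum converges a.e. (Proposition \ref{mix block sum converge}), so that the lattice property applies. Letting $\epsilon\to0$ completes the argument.
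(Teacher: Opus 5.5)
Your proposal is correct, and it takes a genuinely different route from the paper.

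\textbf{The paper's route.} For $r<\infty$ the paper uses the slice-space characterization (Lemma~\ref{char block}). It splits $\vec f=\sum_j\vec f_j$ level by level, with $\vec f_j\in(\mathcal E_{\vec p \, ^\prime}^{t',r'})_j(\ell^{u'})$, which is just an $\ell^{r'}$-amalgam of $L^{\vec p \, ^\prime}(\ell^{u'})$ over the cubes of $\D_j$. It then uses that $\M$ is bounded on each slice space uniformly in $j$, asserted via Lemma~\ref{HL each j}, and reassembles from $\M\vec f\le\sum_j\M\vec f_j$. The subcase $u'=\infty$ is deduced afterwards from the scalar case.

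In that argument $\M\vec f_j$ is measured at the same level $j$. The tail of $\M$ is therefore absorbed by an amalgam estimate that rests on $\ell^{r'}$-boundedness of a discrete maximal operator. So $t<r$ plays no visible role, but $r'>1$ is indispensable: on an $\ell^1$-amalgam, the $|x|^{-n}$ tail of $\M$ applied to a single block is not summable over the cubes of a fixed level. This is why $r=\infty$ has to be imported from Lemma~\ref{HL mix block r = infty}.

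\textbf{Your route.} You treat $\M$ of a single vector block as a molecule: Fefferman--Stein on $2Q$, and the $L^1$ decay $2^{-ln}$ on the annuli. You then re-block the tails on dyadic cubes $2^{l+1}$ times larger, paying $2^{ln/r}$ in the H\"older step. Convergence of $\sum_l 2^{-ln(1/t-1/r)}$ is exactly where $t<r$ enters, and the counting is the precise dual of the parent counting in Theorem~\ref{HL M}.

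\textbf{What each buys.} Your argument is self-contained, needing only Fefferman--Stein on $L^{\vec p \, ^\prime}$ and counting. It handles $u'=\infty$ at the same time. For $r'=1$ the H\"older step simply disappears, so the same computation also proves the case $r=\infty$ instead of citing it. The paper's route is shorter once the slice machinery is in place, and it is set up to be reused for other operators, as in Theorem~\ref{T H vector block}.

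\textbf{Small slips.}
\begin{itemize}
\item In the annulus estimate, the intermediate expression should be $2^{-ln}|Q|^{1/t-1}|2^{l+1}Q|^{1-\frac1n\sum_{i=1}^n 1/p_i}$. You dropped the factor $|Q|^{-1}$ coming from $(2^l\ell(Q))^{-n}$. Your final bound $2^{-ln/t}|2^{l+1}Q|^{1/t-\frac1n\sum_{i=1}^n 1/p_i}$ is correct.
\item Controlling each $G_{j,k}$ uniformly in the block norm would only give a bound by $\|\lambda\|_{\ell^1}$. The real content is your third step, which you do carry out.
\item You do not need Theorem~\ref{Banach} to sum over $l$. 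For each dyadic $R$, Minkowski in $L^{\vec p \, ^\prime}$ shows that $\sum_l\mu_R^{(l)}\beta_R^{(l)}$ is $\bigl(\sum_l\mu_R^{(l)}\bigr)$ times a block on $R$, and $\|\sum_l\mu^{(l)}\|_{\ell^{r'}}\le\sum_l\|\mu^{(l)}\|_{\ell^{r'}}$.
\end{itemize}
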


\begin{remark} \label{remark HL block}
	In \cite[Theorem 4.3]{ST09}, Sawano and Tanaka proved the Hardy-Littlewood  maximal operator $\M$ on vector valued on block spaces $ \mathcal{H}_{p'}^{t',1} (\ell^{u'})$; see also \cite[Theorem 2.12]{IST15}. 
	Since Lemma \ref{HL mix block r = infty}, 
	we only need to prove the case  $1 < n / ( \sum_{i=1}^n  1/p_i )   < t <r <\infty $.  We use the idea from \cite[Section 6.2 Besov-Bourgain-Morrey spaces]{ZYY242}. Before proving  Theorem \ref{HL mix block r le infty}, we give some preparation.
\end{remark}

\begin{lemma} \label{norm equ}
	Let $ 0 < \vec p \le \infty $. Let  $   0 < n / ( \sum_{i=1}^n  1/p_{i})     < t <r <\infty $ or $ 0< n / ( \sum_{i=1}^n  1/p_{i})   \le t < r=\infty  $.
	Then $f \in M_{\vec p}^{t,r} $ if and only if $f \in L_{\mathrm{loc}}^{\vec p} $ and
	\begin{equation*}
		\|f\|'_{ M_{\vec p}^{t,r} } := \left(  \int_0^\infty \int_\rn \left( |B(y,\alpha)| ^{1/t - \frac{\sum_{i=1}^n \frac{1}{ p_i} }{n}  -1/r}  \left\| f \chi_{B(y,\alpha)  } \right\|_{L^{\vec p} }  \right) ^r \d y \frac{\d \alpha }{\alpha}   \right)^{1/r} <\infty,
	\end{equation*}
	with the usual modifications made when $r =\infty$. Moreover,
	\begin{equation*}
		\|f \|_{ M_{\vec  p}^{t,r}}  \approx \|f\|'_{ M_{\vec p}^{t,r} } .
	\end{equation*}
\end{lemma}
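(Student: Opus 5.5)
We compare the continuous quantity $\|f\|'_{M_{\vec p}^{t,r}}$ with the dyadic norm scale by scale, in the spirit of the integral characterization for Bourgain-Morrey spaces in \cite{HNSH23} and \cite{ZSTYY23}. Write $\gamma := 1/t - \frac1n\sum_{i=1}^n 1/p_i$. We discretize $\alpha \in [2^{-j}, 2^{-j+1})$, so $\frac{\d\alpha}{\alpha}$ gives a factor $\log 2$, and we split $y$ over the cubes $Q\in\D_j$. Then $|B(y,\alpha)|\approx |Q|$, so $|B(y,\alpha)|^{r(\gamma - 1/r)}\approx |Q|^{r\gamma}|Q|^{-1}$. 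The factor $|Q|^{-1}$ is exactly compensated by the $\d y$-integral over $Q$. This is why the exponent carries the extra $-1/r$, and it reduces the proof to two local comparisons between balls and dyadic cubes. We treat $r<\infty$; for $r=\infty$ the same comparisons hold with sums and integrals replaced by suprema.

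\textbf{Step 1: $\|f\|'\lesssim\|f\|$.} Fix $y\in Q\in\D_j$ and $\alpha\in[2^{-j},2^{-j+1})$. The ball $B(y,\alpha)$ is covered by at most $C_n$ cubes $Q'\in\D_{j-2}$ lying near $Q$, for instance the cubes of $\D_{j-2}$ meeting $5\sqrt n\,Q$. Then
\[\|f\chi_{B(y,\alpha)}\|_{L^{\vec p}}\le \sum_{Q'}\|f\chi_{Q'}\|_{L^{\vec p}},\]
for $\min\vec p\ge 1$; in the quasi-norm case we use the $\min(1,\min\vec p)$-triangle inequality instead. Raising to the power $r$ with a constant $C_n$, integrating over $y\in Q$ and $\alpha$, and summing over $Q\in\D_j$ and $j$, we find that each $Q'$ is counted a bounded number of times $2^{2n}C_n$. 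Since $|Q'|\approx|Q|$, this gives $\|f\|'^r\lesssim\sum_{Q'\in\D}|Q'|^{r\gamma}\|f\chi_{Q'}\|_{L^{\vec p}}^r$.

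\textbf{Step 2: $\|f\|\lesssim\|f\|'$.} Fix $Q\in\D_j$. If $y\in Q$ and $\alpha\in[\sqrt n\,2^{-j}, 2\sqrt n\,2^{-j})$, then $Q\subset B(y,\alpha)$. By the lattice property,
\[\|f\chi_Q\|_{L^{\vec p}}\le\|f\chi_{B(y,\alpha)}\|_{L^{\vec p}}\quad\text{for all such }(y,\alpha).\]
Averaging over this set of measure $\approx|Q|$ in $\d y\,\frac{\d\alpha}{\alpha}$ gives
\[|Q|^{r\gamma}\|f\chi_Q\|^r_{L^{\vec p}}\lesssim\int_{\sqrt n 2^{-j}}^{2\sqrt n 2^{-j}}\int_Q |B(y,\alpha)|^{r\gamma-1}\|f\chi_{B(y,\alpha)}\|_{L^{\vec p}}^r\,\d y\,\frac{\d\alpha}{\alpha}.\]
The domains $Q\times[\sqrt n2^{-j},2\sqrt n2^{-j})$ are pairwise disjoint as $Q$ ranges over $\D$. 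Summing over $Q$ therefore yields $\|f\|^r\lesssim\|f\|'^r$. The statement that $f \in L^{\vec p}_{\rm loc}$ is automatic from finiteness of either quantity, since every compact set is covered by finitely many dyadic cubes, or by finitely many balls.

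The main obstacle is only bookkeeping. In Step 1 we must check that the covering cubes stay at a comparable scale and that each one is counted uniformly boundedly often. The exponent $\gamma$ can have either sign, but since all the cubes and balls involved have comparable measure, the sign of $\gamma$ does not matter. Mixed norms enter only through monotonicity and the quasi-triangle inequality of $L^{\vec p}$, so no Fubini-type rearrangement of the iterated integrals is needed.
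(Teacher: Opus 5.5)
Your proposal is correct and follows essentially the same route as the paper, which transplants the discretization argument of \cite[Theorem 2.9]{ZSTYY23} to the mixed setting by replacing $\|\cdot\|_{L^p}$ with $\|\cdot\|_{L^{\vec p}}$: a ball $B(y,\alpha)$ with $\alpha\approx 2^{-j}$ is covered by boundedly many dyadic cubes of comparable size, and conversely each $Q\in\D_j$ is contained in the balls $B(y,\alpha)$ with $y\in Q$ and $\alpha\approx\sqrt n\,2^{-j}$. As you observe, only the lattice property and the $\min(1,\min\vec p)$-triangle inequality of $L^{\vec p}$ are needed.
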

\begin{proof}
	Repeating  \cite[Theorem 2.9]{ZSTYY23} and replacing  $ \| \cdot \|_{L^p}$ by $\| \cdot \|_{L^{\vec p}}$, we obtain the result. 
\end{proof}

\begin{remark} 
	Let $1<u\le \infty $. 	Let $ 1< \vec p < \infty $ and    $1 < n / ( \sum_{i=1}^n  1/p_i )   < t <r <\infty $.  From Theorem \ref{HL seq mix BM}, we know that  the vector  Hardy-Littlewood maximal  operator $\M$ is bounded on  $ M_{\vec p}^{t,r} (\ell^{u})$. By Lemma \ref{norm equ}, we obtain
	\begin{equation*}
		\|\{ \M  f_k \}_{k\in\mathbb N} \|_{ M_{\vec p}^{t,r} (\ell^u) } ' \lesssim \| \{   f_k \}_{k\in\mathbb N}\|_{ M_{\vec p}^{t,r} (\ell^u) } '.
	\end{equation*}		
\end{remark}
\begin{lemma}\label{HL each j}
	Let $1<u \le \infty $ and $ 1< \vec p < \infty $.   Let    $1 < n / ( \sum_{i=1}^n  1/p_i )   < t <r <\infty $ or $ 1< n / ( \sum_{i=1}^n  1/p_i )  \le t < r=\infty  $.  Then for each $v \in \mathbb Z $,
	\begin{align*}
		& \left(  \sum_{m \in \mathbb Z^n} \left(  |Q_{v,m}|^{1/t-\frac{\sum_{i=1}^n \frac{1}{ p_i} }{n} } \left\| \left( \sum_{\ell =1}^\infty ( \M f_\ell) ^u \right)^{1/u}  \chi_{ Q_{v,m} }\right\|_{L^{\vec p} }    \right) ^r \right) ^{1/r} \\
		& \lesssim  \left(  \sum_{m \in \mathbb Z^n} \left(  |Q_{v,m}|^{1/t-\frac{\sum_{i=1}^n \frac{1}{ p_i} }{n} } \left\| \left( \sum_{\ell =1}^\infty |f_\ell| ^u \right)^{1/u}  \chi_{ Q_{v,m} }\right\|_{L^{\vec p} }    \right) ^r \right) ^{1/r}.
	\end{align*}
\end{lemma}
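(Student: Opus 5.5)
Fix $v\in\mathbb Z$ and argue scale by scale, along the same lines as the proof of Theorem \ref{HL seq mix BM}. The difference is that there we summed over all scales, and here we must stay inside the single level $\D_v$. Write $F:=(\sum_\ell |f_\ell|^u)^{1/u}$. The case $r=\infty$ is a level-$v$ version of \cite[Theorem 1.8]{N19} and goes the same way, so I concentrate on $r<\infty$.

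The first step is a local/nonlocal splitting. For $Q=Q_{v,m}$, I split each $f_\ell=f_\ell\chi_{3Q}+f_\ell\chi_{(3Q)^c}$. For the local part, the Fefferman--Stein inequality on $L^{\vec p}$ \cite[Theorem 1.7]{N19} gives
\begin{equation*}
\Big\|\Big(\sum_\ell \M(f_\ell\chi_{3Q})^u\Big)^{1/u}\chi_Q\Big\|_{L^{\vec p}}\lesssim \|F\chi_{3Q}\|_{L^{\vec p}}\le \sum_{m'\in m+\{-1,0,1\}^n}\|F\chi_{Q_{v,m'}}\|_{L^{\vec p}}.
\end{equation*}
After multiplying by $|Q_{v,m}|^{1/t-\frac1n\sum 1/p_i}$ and taking the $\ell^r_m$ norm, this contributes at most $3^n$ times the right-hand side, since all the cubes involved have the same size.

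The second step handles the nonlocal part. For $x\in Q$, any ball through $x$ that meets $(3Q)^c$ has radius $\gtrsim 2^{-v}$. Hence
\begin{equation*}
\M(f_\ell\chi_{(3Q)^c})(x)\lesssim \sum_{k\ge1}\frac{1}{|2^kQ|}\int_{2^kQ}|f_\ell|.
\end{equation*}
By Minkowski's inequality in $\ell^u$, the vector of these is bounded by $\sum_{k\ge1}|2^kQ|^{-1}\int_{2^kQ}F$. Since this bound is constant on $Q$, Hölder's inequality (Lemma \ref{Holder mixed}) together with Lemma \ref{chi Q mixed Lp} gives
\begin{equation*}
|Q|^{\frac1t-\frac1n\sum\frac1{p_i}}\|\cdot\,\chi_Q\|_{L^{\vec p}}\lesssim |Q|^{1/t}\sum_{k\ge1}|2^kQ|^{-\frac1n\sum\frac1{p_i}}\|F\chi_{2^kQ}\|_{L^{\vec p}}.
\end{equation*}

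The main obstacle is to control this sum through level-$v$ quantities only, with no reference to the coarser scales. I would cover $2^kQ$ by the cubes $Q_{v,m'}$ with $|m'-m|_\infty\le 2^{k}$, of which there are about $2^{kn}$. The quasi-triangle inequality in $L^{\vec p}$, combined with Hölder's inequality in the discrete sum, gives
\begin{equation*}
\|F\chi_{2^kQ}\|_{L^{\vec p}}\le\Big(\sum_{m'}\|F\chi_{Q_{v,m'}}\|_{L^{\vec p}}^{\sigma}\Big)^{1/\sigma},\qquad \sigma=\min(1,\min\vec p),
\end{equation*}
and in fact $\sigma=1$ here since $\vec p>1$. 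The $k$-th term therefore becomes $2^{-k\sum 1/p_i}\,|Q|^{1/t-\frac1n\sum 1/p_i}\sum_{|m'-m|_\infty\le2^k}\|F\chi_{Q_{v,m'}}\|_{L^{\vec p}}$. Taking the $\ell^r_m$ norm and applying Young's inequality for convolutions on $\mathbb Z^n$ with the kernel $\chi_{\{|\cdot|_\infty\le 2^k\}}$, whose $\ell^1$ norm is about $2^{kn}$, bounds the $k$-th term by $2^{k(n-\sum 1/p_i)}$ times the right-hand side. This does not sum in $k$.

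The crude $\ell^1$ bound is therefore too lossy, and I would replace it by Hölder's inequality with the exponent $\vec p$ kept inside. Using $\|F\chi_{2^kQ}\|_{L^{\vec p}}\le\|F\chi_{2^kQ}\|_{L^{\vec p}}$, the averaging operator is bounded on $\ell^r$ with a gain coming from $\frac1{|2^kQ|}\int_{2^kQ}F\le \frac{1}{|2^kQ|}\sum_{m'}\int_{Q_{v,m'}}F$. The average over $2^kQ$ is the mean of the averages over its $\approx2^{kn}$ subcubes, so the nonlocal term is dominated by the discrete maximal function $\mathcal M_d a(m)$ of the sequence $a_{m'}:=|Q_{v,m'}|^{-1}\int_{Q_{v,m'}}F$, summed over dyadic-like scales, with no loss. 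More precisely, the nonlocal term is at most $\sum_k$ of the average of $a$ over the discrete cube of side $2^k$ around $m$, that is, a sum of discrete averages. The $k$-sum of averages is not bounded by a single maximal function, so it has to be estimated honestly: Minkowski's inequality in $\ell^r$ gives $\sum_k 2^{-kn}\cdot 2^{kn}\|a\|$, which again fails. For this reason I expect the intended argument to use Lemma \ref{norm equ}, i.e.\ the integral norm $\|\cdot\|'$, and the preceding Remark. The quantity on the left is comparable to the $\alpha\approx2^{-v}$ slice of the $\|\cdot\|'$ norm, and I would try to prove the slice inequality by running the proof of Theorem \ref{HL seq mix BM} with the additional decay $|Q|^{1/t}$ versus $|2^kQ|^{1/t}$, i.e.\ using $1/t$ instead of $1/r$. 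Concretely, I bound $a_{m'}\le|Q|^{-\frac1n\sum1/p_i}\|F\chi_{Q_{v,m'}}\|_{L^{\vec p}}$, which lets me match weights. Whether this closes is the crux, and it is the step I am least sure of.
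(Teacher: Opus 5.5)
\textbf{Verdict: there is a genuine gap.} Your local part is fine. The nonlocal part, however, is never closed, and the loss happens at its first step. You dominate $\M(f_\ell\chi_{(3Q)^c})(x)$ by the \emph{sum} $\sum_{k\ge1}|2^kQ|^{-1}\int_{2^kQ}|f_\ell|$, copying the parent sum from Theorem~\ref{HL seq mix BM}. At a fixed level this sum is genuinely unbounded on $\ell^r_m$. Take $F=\chi_R$ with $R$ a cube of side $2^{M-v}$. Then $|Q|^{1/t}\sum_{k\ge1}|2^kQ|^{-1}\int_{2^kQ}F\ge (M-1)|Q|^{1/t}$ for every $Q\in\D_v$ in the middle half of $R$, which is about $M$ times the right-hand side.

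The factor $2^{-kn/t}$ that saves Theorem~\ref{HL seq mix BM} only measures the comparison with the coarser cubes. It is used up exactly when you return to level $v$. The paper's one-line proof (``repeat the proof of Theorem~\ref{HL seq mix BM}'') glosses over precisely this, and you were right to notice it. Your fallback via Lemma~\ref{norm equ} cannot help, for two reasons:
\begin{itemize}
\item at fixed $\alpha\approx 2^{-v}$ the weight is constant, so there is no decay to exploit;
\item the slice inequality in the Remark after this lemma is deduced \emph{from} the lemma, so invoking it is circular.
\end{itemize}

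\textbf{The missing idea is to keep the supremum instead of introducing a sum over scales.} You correctly saw that a $k$-sum of averages is not bounded by a single maximal function; the fix is never to create the sum. The argument goes as follows.
\begin{itemize}
\item For $x,y\in Q=Q_{v,m}$, every ball $B\ni x$ meeting $(3Q)^c$ has radius $\ge 2^{-v-1}$. So the union of the cubes of $\D_v$ meeting $B$ lies in a ball $B'\ni y$ with $|B'|\lesssim |B|$. Hence $\M(f_\ell\chi_{(3Q)^c})(x)\lesssim \M(\mathbb E_v|f_\ell|)(y)$ for all $x,y\in Q$.
\item Since $\sum_\ell \inf_y\le \inf_y\sum_\ell$, the $\ell^u$-norm of the tail on $Q$ is $\lesssim \inf_Q G_v\le (|Q|^{-1}\int_Q G_v^r)^{1/r}$, where $G_v:=(\sum_\ell(\M\,\mathbb E_v|f_\ell|)^u)^{1/u}$. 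A supremum over $k$ cannot be pulled through the $\ell^u$-norm the way your sum was, which is why a vector-valued inequality is needed.
\item Because $|Q_{v,m}|$ does not depend on $m$, the $\ell^r_m$-norm of the tail terms is $\lesssim |Q|^{1/t-1/r}\|G_v\|_{L^r}$.
\item Finally apply the Fefferman--Stein inequality on $L^r(\ell^u)$ with $1<r<\infty$. Then use Minkowski, $(\sum_\ell(\mathbb E_v|f_\ell|)^u)^{1/u}\le\mathbb E_vF$, and H\"older, $\mathbb E_vF\le |Q_{v,m'}|^{-\frac1n\sum_i 1/p_i}\|F\chi_{Q_{v,m'}}\|_{L^{\vec p}}$ on $Q_{v,m'}$. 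Together these bound the tail by the right-hand side.
\end{itemize}

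\textbf{The case $r=\infty$ does not ``go the same way''; for $1<u<\infty$ it is false at a single level.} Take $v=0$ and $f_\ell:=\chi_{A_\ell}$ with $A_\ell:=\bigcup\{Q_{0,m}: 2^\ell\le|m|_\infty<2^{\ell+1}\}$.
\begin{itemize}
\item The sets $A_\ell$ are disjoint, so $F\le 1$ and the right-hand side is at most $1$.
\item $\M f_\ell\ge c_n$ on $Q_{0,0}$ for every $\ell$, so $(\sum_\ell(\M f_\ell)^u)^{1/u}=\infty$ there and the left-hand side is infinite.
\end{itemize}
\cite[Theorem 1.8]{N19} survives only because the Morrey norm also controls large cubes, which excludes such $F$. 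So only the case $r<\infty$ can be proved.
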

\begin{proof}
	Repeating  the proof of  Theorem \ref{HL seq mix BM}, the result is obtained and we omit the detail here.
\end{proof}
\begin{remark}
	Let $1< u \le \infty $ and $ 1< \vec p < \infty $.  Let    $1 < n / ( \sum_{i=1}^n  1/p_i )   < t <r <\infty $ or $ 1< n / ( \sum_{i=1}^n  1/p_i )  \le t < r=\infty  $.
	By Lemmas \ref{norm equ} and \ref{HL each j}, we have that for each $j\in\mathbb Z$,
	\begin{align*}
		& 	 \int_\rn \left( |B(y,2^{-j})| ^{1/t -\frac{\sum_{i=1}^n \frac{1}{ p_i} }{n}  -1/r}
		\left\| \left(  \sum_{\ell=1}^\infty ( \M f_\ell ) ^u \right)^{1/u} \chi_{B(y, 2^{-j})  } \right\|_{L^{\vec p} }\right)^{r} \d y
		\\
		& \lesssim 	 \int_\rn \left( |B(y,2^{-j})| ^{1/t -\frac{\sum_{i=1}^n \frac{1}{ p_i} }{n}  -1/r}
		\left\| \left(  \sum_{\ell=1}^\infty | f_\ell | ^u \right)^{1/u} \chi_{B(y, 2^{-j})  } \right\|_{L^{\vec p} }\right)^{r} \d y  .
	\end{align*}
\end{remark}

\begin{definition}\label{def vec skice}
	
	Let $1< u' <\infty$ and $1 <\vec p <\infty$.
	Let    $1 < n / ( \sum_{i=1}^n  1/p_i )   < t <r <\infty $ and $j \in \mathbb Z$.	 
	The slice space $  (\mathcal E_{\vec p \, ^\prime}^{t',r'}   )_j  (\ell^{u'}) $ is defined to be the set of all  $ \{ f_w \}_{w\in \mathbb N} \in L_{\operatorname{loc} } ^{\vec p \, ^\prime} (\ell^{u'})$  such that 
	\begin{align*}
	 \|\{ f_w \}_{w\in \mathbb N} \|_{  (\mathcal E_{\vec p \, ^\prime}^{t',r'}   )_j  (\ell^{u'} ) } 
	 := \left( \sum_{k\in \mathbb Z^n } \left( |Q_{j,k}|^{1/t' -\frac{\sum_{i=1}^n \frac{1}{ p_i '} }{n} }   \left\| \left(\sum_{w=1} |f_w|^{u'}  \right)^{1/u'}  \chi_{ Q_{j,k} } \right \|_{ L^{\vec p \, ^\prime}  }   \right) ^{r'}  \right) ^{1/r'} <\infty.
	\end{align*}
\end{definition}

From Lemma \ref{HL each j},  we know that $\M$ is bounded on   $  (\mathcal E_{\vec p \, ^\prime}^{t',r'}   )_j  (\ell^{u'}) $.

In the following lemma, we establish a characterization of  slice space  $  (\mathcal E_{\vec p \, ^\prime}^{t',r'}   )_j  (\ell^{u'}) $.

\begin{lemma} \label{char vec slice}
	Let $1< u' <\infty$ and $1 <\vec p <\infty$.
	Let    $1 < n / ( \sum_{i=1}^n  1/p_i )   < t <r <\infty $ and $j \in \mathbb Z$.  Then $\vec f = \{ f_w\}_{w\in \mathbb N} \in (\mathcal E_{\vec p \, ^\prime}^{t',r'}   )_j  (\ell^{u'})  $ if and only if there exist a sequence $  \{ \lambda_{j,k}\}_{k\in \mathbb Z^n}  \subset \mathbb R$ satisfying
	\begin{equation*}
		\left(  \sum_{m\in \mathbb Z^n} |\lambda_{j,k}|^{r'} \right)^{1/r'} <\infty
	\end{equation*}
	and a sequence $\{\vec  b_{j,k} \}_{k\in \mathbb Z^n}$ of vector functions on $\rn$ satisfying, for each $k\in \mathbb Z^n$, supp $\vec b_{j,k}\subset Q_{j,k}$  and 
	\begin{equation} \label{b jk = |Q|}
		\left\|  \| \vec b_{j,k}\|_{\ell^{u'}}  \right\|_{ L^{ \vec p \, ^\prime} (Q_{j,k}) }  = |Q_{j,k}|^{1/t - \frac{\sum_{i=1}^n \frac{1}{ p_i} }{n} }
	\end{equation}
	such that $\vec f = \sum_{k \in \mathbb Z^n} \lambda_{j,k} \vec  b_{j,k} $ almost everywhere on $\rn$; moreover, for such $\vec f$,
	\begin{equation*}
		\| \vec  f\|_{ (\mathcal E_{p'}^{t',r'}   )_j (\ell^{q'}) }  = 	\left(  \sum_{m\in \mathbb Z^n} |\lambda_{j,k}|^{r'} \right)^{1/r'}  .
	\end{equation*}
\end{lemma}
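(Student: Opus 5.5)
The plan is to exploit the fact that for fixed $j$ the cubes $\{Q_{j,k}\}_{k\in\mathbb Z^n}=\D_j$ are pairwise disjoint and cover $\rn$. A decomposition at a single level $j$ is therefore nothing more than the restriction of $\vec f$ to each cube, rescaled. The whole proof reduces to the exponent identity
\begin{equation*}
\frac{1}{t'}-\frac{1}{n}\sum_{i=1}^n\frac{1}{p_i'}=\Big(1-\frac1t\Big)-\Big(1-\frac1n\sum_{i=1}^n\frac1{p_i}\Big)=-\Big(\frac1t-\frac1n\sum_{i=1}^n\frac1{p_i}\Big),
\end{equation*}
which says that the weight $|Q_{j,k}|^{1/t'-\frac1n\sum_i 1/p_i'}$ in Definition \ref{def vec skice} is exactly the reciprocal of the normalization in (\ref{b jk = |Q|}).

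For the ``only if'' part, let $\vec f\in(\mathcal E_{\vec p\,'}^{t',r'})_j(\ell^{u'})$. For each $k\in\mathbb Z^n$ define
\begin{equation*}
\lambda_{j,k}:=|Q_{j,k}|^{1/t'-\frac1n\sum_{i}1/p_i'}\,\big\|\,\|\vec f\|_{\ell^{u'}}\chi_{Q_{j,k}}\big\|_{L^{\vec p\,'}}.
\end{equation*}
\begin{itemize}
\item If $\lambda_{j,k}>0$, set $\vec b_{j,k}:=\lambda_{j,k}^{-1}\vec f\chi_{Q_{j,k}}$. By the exponent identity, (\ref{b jk = |Q|}) then holds with equality.
\item If $\lambda_{j,k}=0$, let $\vec b_{j,k}$ be any admissible normalized vector function, for instance $(c\chi_{Q_{j,k}},0,0,\ldots)$ with $c$ chosen through Lemma \ref{chi Q mixed Lp} so that (\ref{b jk = |Q|}) holds. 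In this case $\vec f\chi_{Q_{j,k}}=0$ a.e., so this choice does not affect the sum.
\end{itemize}
Since the $Q_{j,k}$ partition $\rn$, we get $\vec f=\sum_k\lambda_{j,k}\vec b_{j,k}$ a.e., with at most one nonzero term at each point, so convergence is trivial. By construction $\|\{\lambda_{j,k}\}_k\|_{\ell^{r'}}=\|\vec f\|_{(\mathcal E_{\vec p\,'}^{t',r'})_j(\ell^{u'})}<\infty$.

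For the ``if'' part, suppose $\vec f=\sum_k\lambda_{j,k}\vec b_{j,k}$ a.e. with the stated properties. Disjointness of the supports gives $\vec f\chi_{Q_{j,k}}=\lambda_{j,k}\vec b_{j,k}$ a.e. Hence $\|\vec f\|_{\ell^{u'}}\chi_{Q_{j,k}}=|\lambda_{j,k}|\,\|\vec b_{j,k}\|_{\ell^{u'}}$, and by (\ref{b jk = |Q|}) each summand in the slice norm equals $|\lambda_{j,k}|$. This shows at once that $\vec f\in L^{\vec p\,'}_{\rm loc}(\ell^{u'})$: every compact set meets only finitely many $Q_{j,k}$, and on each of them $\vec f$ has finite $L^{\vec p\,'}(\ell^{u'})$ norm. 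It also gives the norm identity $\|\vec f\|_{(\mathcal E_{\vec p\,'}^{t',r'})_j(\ell^{u'})}=\|\{\lambda_{j,k}\}_k\|_{\ell^{r'}}$. Because the decomposition is forced, the norm is realized exactly by any admissible expression, not merely as an infimum.

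No step is a genuine obstacle; the only points needing care are bookkeeping. The first is the sign of the exponent identity, which must be checked so that the normalization matches rather than squares. The second is the degenerate case $\lambda_{j,k}=0$, where one has to produce a block with exactly the prescribed norm rather than the zero function.
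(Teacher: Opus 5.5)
Your proposal is correct and follows essentially the same route as the paper. You make the same choice $\lambda_{j,k}=|Q_{j,k}|^{\frac1n\sum_i 1/p_i-1/t}\,\big\|\,\|\vec f\|_{\ell^{u'}}\chi_{Q_{j,k}}\big\|_{L^{\vec p\,'}}$ and $\vec b_{j,k}=\lambda_{j,k}^{-1}\vec f\chi_{Q_{j,k}}$, and the paper's dummy block $|Q_{j,k}|^{-1/t'}\chi_{Q_{j,k}}(1,0,0,\ldots)$ in the degenerate case is exactly your $c$; your converse is the same disjointness argument, and your explicit check of the exponent identity and of membership in $L^{\vec p\,'}_{\rm loc}(\ell^{u'})$ supplies details the paper leaves implicit.
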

\begin{proof}
	We first show the necessity. Let $\vec f = \{ f_w\}_{w\in \mathbb N}  \in  (\mathcal E_{\vec p \, ^\prime}^{t',r'}   )_j  (\ell^{u'}) $. For each $k \in \mathbb Z^n$, when $  \left\|  \|\vec f\|_{\ell^{u'}} \chi_{Q_{j,k} } \right\|_{ L^{\vec p \, ^\prime}  } >0 $, let $ \lambda_{j,k} := |Q_{j,k}|^{ \frac{\sum_{i=1}^n \frac{1}{ p_i} }{n} -1/t }   \| \chi_{ Q_{j,k} } \vec f\|_{ L^{p'} (\ell^{u'}) }  $,   $\vec  b_{j,k} := \lambda_{j,k} ^{-1} \vec f \chi_{Q_{j,k} }$ and, when $  \left\|  \|\vec f\|_{\ell^{u'}} \chi_{Q_{j,k} } \right\|_{ L^{\vec p \, ^\prime}  } = 0 $, let $ \lambda_{j,k} := 0$ and $\vec  b_{j,k} : = |Q_{j,k}|^{ -1/t'} \chi_{ Q_{j,k} }  \vec a $ where $\vec a = (1, 0, 0, \ldots) \in \ell^{u'} $ with $\|\vec a\|_{ \ell^{u'}} =1$. Then by the Definition \ref{def vec skice}, it is easy to show that supp $\vec b_{j,k}\subset Q_{j,k}$, (\ref{b jk = |Q|}) and 
	\begin{equation*}
		\left(  \sum_{k\in \mathbb Z^n} |\lambda_{j,k}|^{r'} \right)^{1/r'}  = 	\|\vec f\|_{  (\mathcal E_{\vec p \, ^\prime}^{t',r'}   )_j  (\ell^{u'})  }  	  <\infty .
	\end{equation*} 	
	Next we show the sufficiency. Assume that there exist a sequence $  \{ \lambda_{j,k}\}_{k\in \mathbb Z^n}  \in \ell^{r'}$ and a sequence $\{\vec  b_{j,k} \}_{k\in \mathbb Z^n}$ satisfying supp $\vec b_{j,k} \subset Q_{j,k}$ and  (\ref{b jk = |Q|}) such that $\vec f =  \sum_{k \in \mathbb Z^n} \lambda_{j,k} \vec b_{j,k}$  almost everywhere on $\rn$. Note that $ \{ Q_{j,k}\}_{k\in \mathbb Z^n}$  are disjoint. Then 
	\begin{align*}
		\|\vec  f\|_{  (\mathcal E_{\vec p \, ^\prime}^{t',r'}   )_j  (\ell^{u'})  }  = \left( \sum_{k\in \mathbb Z^n } \left( |Q_{j,k}|^{\frac{\sum_{i=1}^n \frac{1}{ p_i} }{n}  - 1/t }   \| \lambda_{j,k}\vec  b_{j,k}\|_{ L^{\vec p \, ^\prime} (\ell^{u'}) }   \right) ^{r'}  \right) ^{1/r'}   = 	\left(  \sum_{k\in \mathbb Z^n} |\lambda_{j,k}|^{r'} \right)^{1/r'}  <\infty.
	\end{align*}
	Hence $\vec f \in (\mathcal E_{\vec p \, ^\prime}^{t',r'}   )_j  (\ell^{u'})$. Thus we finish the proof.
\end{proof}
Using the characterization of vector valued slice space $(\mathcal E_{\vec p \, ^\prime}^{t',r'}   )_j  (\ell^{u'})$, we obtain the following characterization of block spaces.
\begin{lemma}\label{char block}
	Let $1< u' <\infty$ and $1 <\vec p <\infty$.
	Let    $1 < n / ( \sum_{i=1}^n  1/p_i )   < t <r <\infty $.  Then $\vec f \in \mathcal{H}_{\vec p \, ^\prime}^{t',r'} (\ell^{u'})$ if and only if 
	\begin{align*}
		\| \vec f\|^{\blacklozenge}_{\mathcal{H}_{\vec p \, ^\prime}^{t',r'} (\ell^{u'}) } : = \inf\left\{ \left( \sum_{j\in \mathbb Z} \|\vec  f_j\|_{  (\mathcal E_{\vec p \, ^\prime}^{t',r'}   )_j  (\ell^{u'})  } ^{r'} \right) ^{1/r' }  :  \vec f = \sum_{j\in \mathbb Z} \vec f_j, \vec f_j \in  (\mathcal E_{\vec p \, ^\prime}^{t',r'}   )_j  (\ell^{u'})  \right\}  <\infty ;
	\end{align*}
	moreover, for such $\vec f$, $ \|\vec  f\|_{ \mathcal{H}_{\vec p \, ^\prime}^{t',r'} (\ell^{u'}) } = 	\| \vec f\|^{\blacklozenge}_{\mathcal{H}_{\vec p \, ^\prime}^{t',r'} (\ell^{u'}) }$.
\end{lemma}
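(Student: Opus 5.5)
We prove the two inequalities between $\|\vec f\|_{\mathcal{H}_{\vec p \, ^\prime}^{t',r'} (\ell^{u'})}$ and $\|\vec f\|^{\blacklozenge}_{\mathcal{H}_{\vec p \, ^\prime}^{t',r'} (\ell^{u'})}$ separately, in each direction regrouping a block decomposition level by level in $j$. The tool linking a single level to blocks is Lemma \ref{char vec slice}.

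\emph{Step 1: $\|\vec f\|^{\blacklozenge}\le \|\vec f\|_{\mathcal H}$.} Take $\vec f\in \mathcal{H}_{\vec p \, ^\prime}^{t',r'} (\ell^{u'})$, $\epsilon>0$, and a block decomposition $\vec f=\sum_{(j,k)}\lambda_{j,k}\vec b_{j,k}$ with $\|\lambda\|_{\ell^{r'}}\le \|\vec f\|_{\mathcal H}+\epsilon$. For each $j$ set $\vec f_j:=\sum_{k\in\mathbb Z^n}\lambda_{j,k}\vec b_{j,k}$; the sum is pointwise trivial because the cubes $Q_{j,k}$, $k\in\mathbb Z^n$, are disjoint. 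The blocks satisfy the inequality $\le$ rather than the equality in (\ref{b jk = |Q|}), so we either renormalize them (absorbing the factor into a smaller $\lambda_{j,k}$) or compute the slice norm directly. By disjointness,
\[
\|\vec f_j\|_{(\mathcal E_{\vec p \, ^\prime}^{t',r'})_j(\ell^{u'})}\le \Big(\sum_k|\lambda_{j,k}|^{r'}\Big)^{1/r'} .
\]
Summing the $r'$-th powers over $j$ gives $\|\vec f\|^{\blacklozenge}\le\|\lambda\|_{\ell^{r'}}$. One must also check that $\vec f=\sum_j\vec f_j$ a.e. The original double series converges a.e. (vector analogue of Proposition \ref{mix block sum converge}), and grouping by $j$ is legitimate because, for fixed $j$, each point meets only one term of $\vec f_j$. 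If the convergence is only conditional, we instead read the sums as convergent in $L^1_{\mathrm{loc}}$ (using the Remark after Proposition \ref{mix block sum converge}) so that regrouping is justified.

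\emph{Step 2: $\|\vec f\|_{\mathcal H}\le \|\vec f\|^{\blacklozenge}$.} Take a decomposition $\vec f=\sum_j\vec f_j$ whose $\blacklozenge$-sum is at most $\|\vec f\|^{\blacklozenge}+\epsilon$. Apply Lemma \ref{char vec slice} to each $\vec f_j$. This gives $\vec f_j=\sum_k\lambda_{j,k}\vec b_{j,k}$, where each $\vec b_{j,k}$ is supported on $Q_{j,k}$ with the exact normalization (\ref{b jk = |Q|}), so it is an $\ell^{u'}$-valued $(\vec p \, ^\prime,t')$-block. The lemma also gives $(\sum_k|\lambda_{j,k}|^{r'})^{1/r'}=\|\vec f_j\|_{(\mathcal E)_j}$. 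Then $\vec f=\sum_{(j,k)}\lambda_{j,k}\vec b_{j,k}$ is an admissible decomposition with $\|\lambda\|_{\ell^{r'}(\mathbb Z^{n+1})}=(\sum_j\|\vec f_j\|_{(\mathcal E)_j}^{r'})^{1/r'}$, and letting $\epsilon\to0$ gives the inequality.

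\emph{Main obstacle.} The delicate point is the mode of convergence in Step 2. The series $\sum_j\vec f_j$ converges a.e. by assumption, but the definition of $\mathcal H$ requires the double series $\sum_{(j,k)}$ to converge a.e. For fixed $x$ only one $k$ contributes at each level $j$, so the double sum coincides with $\sum_j\vec f_j(x)$ under the natural ordering. In addition, by the vector analogue of Proposition \ref{mix block sum converge}, any $\ell^{r'}$-summable block series converges absolutely a.e., since $\sum_{(j,k)}|\lambda_{j,k}|\,\|\vec b_{j,k}\|_{\ell^{u'}}$ is locally integrable. Its a.e. limit therefore equals $\sum_j\vec f_j=\vec f$. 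This argument settles both directions.
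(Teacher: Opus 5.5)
Your proposal is correct and follows essentially the same route as the paper: for one inequality you regroup a near-optimal block decomposition by levels $j$ and bound each slice norm using the disjointness of $\{Q_{j,k}\}_{k\in\mathbb Z^n}$, and for the other you expand each $\vec f_j$ into normalized blocks via Lemma \ref{char vec slice}. You also justify the a.e.\ regrouping of the double series, via local integrability of $\sum_{(j,k)}|\lambda_{j,k}|\,\|\vec b_{j,k}\|_{\ell^{u'}}$ (which uses $n/(\sum_{i=1}^n 1/p_i)<t<r$); this makes explicit a step the paper passes over silently.
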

\begin{proof}
	We first show the necessity. Let $\vec f \in \mathcal{H}_{\vec p \, ^\prime}^{t',r'} (\ell^{u'}) $. Then	there exist a  sequence $\lambda=\{\lambda_{j,k}\}_{(j,k)\in\mathbb{Z}^{n+1}}\in\ell^{r'}(\mathbb{Z}^{n+1})$ and a  $\ell^{u'} $-valued sequence $\{\vec  b_{j,k}  \}_{(j,k)\in\mathbb{Z}^{n+1} }  $  of  $(\vec p \, ^\prime,t' )$-block such that 
	\begin{equation*}
		\vec f=\sum_{(j,k)\in\mathbb{Z}^{n+1}}\lambda_{j,k} \vec b_{j,k}  
	\end{equation*}
	almost everywhere on $\rn$ and $ \left( \sum_{ (j,k)\in\mathbb{Z}^{n+1} } |\lambda_{j,k}|^{r'}\right)^{1/r'}  < (1+\epsilon) \|\vec  f\|_{ \mathcal{H}_{\vec p \, ^\prime}^{t',r'} (\ell^{u'})}$.
	For each $j \in \mathbb Z$, let $\vec f_j : = \sum_{k \in \mathbb Z^n}  \lambda_{j,k} \vec  b_{j,k}  $. Then we obtain
	\begin{align*}
		\left(  \sum_{j\in \mathbb Z}	\| \vec f_j \|_{ (\mathcal E_{\vec p \, ^\prime}^{t',r'}   )_j  (\ell^{u'}) }  ^{r'}  \right)^{1/r'} 
		& = \left(  \sum_{j\in \mathbb Z} \left(  \sum_{k\in \mathbb Z^n } \left( |Q_{j,k}|^{\frac{\sum_{i=1}^n \frac{1}{ p_i} }{n}  - 1/t}   \| \lambda_{j,k}\vec  b_{j,k}\|_{ L^{\vec p \, ^\prime} (\ell^{u'}) }   \right) ^{r'}  \right)^{r'/r'}   \right) ^{1/r'} \\
		& \le 	\left(  \sum_{j\in \mathbb Z} \sum_{m\in \mathbb Z^n} |\lambda_{j,k}|^{r'} \right)^{1/r'} < (1+\epsilon) \| \vec  f\|_{ \mathcal{H}_{\vec p \, ^\prime}^{t',r'} (\ell^{u'}) }.
	\end{align*}
	Letting $\epsilon \to 0^+$, we obtain $  	\| \vec f\|^{\blacklozenge}_{\mathcal{H}_{\vec p \, ^\prime}^{t',r'} (\ell^{u'})} \le  \|\vec  f\|_{ \mathcal{H}_{\vec p \, ^\prime}^{t',r'} (\ell^{u'})} $.
	
	Next we show the sufficiency. Let $\vec f $ be a measurable vector function with $ 	\| \vec f\|^{\blacklozenge}_{\mathcal{H}_{\vec p \, ^\prime}^{t',r'} (\ell^{u'})} <\infty$. Then there exists a sequence $\{\vec f_j \}_{j\in \mathbb Z}$ satisfying that $\vec f_j \in  (\mathcal E_{\vec p \, ^\prime}^{t',r'}   )_j (\ell^{u'})  $ for each $j$ such that $\vec f =\sum_{j\in \mathbb Z} \vec f_j $ almost everywhere on $\rn$ and 
	\begin{equation*}
		\left( \sum_{j\in \mathbb Z} \|\vec  f_j\|_{ (\mathcal E_{\vec p \, ^\prime}^{t',r'}   )_j  (\ell^{u'}) } ^{r'} \right) ^{1/r' } < (1+\epsilon) 	\| \vec f\|^{\blacklozenge}_{\mathcal{H}_{\vec p \, ^\prime}^{t',r'} (\ell^{u'})}.
	\end{equation*}
	By  Lemma \ref{char vec slice}, for each $j\in \mathbb Z$, there exist a sequence 
	$\{ \lambda_{j,k} \}_{ (j,k)\in\mathbb{Z}^{n+1} }  \in \ell^{r'}$  and 
	a vector valued sequence $ \{\vec  b_{j,k}  \}_{ (j,k) \in \mathbb{Z}^{n+1} }$   satisfying (\ref{b jk = |Q|}) such that $\vec f_j = \sum_{k \in \mathbb Z^n} \lambda_{j,k}\vec  b_{j,k} $ almost everywhere on $\rn$ and 
	\begin{equation*}
		\|\vec  f_j \|_{  (\mathcal E_{\vec p \, ^\prime}^{t',r'}   )_j  (\ell^{u'})  }  = 	\left(  \sum_{m\in \mathbb Z^n} |\lambda_{j,k}|^{r'} \right)^{1/r'} .
	\end{equation*}
	Hence, $\vec f = \sum_{j\in \mathbb Z}\vec f_j = \sum_{j\in \mathbb Z}\sum_{k \in \mathbb Z^n} \lambda_{j,k}\vec  b_{j,k}  $ almost everywhere on $\rn$ and 
	\begin{equation*}
		\|\vec  f\|_{ \mathcal{H}_{\vec p \, ^\prime}^{t',r'} (\ell^{u'})  }  \le 	\left(  \sum_{j\in \mathbb Z} \sum_{m\in \mathbb Z^n} |\lambda_{j,k}|^{r'} \right)^{1/r'} < (1+\epsilon) 	\| \vec f\|^{\blacklozenge}_{\mathcal{H}_{\vec p \, ^\prime}^{t',r'} (\ell^{u'}) }.
	\end{equation*}
	Letting $\epsilon \to 0^+$, we obtain $	\| \vec f\|_{ \mathcal{H}_{\vec p \, ^\prime}^{t',r'} (\ell^{u'}) } \le   	\| \vec f\|^{\blacklozenge}_{\mathcal{H}_{\vec p \, ^\prime}^{t',r'} (\ell^{u'}) } $. This finish the proof of sufficiency. Thus we complete the proof of Lemma \ref{char block}.
\end{proof}
Now we are ready to show  Theorem \ref{HL mix block r le infty}.
\begin{proof}[Proof of Theorem \ref{HL mix block r le infty}]
	Just as in Remark \ref{remark HL block}, we only need to show the case  $1<u' \le \infty $ and    $1 < n / ( \sum_{i=1}^n  1/p_i )   < t <r <\infty $.
	
	Subcase $1<u' <\infty $ and $1 < n / ( \sum_{i=1}^n  1/p_i )   < t <r <\infty $.	
	
	Let $0<\epsilon <1$.
	Let $\vec f\in \mathcal{H}_{\vec p \, ^\prime}^{t',r'} (\ell^{u'}) $. From Lemma \ref{char block}, there exists a sequence $\{\vec f_j\}_{j\in \mathbb Z} \subset (\mathcal E_{\vec p \, ^\prime}^{t',r'}   )_j (\ell^{u'}) $ such that $\vec f = \sum_{j\in \mathbb Z} \vec f_j$ almost everywhere on $\rn$ and 
	\begin{equation*}
		\| \vec f\|_{\mathcal{H}_{\vec p \, ^\prime}^{t',r'} (\ell^{u'})  } = 	\| \vec f\|^{\blacklozenge}_{\mathcal{H}_{\vec p \, ^\prime}^{t',r'} (\ell^{u'})  } > (1 - \epsilon) \left( \sum_{j\in \mathbb Z} \| \vec f_j\|_{ (\mathcal E_{\vec p \, ^\prime}^{t',r'}   )_j (\ell^{u'})  } ^{r'} \right) ^{1/r' } .
	\end{equation*}
	By the sublinear of Hardy-Littlewood maximal function and Lemma \ref{HL each j}, we obtain
	\begin{align*}
		\| \M \vec f\|_{\mathcal{H}_{\vec p \, ^\prime}^{t',r'} (\ell^{u'}) }  & \le \left\| \sum_{j\in   \mathbb Z}  \M \vec f _j  \right\|_{ \mathcal{H}_{\vec p \, ^\prime}^{t',r'} (\ell^{u'}) }  = \left\| \sum_{j\in   \mathbb Z}  \M  \vec f _j \right\|^{\blacklozenge}_{ \mathcal{H}_{\vec p \, ^\prime}^{t',r'} (\ell^{u'})   }  \\
		& \le \left( \sum_{j\in   \mathbb Z} \| \M  \vec f _j  \|_{ (\mathcal E_{\vec p \, ^\prime}^{t',r'}   )_j (\ell^{u'}) } ^{r'} \right)^{1/r'}  \lesssim  \left( \sum_{j\in   \mathbb Z} \|  \vec f _j  \|_{ (\mathcal E_{\vec p \, ^\prime}^{t',r'}   )_j (\ell^{u'})} ^{r'} \right)^{1/r'}  \\
		& < \frac{1}{1-\epsilon} 	\| \vec f\|_{\mathcal{H}_{\vec p \, ^\prime}^{t',r'} (\ell^{u'}) }
	\end{align*}
	where the implicit positive constants are independent of both $\vec f$ and $\epsilon$. Letting $\epsilon \to 0^+$, we obtain $	\| \M \vec  f\|_{\mathcal{H}_{\vec p \, ^\prime}^{t',r'} (\ell^{u'})} \lesssim  	\| \vec f\|_{\mathcal{H}_{\vec p \, ^\prime}^{t',r'} (\ell^{u'})} $.
	
	Let $ \vec g =\{ g_j \}_{j =0}^\infty  $ be defined by $	g_0 = f, g_j =0$ for $j \in \mathbb N$. Then from the above, we obtain the scalar version:
	\begin{equation} \label{scalar HL mixed block}
		\| \M   f\|_{\mathcal{H}_{\vec p \, ^\prime}^{t',r'} }	=	\| \M \vec  g\|_{\mathcal{H}_{\vec p \, ^\prime}^{t',r'} (\ell^{u'})} \lesssim  	\| \vec g\|_{\mathcal{H}_{\vec p \, ^\prime}^{t',r'} (\ell^{u'})} = \|   f\|_{\mathcal{H}_{\vec p \, ^\prime}^{t',r'} } .
	\end{equation}
	
	Subcase $u' = \infty $ and $1 < n / ( \sum_{i=1}^n  1/p_i )   < t <r <\infty $.	
	By 	\begin{equation*}
		\sup_{k \in \mathbb N} \M f_k  \le  \M \sup_{k \in \mathbb N} f_k, 
	\end{equation*} and (\ref{scalar HL mixed block}),
	we obtain 
	\begin{equation*}
		\| \M \vec  f\|_{\mathcal{H}_{\vec p \, ^\prime}^{t',r'} (\ell^{\infty})} \lesssim  	\| \vec f\|_{\mathcal{H}_{\vec p \, ^\prime}^{t',r'} (\ell^{\infty})}. 
	\end{equation*}
	Hence, we finish the proof.	
\end{proof}

\subsection{Fractional integral operator}
For $ 0<\alpha <n $, define the fractional integral operator $I_\alpha$ of order $\alpha$ by
\begin{equation*} 
	I_\alpha f (x) := \int_\rn \frac{f (y)}{|x-y|^{n-\alpha} }  \d y
\end{equation*}
for $f \in L^1_{ \operatorname{loc}} $  as long as the right-hand side makes sense.

From the standard argument for fractional integral operators on mixed Morrey spaces (see \cite[proof of Theorm 1.11]{N19}), we get that the pointwise estimate for non-negative functions $f$   is 
\begin{equation} \label{frac estimate}
	I_\alpha f (x)  \lesssim \| f\|_{ M_{\vec p}^{t,\infty} }^{ t\alpha /n  } \M f (x) ^{ 1 - t\alpha /  n }
\end{equation}
where $1<\vec p <\infty $ and  $ 1< n / ( \sum_{i=1}^n  1/p_{i})  \le t < \infty  $.

\begin{theorem}
	Let $ 1< \vec p_1 , \vec p_2< \infty $.  
	For $j=1,2$, let   $1 < n / ( \sum_{i=1}^n  1/p_{j,i})   < t_j <r_j <\infty $ or $ 1< n / ( \sum_{i=1}^n  1/p_{j,i})  \le t_j < r_j =\infty  $ where $p_{j,i}$  is $i$-th item of the vector $\vec p_j$.
	Let $ 0<\alpha <n $ and $1/ t_2 = 1 /t_1 - \alpha /n $. Assume that
	$
	t_1 \vec p_2 = t_2 \vec p_1, 
	t_1 r_2 = t_2 r_1. 
	$	
	Then 
	$	\|  I_\alpha  f \|_{M_{\vec p_2 }^{t_2,r_2	}  } \lesssim   \| f\|_{ M_{\vec p_1}^{t_1,r_1} } .$

\end{theorem}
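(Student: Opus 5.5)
The plan is the Adams/Hedberg-type argument: the pointwise estimate (\ref{frac estimate}) combined with the boundedness of $\M$ on $M_{\vec p_1}^{t_1,r_1}$ (Theorem \ref{HL M}) and a power-rescaling identity. Since $|I_\alpha f|\le I_\alpha |f|$ and the norms are lattice norms, assume $f\ge 0$. Set $\theta:=t_1/t_2 = 1-t_1\alpha/n\in(0,1)$. The hypotheses $t_1\vec p_2=t_2\vec p_1$ and $t_1r_2=t_2r_1$ say exactly that $\vec p_2=\vec p_1/\theta$, $t_2=t_1/\theta$ and $r_2=r_1/\theta$.

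First I will record the scaling identity, the same one used in the nontriviality proof:
\begin{equation*}
\| |g|^{\theta}\|_{M_{\vec p_2}^{t_2,r_2}} = \|g\|_{M_{\theta\vec p_2}^{\theta t_2,\theta r_2}}^{\theta}=\|g\|_{M_{\vec p_1}^{t_1,r_1}}^{\theta}.
\end{equation*}
This is checked cube by cube. On each $Q\in\D$ we have $\||g|^\theta\chi_Q\|_{L^{\vec p_2}}=\|g\chi_Q\|_{L^{\theta\vec p_2}}^\theta$. The exponent satisfies $|Q|^{1/t_2-\frac1n\sum 1/p_{2,i}}=\bigl(|Q|^{1/t_1-\frac1n\sum 1/p_{1,i}}\bigr)^{\theta}$. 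Finally, summing the $r_2$-th powers and taking the $1/r_2$ root turns the $\ell^{r_2}$ norm into the $\theta$-th power of the $\ell^{r_1}$ norm.

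Second, I will apply (\ref{frac estimate}) with $(\vec p,t)=(\vec p_1,t_1)$. This is legitimate because $1<\vec p_1<\infty$ and $1<n/(\sum_i 1/p_{1,i})\le t_1<\infty$, and it gives
\begin{equation*}
I_\alpha f\lesssim \|f\|_{M_{\vec p_1}^{t_1,\infty}}^{t_1\alpha/n}(\M f)^{\theta}.
\end{equation*}
Lemma \ref{embed ell r} gives $\|f\|_{M_{\vec p_1}^{t_1,\infty}}\le\|f\|_{M_{\vec p_1}^{t_1,r_1}}$, since $\ell^{r_1}\hookrightarrow\ell^\infty$.

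Third, taking the $M_{\vec p_2}^{t_2,r_2}$ norm and using the lattice property together with the scaling identity, I get
\begin{equation*}
\|I_\alpha f\|_{M_{\vec p_2}^{t_2,r_2}}\lesssim \|f\|_{M_{\vec p_1}^{t_1,r_1}}^{1-\theta}\|\M f\|_{M_{\vec p_1}^{t_1,r_1}}^{\theta}\lesssim \|f\|_{M_{\vec p_1}^{t_1,r_1}},
\end{equation*}
where the last step is Theorem \ref{HL M} on $M_{\vec p_1}^{t_1,r_1}$.

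For the case $r_1=\infty$ we have $r_2=\infty$, and the same chain goes through (this case is also \cite[Theorem 1.11]{N19}). The only point to check there is that the parameter hypotheses on $(\vec p_2,t_2,r_2)$ are consistent with the scaling. Indeed $\frac1n\sum_i 1/p_{2,i}=\theta\,\frac1n\sum_i 1/p_{1,i}$ and $1/t_2=\theta/t_1$, so the condition $t>n/\sum 1/p$ transfers automatically.

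The main obstacle is the validity of the pointwise estimate (\ref{frac estimate}) itself. It rests on splitting $I_\alpha f$ at a radius $R$: the near part is $\lesssim R^\alpha\M f(x)$, and the far part is bounded by summing over dyadic annuli. On a ball $B(x,2^kR)$, Hölder's inequality (Lemma \ref{Holder mixed}) with $\|\chi_B\|_{L^{\vec p_1'}}$ from Lemma \ref{chi Q mixed Lp} gives the bound $\lesssim (2^kR)^{\alpha-n/t_1}\|f\|_{M^{t_1,\infty}_{\vec p_1}}$, summable since $\alpha<n/t_1$. Optimizing in $R$ then gives the estimate. Since the paper already quotes this estimate, I would simply invoke it.
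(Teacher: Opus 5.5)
Your proposal is correct and follows essentially the same route as the paper's proof. Both combine the pointwise bound (\ref{frac estimate}), the embedding into $M_{\vec p_1}^{t_1,\infty}$, the power-rescaling identity with $\theta=1-t_1\alpha/n$ (so that $(\theta\vec p_2,\theta t_2,\theta r_2)=(\vec p_1,t_1,r_1)$), and Theorem \ref{HL M}, and your cube-by-cube verification of that identity simply makes explicit a step the paper states in one line.
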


\begin{proof}
	We have a pointwise estimate (\ref{frac estimate}) for  $|f|$. 	
	Using the embedding  $M_{\vec p}^{t,r}  \hookrightarrow  M_{\vec p}^{t,\infty} $
	and the condition  $1/ t_2 = 1 /t_1 - \alpha /n  $, 
	we obtain the  estimate
	\begin{equation*}
		| I_\alpha  f(x) | \le  I_\alpha  [ | f| ](x) \lesssim \| f\|_{ M_{\vec p_1}^{t_1,r_1} }^{ t_1 \alpha /n  } \M f (x) ^{ 1 - t_1 \alpha /  n } .
	\end{equation*} 
	Hence
	\begin{align*}
		\|  I_\alpha  f \|_{M_{\vec p_2 }^{t_2,r_2	}  }  & \lesssim \| f\|_{ M_{\vec p_1}^{t_1,r_1} }^{ t_1 \alpha /n  }  	\|  \M f  ^{ 1 - t_1 \alpha /  n }  \|_{M_{\vec p_2 }^{t_2,r_2	}  }   	= \| f\|_{ M_{\vec p_1}^{t_1,r_1} }^{ t_1 \alpha /n  }  	\|  \M f   \|_{M_{  (1 - t_1 \alpha /  n ) \vec p_2 }^{ (1 - t_1 \alpha /  n) t_2, (1 - t_1 \alpha /  n) r_2	}  } ^{ 1 - t_1 \alpha /  n }  \\
		& 	\lesssim  \| f\|_{ M_{\vec p_1}^{t_1,r_1} }^{ t_1 \alpha /n  }  	\|   f   \|_{M_{  (1 - t_1 \alpha /  n ) \vec p_2 }^{ (1 - t_1 \alpha /  n) t_2, (1 - t_1 \alpha /  n) r_2	}  } ^{ 1 - t_1 \alpha /  n }    =  \| f\|_{ M_{\vec p_1}^{t_1,r_1} } .
	\end{align*}
	We finish the proof.
\end{proof}

\subsection{Singular integral operators}
Let us consider the boundedness of singular integral operators.
\begin{definition}
	A singular integral operator is an $L^2$-bounded linear operator $T$ with a kernel $K(x,y)$ which satisfies the following conditions:
	\begin{itemize}
		\item[(i)] There exist  $\epsilon, c>0$  such that $ |K(x,y) | \le c / |x-y|^n $ for $x \neq y$ and 
		\begin{equation*}
			| K (x,y) - K (z,y) | + | K (y,x) -K (y,z) |  \le c \frac{|x-z| ^\epsilon}{|x-y| ^{ \epsilon+n  } }
		\end{equation*}
		if $ |x-y| \ge 2 |x-z| >0$.
		\item [(ii)] If $f\in L_c^\infty  $, then
		\begin{equation*}
			T f(x) = \int_\rn K (x,y) f(y) \d y ,  \quad  x \notin  \operatorname{supp} (f).
		\end{equation*}
	\end{itemize}
\end{definition}

\begin{theorem}
	Let $1 < u <\infty $
	and $ 1< \vec p <\infty$. Let   $1 < n / ( \sum_{i=1}^n  1/p_{i})   < t <r<\infty $ or $ 1< n / ( \sum_{i=1}^n  1/p_{i})  \le t< r =\infty  $. Let $T$ be a singular integral operator. Then for all $\vec f \in M_{\vec p}^{t,r} (\ell^u)$, we have
	\begin{equation*}
		\| T \vec f\|_{M_{\vec p}^{t,r} (\ell^u) }  \lesssim 	\| \vec f\|_{M_{\vec p}^{t,r} (\ell^u) };
	\end{equation*}
	and for all $g \in M_{\vec p}^{t,r}$, we have
	\begin{equation*}
		\| Tg\|_{M_{\vec p}^{t,r} }  \lesssim 	\| g\|_{M_{\vec p}^{t,r} } .
	\end{equation*}
\end{theorem}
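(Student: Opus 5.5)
We follow the local/global decomposition already used for $\M$ in Theorem \ref{HL seq mix BM} and Theorem \ref{mixed vector HL mixed BM}. The case $r=\infty$ is the mixed Morrey case treated by Nogayama \cite{N19}, so we assume $r<\infty$. The scalar statement is the special case $\vec f=(g,0,0,\ldots)$ of the vector-valued one, so it suffices to prove the vector-valued estimate. By Lemma \ref{D equivalence} we may work on a fixed dyadic grid $\D_{\vec a}$, or simply on $\D$. We first prove the estimate for $\vec f$ with components in $L_c^\infty$, where $T f_k$ is defined by the kernel off the support. Density (Corollary \ref{dense L_c mixed BM} in the scalar case, and its vector analogue via truncation and the Fatou/monotone property) then gives the general case.

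Fix $Q\in\D$ and split $f_k=f_k\chi_{2Q}+f_k\chi_{(2Q)^c}=:f_k^1+f_k^2$. For the local part we use the vector-valued boundedness of singular integrals on mixed Lebesgue spaces, $\|(\sum_k|Tf_k^1|^u)^{1/u}\|_{L^{\vec p}}\lesssim\|(\sum_k|f_k|^u)^{1/u}\chi_{2Q}\|_{L^{\vec p}}$. This is available in the mixed setting, for instance through Lemma \ref{Mit mixed weight} and the weighted extrapolation used in \cite{N19}, or by citing the vector-valued Calder\'on–Zygmund theorem on $L^{\vec p}$ from \cite{N19}. Multiplying by $|Q|^{1/t-\frac1n\sum 1/p_i}$ and noting $|2Q|\approx|Q|$ reduces this part to a sum over the $2^n$ dyadic cubes of side $2\ell(Q)$ meeting $2Q$, i.e.\ to neighbours of the parent. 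This is controlled by the translation-type argument of Lemma \ref{cube be covered}, and after summing over $Q$ with bounded overlap it gives $\lesssim\|\vec f\|_{M_{\vec p}^{t,r}(\ell^u)}$.

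For the global part, for $x\in Q$ the size condition gives
\begin{equation*}
|Tf_k^2(x)|\lesssim\int_{(2Q)^c}\frac{|f_k(y)|}{|x-y|^n}\,\d y\lesssim\sum_{m=1}^\infty\frac{1}{|2^mQ|}\int_{2^{m+1}Q}|f_k(y)|\,\d y .
\end{equation*}
Minkowski's inequality in $\ell^u$ then gives the same bound with $|f_k|$ replaced by $\|\vec f(y)\|_{\ell^u}$. Applying H\"older (Lemma \ref{Holder mixed}) on $2^{m+1}Q$ together with Lemma \ref{chi Q mixed Lp} yields
\begin{equation*}
|Q|^{\frac1t-\frac1n\sum\frac1{p_i}}\Big\|\Big(\sum_k|Tf_k^2|^u\Big)^{1/u}\chi_Q\Big\|_{L^{\vec p}}\lesssim\sum_{m\ge1}2^{-mn/t}|2^{m+1}Q|^{\frac1t-\frac1n\sum\frac1{p_i}}\big\|\|\vec f\|_{\ell^u}\chi_{2^{m+1}Q}\big\|_{L^{\vec p}} .
\end{equation*}
Each $2^{m+1}Q$ is covered by a bounded number of cubes from $\bigcup_{\vec a}\D_{\vec a}$ of comparable size (Lemma \ref{cube be covered}). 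Taking the $\ell^r$ norm over $Q$, Minkowski in $m$, and the counting argument that each such cube arises from $\lesssim 2^{mn}$ cubes $Q$ give the bound $\sum_m 2^{-mn/t}2^{mn/r}\|\vec f\|_{M_{\vec p}^{t,r}(\ell^u)}$, which converges since $t<r$. This is the same counting as in Theorem \ref{HL seq mix BM}, with Lemma \ref{D equivalence} converting norms between grids.

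We expect the main obstacle to be the local step, namely justifying the vector-valued $L^{\vec p}(\ell^u)$ boundedness of $T$ for mixed norms. Our first attempt is to cite \cite{N19}, where it is established in the course of the mixed Morrey result. Failing that, we would derive it by Rubio de Francia extrapolation from weighted $L^2$ bounds, iterating coordinatewise with the one-dimensional $A_{p_i}$ weights as in Lemma \ref{Mit mixed weight}. A secondary technical point is the density argument needed to define $T$ on all of $M_{\vec p}^{t,r}(\ell^u)$. We handle it by the a priori estimate on $L_c^\infty$ plus density, and in the case $r=\infty$ by appeal to Nogayama's definition.
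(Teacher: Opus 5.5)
Your proposal is correct and follows essentially the paper's route: reduction to the vector-valued case with $r=\infty$ deferred to Nogayama, the vector-valued Calder\'on--Zygmund bound on $L^{\vec p}(\ell^u)$ for the local term, and the kernel size bound plus H\"older and the same counting for the tail, giving $\sum_{m}2^{-mn/t}2^{mn/r}<\infty$. The one substantive difference is that you localize to $2Q$ and sum over the annuli $2^{m+1}Q\setminus 2^{m}Q$, whereas the paper splits at $Q$ itself and uses the dyadic parents $Q_m$. Your choice is the better one: for $x\in Q$ near $\partial Q$ the paper's pointwise bound $\int_{Q^c}|f_k(y)||x-y|^{-n}\,\d y\lesssim\sum_{m\ge1}|Q_m|^{-1}\int_{Q_m}|f_k|$ fails, while $\operatorname{dist}(Q,(2Q)^c)=\ell(Q)/2$ makes your kernel estimate valid, at the cost of only a bounded-overlap step in the local term.
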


\begin{proof}
	It suffices to show the first inequality.	
	The case $ 1< n / ( \sum_{i=1}^n  1/p_{i})  \le t< r =\infty  $ is proved in \cite[Theorem 2.17]{N24}. Thus we only need to show the case $1 < n / ( \sum_{i=1}^n  1/p_{i})   < t <r<\infty $. Since we have the embedding
	$ M_{\vec p}^{t,r} (\ell^u)  \hookrightarrow M_{\vec p}^{t,\infty} (\ell^u)  $, we can consider the singular integral operators on mixed Bourgain-Morrey spaces by restriction.
	
	Let $Q \in \D$. We may consider one  of $2^n$ directions about a fixed $Q \in \D$ by the definition of singular integral operators.  Denote by $Q_m$ the $m^{\operatorname{th}}$ dyadic parent. Observe that for $x\in Q$,
	\begin{align*}
		\left(  \sum_{k=1}^\infty (T f_k ) ^u  (x) \right)^{1/u} 
		& \lesssim \left(  \sum_{k=1}^\infty (T [ \chi_Q  f_k ] ) ^u (x) \right)^{1/u} +  \left(  \sum_{k=1}^\infty \left( \sum_{m=1}^\infty  \int_{Q_m}  \frac{|f_k (y) |}{ |x-y|^n}\d y   \right) ^u  \right)^{1/u} \\
		& \lesssim \left(  \sum_{k=1}^\infty (T [ \chi_Q  f_k ] ) ^u  \right)^{1/u} + \sum_{m=1}^\infty \left(  \sum_{k=1}^\infty \left( \frac{1}{|Q_m|} \int_{Q_m} |f_k (y) |\d y   \right) ^u  \right)^{1/u} .
	\end{align*}
	Consequently,
	\begin{align*}
	 |Q|^{ \frac{1}{t}  - \frac{1}{n} \sum_{i=1 } ^n \frac{1}{p_i}  }  \left\| \left( \sum_{k=1}^\infty  |T f_k|^u \right)^{1/u}  \chi_Q \right\|_{L^{\vec p}  } 
		& \lesssim |Q|^{ \frac{1}{t}  - \frac{1}{n} \sum_{i=1 } ^n \frac{1}{p_i}  }  \left\| \left(  \sum_{k=1}^\infty (T [ \chi_Q  f_k ] ) ^u  \right)^{1/u}  \chi_Q \right\|_{L^{\vec p}  } \\
		& \quad + |Q|^{1/t}  \sum_{m=1}^\infty \left(  \sum_{k=1}^\infty \left( \frac{1}{|Q_m|} \int_{Q_m} |f_k (y) |\d y   \right) ^u  \right)^{1/u} =: I+ II.
	\end{align*}
	For the first part, by the boundedness of singular integral operator for mixed spaces (for example, see \cite[Theorem 7.1]{N19}), we obtain
	\begin{equation*}
		I \lesssim |Q|^{ \frac{1}{t}  - \frac{1}{n} \sum_{i=1 } ^n \frac{1}{p_i}  }  \left\| \left(  \sum_{k=1}^\infty |  f_k | ^u  \right)^{1/u}  \chi_Q \right\|_{L^{\vec p}  }.
	\end{equation*}
	Then repeating the proof of Theorem \ref{HL seq mix BM}, we obtain the result.
\end{proof}

\begin{theorem}\label{T H block}
	Let $ 1< \vec p <\infty$. Let   $1 < n / ( \sum_{i=1}^n  1/p_{i})   < t <r<\infty $ or $ 1< n / ( \sum_{i=1}^n  1/p_{i})  \le t< r =\infty  $. Let $T$ be a singular integral operator. Then for all $g \in \mathcal{H}_{\vec p \, ^\prime}^{t',r'}$, we have
	\begin{equation*}
		\| Tg\|_{\mathcal{H}_{\vec p \, ^\prime}^{t',r'} }  \lesssim 	\| g\|_{\mathcal{H}_{\vec p \, ^\prime}^{t',r'} } .
	\end{equation*}
\end{theorem}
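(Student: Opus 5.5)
The plan is a duality argument. We combine the predual theorem (Theorem \ref{predual mix BM}) with the boundedness of singular integral operators on $M_{\vec p}^{t,r}$ from the preceding theorem, applied to the transpose $T^*$. The kernel condition in the definition is symmetric in the two variables: the smoothness estimate covers both $K(x,y)-K(z,y)$ and $K(y,x)-K(y,z)$. Hence $T^*$, with kernel $K^*(x,y)=K(y,x)$, is again a singular integral operator, and it is $L^2$-bounded because $T$ is. The preceding theorem therefore gives
\[
\|T^* f\|_{M_{\vec p}^{t,r}}\lesssim \|f\|_{M_{\vec p}^{t,r}}
\]
for all $f\in M_{\vec p}^{t,r}$.

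First I treat $g\in L^{\vec p\,'}_c$, which is dense in $\mathcal{H}_{\vec p\,'}^{t',r'}$ by Theorem \ref{dense block}. For such $g$, $Tg$ is well defined, since $T$ is bounded on $L^{\vec p\,'}$ (the singular integral result in mixed Lebesgue spaces, \cite[Theorem 7.1]{N19}, applied with $\vec p\,'$). Take $f\in M_{\vec p}^{t,r}$ with $f\in L^\infty_c$; such $f$ are dense by Corollary \ref{dense L_c mixed BM}, and in the case $r=\infty$ I instead use a truncation argument. For these $f$ the identity
\[
\int_{\rn} Tg\, f\,\d x=\int_{\rn} g\, T^*f\,\d x
\]
holds because both functions lie in the appropriate mixed Lebesgue spaces. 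Part (i) of Theorem \ref{predual mix BM} then gives
\[
\Bigl|\int Tg\, f\Bigr|\le \|g\|_{\mathcal{H}_{\vec p\,'}^{t',r'}}\|T^*f\|_{M_{\vec p}^{t,r}}\lesssim \|g\|_{\mathcal{H}_{\vec p\,'}^{t',r'}}\|f\|_{M_{\vec p}^{t,r}}.
\]

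The step I expect to be the main obstacle is converting this into a bound on $\|Tg\|_{\mathcal{H}_{\vec p\,'}^{t',r'}}$. Since $Tg$ is not known a priori to lie in the block space, the norming formula (\ref{H = max M le 1}) cannot be applied directly. Instead I use the associate-space identification of Theorem \ref{associate space}: $\mathcal{H}_{\vec p\,'}^{t',r'}=(M_{\vec p}^{t,r})'$. This means a measurable $h$ belongs to the block space with norm at most $C$ as soon as $\|hf\|_{L^1}\le C\|f\|_{M_{\vec p}^{t,r}}$ for all $f$. To pass from the signed pairing to $\|hf\|_{L^1}$, I replace $f$ by $f\,\overline{\sgn(Tg)}$, which has the same modulus and therefore the same $M_{\vec p}^{t,r}$ norm. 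The restriction to $L^\infty_c$ test functions is removed by monotone convergence via $f_k=\min(|f|,k)\chi_{B_k}$, using (P3) for $M_{\vec p}^{t,r}$. This yields $\|Tg\|_{\mathcal{H}_{\vec p\,'}^{t',r'}}\lesssim\|g\|_{\mathcal{H}_{\vec p\,'}^{t',r'}}$ on the dense class. Alternatively, one can apply the Fatou property, Theorem \ref{Fatou general}, to $Tg\,\chi_{B_k}\min(1,k/|Tg|)$.

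Finally, I extend to general $g\in\mathcal{H}_{\vec p\,'}^{t',r'}$ by density. Take $g_N\to g$ in the block norm with $g_N\in L^{\vec p\,'}_c$; then $\{Tg_N\}$ is Cauchy in the Banach space $\mathcal{H}_{\vec p\,'}^{t',r'}$ (Theorem \ref{Banach}). Its limit is identified with $Tg$, for instance as a distribution, using the local $L^1$ convergence from Proposition \ref{mix block sum converge}. The bound then passes to the limit.
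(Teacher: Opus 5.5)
Your proposal is correct and uses the same duality argument as the paper. You pair $Tg$ against $f\in M_{\vec p}^{t,r}$, move $T$ onto $f$ as the transpose $T^*$ (again a singular integral operator), and combine Theorem \ref{predual mix BM}(i) with the boundedness of $T^*$ on $M_{\vec p}^{t,r}$. The paper applies the norming formula of Theorem \ref{predual mix BM} directly to $Tg$, tacitly assuming both $Tg\in\mathcal{H}_{\vec p\,'}^{t',r'}$ and the transpose identity; your restriction to the dense class $L^{\vec p\,'}_c$, your passage through the associate-space identification of Theorem \ref{associate space}, and the final density extension supply exactly those justifications.
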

\begin{proof}
	By Theorem \ref{predual mix BM},
	we obtain 
	\begin{align*}
		\| Tg\|_{\mathcal{H}_{\vec p \, ^\prime}^{t',r'} }  & = \max_{\|f\|_{   M_{\vec p}^{t,r} } \le 1  } \left|\int_\rn T g (x) f (x)  \d x    \right|  = \max_{\|f\|_{   M_{\vec p}^{t,r} } \le 1  } \left|\int_\rn  g (x) T^* f (x)  \d x    \right| \\
		& \le \max_{\|f\|_{   M_{\vec p}^{t,r} } \le 1  } \| g\|_{\mathcal{H}_{\vec p \, ^\prime}^{t',r'} } \|T^* f \|_{  M_{\vec p}^{t,r} } \lesssim \| g\|_{\mathcal{H}_{\vec p \, ^\prime}^{t',r'} }.
	\end{align*}
	Thus the proof is complete.
\end{proof}
\begin{theorem}\label{T H vector block}
	Let $1<u ' <\infty$ and $ 1< \vec p <\infty$. Let   $1 < n / ( \sum_{i=1}^n  1/p_{i})   < t <r<\infty $ or $ 1< n / ( \sum_{i=1}^n  1/p_{i})  \le t< r =\infty  $. Let $T$ be a singular integral operator. Then for all $ \vec g=\{ g_j\}_{j =0 }^\infty  \in \mathcal{H}_{\vec p \, ^\prime}^{t',r'} (\ell^ {u'} )$, we have
	\begin{equation*}
		\| T \vec g \|_{\mathcal{H}_{\vec p \, ^\prime}^{t',r'} (\ell^ {u'} ) }  \lesssim 	\|\vec  g\|_{\mathcal{H}_{\vec p \, ^\prime}^{t',r'} (\ell^ {u'} ) } .
	\end{equation*}
\end{theorem}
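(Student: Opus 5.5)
The plan is to argue by duality, as in the proof of Theorem \ref{T H block}, but in a vector-valued setting. In the case $r<\infty$ I will not go through a vector-valued version of Theorem \ref{predual mix BM}. Instead I will use the slice characterization, as in the proof of Theorem \ref{HL mix block r le infty}. The case $r=\infty$ should be treated in \cite{N24} along the lines of Lemma \ref{HL mix block r = infty}. If it is not available there, the same duality argument applies verbatim with $r'=1$.

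For $1<n/(\sum_i 1/p_i)<t<r<\infty$, take $\vec g\in\mathcal{H}_{\vec p\,^\prime}^{t',r'}(\ell^{u'})$. By Lemma \ref{char block}, write $\vec g=\sum_{j}\vec g_j$ with $\vec g_j\in(\mathcal E_{\vec p\,^\prime}^{t',r'})_j(\ell^{u'})$ and $\big(\sum_j\|\vec g_j\|^{r'}_{(\mathcal E)_j(\ell^{u'})}\big)^{1/r'}\le(1+\epsilon)\|\vec g\|$. Since $T$ is linear, $T\vec g=\sum_j T\vec g_j$. This requires a convergence justification, which I will handle through the $L^1_{\operatorname{loc}}$ convergence in Proposition \ref{mix block sum converge} together with a density argument based on Theorem \ref{dense block}, first working with finite sums. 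By Lemma \ref{char block} again, it then suffices to prove the uniform slice estimate
\begin{equation*}
\|T\vec h\|_{(\mathcal E_{\vec p\,^\prime}^{t',r'})_j(\ell^{u'})}\lesssim\|\vec h\|_{(\mathcal E_{\vec p\,^\prime}^{t',r'})_j(\ell^{u'})},
\end{equation*}
with a constant independent of $j$.

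To prove the slice estimate, I will mirror the proof of the preceding singular integral theorem on each level $j$. Fix $Q=Q_{j,k}$ and split $\vec h=\vec h\chi_{3Q}+\vec h\chi_{(3Q)^c}$. For the local part, the vector-valued $L^{\vec p\,^\prime}(\ell^{u'})$ boundedness of $T$ (\cite[Theorem 7.1]{N19}, noting $1<\vec p\,^\prime<\infty$ and $1<u'<\infty$) gives a bound by $\|\,\|\vec h\|_{\ell^{u'}}\chi_{3Q}\|_{L^{\vec p\,^\prime}}$. Since $3Q$ is covered by $3^n$ cubes of $\mathcal D_j$, this is summable in $\ell^{r'}$ over $k$ at the single scale. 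For the far part, the kernel bound gives, for $x\in Q$,
\begin{equation*}
\|T(\vec h\chi_{(3Q)^c})(x)\|_{\ell^{u'}}\lesssim\sum_{m\ge1}\frac{1}{|2^mQ|}\int_{2^mQ}\|\vec h\|_{\ell^{u'}}.
\end{equation*}
Applying Lemma \ref{Holder mixed} then gives a tail of the form $\sum_m 2^{-mn(\cdots)}$ times averages over the cubes of $\mathcal D_j$ contained in $2^mQ$.

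I expect this tail to be the main obstacle. The sum over $m$ must converge using only the exponent $1/t'-\frac1n\sum_i 1/p_i'$, and the $\ell^{r'}$ sum over $k$ of neighbourhood sums must be absorbed, with each cube counted $\approx 2^{mn}$ times. If this bookkeeping fails, I will fall back on duality at a fixed scale. The slice space $(\mathcal E_{\vec p\,^\prime}^{t',r'})_j(\ell^{u'})$ is the dual of the corresponding single-scale $(M_{\vec p}^{t,r})_j(\ell^{u})$ slice; this follows from Lemma \ref{dual mixed Lp} on each $Q_{j,k}$ together with $\ell^{r'}$–$\ell^{r}$ duality. On that slice, $T^*$ (also a singular integral operator) is bounded uniformly in $j$ by the single-level version of the preceding theorem, exactly as Lemma \ref{HL each j} is obtained from Theorem \ref{HL seq mix BM}. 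The bound $\|T\vec h\|\le\sup|\langle\vec h,T^*\vec f\rangle|$ then yields the slice estimate, and summing over $j$ via Lemma \ref{char block} and letting $\epsilon\to0$ completes the proof.
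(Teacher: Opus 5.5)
\textbf{Verdict: the reduction is the same as the paper's, but the slice estimate is not proved.}

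Your reduction matches the paper's omitted argument, which refers back to the proof of Theorem \ref{HL mix block r le infty}. You split $\vec g=\sum_j\vec g_j$ by Lemma \ref{char block} and then need $T$ bounded on $(\mathcal E_{\vec p\,^\prime}^{t',r'})_j(\ell^{u'})$ uniformly in $j$. That slice bound is the entire content of the theorem, and neither of your routes establishes it.

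\textbf{Why the direct tail estimate fails.} At a fixed level all $Q_{j,k}$ have the same size. So the weight $|Q_{j,k}|^{1/t'-\frac1n\sum_i 1/p_i'}$ is a constant, and the slice is just the amalgam $\ell^{r'}_k(L^{\vec p\,^\prime}(Q_{j,k};\ell^{u'}))$ at scale $2^{-j}$. Nothing in it can produce decay in $m$. The factor $2^{-m\sum_i 1/p_i'}$ from H\"older is exactly offset by $\|\chi_{2^mQ}\|_{L^{\vec p\,^\prime}}$, so the size majorant diverges logarithmically.

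Concretely, take $\vec h=(\chi_R,0,0,\ldots)$ with $R\in\D_{j-M}$. Every $Q\in\D_j$ inside $\frac12R$ satisfies $\sum_{m\ge1}|2^mQ|^{-1}\int_{2^mQ}\|\vec h\|_{\ell^{u'}}\ge M-1$. Hence the majorant has slice norm $\gtrsim M$ times that of $\vec h$. Equivalently, a positive discrete kernel such as $|k-l|^{-n}$, which is not in $\ell^1(\mathbb Z^n)$, is never bounded on $\ell^{r'}$.

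\textbf{Why the duality fallback does not help.} Duality turns the problem into the same single-scale amalgam estimate for $T^*$ on $\ell^r(L^{\vec p}(\ell^u))$. That estimate does not follow by repeating the proof of the preceding singular-integral theorem at one level. In that proof the tail at $Q$ is charged to the terms at the ancestors $Q_m$, which live on coarser levels. The convergence of $\sum_m2^{-mn/t}2^{mn/r}$ comes from the $\ell^r$-sum over scales, i.e.\ from $t<r$. In a single-level norm those terms are absent, and rewriting $Q_m$ through level-$j$ cubes returns the divergent sum above. The analogy with Lemma \ref{HL each j} fails at exactly this point. For $\M$ the far part is a \emph{supremum} of ancestor averages, a discrete dyadic maximal operator bounded on $\ell^r$. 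For $T$ it is a \emph{sum}.

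\textbf{What is missing: cancellation of $T$ at the outer exponent $r'$.} Let $\delta=2^{-j}$ and let $c_k$ be the centre of $Q_{j,k}$.
\begin{itemize}
\item Put the cubes with $|k-l|_\infty\le C_n$ into the local part; your local argument handles them.
\item For $x\in Q_{j,l}$, $y\in Q_{j,k}$ with $|k-l|_\infty>C_n$, freeze the kernel: $|K(x,y)-K(c_l,c_k)|\lesssim\delta^{-n}|k-l|^{-n-\epsilon}$. This gives a summable discrete kernel, handled by Young's inequality.
\item It remains to bound $\vec a\mapsto\{\sum_{|k-l|_\infty>C_n}\delta^nK(c_l,c_k)\vec a_k\}_l$ on $\ell^{r'}(\ell^{u'})$ uniformly in $\delta$. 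Here $\vec a_k$ is the average of $\vec h$ over $Q_{j,k}$, controlled by H\"older.
\item This follows by testing $T$ on $\sum_k\vec a_k\chi_{Q_{j,k}}$ in $L^{r'}(\ell^{u'})$. The matrix $(\delta^{-n}\int_{Q_{j,l}}T\chi_{Q_{j,k}})_{l,k}$ is then bounded on $\ell^{r'}(\ell^{u'})$. Its entries with $|k-l|_\infty\le C_n$ are $O(1)$. The other entries differ from $\delta^nK(c_l,c_k)$ by $O(|k-l|^{-n-\epsilon})$.
\end{itemize}
This step uses $1<r'<\infty$ essentially. Alternatively, you can cite boundedness of Calder\'on--Zygmund operators on amalgam spaces.

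\textbf{The case $r=\infty$.} Your remark does not work as a slice argument. Lemma \ref{char block} requires $r<\infty$. Moreover, a Riesz transform, for example, is unbounded on $\ell^1$-amalgams, since $T\chi_{Q_{j,0}}$ decays only like $|x|^{-n}$ on a cone. To run the duality of Theorem \ref{T H block} in this case you need a vector-valued version of Theorem \ref{predual mix BM}, or a citation.

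The paper's own omitted proof relies on the same uniform slice bound. That bound is where the real work has to be done.
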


\begin{proof}
	The proof is similar to  that of Theorem \ref{HL mix block r le infty}, and we omit it here.	
\end{proof}

\noindent\textbf{Author Contributions}\quad 
All authors developed and discussed the results and contributed to the final
manuscript.

\medskip

\noindent\textbf{Data Availability}\quad Data sharing is not applicable 
to this article as no data sets were generated or analyzed.

\section*{Declarations}

\noindent\textbf{Conflict of interest}\quad All authors state no conflict of interest.

\medskip

\noindent\textbf{Informed Consent}\quad Informed consent has been obtained 
from all individuals included in this research work.


\bigskip

\noindent   Tengfei Bai, Pengfei Guo

\medskip

\noindent College of Mathematics and Statistics, Hainan Normal University, Haikou, Hainan 571158,
China

\medskip

\noindent Jingshi Xu (Corresponding author)

\medskip

\noindent School of Mathematics and Computing Science, Guilin University of Electronic Technology, Guilin 541004, China

\noindent Center for Applied Mathematics of Guangxi (GUET), Guilin 541004, China

\noindent Guangxi Colleges and Universities Key Laboratory of Data Analysis and Computation, Guilin 541004, China

\smallskip

\noindent {\it E-mails}:
\texttt{202311070100007@hainnu.edu.cn} (T. Bai)

\noindent\phantom{{\it E-mails:}}
\texttt{guopf999@163.com} (P. Guo)

\noindent\phantom{{\it E-mails:}}
\texttt{jingshixu@126.com} (J. Xu)

\end{document}